\def\namedlabel#1#2{\begingroup
   \def\@currentlabel{#2}%
   \label{#1}\endgroup
}
\theoremstyle{definition}
\newtheorem{theorem}{Theorem}[section]
\newtheorem{lemma}[theorem]{Lemma}
\newtheorem{coro}[theorem]{Corollary}
\newtheorem*{coro*}{Corollary}
\newtheorem{prop}[theorem]{Proposition}
\newtheorem{defn}[theorem]{Definition}
\newtheorem{nota}[theorem]{Notation}
\newtheorem{ans}[theorem]{Definition}
\newtheorem{rem}[theorem]{Remark}
\newtheorem{que}[theorem]{Question}
\newtheorem{ex}[theorem]{Example}
\newtheorem{thmIntro}{Theorem}    \renewcommand{\thethmIntro}{\Alph{thmIntro}}
\newcommand{\R}{\mathbb{R}} %%% real numbers
\newcommand{\NumCol}{NavyBlue} %%% color of the reference numbers
\newcommand{\CosPiOverFive}{0.80901699437} %%% x*cos(pi/5) 
\newcommand{\SinPiOverFive}{0.58778525229} %%% x*sin(pi/5) 
\newcommand{\CosTwoPiOverFive}{0.30901699437} %%% x*cos(2pi/5) 
\newcommand{\SinTwoPiOverFive}{0.951056516295} %%% x*sin(2pi/5) 
\begin{document}

\date{\today}

\keywords{cluster algebras, quiver mutations, geometric realisations, Coxeter groups, Euclidean geometry, billiards, lattices, cyclotomic fields, quasi-isometries, growth rates}
\subjclass[2010]{Primary 13F60; Secondary 97G40, 20F55.}

\title[Exchange graphs for mutation-finite non-integer quivers of rank $3$]{Exchange graphs for mutation-finite non-integer quivers of rank $3$}

\author{Anna Felikson}
\address{Department of Mathematical Sciences\\
Durham University\\
Science Laboratories\\
South Road\\
Durham, DH1 3LE\\
United Kingdom}
\email{anna.felikson@durham.ac.uk}

\author{Philipp Lampe}
\address{School of Mathematics, Statistics and Actuarial Science\\
University of Kent\\
Sibson Building\\
Parkwood Road\\
Canterbury, CT2 7FS\\
United Kingdom}
\email{P.B.Lampe@kent.ac.uk}

\begin{abstract} Skew-symmetric non-integer matrices with real entries can be viewed as quivers with non-integer weights of arrows. One can mutate such quivers according to usual rules of quiver mutation. Felikson and Tumarkin show that rank 3 mutation-finite non-integer quivers admit geometric realisations by partial reflections. This allows to define a notion of seeds (as Y-seeds), and hence, to define the exchange graphs for mutation classes. In this paper we study exchange graphs of mutation-finite quivers in rank 3. The concept of finite and affine type generalises naturally to non-integer quivers. In particular, exchange graphs of finite type quivers are finite, while exchange graphs of affine quivers are finite modulo the action of a finite-dimensional lattice.
\end{abstract}

\maketitle

\section{Introduction and summary}

Cluster algebras were introduced and studied in a series of four articles by Fomin--Zelevinsky, see \cite{FZ1,FZ2,FZ4}, and Berenstein--Fomin--Zelevinsky, see \cite{BFZ}. Many beautiful properties of cluster algebras are visualised by the exchange graph. For example, cluster algebras attached to Dynkin diagrams have finite exchange graphs, see \cite{FZ2}, and cluster algebras attached to affine Dynkin diagrams have exchange graphs of linear growth, see Felikson--Shapiro--Tumarkin  \cite[Theorem 10.8]{FST} and Felikson--Shapiro--Thomas--Tumarkin \cite[Theorem 1.1]{FSTT}.

In Fomin--Zelevinsky's setting, all quivers are integer-valued, that is, they are given by exchange matrices with integer entries. More generally, one can consider quivers whose multiplicities of arrows are not necessarily integers. Such quivers correspond to exchange matrices with real entries. Those objects were first considered in the context of almost periodicity, see \cite{L}. However, a concept of cluster variables has not yet been developed for quivers with real weights.

Fortunately, we can think about mutation of real quivers geometrically. The article \cite{FT} provides a model for all $3\times 3$ exchange matrices with real weights. More precisely, every mutation-acyclic quiver of rank $3$ admits a geometric realisation based on partial reflections, and every mutation-cyclic quiver of rank $3$ admits a realisation by partial rotations. These geometric models allow to classify real exchange matrices of finite mutation type in rank $3$ (and in general rank, as shown later in \cite{FT2}). In this article we investigate exchange graphs for mutation-finite non-integer mutation classes of rank $3$ quivers. As it is shown in \cite{FT}, any mutation-finite mutation class in rank $3$ (except for the mutation class of the Markov quiver) has a model by partial reflections on the sphere $\mathbb{S}^2$ or in the Euclidean plane $\mathbb{E}^2$. The mutation classes modelled in $\mathbb{S}^2$ have finitely many seeds, and therefore they are said to be of finite type. The other mutation-finite mutation classes are called affine, because they are modelled by Euclidean geometry. The model uses the notion of a $Y$-seed to define an exchange graph.

According to the classification by \cite{FT}, there are exactly $5$ mutation classes of finite type in rank $3$. Two of these classes are given by classical cluster algebras of type $A_3$ and $B_3$, and another three mutation classes are associated with the non-crystallographic root system $H_3$. The exchange graphs of type $A_3$, $B_3$ and $H_3$ are called (generalised) associahedra. Associahedra of type $A$ and $B$ are well-known in the literature; the generalised associahedron of type $H_3$ appeared in the work of Fomin--Reading \cite{FR}. Exchange graphs for the remaining two mutation classes were presented in \cite{FT} and are also related to the root system $H_3$. Note that, in fact, the definition of mutation depends on a certain choice. In this article we show that any definition of mutation on the sphere that respects natural rules will result in one of the five exchange graphs mentioned above. 

More precisely, the definition of mutation depends on a notion of positivity/negativity, and there are several ways to define that. We introduce compatible choices of positivity for which rank $2$ subseeds behave in a way resembling the integer case. We prove that exchange graphs are independent of the choices made.

\renewcommand{\thethmIntro}{A}
\begin{thmIntro}[Theorem \ref{Thm:SphericalExchangeGraph}] Given a seed of finite type (i.e. realised in $\mathbb{S}^2$), all compatible choices of positive vectors yield the same exchange graph.  
\end{thmIntro}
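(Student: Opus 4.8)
In the spherical model of \cite{FT}, a finite-type seed is recorded by a triple of vectors $v_1,v_2,v_3$ on $\mathbb{S}^2$, the pairwise pairings $\langle v_i,v_j\rangle$ encoding the weights of the quiver; a choice of positive vectors amounts to choosing, for each of the three directions, which of $v$ or $-v$ is declared positive, and the compatibility condition restricts these choices to those under which every rank $2$ subseed reproduces the integer sign conventions. The unsigned configuration --- the three great circles, equivalently the three pairs $\{\pm v_i\}$ --- is rigidly determined by the mutation class, so the entire freedom lies in the orientations, and the plan is to show directly that any two compatible orientations yield isomorphic exchange graphs.

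First I would set up the comparison map. Fixing the unsigned configuration, write the two compatible choices as points of $\{\pm1\}^3$ differing in a subset $T\subseteq\{1,2,3\}$ of directions, and let $\Phi$ be the relabelling that reverses the positive vector in each direction of $T$, recording each signed seed of the first convention in the second. Since $\Phi$ is a bijection on signed configurations that fixes the underlying unsigned data, Theorem A reduces to the commutation $\Phi\circ\mu_i=\mu_i\circ\Phi$ for every $i$, where the two copies of $\mu_i$ are the mutations computed in the two conventions.

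The core of the argument is the covariance of the partial-reflection rule under $\Phi$. Mutation $\mu_i$ replaces $v_i$ by $-v_i$ and sends each remaining $v_j$ either to its reflection in $v_i^{\perp}$ or to itself, the alternative being selected by the sign of the pairing $\langle v_j,v_i\rangle$ read off from the chosen positive vectors. Reversing the positive vector in a direction of $T$ flips exactly those selections that pair with that direction, so each $\mu_i$ is literally a different map in the two conventions. I would verify $\Phi\circ\mu_i=\mu_i\circ\Phi$ by unwinding this rule one direction of $T$ at a time, the content being that the change in the selection rule caused by each reversal is exactly compensated by the relabelling recorded by $\Phi$.

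The main obstacle is this case analysis. The partial reflection is discontinuous in the signs, so a bare sign change need not commute with mutation; what rescues the argument is the compatibility hypothesis, which is designed so that on every rank $2$ subseed the reflection behaves as in the integer case. I expect the proof to show that this is precisely the condition forcing the sign-dependent branches to match, so that the discontinuities introduced by reversing a positive vector cancel against those already present in the selection rule. Once the commutation holds for every $i$, the bijection $\Phi$ is an isomorphism of exchange graphs, proving that all compatible choices give the same graph; finiteness of the spherical model makes this a finite verification.
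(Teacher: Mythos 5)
Your proposal rests on a misreading of what ``a compatible choice of positive vectors'' is in this setting, and the comparison map $\Phi$ you build from that reading is not well defined on the object the theorem is about. In the paper the set $V^{+}$ is cut out by a single reference point $u\in\mathbb{S}^2$: a vector $v$ is positive exactly when $u\in\Pi_v$, and this one choice simultaneously assigns a sign to \emph{every} vector arising in \emph{every} seed of the (40- or 48-element) exchange graph, not just to the three vectors of one seed. Two compatible choices differ by moving $u$ between acute-angled domains (parts (3) and (4) of Theorem \ref{Thm:SphericalExchangeGraph}), and the vectors whose signs flip under such a move are precisely those whose great circles separate the two positions of $u$ --- a set that is scattered across many different seeds and is in no way parametrised by a subset $T\subseteq\{1,2,3\}$ of the initial directions. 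So the relabelling $\Phi$ you propose does not capture the difference between two compatible positivity notions, and the identity $\Phi\circ\mu_i=\mu_i\circ\Phi$ is not the statement that needs proving.

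There is also a substantive reason no purely formal covariance argument can work: changing the positivity convention genuinely changes the exchange graph in general. The rank $2$ computation in Section \ref{Section:Rank2} exhibits the same unsigned configuration producing a pentagon for one reference point and a heptagon for another (Examples \ref{Ex:A2} and \ref{Ex:A2Long}), so sign changes are \emph{not} automatically ``compensated by relabelling''; ruling out the long-period behaviour is exactly the content of compatibility, and your proposal invokes it only as a hope (``I expect the proof to show\dots'') rather than as a verified mechanism propagated through all seeds of the rank $3$ graph. The paper's own proof is, in the end, an exhaustive finite verification over the five mutation classes and the finitely many acute-angled domains; your closing remark about finiteness points in that direction, but the reduction you place in front of it is not valid, so the proposal as written has a genuine gap.
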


In the affine case mutations can be modelled (by partial reflections of) triangles in $\mathbb{E}^2$. In this context a seed consists of a triangle with angles $r_1\pi$, $r_2\pi$, $r_3\pi$ with $r_1,r_2,r_3\in\mathbb{Q}$ together with a quiver whose vertices correspond to the sides of the triangle. No previous results about the exchange graphs are known to the authors.

\renewcommand{\thethmIntro}{B}
\begin{thmIntro}[Theorem \ref{Thm:ExchangeGraph}, Corollary \ref{Coro:QItoLattice}, Theorem \ref{thm:even}]
\label{Thm:IntroAffine} Given a seed of affine type (i.e. realised in $\mathbb{E}^2$), there exists a lattice $L$ such that the following conditions hold:
\begin{enumerate}
\item \label{Thm:QIIntro} The lattice acts on the exchange graph and the quotient of the group action is finite.  
\item The rank of the lattice is equal to $\operatorname{rk}_{\mathbb{Z}}(L)=\varphi(d)/2$ or $\varphi(d)$ where $\varphi$ is Euler's totient function and $d$ the least common denominator of $r_1,r_2,r_3$.
\end{enumerate}
\end{thmIntro}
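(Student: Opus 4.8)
The plan is to realise the exchange graph as a combinatorial shadow of the isometry group generated by the partial reflections, reduce the study of its large-scale geometry to that of a Euclidean crystallographic-type group, and finally read off the rank of the translation lattice from cyclotomic number theory. First I would fix a representative seed, a triangle $\Delta$ with angles $r_1\pi,r_2\pi,r_3\pi$, and let $G\le\operatorname{Isom}(\mathbb{E}^2)$ be generated by the three reflections $s_1,s_2,s_3$ in the sides of $\Delta$. The key structural claim (Theorem \ref{Thm:ExchangeGraph}) is that mutating a seed corresponds to reflecting its triangle in the appropriate side, so that the vertices of the exchange graph are in bijection with the $G$-images of $\Delta$ carrying admissible quiver data, and edges record pairs of triangles sharing a side. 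I would prove this by induction along mutation sequences, checking that the $Y$-seed attached to a triangle is determined by the triangle together with finitely many sign choices, and that the partial-reflection rule reproduces exactly the adjacency of triangles in the orbit. The subtle point is that only partial reflections are permitted, so I must show that the omitted reflections neither disconnect the graph nor change its quasi-isometry type; this is where the compatibility of positivity choices (as in Theorem A) is invoked.

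Next I would exploit that the angles are rational multiples of $\pi$. The linear parts $\overline{s_1},\overline{s_2},\overline{s_3}\in O(2)$ generate a finite dihedral group $\overline G$ of order $2d$, where $d$ is the least common denominator of the $r_i$: indeed $\overline{s_i}\,\overline{s_j}$ is the rotation by $2r_k\pi$, and writing $a_i=r_id$ one checks $\gcd(a_1,a_2,a_3,d)=1$, so these rotations generate the cyclic group of order $d$ and all three mirror lines lie at multiples of $\pi/d$. Hence $G$ sits in an exact sequence $1\to L\to G\to\overline G\to 1$ with $L=G\cap\operatorname{Trans}(\mathbb{E}^2)$ a finitely generated free abelian group of finite index in $G$. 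Thus $G$ is virtually abelian and quasi-isometric to $L\cong\mathbb{Z}^r$; combined with the previous step and Corollary \ref{Coro:QItoLattice}, this promotes the quasi-isometry into an honest cocompact action of $L$ on the exchange graph with finite quotient, which is part~\ref{Thm:QIIntro} of the statement.

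It remains to compute $r=\operatorname{rk}_{\mathbb{Z}}L$. Conjugation makes $L$ a module over $\mathbb{Z}[\overline G]$, and since the rotation by $2\pi/d$ acts on translations as multiplication by $\zeta_d=e^{2\pi i/d}$ (one checks $\rho\,\tau_v\,\rho^{-1}=\tau_{\zeta_d v}$ independently of the centre of $\rho$), the rational vector space $L\otimes\mathbb{Q}$ is naturally a module over the cyclotomic field $\mathbb{Q}(\zeta_d)$, while the reflections in $\overline G$ act through complex conjugation $\zeta_d\mapsto\zeta_d^{-1}$. I would then show that $L\otimes\mathbb{Q}$ is generated over this ring by a single period vector, the translation obtained by closing up a minimal mutation loop, so that it is one-dimensional over the relevant field. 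The dichotomy of Theorem \ref{thm:even} is precisely whether the module structure is genuinely complex, giving the field $\mathbb{Q}(\zeta_d)$ and rank $\varphi(d)$, or descends to the maximal real subfield $\mathbb{Q}(\zeta_d+\zeta_d^{-1})$, giving rank $\varphi(d)/2$; I expect this to be governed by a parity condition on the closing loop, namely whether it returns the triangle by an orientation-preserving or an orientation-reversing isometry.

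The main obstacle I anticipate is the rank computation, specifically identifying the field over which $L\otimes\mathbb{Q}$ is one-dimensional. Proving that the period lattice is a rank-one module, rather than accidentally spanning a two-dimensional $\mathbb{Q}(\zeta_d)$-space (which would force rank $2\varphi(d)$), requires an explicit and uniform description of the generators of $L$ as cyclotomic-integer multiples of a single edge vector, and the real-versus-complex dichotomy must be matched precisely to the arithmetic of the three affine families. The quasi-isometry step, by contrast, I expect to be comparatively routine once the triangle/orbit dictionary of Theorem \ref{Thm:ExchangeGraph} is in place, since virtually abelian groups have well-understood coarse geometry and the finite quotient then follows from cocompactness of the $L$-action.
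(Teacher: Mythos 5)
There is a genuine gap, and it sits exactly at the point you flagged as ``subtle'': the claim that the vertices of the exchange graph are in bijection with the $G$-orbit of $\Delta$ under the full reflection group $G=\langle s_1,s_2,s_3\rangle$ is false, and the omitted (partial) reflections \emph{do} change the quasi-isometry type. The paper's Propositions \ref{Prop:Feet}, \ref{Prop:ParallelTob} and \ref{Prop:FinitelyManyTriangles} show that every triangle occurring in the exchange graph has two of its altitude feet on a single line $b$ (the billiard/belt line of the initial triangle), that every translation relating two seeds is parallel to $b$, and that up to such translations and the reflection across $b$ there are only \emph{finitely many} triangles with a given angle triple and quiver. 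The $G$-orbit of $\Delta$, by contrast, spreads out two-dimensionally (or densely, when $G$ is non-discrete), so the exchange graph sees only a thin sliver of it. The case $d=3$ (type $\widetilde{A}_2$) refutes your dictionary concretely: $G$ is the $(3,3,3)$ triangle group, whose translation subgroup $G\cap\operatorname{Trans}(\mathbb{E}^2)$ is $\mathbb{Z}^2$, but the brick-wall exchange graph has linear growth and is quasi-isometric to $\mathbb{Z}=\mathbb{Z}^{\varphi(3)/2}$. So your lattice $L=G\cap\operatorname{Trans}(\mathbb{E}^2)$ is not the lattice of the theorem, and the crystallographic exact sequence $1\to L\to G\to\overline{G}\to 1$ computes the wrong rank.

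Because the correct $L$ lives inside the one-parameter group of translations along $b$, its rank cannot be read off from a module structure over $\mathbb{Z}[\zeta_d]$ acting on the plane; it is the $\mathbb{Z}$-rank of a finitely generated subgroup of $\mathbb{R}$, namely the group generated by the lengths $d_1\sin(\alpha)\sin(n\alpha)/\sin^2(k\alpha)$ for $k$ coprime to $d$ (these arise as the finite sides of the infinite regions in the mutation class). The paper's rank computation is therefore a statement about $\mathbb{Q}$-linear independence of the real numbers $1/\sin^2(k\alpha)$, proved by applying the Dedekind group-determinant formula to the Galois conjugates $\sigma_l(1/\sin^2\alpha)$ and showing each character sum is nonzero via the Verlinde-type identity $\sum_{k=1}^n\sin^{-2}(k\alpha)=\tfrac23 n(n+1)$, which makes the $l=1$ term dominate. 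This sandwiches $L$ between two lattices of rank $\varphi(d)/2$ when $d$ is odd (and between ranks $\varphi(d)/2$ and $\varphi(d)$ when $d$ is even, whence the ``or'' in the statement --- the dichotomy is governed by the parity of $d$, not by an orientation character of a closing loop). The Schwarz--Milnor step is indeed routine once this is in place, but it is applied to the action of this one-dimensional-in-space, high-rank-as-an-abstract-group lattice, cocompact by Proposition \ref{Prop:FinitelyManyTriangles}, not to the reflection group $G$. To repair your argument you would need to replace the $G$-orbit dictionary by the belt-line analysis and replace the cyclotomic module argument by the linear-independence-of-lengths argument; as written, the proposal proves a statement about the wrong group.
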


To prove Theorem \ref{Thm:IntroAffine} we construct a line $b\subseteq \mathbb{E}^2$ (called the belt line) that intersects every triangle associated to an acyclic seed. The line is related to a billiard problem in Euclidean geometry. The lattice $L$ encodes the number theory of lengths of vectors parallel to $b$ that translate one seed into another seed. After constructing a number of translation vectors geometrically, we use number theory to prove that these vectors are linearly independent. More precisely, we work in the cyclotomic number field and apply a Dedekind determinant and a Verlinde formula. The proof of Theorem~\ref{Thm:IntroAffine} (\ref{Thm:QIIntro}) also uses the Schwarz--Milnor Lemma, see \cite{Sch} and \cite{M}. We observe that the geometry of the belt line $b$ also holds for real rank $3$ quivers of finite type (realised in $\mathbb{S}^2$) and even for mutation-infinite cases represented by partial reflections in $\mathbb{S}^2$, $\mathbb{E}^2$, or the hyperbolic plane~$\mathbb{H}^2$.

As an application of the above results, we compute the growth of the exchange graphs of mutation-finite real quivers. For a natural number $n$ we denote by $gr(n)$ the number of seeds that can be reached from the initial seed by at most $n$ mutations. We are interested in the growth of the function.

\begin{coro*}[Corollary \ref{Theo:GrowthRate}]
The exchange graph $\Gamma$ of a mutation-finite real quiver distinct from the Markov quiver has polynomial growth. In case when the quiver is affine, the polynomial growth rate is equal to the rank of the lattice $L$ from Theorem \ref{Thm:IntroAffine}.
\end{coro*}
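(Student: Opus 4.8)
The plan is to separate the two regimes covered by the classification and reduce each to a growth computation for a well-understood reference space. \emph{Finite type.} If the quiver is of finite type then it is realised in $\mathbb{S}^2$ and, by Theorem~\ref{Thm:SphericalExchangeGraph} together with the classification, its exchange graph $\Gamma$ has only finitely many seeds. Hence $gr(n)$ is non-decreasing and bounded above by the total number of seeds, so it is eventually constant; this is polynomial growth of degree $0$, and nothing further is required.

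\emph{Affine type.} Here I would exploit the lattice action supplied by Theorem~\ref{Thm:IntroAffine}. Write $k=\operatorname{rk}_{\mathbb{Z}}(L)$, so that $L\cong\mathbb{Z}^{k}$ with $k=\varphi(d)/2$ or $\varphi(d)$. First I would record that $\Gamma$, equipped with its path metric $d_\Gamma$, is a connected graph in which every seed has exactly three neighbours (one mutation in each of the three directions of a rank $3$ seed); in particular $\Gamma$ is $3$-regular, hence a geodesic space of bounded geometry, and $gr(n)$ is precisely the cardinality $|B_\Gamma(s_0,n)|$ of the metric ball of radius $n$ about the initial seed $s_0$.

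The core step is to convert the lattice action into a quasi-isometry. By Theorem~\ref{Thm:ExchangeGraph} and Corollary~\ref{Coro:QItoLattice} the group $L$ acts on $\Gamma$ by graph automorphisms, properly discontinuously and with finite (compact) quotient; since $L$ is torsion free the stabilisers are trivial and the action is moreover free. The Schwarz--Milnor Lemma (\cite{Sch,M}), already invoked in the proof of Theorem~\ref{Thm:IntroAffine}(\ref{Thm:QIIntro}), then guarantees that the orbit map $L\to\Gamma$, $g\mapsto g\cdot s_0$, is a quasi-isometry. Consequently $\Gamma$ is quasi-isometric to $(L,d_{\mathrm{word}})\cong(\mathbb{Z}^{k},d_{\mathrm{word}})$, whose ball-counting function satisfies $|B_{\mathbb{Z}^{k}}(m)|\asymp m^{k}$.

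It remains to transport this estimate across the quasi-isometry and, crucially, to preserve the \emph{degree} rather than merely the dichotomy polynomial-versus-exponential. If $f\colon\Gamma\to\mathbb{Z}^{k}$ is an $(A,B)$-quasi-isometry, then $f$ maps $B_\Gamma(s_0,n)$ into $B_{\mathbb{Z}^{k}}(An+B)$ and, using a quasi-inverse, one obtains a reverse inclusion up to bounded distortion of the radius; combined with the bounded geometry ($3$-regularity) of both spaces this yields constants $c_1,c_2>0$ with $c_1 n^{k}\le gr(n)\le c_2 n^{k}$ for large $n$. Hence $\lim_{n\to\infty}\log gr(n)/\log n=k$, i.e. $\Gamma$ has polynomial growth of degree $k=\operatorname{rk}_{\mathbb{Z}}(L)$, as claimed. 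I expect the main obstacle to be precisely this last bookkeeping: one must verify that the additive and multiplicative quasi-isometry constants perturb the radius and the count only by amounts absorbed into $c_1,c_2$, so that the exact polynomial degree, and not just the growth type, is preserved; the $3$-regularity of $\Gamma$ is what makes these count comparisons legitimate.
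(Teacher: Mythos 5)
Your proposal is correct and follows essentially the same route as the paper: the affine case is handled by the quasi-isometry $L\to\Gamma$ of Corollary \ref{Coro:QItoLattice} (itself a consequence of the Schwarz--Milnor Lemma), followed by transferring the ball-counting estimate across the quasi-isometry, with the $3$-regularity of $\Gamma$ controlling fibre sizes exactly as in the paper's $3^c$ bound. The only addition is your explicit (and trivially correct) treatment of the finite-type case, which the paper's body leaves implicit.
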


For cluster algebras with integer exchange matrices we have a gap between polynomial growth of small degree and exponential growth. More precisely, unless a mutation-finite cluster algebra with an integer exchange has exponential growth, it either has linear growth (in the case of affine cluster algebras) or quadratic or cubic growth (for $3$ sporadic families of cluster algebras), see Fomin--Shapiro--Thurston \cite[Proposition 11.1]{FST08} and Felikson--Shapiro--Thomas--Tumarkin \cite[Theorem 1.1]{FSTT}. In contrast to the integer case, the polynomial growth rate, i.e. the rank of the lattice $L$, is larger than $1$ in most cases.

The article is organised as follows. In Section \ref{Section:GeoReal} we recall from \cite{FT} the notion of geometric realisations. In particular, we use geometry to define a notion of a seed for real exchange matrices. The section also contains a study of mutation-finite seeds that are geometrically realised on the sphere, i.e. quivers of finite type. In Section \ref{Section:GeoMut} we construct initial seeds for affine quivers. In this section we focus on the case where all angles are rational multiples of $\pi$ with an odd least common denominators; we explore the case of even least common denominators in Section \ref{Section:EvenDenominators}. In Section~\ref{Section:NumberTheory} we recall the definition of the cyclotomic field and study number-theoretic properties of sines and cosines of angles in triangles in the exchange graph. Section \ref{Sec:LinInd} is devoted to the proof of the linear independence of a set of translation vectors. In Section \ref{Section:MainThm} we determine the rank of $L$ and present our main result about the structure of the exchange graph, using quasi-isometries. In this section we also introduce growth rates and prove that the exchange graph has polynomial growth. Section~\ref{Section:EvenDenominators} contains an analysis of changes for even common denominators.

\section{Geometric realisations and their mutations}
\label{Section:GeoReal}

\subsection{Real exchange matrices and their mutations}

A crucial notion in the theory of cluster algebras is the mutation of a skew-symmetric matrix. The notion was introduced by Fomin--Zelevinsky \cite{FZ1} for integer matrices, but the same definition makes sense for real matrices. We fix an integer $r\geq 1$ called the \emph{rank}.

\begin{defn}[Matrix mutation \cite{FZ1}] The \emph{mutation} $\mu_k$, $k\in [1,r]$, of a real skew-symmetric $r\times r$ matrix $B=(b_{ij})$ is the skew-symmetric $r\times r$ matrix $\mu_k(B)=B'=(b'_{ij})$ with entries
\begin{align*}
b_{ij}'=\begin{cases}
-b_{ij}&\textrm{if }k\in\{i,j\};\\
b_{ij}+(b_{ik}\lvert b_{kj}\rvert+\lvert b_{ik}\rvert b_{kj})/2&\textrm{otherwise}.
\end{cases}
\end{align*}
\end{defn}

Such a matrix $B$ is called \emph{exchange matrix}. Note that real skew-symmetric $r\times r$ matrices are in correspondence with quivers on $r$ vertices without loops and $2$-cycles (with real-valued edge weights). We say that a skew-symmetric matrix $B$ is \emph{acyclic} if its quiver does not contain oriented cycles. 

Notice that matrix mutation is an involution, that is, $(\mu_k\circ \mu_k)(B)=B$ for all $B$ and all $k$. The following definition is a straightforward generalisation of usual terminology for exchange matrices with integer entries to real entries.

\begin{defn}[Matrix mutation classes] Let $B\in\operatorname{Mat}_{r\times r}(\mathbb{R})$ be an exchange matrix.
\begin{enumerate}
\item A skew-symmetric matrix $B'\in\operatorname{Mat}_{r\times r}(\mathbb{R})$ is called \emph{mutation-equivalent} to $B$ if there exists a sequence $(k_1,\ldots,k_s)$ of indices such that $B'=(\mu_{k_s}\circ\ldots\circ\mu_{k_1})(B)$.
\item The \emph{mutation class} of $B$ is the set of all $B'$ that are mutation-equivalent to $B$.
\item We say that $B$ is \emph{mutation-finite} if the mutation class of $B$ is finite.
\item We say that $B$ is \emph{mutation-acyclic} if it is mutation-equivalent to an acyclic exchange matrix. Otherwise we say it is \emph{mutation-cyclic}.
\end{enumerate}
\end{defn}

In this article we are interested in exchange matrices of rank $3$, that is, we only consider $3\times 3$ exchange matrices in the rest of the paper. Furthermore, our article is devoted to exchange matrices that are mutation-finite. The next proposition summarises \cite[Lemma 6.5, Corollary 6.3, Lemma 6.6, Lemma 6.7/Remark 6.8, Lemma 6.9]{FT}. In this proposition we use a notion introduced by Beineke--Br\"ustle--Hille \cite{BBH} known as the \emph{Markov constant}. For a $3\times 3$ exchange matrix $B$ we put
\begin{align*}
C(B)=\begin{cases}
\lvert b_{12}\rvert^2+\lvert b_{23}\rvert^2+\lvert b_{13}\rvert^2+\lvert b_{12}b_{23}b_{13}\rvert&\textrm{if }B\textrm{ is acyclic};\\
\lvert b_{12}\rvert^2+\lvert b_{23}\rvert^2+\lvert b_{13}\rvert^2-\lvert b_{12}b_{23}b_{13}\rvert&\textrm{if }B\textrm{ is cyclic}.
\end{cases}
\end{align*}

\begin{prop}[\cite{FT}]
\label{Prop:Classification}
Suppose that $B=(b_{ij})\in\operatorname{Mat}_{3\times 3}(\mathbb{R})$ is an exchange matrix. 
\begin{enumerate}
\item \label{Prop:ClassificationRational}
Assume that $B$ is mutation-finite. For any two distinct indices $i,j\in \{1,2,3\}$ there are natural numbers $k\geq 0$ and $l\geq 1$ such that $b_{ij}=2\cos(\pi k/l)$.
\item \label{Prop:ClassificationMarkov}
If $B$ is mutation-finite and mutation-cyclic, then $B$ is mutation-equivalent to the matrix
\begin{align*}
\left(\begin{smallmatrix}0&2&-2\\-2&0&2\\2&-2&0\end{smallmatrix}\right)
\end{align*}
which is known as the \emph{Markov quiver}. The mutation class of this matrix contains only one element (up to reordering of indices).
\item \label{Prop:ClassificationHyperbolic}
If $B$ is mutation-acyclic and $C(B)>4$, then $B$ is not mutation-finite.
\item \label{Prop:ClassificationSpherical}
Suppose that $B$ is mutation-acyclic and $C(B)<4$. Then $B$ is mutation-finite if and only if it is mutation-equivalent to the matrix
\begin{align*}
\left(\begin{smallmatrix}
0&2\cos(\pi t_1)&0\\
-2\cos(\pi t_1)&0&2\cos(\pi t_2)\\
0&-2\cos(\pi t_2)&0
\end{smallmatrix}\right)
\end{align*}
where $(t_1,t_2)$ is one of the pairs
\begin{align*}
\left(\frac{1}{3},\frac{1}{3}\right),\,\left(\frac{1}{3},\frac{1}{4}\right),\,\left(\frac{1}{3},\frac{1}{5}\right),\,\left(\frac{1}{3},\frac{2}{5}\right),\,\left(\frac{1}{5},\frac{2}{5}\right).
\end{align*}
\item \label{Prop:ClassificationEuclidean}
Suppose that $B$ is mutation-acyclic and $C(B)=4$. Then $B$ is mutation-finite if and only it is mutation-equivalent to 
\begin{align}
\label{Eq:InfiniteRegions}
\left(\begin{smallmatrix}
0&2&-2\cos(\pi t)\\
-2&0&2\cos(\pi t)\\
2\cos(\pi t)&-2\cos(\pi t)&0
\end{smallmatrix}\right).
\end{align}
where $t=\pi/d$ for some natural number $d$.
\end{enumerate}
\end{prop}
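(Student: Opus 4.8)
The plan is to organise everything around the Markov constant $C(B)$ and to read off the five statements from the geometric realisation of \cite{FT}. First I would check, by a direct computation over the finitely many mutation patterns, that $C(B)$ is invariant under every $\mu_k$; the two sign conventions in the definition of $C$ are exactly what is needed for the invariant to survive the passage between acyclic and cyclic quivers. For example, mutating the middle vertex of an acyclic quiver with weights $(x,y,z)$ yields a cyclic quiver with weights $(x,y,z+xy)$, and one verifies $x^2+y^2+z^2+xyz=x^2+y^2+(z+xy)^2-xy(z+xy)$; the other cases are analogous. With $C$ in hand, the trichotomy $C<4$, $C=4$, $C>4$ becomes a mutation-invariant condition that matches the signature of the symmetric bilinear form underlying the realisation: positive definite (spherical, $\mathbb{S}^2$), positive semidefinite (Euclidean, $\mathbb{E}^2$), or indefinite (hyperbolic, $\mathbb{H}^2$).

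Next I would recall the realisation itself: a mutation-acyclic quiver is encoded by three mirrors with pairwise angles $\theta_{ij}$ satisfying $|b_{ij}|=2\cos\theta_{ij}$, and a single mutation acts as a partial reflection of one mirror in another, producing a new admissible triple carrying the same Gram form. Crucially, the matrix mutation class is finite if and only if the triples of mirrors generated under these partial reflections fall into finitely many congruence classes, equivalently only finitely many distinct Gram matrices appear. This reduces every assertion to a finiteness question about orbits of mirror configurations in a fixed two-dimensional geometry, and I would note that finiteness of the \emph{matrix} class is weaker than finiteness of the full reflection group, which is what allows the Euclidean (affine) case to be mutation-finite despite the underlying affine Coxeter group being infinite.

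From this dictionary the parts follow in turn. For (\ref{Prop:ClassificationHyperbolic}), an indefinite form places the mirrors in $\mathbb{H}^2$, where repeated partial reflections force the pairwise hyperbolic separations, and hence the weights $|b_{ij}|$, to grow without bound beyond $2$, so infinitely many matrices occur and $B$ is not mutation-finite. For (\ref{Prop:ClassificationRational}), once mutation-finiteness is assumed the orbit of configurations is finite, and a finite orbit of reflecting mirrors forces every dihedral angle to be a rational multiple of $\pi$, whence $|b_{ij}|=2\cos(\pi k/l)$; the mutation-cyclic case is the Markov quiver, whose entries are $\pm2=2\cos(\pi k/l)$ with $l=1$. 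For (\ref{Prop:ClassificationSpherical}), a definite form gives a genuine spherical triangle whose angles are $\pi$-rational by (\ref{Prop:ClassificationRational}) (with the vanishing entry $b_{13}=0$ forcing a right angle, so the triangle is of type $(\pi/2,\pi t_1,\pi t_2)$); finiteness is then equivalent to this triangle being a fundamental domain of a finite spherical Coxeter group, and enumerating those groups (the dihedral $A_3$, $B_3$ and the $H_3$-type families, the latter supplying the $2/5$ entries) produces exactly the listed pairs $(t_1,t_2)$. For (\ref{Prop:ClassificationEuclidean}), a semidefinite form places two mirrors parallel and one transversal, and imposing $\pi$-rational angles together with boundedness of the weights pins the configuration down to the one-parameter family displayed in \eqref{Eq:InfiniteRegions}.

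Finally, for (\ref{Prop:ClassificationMarkov}) I would argue separately, since mutation-cyclic quivers are realised by partial rotations rather than reflections. A finite rotation orbit is very rigid: away from the balanced configuration with all weights equal to $2$, the rotations either have angle incommensurable with $\pi$ or act as hyperbolic isometries and generate an infinite orbit, leaving the Markov quiver as the only mutation-finite possibility; a direct mutation check then shows its class is a single matrix up to relabelling. The main obstacle throughout is (\ref{Prop:ClassificationRational}) together with the enumeration in (\ref{Prop:ClassificationSpherical}): converting \emph{finite orbit of partial reflections} into the arithmetic statement that all dihedral angles are rational multiples of $\pi$, and then determining precisely which rational spherical triangles close up to a finite group, is where the genuine work lies and is exactly the content imported from \cite{FT}.
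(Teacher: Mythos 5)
This proposition is not proved in the paper at all: it is stated with the citation \cite{FT} and the surrounding text says explicitly that it ``summarises'' Lemma 6.5, Corollary 6.3, Lemma 6.6, Lemma 6.7/Remark 6.8 and Lemma 6.9 of that reference. So there is no in-paper argument to compare against; the relevant comparison is with Felikson--Tumarkin's proofs, and your outline does follow their strategy (mutation-invariance of the Markov constant, the trichotomy $C<4$, $C=4$, $C>4$ matching the signature of the quasi-Cartan companion, realisation by partial reflections on $\mathbb{S}^2$, $\mathbb{E}^2$, $\mathbb{H}^2$, and partial rotations in the mutation-cyclic case). Your verification that $x^2+y^2+z^2+xyz=x^2+y^2+(z+xy)^2-xy(z+xy)$ is correct and is the right first step.

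That said, as a standalone proof the proposal has real gaps, most of which you candidly flag yourself. Two deserve naming. First, in part (\ref{Prop:ClassificationSpherical}) the criterion ``the triangle is a fundamental domain of a finite spherical Coxeter group'' is not correct: the triangles behind $(t_1,t_2)=\bigl(\tfrac{1}{3},\tfrac{2}{5}\bigr)$ and $\bigl(\tfrac{1}{5},\tfrac{2}{5}\bigr)$ have an angle $2\pi/5$ and are not fundamental domains of any finite reflection group; the right condition is that the group generated by the reflections in the three sides is finite (these are Schwarz-type triangles for $H_3$), and establishing that this condition is equivalent to mutation-finiteness, and then enumerating the admissible rational triangles, is precisely the content one must import from \cite{FT}. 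Second, the unboundedness claim in part (\ref{Prop:ClassificationHyperbolic}) and the rigidity claim for partial rotations in part (\ref{Prop:ClassificationMarkov}) are asserted rather than derived; note also that a mutation-acyclic quiver with $C(B)>4$ can start with all $\lvert b_{ij}\rvert<2$, so the growth of entries under iterated mutation genuinely has to be proved, not read off from the initial configuration. Since the paper itself outsources all of this to \cite{FT}, your sketch is an acceptable roadmap of the cited proofs, but it should not be mistaken for a self-contained argument.
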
 

The entirety of the statements in Proposition \ref{Prop:Classification} yields a classification of $3\times 3$ exchange matrices of finite mutation type in terms of the Markov constant. In particular, finite mutation type is exhausted by the Markov quiver (case \ref{Prop:ClassificationMarkov}) together with quivers of finite and affine type (cases \ref{Prop:ClassificationSpherical} and \ref{Prop:ClassificationEuclidean}). The various cases admit different geometric models. We will study quivers of finite type (case \ref{Prop:ClassificationSpherical}) in Section \ref{Section:Spherical} via spherical geometry. Quivers of affine type (case \ref{Prop:ClassificationEuclidean}) form the main part of this paper. They admit a model in Euclidean geometry, which is studied in further sections.

\subsection{Geometric realisation of seeds and their mutations}

For a skew-symmetric matrix with integer entries we can construct a cluster algebra by introducing clusters and cluster variables. For a skew-symmetric matrix with non-integer entries it is not easy to define notions of cluster variables and clusters. Building on work of Seven \cite{S} and Barot--Gei{\ss}--Zelevinsky \cite{BGZ}, Felikson--Tumarkin \cite{FT} consider a notion of a \emph{Y-seeds} and realise mutations of Y-seeds by partial reflections. In the rest of the section we give a short overview of the construction.

\begin{rem} We point out that in different contexts we can construct different geometric realisations. In our article we restrict ourselves to the construction below.
\end{rem}

We fix a $3$-dimensional \emph{quadratic space} $V$, that is, a $3$-dimensional real vector space $V$ together with a symmetric bilinear form $(\cdot,\cdot)\,\colon\, V\times V\to \R$. The following definition is motivated by a proposition of Barot--Gei{\ss}--Zelevinsky \cite[Proposition 3.2]{BGZ}.

\begin{defn}[Geometric realisations]
\label{Def:GeoReal}
Suppose that $B$ is a real skew-symmetric $3\times 3$ matrix. An ordered basis $\mathbf{v}=(v_1,v_2,v_3)$ of $V$ is called a \emph{geometric realisation} of $B$ if the following conditions hold:
\begin{enumerate}
\item For every $i\in \{1,2,3\}$ we have $(v_i,v_i)=2$.
\item For two distinct $i,j\in \{1,2,3\}$ we have $\lvert(v_i,v_j)\rvert=\lvert b_{ij}\rvert$.
\item Assume that $(v_i,v_j)\neq 0$ for all $1\leq i<j\leq 3$. Then
\begin{align*}
\# \{\,(i,j)\in\mathbb{N}^2\mid 1\leq i< j\leq 3\textrm{ and } (v_i,v_j)>0\,\} \equiv 
\begin{cases}
0 \textrm{ mod }2&\textrm{if }B\textrm{ is acyclic};\\
1 \textrm{ mod }2&\textrm{otherwise}.
\end{cases}
\end{align*} 
\end{enumerate}
\end{defn}

We identify two geometric realisations that are obtained from each other by a permutation of their entries.

\begin{rem} Let $B=(b_{ij})$ be a skew-symmetric matrix. A \emph{quasi-Cartan companion} of $B$ is a matrix $A=(a_{ij})$ such that $a_{ii}=2$ for all $i$ and $\lvert a_{ij}\rvert=\lvert b_{ij}\rvert$. The \emph{Gram matrix} of $\mathbf{v}\in V^3$ is the matrix $((v_i,v_j)_{ij})$. The conditions (1) and (2) in Definition \ref{Def:GeoReal} imply that the Gram matrix of a geometric realisation  of $B$ is a quasi-Cartan companion of $B$. Condition (3) asserts that the Gram matrix is \emph{compatible} with $B$ in the sense of Barot--Gei{\ss}--Zelevinsky \cite[Definition 3.1]{BGZ}.
\end{rem}

\begin{rem} \begin{itemize}
\item[(a)] 
For better readability we have stated Definition \ref{Def:GeoReal} only for $3\times 3$ exchange matrices. It is possible to define geometric realisations in all ranks.
To define geometric realisations in higher rank we would replace condition (3) with a cycle condition, see the third condition Seven lists in \cite[Theorem 1.4]{S} (with integer entries replaced with real entries).
\item[(b)] A realisation of a $2\times 2$ exchange matrix is a linearly independent pair $(v_1,v_2)$ inside a $2$-dimensional vector space $V$ satisfying conditions (1) and (2) in Definition \ref{Def:GeoReal} for all $i,j\in \{1,2\}$. Given a seed of rank $3$, we can construct a seed of rank $2$ by removing a vector and deleting the corresponding rows and columns of the matrix. This seed is called a \emph{subseed}. 
\end{itemize}
\end{rem}

\begin{defn}[Seeds] A \emph{seed} of rank $r$ is a pair $(\mathbf{v},B)$ where $B$ is a real skew-symmetric $r\times r$ matrix and $\mathbf{v}$ a geometric realisation of $B$. 
\end{defn}

As in Lie theory, where the roots of a root system are divided into positive and negative roots, we declare certain vectors $v\in V\backslash\{0\}$ to be \emph{positive}, and certain vectors $v\in V\backslash\{0\}$ to be \emph{negative}. %This condition will imply that there exists a vector $u\in V$ that does not belong to the radical of $(\cdot,\cdot)\colon V\times V\to\mathbb{R}$ such that $v\in V$ is positive if $(u,v)>0$, and negative if $(u,v)<0$.
We denote the set of positive elements by $V^{+}\subseteq V$ and the set of negative elements by $V^{-}\subseteq V$. We will give a precise definition of positive and negative vectors later in the text, see the Compatibility Condition \ref{Condition:Pos1} in Section \ref{Subsection:GeoRealRank3}. As a rough idea, we demand that the definition be made to recover the classical mutation of seeds when restricting to the case of integer exchange matrices. Moreover, the condition implies that $v\in V^{+}$ if and only if $-v\in V^{-}$. %The vector $u$ is called the \emph{reference vector}.

\begin{defn}[Mutations of seeds]
\label{Def:SeedMutation}
Suppose that $(\mathbf{v},B)$ is a seed of rank $r$. The \emph{mutation} $\mu_k$, $k\in[1,r]$, of $(\mathbf{v},B)$ is $\mu_k(\mathbf{v},B)=(\mu_k(\mathbf{v}),\mu_k(B))$ where $\mu_k(\mathbf{v})=(v_1',\ldots,v_r')$ is defined in the following way. If $v_k\in V^{+}$, then we put
\begin{align*}
v_i'=\begin{cases}
-v_i&\textrm{if }i=k;\\
v_i&\textrm{if }i\neq k\textrm{ and }b_{ik}>0;\\
v_i-(v_i,v_k)v_k&\textrm{if }i\neq k\textrm{ and }b_{ik}<0;
\end{cases}
\end{align*}
otherwise we put
\begin{align*}
v_i'=\begin{cases}
-v_i&\textrm{if }i=k;\\
v_i&\textrm{if }i\neq k\textrm{ and }b_{ik}<0;\\
v_i-(v_i,v_k)v_k&\textrm{if }i\neq k\textrm{ and }b_{ik}>0.
\end{cases}
\end{align*}
\end{defn}

It is easy to check that the mutation $\mu_k$ is well-defined, i.e. $\mu_k(\mathbf{v},B)$ is again a seed. A related statement for exchange matrices with integer entries is due to Barot--Gei{\ss}--Zelevinsky \cite[Proposition 3.2]{BGZ}.

If $v_k$ is positive, then $-v_k$ is negative. We can conclude that $\mu_k$ is an involution, that is, $(\mu_k\circ\mu_k)(\mathbf{v},B)=(\mathbf{v},B)$ for all $k$ and $(\mathbf{v},B)$. The mutation $\mu_k$ is called \emph{positive} if $v_k\in V^{+}$  and it is called \emph{negative} otherwise.

\begin{defn}[Exchange graphs]
The \emph{exchange graph} of a seed $(\mathbf{v},B)$ contains as vertices all seeds $(\mathbf{v}',B')$ obtained from $(\mathbf{v},B)$ by sequences of mutations. Two vertices $(\mathbf{v}',B')$ and $(\mathbf{v}'',B'')$ are connected by an edge if and only if they are related by a single mutation.   
\end{defn}

Two seeds $(\mathbf{v},B)$ and $(\mathbf{v}',B')$ are called \emph{mutation-equivalent} if there exists a sequence of indices $(k_1,\ldots,k_s)$ such that $(\mathbf{v}',B')=(\mu_{k_s}\circ\ldots\circ\mu_{k_1})(\mathbf{v},B)$. By definition, if $(\mathbf{v},B)$ and $(\mathbf{v}',B')$ are mutation-equivalent, then their exchange graphs coincide.

\subsection{Seed mutations in rank 2} 
\label{Section:Rank2}
This subsection is devoted to the mutation of seeds of rank $2$. The aim of the subsection is to study the adequacy of distinct notions of positive and negative vectors.

Here a seed is given by a skew-symmetric $2\times 2$ matrix $B$ together with a realisation $\mathbf{v}=(v_1,v_2)$ in a $2$-dimensional real vector space $V$, which we may identify with the vector space $V=\mathbb{R}^2$. After projectivisation, we may identify the space $V/\mathbb{R}^{+}$ with the circle $\mathbb{S}^1$.

The matrix $B$ is determined by the entry $b_{12}$ thanks to skew-symmetry. In what follows, in regard to Proposition \ref{Prop:Classification} (\ref{Prop:ClassificationRational}), we suppose that $b_{12}=2\cos(\alpha)$ is a rational multiple of $\pi$. To be concrete, let us fix the angle as follows.

\begin{defn}[Fundamental angle] 
\label{Def:AngleRankTwo}
For a rank $2$ seed $(\mathbf{v},B)$ with $b_{12}=2\cos(\alpha)$, we put $\alpha=\frac{a}{b}\pi$ where $a,b\geq 1$ are natural coprime numbers, and call $\alpha$ the \emph{fundamental angle}.
\end{defn} 

Notice that $\lvert b_{12}\rvert \leq 2$. We combine this inequality with condition (2) in Definition \ref{Def:GeoReal}. Sylvester's criterion implies that the bilinear form $(\cdot,\cdot)\,\colon\, V\times V\to\mathbb{R}$ is positive definite. Without loss of generality we may assume that $V=\mathbb{E}^2$ is the Euclidean plane equipped with the standard scalar product $\langle\cdot,\cdot\rangle$.

For $v\in V\backslash\{0\}$ we put $\Pi_v=\{w\in\mathbb{E}^2\mid \langle w,v\rangle<0\}$ and $l_v=\{w\in\mathbb{E}^2\mid \langle w,v\rangle=0\}$. Notice that $\Pi_v$ is an (open) half-plane bounded by the line $l_v$.

The map $v\mapsto \Pi_v$ induces a bijection between the set of vectors $v\in\mathbb{E}^2$ with $\langle v,v\rangle=2$ and the set of (open) half-planes in $\mathbb{E}^2$ whose boundary line passes through the origin. Because of that we identify the pair $\mathbf{v}=(v_1,v_2)$ with the intersection of the half-planes $\Pi_{v_1}\cap\Pi_{v_2}$. This intersection is a sector given by an angle of size $\alpha$ centered at the origin. The size $\alpha$ determines $\vert b_{12}\rvert$. To obtain a geometric model of the seed $(\mathbf{v},B)$ we have to encode the sign of $b_{12}$, which we do by an arrow. More precisely, we orient (the sides of the sector corresponding to) the vectors $v_1 \to v_2$ if $b_{12}>0$, $v_2 \to v_1$ if $b_{12}<0$, and do not connect them if $b_{12}=0$.

The mutation process is now realised by partial reflections, which means the following. Suppose that the seed $(\mathbf{v},B)$ is realised by a sector with an angle whose sides $s_1$ and $s_2$ correspond to $v_1$ and $v_2$ (together with an arrow between $v_1$ and $v_2$). Then the mutation of the seed at $v_2$, compare Definition~ \ref{Def:SeedMutation}, either replaces $s_1$ with the reflection $s_1$ across $s_2$, or leaves it unchanged (depending on the location of $v_2$ and orientation of the arrow).

The following definition is an obvious attempt to define positive and negative vectors. Under some assumptions it recovers the classical notion of positivity in Lie theory when we think if the vectors $v_1$ and $v_2$ as positive simple roots in a finite type root system. 

\begin{ans}[Positivity via reference point]
\label{Attempt:Positivity}
Fix $u\in \mathbb{E}^2\backslash\{0\}$. We  declare $v$ to belong to the set $V^{+}$ if $\langle u,v\rangle>0$, and declare $v$ to belong to the set $V^{-}$ if $\langle u,v\rangle <0$.
\end{ans}

\begin{ex}[Short period]
\label{Ex:A2}
We consider a geometric realisation $\mathbf{v}=(v_1,v_2)$ of the exchange matrix 
\begin{align*}
B=\left(\begin{matrix}0&2\cos(\pi/3)\\-2\cos(\pi/3)&0
\end{matrix}\right)=\left(\begin{matrix}0&1\\-1&0
\end{matrix}\right).
\end{align*}
We draw the seed $(\mathbf{v},B)$ in the top left of corner of Figures \ref{Figure:ShortRankTwo} and \ref{Figure:LongRankTwo}. Here we visualise the sector $\Pi_{v_1}\cap \Pi_{v_2}$ in blue. The vectors $v_1$ and $v_2$ itself are not drawn explicitly, but we can recover them as the two vectors of length $\sqrt{2}$ which are orthogonal to the sides of the sector pointing outside. The reference point $u$ is coloured red. Definition~\ref{Attempt:Positivity} asserts that $v_i$, $i\in\{1,2\}$, is positive if and only if the line $l_{v_i}$ divides the plane into two half-planes such that $u$ lies in the same half-plane than the sector.   

Figure \ref{Figure:ShortRankTwo} shows the mutation process. Here we mutate at sources when passing through the picture in clockwise direction. During the mutation process sectors with angles $\pi/3$ and $2\pi/3$ occur. Sometimes it is convenient not to draw sectors with obtuse angles. Instead we introduce signs. Here a minus sign attached to a half-plane $\Pi_{v_i}$, $i\in\{1,2\}$, indicates that we draw $\{w\in\mathbb{E}^2\mid \langle w,v_i\rangle>0\}$ instead of $\Pi_{v_i}=\{w\in\mathbb{E}^2\mid \langle w,v_i\rangle<0\}$. We obtain a periodic sequence of seeds of period $5$. Figure~\ref{Figure:SeedMutation} shows the $5$-periodic mutation process for the same seed and the same reference point as Figure~\ref{Figure:ShortRankTwo}, but here sectors are drawn entirely without signs. The figure also shows the exchange graph of the seed, a pentagon, which is an associahedron of type $A_2$, compare Fomin--Reading~\cite[Figure~4.4]{FR}.  
\end{ex}

\begin{ex}[Long period]
\label{Ex:A2Long}
Figure \ref{Figure:LongRankTwo} shows the mutation process for the same seed as in Example \ref{Ex:A2} with a different reference point. We obtain a periodic sequence of seeds of period $7$. Notice that the matrix $B$ defines a cluster algebra of type $A_2$, which admits $5$ seeds. Hence, the short periods correspond to the periods that occur in cluster algebras (attached to integer exchange matrices), whereas larger periods yield larger exchange graphs. We draw the locus of the reference points $u$ which yield $5$-periodic sequences of seeds in Figure \ref{Figure:GoodRegions}.  
\end{ex}

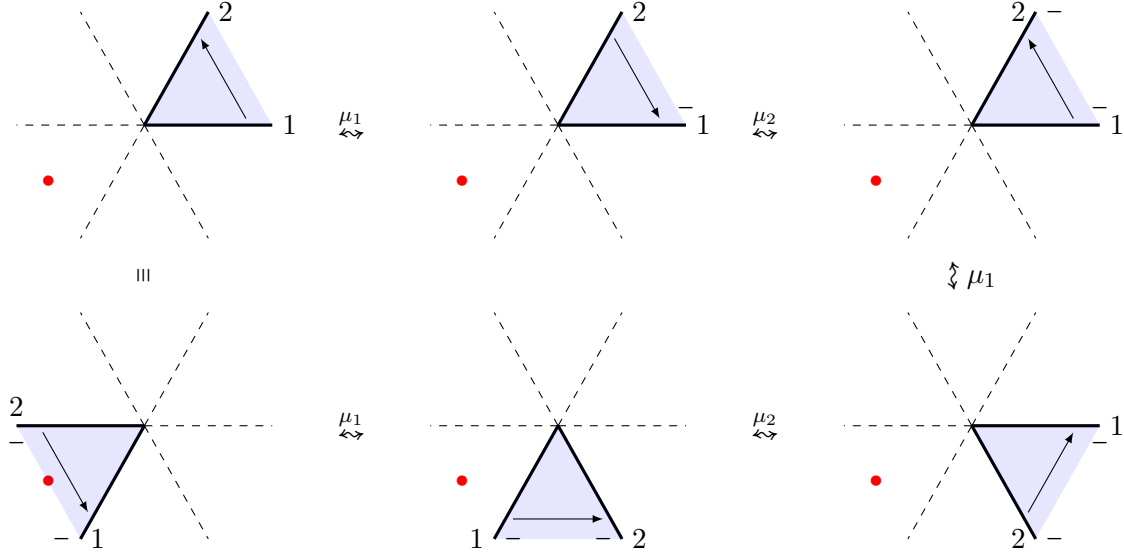
\begin{figure}
\begin{center}
\begin{tikzpicture}

%%% Abbreviations
\renewcommand{\R}{1.7cm} %% length of radius 
\newcommand{\x}{0.3cm} %% distance to label
\newcommand{\s}{5.5cm} %% shift between figures
\newcommand{\ys}{4cm} %% shift between figures
\newcommand{\h}{0.886} %% height of triangles

%% Seed 1
\path[draw,-,very thick] (0,0) to (\R,0);
\path[draw,-,very thick] (0,0) to (0.5*\R,\h*\R);
\path[draw,-,dashed] (0,0) to (-0.5*\R,\h*\R);
\path[draw,-,dashed] (0,0) to (-\R,0);
\path[draw,-,dashed] (0,0) to (-0.5*\R,-\h*\R);
\path[draw,-,dashed] (0,0) to (0.5*\R,-\h*\R);

\node[right] at (\R,0) {$1$};
\node[right] at (0.5*\R,\h*\R) {$2$};

\node[red] at (-0.75*\R,-0.5*\h*\R) {$\bullet$};

\fill[blue,fill opacity=0.1] (0,0) -- (\R,0) -- (0.5*\R,\h*\R); 

%\node at (\R+\x,0) {$1$};
%\node at (0.5*\R+0.5*\x,\h*\R+\h*\x) {$2$};

\draw[-latex,shorten >=0.1cm,shorten <=0.1cm] (\R-\x,0) -- (0.5*\R-0.5*\x,\h*\R-\h*\x);

%%%% Seed 2
\path[draw,-,very thick] (\s,0) to (\s+\R,0);
\path[draw,-,very thick] (\s,0) to (\s+0.5*\R,\h*\R);
\path[draw,-,dashed] (\s,0) to (\s-0.5*\R,\h*\R);
\path[draw,-,dashed] (\s,0) to (\s-\R,0);
\path[draw,-,dashed] (\s,0) to (\s-0.5*\R,-\h*\R);
\path[draw,-,dashed] (\s,0) to (\s+0.5*\R,-\h*\R);

\node[red] at (-0.75*\R+\s,-0.5*\h*\R) {$\bullet$};

\node[right] at (\s+\R,0) {$1$};
\node[right] at (\s+0.5*\R,\h*\R) {$2$};

\node[above] at (\s+\R,0) {$-$}; 

\fill[blue,fill opacity=0.1] (\s,0) -- (\s+\R,0) -- (\s+0.5*\R,\h*\R); 

%\node at (\s+\R+\x,0) {$1$};
%\node at (\s+0.5*\R+0.5*\x,\h*\R+\h*\x) {$2$};

\draw[-latex,shorten >=0.1cm,shorten <=0.1cm] (\s+0.5*\R-0.5*\x,\h*\R-\h*\x) -- (\s+\R-\x,0);

%%%% Seed 3
\path[draw,-,very thick] (2*\s,0) to (2*\s+\R,0);
\path[draw,-,very thick] (2*\s,0) to (2*\s+0.5*\R,\h*\R);
\path[draw,-,dashed] (2*\s,0) to (2*\s-0.5*\R,\h*\R);
\path[draw,-,dashed] (2*\s,0) to (2*\s-\R,0);
\path[draw,-,dashed] (2*\s,0) to (2*\s-0.5*\R,-\h*\R);
\path[draw,-,dashed] (2*\s,0) to (2*\s+0.5*\R,-\h*\R);

\node[red] at (-0.75*\R+2*\s,-0.5*\h*\R) {$\bullet$};

\node[above] at (2*\s+\R,0) {$-$}; 
\node[right] at (2*\s+0.5*\R,\h*\R) {$-$}; 

\node[right] at (2*\s+\R,0) {$1$};
\node[left] at (2*\s+0.5*\R,\h*\R) {$2$};

\fill[blue,fill opacity=0.1] (2*\s,0) -- (2*\s+\R,0) -- (2*\s+0.5*\R,\h*\R); 

%\node at (\s+\R+\x,0) {$1$};
%\node at (\s+0.5*\R+0.5*\x,\h*\R+\h*\x) {$2$};

\draw[-latex,shorten >=0.1cm,shorten <=0.1cm] (2*\s+\R-\x,0) -- (2*\s+0.5*\R-0.5*\x,\h*\R-\h*\x);

%%%% Seed 4
\path[draw,-,very thick] (2*\s,-\ys) to (2*\s+\R,-\ys);
\path[draw,-,dashed] (2*\s,-\ys) to (2*\s+0.5*\R,\h*\R-\ys);
\path[draw,-,dashed] (2*\s,-\ys) to (2*\s-0.5*\R,\h*\R-\ys);
\path[draw,-,dashed] (2*\s,-\ys) to (2*\s-\R,-\ys);
\path[draw,-,dashed] (2*\s,-\ys) to (2*\s-0.5*\R,-\h*\R-\ys);
\path[draw,-,very thick] (2*\s,-\ys) to (2*\s+0.5*\R,-\h*\R-\ys);

\node[red] at (-0.75*\R+2*\s,-0.5*\h*\R-\ys) {$\bullet$};

\node[below] at (2*\s+\R,-\ys) {$-$}; 
\node[right] at (2*\s+0.5*\R,-\h*\R-\ys) {$-$}; 

\node[right] at (2*\s+\R,-\ys) {$1$};
\node[left] at (2*\s+0.5*\R,-\h*\R-\ys) {$2$};

\fill[blue,fill opacity=0.1] (2*\s,-\ys) -- (2*\s+\R,-\ys) -- (2*\s+0.5*\R,-\h*\R-\ys); 

%\node at (\s+\R+\x,0) {$1$};
%\node at (\s+0.5*\R+0.5*\x,\h*\R+\h*\x) {$2$};

\draw[-latex,shorten >=0.1cm,shorten <=0.1cm] (2*\s+0.5*\R-0.5*\x,-\h*\R+\x*\h-\ys) -- (2*\s+\R-\x,-\ys);

%%%% Seed 5
\path[draw,-,dashed] (\s,-\ys) to (\s+\R,-\ys);
\path[draw,-,dashed] (\s,-\ys) to (\s+0.5*\R,\h*\R-\ys);
\path[draw,-,dashed] (\s,-\ys) to (\s-0.5*\R,\h*\R-\ys);
\path[draw,-,dashed] (\s,-\ys) to (\s-\R,-\ys);
\path[draw,-,very thick] (\s,-\ys) to (\s-0.5*\R,-\h*\R-\ys);
\path[draw,-,very thick] (\s,-\ys) to (\s+0.5*\R,-\h*\R-\ys);

\node[red] at (-0.75*\R+\s,-0.5*\h*\R-\ys) {$\bullet$};

\node[left] at (\s-0.5*\R,-\h*\R-\ys) {$1$};
\node[right] at (\s+0.5*\R,-\h*\R-\ys) {$2$};

\node[right] at (\s-0.5*\R,-\h*\R-\ys) {$-$}; 
\node[left] at (\s+0.5*\R,-\h*\R-\ys) {$-$}; 

\fill[blue,fill opacity=0.1] (\s,-\ys) -- (\s+0.5*\R,-\h*\R-\ys) -- (\s-0.5*\R,-\h*\R-\ys); 

%\node at (\s+\R+\x,0) {$1$};
%\node at (\s+0.5*\R+0.5*\x,\h*\R+\h*\x) {$2$};

\draw[-latex,shorten >=0.1cm,shorten <=0.1cm] (\s-0.5*\R+0.5*\x,-\h*\R+\x*\h-\ys) -- (\s+0.5*\R-0.5*\x,-\h*\R+\x*\h-\ys);

%%%% Seed 6
\path[draw,-,dashed] (0,-\ys) to (\R,-\ys);
\path[draw,-,dashed] (0,-\ys) to (0.5*\R,\h*\R-\ys);
\path[draw,-,dashed] (0,-\ys) to (-0.5*\R,\h*\R-\ys);
\path[draw,-,very thick] (0,-\ys) to (-\R,-\ys);
\path[draw,-,very thick] (0,-\ys) to (-0.5*\R,-\h*\R-\ys);
\path[draw,-,dashed] (0,-\ys) to (0.5*\R,-\h*\R-\ys);

\node[red] at (-0.75*\R,-0.5*\h*\R-\ys) {$\bullet$};

\node[right] at (-0.5*\R,-\h*\R-\ys) {$1$};
\node[above] at (-\R,-\ys) {$2$};

\node[below] at (-\R,-\ys) {$-$}; 
\node[left] at (-0.5*\R,-\h*\R-\ys) {$-$}; 

\fill[blue,fill opacity=0.1] (0,-\ys) -- (-\R,-\ys) -- (-0.5*\R,-\h*\R-\ys); 

%\node at (\s+\R+\x,0) {$1$};
%\node at (\s+0.5*\R+0.5*\x,\h*\R+\h*\x) {$2$};

\draw[-latex,shorten >=0.1cm,shorten <=0.1cm] (-\R+\x,-\ys) -- (-0.5*\R+0.5*\x,-\h*\R+\x*\h-\ys);

%%% Mutations
\node at (0.5*\s,0) {$\overset{\mu_1}{\leftrightsquigarrow}$};
\node at (1.5*\s,0) {$\overset{\mu_2}{\leftrightsquigarrow}$};
\node at (2*\s,-0.5*\ys) {$\squigarrowupdown \mu_1$};
\node at (1.5*\s,-\ys) {$\overset{\mu_2}{\leftrightsquigarrow}$};
\node at (0.5*\s,-\ys) {$\overset{\mu_1}{\leftrightsquigarrow}$};
\node at (0,-0.5*\ys) {\rotatebox[origin=c]{90}{$\equiv$}};

\end{tikzpicture}
\caption{Geometric description of seeds and mutations in rank 2}
\label{Figure:ShortRankTwo}
\end{center}
\end{figure}

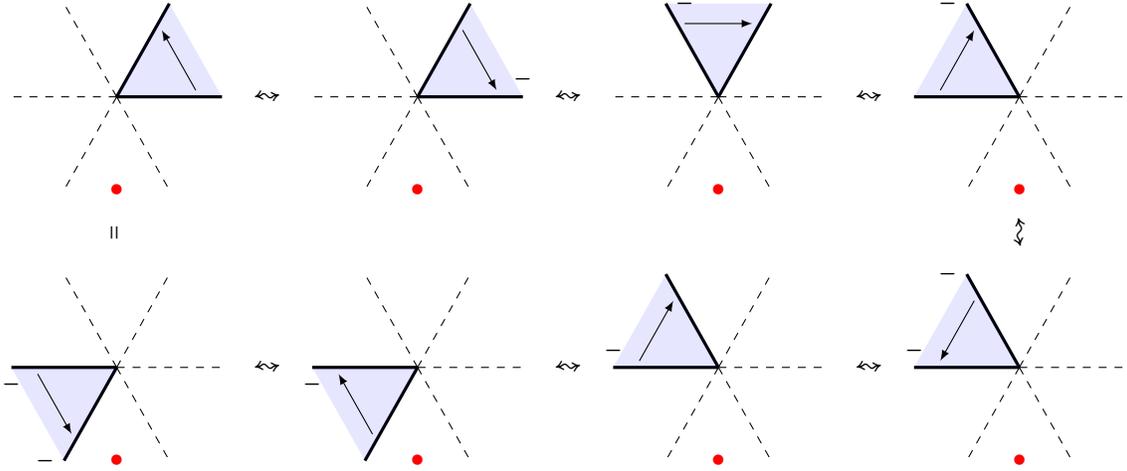
\begin{figure}
\begin{center}
\begin{tikzpicture}

%%% Abbreviations
\renewcommand{\R}{1.4cm} %% length of radius 
\newcommand{\x}{0.3cm} %% distance to label
\newcommand{\s}{4cm} %% shift between figures
\newcommand{\ys}{3.6cm} %% shift between figures
\newcommand{\h}{0.886} %% height of triangles

%% Seed 1
\path[draw,-,very thick] (0,0) to (\R,0);
\path[draw,-,very thick] (0,0) to (0.5*\R,\h*\R);
\path[draw,-,dashed] (0,0) to (-0.5*\R,\h*\R);
\path[draw,-,dashed] (0,0) to (-\R,0);
\path[draw,-,dashed] (0,0) to (-0.5*\R,-\h*\R);
\path[draw,-,dashed] (0,0) to (0.5*\R,-\h*\R);

\node[red] at (0,-\h*\R) {$\bullet$};

\fill[blue,fill opacity=0.1] (0,0) -- (\R,0) -- (0.5*\R,\h*\R); 

%\node at (\R+\x,0) {$1$};
%\node at (0.5*\R+0.5*\x,\h*\R+\h*\x) {$2$};

\draw[-latex,shorten >=0.1cm,shorten <=0.1cm] (\R-\x,0) -- (0.5*\R-0.5*\x,\h*\R-\h*\x);

%%%% Seed 2
\path[draw,-,very thick] (\s,0) to (\s+\R,0);
\path[draw,-,very thick] (\s,0) to (\s+0.5*\R,\h*\R);
\path[draw,-,dashed] (\s,0) to (\s-0.5*\R,\h*\R);
\path[draw,-,dashed] (\s,0) to (\s-\R,0);
\path[draw,-,dashed] (\s,0) to (\s-0.5*\R,-\h*\R);
\path[draw,-,dashed] (\s,0) to (\s+0.5*\R,-\h*\R);

\node[red] at (\s,-\h*\R) {$\bullet$};

\node[above] at (\s+\R,0) {$-$}; 

\fill[blue,fill opacity=0.1] (\s,0) -- (\s+\R,0) -- (\s+0.5*\R,\h*\R); 

%\node at (\s+\R+\x,0) {$1$};
%\node at (\s+0.5*\R+0.5*\x,\h*\R+\h*\x) {$2$};

\draw[-latex,shorten >=0.1cm,shorten <=0.1cm] (\s+0.5*\R-0.5*\x,\h*\R-\h*\x) -- (\s+\R-\x,0);

%%%% Seed 3
\path[draw,-,dashed] (2*\s,0) to (2*\s+\R,0);
\path[draw,-,very thick] (2*\s,0) to (2*\s+0.5*\R,\h*\R);
\path[draw,-,very thick] (2*\s,0) to (2*\s-0.5*\R,\h*\R);
\path[draw,-,dashed] (2*\s,0) to (2*\s-\R,0);
\path[draw,-,dashed] (2*\s,0) to (2*\s-0.5*\R,-\h*\R);
\path[draw,-,dashed] (2*\s,0) to (2*\s+0.5*\R,-\h*\R);

\node[red] at (2*\s,-\h*\R) {$\bullet$};

\node[right] at (2*\s-0.5*\R,\h*\R) {$-$}; 

\fill[blue,fill opacity=0.1] (2*\s,0) -- (2*\s+0.5*\R,\h*\R) -- (2*\s-0.5*\R,\h*\R); 

%\node at (\s+\R+\x,0) {$1$};
%\node at (\s+0.5*\R+0.5*\x,\h*\R+\h*\x) {$2$};

\draw[-latex,shorten >=0.1cm,shorten <=0.1cm] (2*\s-0.5*\R+0.5*\x,\h*\R-\h*\x) -- (2*\s+0.5*\R-0.5*\x,\h*\R-\h*\x);

%%%% Seed 4
\path[draw,-,dashed] (3*\s,0) to (3*\s+\R,0);
\path[draw,-,dashed] (3*\s,0) to (3*\s+0.5*\R,\h*\R);
\path[draw,-,very thick] (3*\s,0) to (3*\s-0.5*\R,\h*\R);
\path[draw,-,very thick] (3*\s,0) to (3*\s-\R,0);
\path[draw,-,dashed] (3*\s,0) to (3*\s-0.5*\R,-\h*\R);
\path[draw,-,dashed] (3*\s,0) to (3*\s+0.5*\R,-\h*\R);

\node[red] at (3*\s,-\h*\R) {$\bullet$};

\node[left] at (3*\s-0.5*\R,\h*\R) {$-$}; 

\fill[blue,fill opacity=0.1] (3*\s,0) -- (3*\s-\R,0) -- (3*\s-0.5*\R,\h*\R); 

%\node at (\s+\R+\x,0) {$1$};
%\node at (\s+0.5*\R+0.5*\x,\h*\R+\h*\x) {$2$};

\draw[-latex,shorten >=0.1cm,shorten <=0.1cm] (3*\s-\R+\x,0) -- (3*\s-0.5*\R+0.5*\x,\h*\R-\h*\x);

%%%% Seed 5
\path[draw,-,dashed] (3*\s,-\ys) to (3*\s+\R,-\ys);
\path[draw,-,dashed] (3*\s,-\ys) to (3*\s+0.5*\R,\h*\R-\ys);
\path[draw,-,very thick] (3*\s,-\ys) to (3*\s-0.5*\R,\h*\R-\ys);
\path[draw,-,very thick] (3*\s,-\ys) to (3*\s-\R,-\ys);
\path[draw,-,dashed] (3*\s,-\ys) to (3*\s-0.5*\R,-\h*\R-\ys);
\path[draw,-,dashed] (3*\s,-\ys) to (3*\s+0.5*\R,-\h*\R-\ys);

\node[red] at (3*\s,-\h*\R-\ys) {$\bullet$};

\node[left] at (3*\s-0.5*\R,\h*\R-\ys) {$-$}; 
\node[above] at (3*\s-\R,-\ys) {$-$}; 

\fill[blue,fill opacity=0.1] (3*\s,-\ys) -- (3*\s-\R,-\ys) -- (3*\s-0.5*\R,\h*\R-\ys); 

%\node at (\s+\R+\x,0) {$1$};
%\node at (\s+0.5*\R+0.5*\x,\h*\R+\h*\x) {$2$};

\draw[-latex,shorten >=0.1cm,shorten <=0.1cm] (3*\s-0.5*\R+0.5*\x,\h*\R-\h*\x-\ys) -- (3*\s-\R+\x,-\ys);

%%%% Seed 6
\path[draw,-,dashed] (2*\s,-\ys) to (2*\s+\R,-\ys);
\path[draw,-,dashed] (2*\s,-\ys) to (2*\s+0.5*\R,\h*\R-\ys);
\path[draw,-,very thick] (2*\s,-\ys) to (2*\s-0.5*\R,\h*\R-\ys);
\path[draw,-,very thick] (2*\s,-\ys) to (2*\s-\R,-\ys);
\path[draw,-,dashed] (2*\s,-\ys) to (2*\s-0.5*\R,-\h*\R-\ys);
\path[draw,-,dashed] (2*\s,-\ys) to (2*\s+0.5*\R,-\h*\R-\ys);

\node[red] at (2*\s,-\h*\R-\ys) {$\bullet$};

\node[above] at (2*\s-\R,-\ys) {$-$}; 

\fill[blue,fill opacity=0.1] (2*\s,-\ys) -- (2*\s-\R,-\ys) -- (2*\s-0.5*\R,\h*\R-\ys); 

%\node at (\s+\R+\x,0) {$1$};
%\node at (\s+0.5*\R+0.5*\x,\h*\R+\h*\x) {$2$};

\draw[-latex,shorten >=0.1cm,shorten <=0.1cm] (2*\s-\R+\x,-\ys) -- (2*\s-0.5*\R+0.5*\x,\h*\R-\h*\x-\ys);

%%%% Seed 7
\path[draw,-,dashed] (\s,-\ys) to (\s+\R,-\ys);
\path[draw,-,dashed] (\s,-\ys) to (\s+0.5*\R,\h*\R-\ys);
\path[draw,-,dashed] (\s,-\ys) to (\s-0.5*\R,\h*\R-\ys);
\path[draw,-,very thick] (\s,-\ys) to (\s-\R,-\ys);
\path[draw,-,very thick] (\s,-\ys) to (\s-0.5*\R,-\h*\R-\ys);
\path[draw,-,dashed] (\s,-\ys) to (\s+0.5*\R,-\h*\R-\ys);

\node[red] at (\s,-\h*\R-\ys) {$\bullet$};

\node[below] at (\s-\R,-\ys) {$-$}; 

\fill[blue,fill opacity=0.1] (\s,-\ys) -- (\s-\R,-\ys) -- (\s-0.5*\R,-\h*\R-\ys); 

%\node at (\s+\R+\x,0) {$1$};
%\node at (\s+0.5*\R+0.5*\x,\h*\R+\h*\x) {$2$};

\draw[-latex,shorten >=0.1cm,shorten <=0.1cm] (\s-0.5*\R+0.5*\x,-\h*\R+\h*\x-\ys) --  (\s-\R+\x,-\ys);

%%%% Seed 8
\path[draw,-,dashed] (0,-\ys) to (\R,-\ys);
\path[draw,-,dashed] (0,-\ys) to (0.5*\R,\h*\R-\ys);
\path[draw,-,dashed] (0,-\ys) to (-0.5*\R,\h*\R-\ys);
\path[draw,-,very thick] (0,-\ys) to (-\R,-\ys);
\path[draw,-,very thick] (0,-\ys) to (-0.5*\R,-\h*\R-\ys);
\path[draw,-,dashed] (0,-\ys) to (0.5*\R,-\h*\R-\ys);

\node[red] at (0,-\h*\R-\ys) {$\bullet$};

\node[below] at (-\R,-\ys) {$-$}; 
\node[left] at(-0.5*\R,-\h*\R-\ys) {$-$};

\fill[blue,fill opacity=0.1] (0,-\ys) -- (-\R,-\ys) -- (-0.5*\R,-\h*\R-\ys); 

%\node at (\s+\R+\x,0) {$1$};
%\node at (\s+0.5*\R+0.5*\x,\h*\R+\h*\x) {$2$};

\draw[-latex,shorten >=0.1cm,shorten <=0.1cm] (-\R+\x,-\ys) -- (-0.5*\R+0.5*\x,-\h*\R+\h*\x-\ys);

%%% Mutations
\node at (0.5*\s,0) {$\leftrightsquigarrow$};
\node at (1.5*\s,0) {$\leftrightsquigarrow$};
\node at (2.5*\s,0) {$\leftrightsquigarrow$};
\node at (3*\s,-0.5*\ys) {$\squigarrowupdown$};
\node at (2.5*\s,-\ys) {$\leftrightsquigarrow$};
\node at (1.5*\s,-\ys) {$\leftrightsquigarrow$};
\node at (0.5*\s,-\ys) {$\leftrightsquigarrow$};
\node at (0,-0.5*\ys) {\rotatebox[origin=c]{90}{$=$}};

\end{tikzpicture}
\caption{Seed mutation with period 7}
\label{Figure:LongRankTwo}
\end{center}
\end{figure}

\begin{figure}
\centering
\begin{minipage}{.44\textwidth}
\centering
\begin{tikzpicture}

%%%% Abbreviations
\renewcommand{\R}{2cm} %% length of radius 
\newcommand{\x}{0.3cm}
\newcommand{\h}{0.886} %% height of triangles

\path[draw,-,very thick] (0,0) to (\R,0);
\path[draw,-,very thick] (0,0) to (0.5*\R,\h*\R);
\path[draw,-,very thick] (0,0) to (-0.5*\R,\h*\R);
\path[draw,-,very thick] (0,0) to (-\R,0);
\path[draw,-,very thick] (0,0) to (-0.5*\R,-\h*\R);
\path[draw,-,very thick] (0,0) to (0.5*\R,-\h*\R);

\fill[blue,fill opacity=0.1] (0,0) -- (-\R,0) -- (-0.5*\R,-\h*\R); 
\fill[blue,fill opacity=0.1] (0,0) -- (-\R,0) -- (\R,0) -- (0.5*\R,\h*\R) -- (-0.5*\R,\h*\R) -- (-\R,0); 

\draw[-latex,shorten >=0.1cm,shorten <=0.1cm] (\R-\x,0) -- (0.5*\R-0.5*\x,\h*\R-\h*\x);

\end{tikzpicture}
\caption{The locus of reference points (shaded) for which mutation has a short period}
\label{Figure:GoodRegions}
\end{minipage}
\begin{minipage}{.54\textwidth}
\centering
\begin{tikzpicture}

%%%% Abbreviations
\renewcommand{\R}{2cm} %% length of radius 
\newcommand{\h}{0.886} %% height of triangles
\newcommand{\x}{0.3cm}
\newcommand{\as}{0.8}

\path[draw,-,very thick] (0,0) to (\R,0);
\path[draw,-,very thick] (0,0) to (0.5*\R,\h*\R);
\path[draw,-,very thick] (0,0) to (-0.5*\R,\h*\R);
\path[draw,-,very thick] (0,0) to (-\R,0);
\path[draw,-,very thick] (0,0) to (-0.5*\R,-\h*\R);
%\path[draw,-,dashed] (0,0) to (0.5*\R,-\h*\R);

%\node[right] at (\R,0) {$-\alpha_1$};
%\node[right] at (0.5*\R,\h*\R) {$-\alpha_2$};
%\node[left] at (-0.5*\R,\h*\R) {$\alpha_1+\alpha_2$};
%\node[left] at (-\R,0) {$\alpha_1$};
%\node[left] at (-0.5*\R,-\h*\R) {$\alpha_2$};

\fill[pattern color=blue!50!,pattern=north west lines,opacity=0.5] (0,0) -- (-\R,0) -- (-0.5*\R,-\h*\R); 
\fill[pattern color=blue!50!,pattern=north west lines,opacity=0.5] (0,0) -- (\R,0) -- (0.5*\R,\h*\R); 
\fill[pattern color=blue!50!,pattern=vertical lines,opacity=0.5] (0,0) -- (0.5*\R,\h*\R) -- (-0.5*\R,\h*\R); 
\fill[pattern color=blue!50!,pattern=north east lines,opacity=0.5] (0,0) -- (-0.5*\R,\h*\R) -- (-\R,0);
\fill[pattern color=blue!50!,pattern=north east lines,opacity=0.5] (0,0) -- (-0.5*\R,-\h*\R) -- (0.5*\R,-\h*\R) -- (\R,0);

\draw[-latex,shorten >=0.1cm,shorten <=0.1cm] (\R-\x,0) -- (0.5*\R-0.5*\x,\h*\R-\h*\x);
\draw[-latex,shorten >=0.1cm,shorten <=0.1cm]  (0.5*\R-0.5*\x,\h*\R-\h*\x) -- (-0.5*\R+0.5*\x,\h*\R-\h*\x);
\draw[-latex,shorten >=0.1cm,shorten <=0.1cm]  (-0.5*\R+0.5*\x,\h*\R-\h*\x) -- (-\R+\x,0);
\draw[-latex,shorten >=0.1cm,shorten <=0.1cm]  (-\R+\x,0) -- (-0.5*\R+0.5*\x,-\h*\R+\h*\x);
\draw[-latex,shorten >=0.1cm,shorten <=0.1cm]  (-0.5*\R+0.5*\x,-\h*\R+\h*\x) to [bend right=50] (\R-\x,0);

\node at (0,\as) {$\bullet$};
\node at (-\h*\as,0.5*\as) {$\bullet$};
\node at (-\h*\as,-0.5*\as) {$\bullet$};
\node at (\h*\as,-1.5*\as) {$\bullet$};
\node at (\h*\as,0.5*\as) {$\bullet$};

\draw[-,color=black!80!,very thick] (0,\as) -- (-\h*\as,0.5*\as);
\draw[-,color=black!80!,very thick] (-\h*\as,0.5*\as) -- (-\h*\as,-0.5*\as);
\draw[-,color=black!80!,very thick] (-\h*\as,-0.5*\as) -- (\h*\as,-1.5*\as);
\draw[-,color=black!80!,very thick]  (\h*\as,-1.5*\as) -- (\h*\as,0.5*\as);
\draw[-,color=black!80!,very thick]  (0,\as) -- (\h*\as,0.5*\as);

\end{tikzpicture}
\caption{The construction of the associahedron of type $A_2$ via geometric mutations}
\label{Figure:SeedMutation}
\end{minipage}
\end{figure}

Our aim is to describe which choices result in short periods, and which ones result in long periods. To do this, we define the \emph{swap} of a seed $((v_1,v_2),B)$ to be the seed $((v_2,v_1),\tau(B))$ where $\tau(B)$ is obtained from $B$ by a simultaneous reordering of the rows and columns given by the permutation $(12)$. The swap is denoted by $\tau$.

Let us fix an initial seed $(\mathbf{w},B)$, given by a sector with an angle of size $\alpha$ as in Definition \ref{Def:AngleRankTwo}. For technical reasons it will be convenient to write the pair of vectors as $\mathbf{w}=(w_0,-w_1)$. Without loss of generality we may assume that $b_{12}>0$. We consider the map $\tau\circ \mu_2$. Note that 
\begin{align*}
(\tau\circ \mu_2)((w_0,-w_1,B)=((w_1,-w_2),B')
\end{align*}
for some $w_2\in V$ and a matrix $B'=(b_{ij}')$. Notice that $B'=(\tau\circ\mu_2)(B)=\tau(-B)=B$. In particular, $b'_{12}>0$. Iteration yields seeds $(\tau\circ\mu_2)^{n}(\mathbf{w},B)=((w_n,-w_{n+1}),B)$ parametrised by natural numbers $n\in\mathbb{N}$ all of which have the same exchange matrix. Explicitly, we have
\begin{align*}
w_{n+1}=\begin{cases}
-w_{n-1}+\langle w_{n-1},w_n\rangle w_n&\textrm{if }w_{n}\textrm{ is positive};\\
-w_{n-1}&\textrm{if }w_{n}\textrm{ is negative}.\\
\end{cases}
\end{align*}    

\begin{defn}[Lazy mutations]
We say that $n\in\mathbb{N}$ is \emph{lazy} if $w_n$ is negative. If this is the case, then we also call the mutation $(w_{n-1},-w_n)\leftrightsquigarrow (w_n,-w_{n+1})$ lazy.  
\end{defn}

\begin{rem}
\label{Rem:LazyMutations}
\begin{enumerate} 
\item If $n$ is lazy, then $\langle w_{n+1},w_n\rangle=-\langle w_n,w_{n-1}\rangle$, else $\langle w_{n+1},w_n\rangle=\langle w_n,w_{n-1}\rangle$. 
\item Lazy numbers come in pairs. Suppose that $n\in\mathbb{Z}$ is lazy. Then $w_{n+1}=-w_{n-1}$ so that exactly one of the numbers $n-1$ and $n+1$ is lazy.
\end{enumerate}
\end{rem}

For every $n$ the angle between $w_n$ and $w_0$ is an integer multiple of $\pi/b$. It follows that the sequence $(\tau\circ\mu_2)^{n}(\mathbf{w},B)$ must be periodic.  

\begin{lemma}
\label{Lemma:Period}
Let $p$ be the period of $\tau\circ\mu_2$. Suppose that $2a<b$ (i.e. the angle between the initial vectors $w_0$ and $-w_1$ is acute). Then $p=b+2a$ unless $w_0$ is negative and $w_1$ is positive. In the latter case $p=3b-2a>b+2a$.
\end{lemma}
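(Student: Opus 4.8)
The plan is to convert the recursion for $(w_n)$ into a one–dimensional dynamical system on directions and to read off the period from a short Diophantine computation, with the whole subtlety concentrated in a parity (``phase'') count governed by the initial signs.

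First I would record the angular dynamics. Write $\theta_n=k_n\cdot \pi/b$ for the direction of $w_n$, measured in units of $\pi/b$, where $\alpha=\tfrac{a}{b}\pi$. A positive (non-lazy) mutation $w_{n+1}=-w_{n-1}+\langle w_{n-1},w_n\rangle w_n$ is a reflection composed with $-1$, hence $\theta_{n+1}=2\theta_n-\theta_{n-1}$, i.e. $k_{n+1}=2k_n-k_{n-1}$; a negative (lazy) mutation $w_{n+1}=-w_{n-1}$ gives $\theta_{n+1}=\theta_{n-1}+\pi$, i.e. $k_{n+1}=k_{n-1}+b$. Fixing the orientation so that the first increment is $\epsilon_1=k_1-k_0=a$, these two rules keep every increment $\epsilon_n$ inside $\{a,b-a\}$, a lazy step replacing $\epsilon_n$ by $b-\epsilon_n$ (toggling the two values) and a non-lazy step preserving it. Since $2a<b$ forces $0<a<b-a<b$, all increments are positive, so $(k_n)$ is strictly increasing.

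Next I would isolate the structural facts. By Remark~\ref{Rem:LazyMutations} the lazy indices occur in runs of length exactly two (``lazy pairs''). A mutation is lazy precisely when $w_n$ lies in the negative half-plane $V^-$, which as a set of directions is an arc of length $b$ in the above units; because each increment is strictly smaller than $b$, the trajectory cannot jump across $V^-$, so it enters $V^-$ exactly once per revolution. Consequently the number $\ell$ of lazy pairs completed in one period equals the winding number $W=(k_p-k_0)/2b$, and summing all increments over a period yields $k_p-k_0=2b\ell$.

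The two periods then differ only through a parity count. Let the \emph{phase} be the parity of the number of lazy mutations performed so far; by the toggling rule an increment equals $b-a$ exactly when the phase is odd. Generically the lazy pairs lie entirely inside the counted orbit, so the phase is odd only on the single increment falling between the two members of each pair, and exactly $\ell$ increments equal $b-a$. In the exceptional configuration singled out in the statement the initial seed is straddled by a lazy pair, so that only one of its two lazy mutations is counted; this flips the global phase, and instead exactly $\ell$ increments equal $a$. Pinning down this dichotomy (and with it the identity $\ell=W$) is the delicate point, and it is precisely here that the two cases part ways. With the counts in hand the arithmetic is immediate: in the generic case $a(p-\ell)+(b-a)\ell=2b\ell$ gives $ap=\ell(b+2a)$, and since $\gcd(a,b)=1$ implies $\gcd(a,b+2a)=1$, minimality of the period forces $\ell=a$, whence $p=b+2a$; in the exceptional case $a\ell+(b-a)(p-\ell)=2b\ell$ gives $(b-a)p=\ell(3b-2a)$, and as $\gcd(b-a,3b-2a)=\gcd(b-a,a)=1$, minimality forces $\ell=b-a$, whence $p=3b-2a$. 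Finally $3b-2a>b+2a$ is equivalent to $b>2a$, consistent with the hypothesis. The hard part will be the rigorous bookkeeping of the phase and of $\ell=W$, that is, showing that the initial signs shift the pairing of lazy mutations by exactly one: this boundary effect is the sole mechanism producing the longer period.
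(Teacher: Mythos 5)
Your setup is sound and genuinely different in flavour from the paper's argument: you track the direction $k_n$ and its increments $\epsilon_n\in\{a,b-a\}$ and extract the period from the Diophantine relation $ap=\ell(b+2a)$ (resp. $(b-a)p=\ell(3b-2a)$), whereas the paper composes the maps $f_n$ into rotations, deduces $b\mid b'$ from $c'\pi+b'\psi\in 2\pi\mathbb{Z}$, and then pins down the exact counts $b'=b$, $c'=c$ by an explicit labelling of the $2b$ sectors of width $\pi/b$ and of the positions where lazy pairs are inserted. The identification of $\ell$ with the winding number (one entry into the half-plane arc $V^-$ per revolution, each entry being a run of exactly two lazy indices by Remark~\ref{Rem:LazyMutations}) is correct, as is the toggling rule for the increments.

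The gap is in the final step. From $ap=\ell(b+2a)$ and $\gcd(a,b+2a)=1$ you may conclude $a\mid\ell$, hence $(b+2a)\mid p$, i.e. $p\geq b+2a$ — a lower bound only. The sentence ``minimality of the period forces $\ell=a$'' is not a valid deduction: minimality of $p$ can only be invoked after you know that $b+2a$ actually \emph{is} a period, and nothing in your argument rules out $p=m(b+2a)$ with $\ell=ma$ for some $m\geq 2$; both satisfy your relation. What is missing is the verification that the state $(k_n,\epsilon_{n+1})$ (equivalently, the seed) returns to its initial value after exactly $b+2a$ steps. This is precisely the content of the paper's sector-interval computation, where the sequence of occupied intervals $[k+1,k+c],\dots,[k+(b-1)c+1,k+bc]$ closes up after $b$ non-lazy mutations and the fixed sector $t$ is hit exactly $c$ times, giving the matching upper bound. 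In addition, the case dichotomy (which of the two Diophantine relations applies, governed by the signs of $w_0$ and $w_1$ and the resulting shift of the lazy pairing across the origin of the orbit) is flagged by you as ``the hard part'' but not carried out; the paper handles it up front by determining whether $\psi=\alpha$ or $\psi=\pi-\alpha$ from which of $f_0$, $f_1$, $f_2$ is guaranteed to be a genuine rotation. As written, your argument therefore proves only $(b+2a)\mid p$ in the generic case and $(3b-2a)\mid p$ in the exceptional one.
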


\begin{proof}
For every $n\in\mathbb{Z}$ we consider the linear map $f_n\colon V\to V$ defined by $w_{n-1}\mapsto w_n$ and $w_n\mapsto w_{n+1}$. If $n$ is not lazy, then $f_n$ is a rotation around the origin by the angle $\psi\in[0,\pi]$ for which $\langle w_{n-1},w_n\rangle =2\cos(\psi)$. Since lazy numbers come in pairs, the value of $\langle w_{n-1},w_n\rangle$ is the same for all non-lazy numbers $n$ by Remark \ref{Rem:LazyMutations} (1). In other words, for every non-lazy number $n$ the map $f_n$ is a rotation by the same angle $\psi$ in the same direction. We have $\psi=\alpha$ or $\psi=\pi-\alpha$, depending on the sign of $w_0$ and $w_1$.

To find the period $p$, we consider several cases. First, suppose that $w_1$ is negative. In this case, the number $1$ is not lazy, and hence $f_1$ is a rotation so that $\psi=\alpha$. Second, suppose that $w_0$ is positive. In this case $0$ is not lazy, and hence $f_0$ is a rotation so that $\psi=\alpha$. Now suppose that $w_1$ is positive and $w_0$ negative. In this case, $0$ and $1$ are both lazy. It follows that $2$ is not lazy and hence $f_2$ is a rotation so that $\psi=\pi-\alpha$. 

Let us write $\psi=c\pi/b$ with $c\in\{a,b-a\}$. In the case $c=a$, we have to show that $p=b+2a$; in the case $c=b-a$, we have to show that $p=3b-2a$. Therefore, we have to show $p=b+2c$ in both cases. To this end, we show that there are exactly $b$ non-lazy mutations and exactly $c$ pairs of lazy mutations along a fundamental period of seeds. We denote the number of non-lazy mutations along a period by $b'$, and the number of pairs of lazy mutations by $c'$. We claim $b'=b$ and $c'=c$.

The composition $f_{n+1}\circ f_n$ is a rotation by $\pi$ when $(n,n+1)$ is a pair of lazy numbers. Since $f_n$ is a rotation by $\psi$ for every non-lazy number $n$, we must have $c'\pi+b'\psi=2\pi m$ for some integer $m$. The equation implies that $c'+b'\cdot\tfrac{c}{b}=2m$ is an integer. Because of the coprimality of $b$ and $c$, we conclude that $b'$ must be a multiple of $b$. It cannot be zero because there must be non-lazy mutations. Hence $b'\geq b$.

There are $b$ lines through the origin intersecting the sides of the initial sector under an angle that is a multiple of $\pi/b$. These lines divide the plane into $2b$ sectors with angles $\pi/b$, which we can label consecutively by elements in $\mathbb{Z}/b\mathbb{Z}$ when we identify opposite sectors. Now suppose we perform $b$ non-lazy mutations. The occurring seeds are given by sectors made of $c$ smaller sectors whose labels are given by 
\begin{align*}
[k+1,k+c],[k+c+1,k+2c],\ldots,[k+(b-1)c+1,k+bc],[k+1,k+c].
\end{align*}   
for some $k\in \mathbb{Z}/b\mathbb{Z}$. (Recall that $[r,s]=[r,r+1,\ldots,r+s]$.) The non-lazy mutations are interrupted by pairs of lazy mutations. Lazy mutations happen between intervals $[k+(l-1)c+1,k+lc]$ and $[k+lc+1,k+l(c+1)]$ if $[k+(l-1)c+1,k+lc]$ contains a fixed sector $t\in \mathbb{Z}/b\mathbb{Z}$ (determined by the reference point $u$). Hence the number of pairs of lazy mutations is equal to the number of elements in the sequence $k+1,k+2,\ldots,k+bc$ that are equal to $t$. This number is equal to $c$, and hence $p=b+2c$.
\end{proof}

There are three types of cluster algebras of rank $2$ with finitely many cluster variables, namely $A_2$, $B_2$, and $G_2$. The entries of the exchange matrix $B=(b_{ij})$ fulfil the relation $\vert b_{ij}b_{ji}\rvert=1=4\cos^2(\pi/3)$ in type $A_2$, $\vert b_{ij}b_{ji}\rvert=2=4\cos^2(\pi/4)$ in type $B_2$, and $\vert b_{ij}b_{ji}\rvert=3=4\cos^2(\pi/6)$. The orbits of the initial seed under the transformation $\tau\circ\mu_2$ have lengths $5$, $6$, and $8$, respectively. We observe that the natural number $b$ such that $\vert b_{ij}b_{ji}\rvert=4\cos^2(\pi/b)$ and the period $p$ obey the relation $p=b+2$, that is, the first case occurs in Lemma \ref{Lemma:Period}.

\begin{defn}[Short and long periods] We refer to a period of the form $p=b+2a$ as in Lemma~ \ref{Lemma:Period} as a \emph{short} period and to a period of the form $p=3b-2a$ as a \emph{long} period.
\end{defn}

\begin{defn}[Compatibility of the reference point]
\label{Def:Positivity}
We say that the choice $u \in V$ is \emph{compatible} with the seed $((w_0,-w_1),B)$ if mutation of this seed (with respect to the reference point $u$) has a short period.
\end{defn}

In other words, $u \in V$ is compatible with the seed $((w_0,-w_1),B)$ if $w_0$ is positive or $w_1$ is negative.

\begin{rem}
\label{Rem:ForbiddenRegions}
Let us describe the locus of compatible reference points geometrically. We fix a sector with angle $\angle (A,O,B)$ and orient its sides $OA \to OB $, and denote by $(\mathbf{v},B)$ the associated seed. Then the reference point $u$ is compatible with $(\mathbf{v},B)$ unless $u$ lies inside the supplementary angle $\angle (-B,O,A)$.   
\end{rem}

\subsection{Geometric realisations of seeds in rank 3} 
\label{Subsection:GeoRealRank3}

For the rest of the article we concern ourselves with mutation-finite exchange matrices of rank $3$. We consider a skew-symmetric matrix $B\in\operatorname{Mat}_{3\times 3}(\mathbb{R})$ and assume it is mutation-finite. Proposition \ref{Prop:Classification} (\ref{Prop:ClassificationMarkov}) says that if $B$ is mutation-cyclic, then $B$ is the exchange matrix of the Markov cluster algebra. The exchange graph of this cluster algebra is an infinite $3$-regular tree (of exponential growth). From now on, we will suppose without loss of generality that $B$ is mutation-acyclic. Part $(3)$ of Definition~\ref{Def:GeoReal} implies that a geometric realisation is represented by an acute-angled triangle if and only if its quiver is acyclic, see \cite{FT}.

\begin{rem}[\cite{FT}]
\label{Rem:FTProp}
Suppose that $B$ is a mutation-acyclic exchange matrix of rank $3$. Then it admits a geometric realisation by partial reflections.
\begin{enumerate}
\item \label{Rem:FTPropSph} If the Markov constant satisfies $C(B)<4$, then the geometric realisation of $B$ lies on the sphere $\mathbb{S}^2$. An initial seed can be realised by a triangle in $\mathbb{S}^2$ whose interior angles are as in Proposition \ref{Prop:Classification} (\ref{Prop:ClassificationSpherical}).
\item  \label{Rem:FTPropEuc} If the Markov constant satisfies $C(B)=4$, then the geometric realisation of $B$ lies in the Euclidean plane $\mathbb{E}^2$. An initial seed can be realised by a triangle in $\mathbb{E}^2$ whose interior angles are as in Remark \ref{Rem:classificationEuclidean}.
\end{enumerate}   
\end{rem}

The Definition \ref{Def:SeedMutation} of a seed mutation requires a choice of positive and negative vectors. Like in the situation in rank $2$, as discussed in Section \ref{Section:Rank2}, we define positive and negative vectors through a reference point, compare also Macdonald \cite[Section 1.2]{Mac}. To construct the set $V^{+}\subseteq V$,
\begin{enumerate}
\item we fix a point $u\in \mathbb{S}^2$ on the sphere if the exchange matrix $B$ falls in the spherical case (\ref{Rem:FTPropSph}) of Remark \ref{Rem:FTProp},
\item or we fix a point $u\in \mathbb{E}^2$ in the Euclidean plane (or in a projective plane containing $\mathbb{E}^2$) if the exchange matrix $B$ falls in the Euclidean case (\ref{Rem:FTPropEuc}) of Remark \ref{Rem:FTProp}.
\end{enumerate}
Every vector $v\in V$ defines a half-sphere $\Pi_{v}$ (in case (\ref{Rem:FTPropSph})) or a half-plane $\Pi_v$ (in case (\ref{Rem:FTPropEuc})). The details of this construction will be explained in Sections \ref{Section:Spherical} and \ref{Sec:GeomRealE}.

\begin{defn}[Positive vectors]
We say that $v\in V$ is \emph{positive} if $u$ belongs to $\Pi_{v}$. In this case we write $v\in V^{+}$. We say $v$ belongs to $V^{-}$ if $-v\in V^{+}$. 
\end{defn}

\begin{defn}[Compatibility in rank 3]
We say that the reference point $u$ is \emph{compatible} with the seed $(\mathbf{v},B)$ if $u$ is compatible with every rank $2$ subseed of $(\mathbf{v},B)$ in the sense of Definition~\ref{Def:Positivity}. We say that $u$ is \emph{compatible} if it is compatible with every seed in the mutation class of $(\mathbf{v},B)$.
\end{defn}

As we have seen in Section \ref{Section:Rank2}, not every notion of positivity will recover the exchange graphs for classical cluster algebras. Moreover, the following condition \ref{Condition:Pos1} is a necessary condition for exchange graphs to agree with classical exchange graphs. Hence it is natural to impose this condition in general. We will see in Section \ref{Section:Spherical} that it is also a sufficient condition to recover classical exchange graphs. We fix an initial seed $(\mathbf{v}_0,B)$ and a reference point $u$, and denote the resulting exchange graph by $\Gamma$.

\begin{description}[style=multiline]
    \item[(C)\namedlabel{Condition:Pos1}{C}] (\textbf{Compatibility Condition}) We say that the compatibility condition holds if the reference point $u$ is compatible with every seed $(\mathbf{v},B)$ in the exchange graph $\Gamma$.
\end{description} 

\phantomsection

\begin{defn}[Finite and affine seeds]
\label{Def:FinAff}
Suppose that $(\mathbf{v},B)$ is a seed of rank $3$. 
\begin{enumerate}
\item We say that $(\mathbf{v},B)$ is of \emph{finite type} if its exchange graph is finite, or, equivalently, if its geometric realisation lies on the sphere $\mathbb{S}^2$. We say it is \emph{of infinite type} otherwise.
\item We say that $(\mathbf{v},B)$ is of \emph{affine} if its geometric realisation lies in the Euclidean plane $\mathbb{E}^2$.
\end{enumerate} 
\end{defn}

\begin{rem}[Finite and affine quivers] As the exchange matrix is the essential part from which we can determine the seed $(\mathbf{v},B)$, we also use the terminology from Definition \ref{Def:FinAff} when considering exchange matrices only. In particular,
\label{Def:Affine}
\begin{enumerate}
\item an exchange matrix $B$ is of \emph{finite type} if its geometric realisation lies on the sphere $\mathbb{S}^2$;
\item an exchange matrix $B$ is \emph{affine} if its geometric realisation lies in the Euclidean plane $\mathbb{E}^2$.
\end{enumerate}
\end{rem}

\begin{rem} In Definition \ref{Def:Affine} the term \emph{affine} is justified because the condition is satisfied for all affine quivers in the usual setting with integer exchange matrices. In particular, quivers of type $\tilde{A}_2$, $\tilde{B_2}$, etc. satisfy the condition.
\end{rem}

\subsection{Exchange graphs for quivers of finite type}
\label{Section:Spherical}

Consider a mutation-finite exchange matrix $B$ of rank $3$. This subsection concerns case (\ref{Rem:FTPropSph}) of Remark \ref{Rem:FTProp}, that is, we assume that $B$ is mutation-acyclic with $C(B)<4$. We choose a geometric realisation $\mathbf{v}=(v_1,v_2,v_3)$ of $B$ inside a quadratic space $V=\mathbb{R}^3$, which we may realise on the sphere $\mathbb{S}^2$ after projectivisation.

More precisely, the intersection of the half-space $\Pi_{v_i}=\{\,w\in\mathbb{R}^3\mid (v_i,w)<0\,\}$, $i\in\{1,2,3\}$, with $\mathbb{S}^2$ defines a half-sphere. Hence the intersection $\Pi_{v_1}\cap\Pi_{v_2}\cap\Pi_{v_3}$ defines a spherical triangle. The triangle is bounded by $3$ great circles, given by intersecting $\mathbb{S}^2$ with the planes $l_{v_i}=\{\,w\in\mathbb{R}^3\mid (v_i,w)=0\,\}$, $i\in\{1,2,3\}$. The concept of \emph{geometric realisation} now means that if $\lvert b_{ij}\rvert=\lvert (v_i,v_j)\rvert=2\cos(\pi t_{ij})$ for two indices $i$ and $j$, then $\pi t_{ij}$ is the angle between the great circles associated to $v_i$ and $v_j$. The number $t_{ij}$ is rational by Proposition \ref{Prop:Classification} (\ref{Prop:ClassificationRational}). A \emph{seed} is now given by spherical triangle together with a quiver whose vertices are associated to the sides of the triangle. The quiver encodes the placements of the signs inside the skew-symmetric matrix $B$.

Thanks to Proposition \ref{Prop:Classification} (\ref{Prop:ClassificationSpherical}) we may assume that
\begin{align}
\label{Eqn:SphericalB}
B=\left(\begin{matrix}
0&2\cos(\pi t_1)&0\\
-2\cos(\pi t_1)&0&2\cos(\pi t_2)\\
0&-2\cos(\pi t_2)&0
\end{matrix}\right)
\end{align}
where $(t_1,t_2)$ belongs to the set
\begin{align*}
Sph=\left\lbrace\,\left(\frac{1}{3},\frac{1}{3}\right),\,\left(\frac{1}{3},\frac{1}{4}\right),\,\left(\frac{1}{3},\frac{1}{5}\right),\,\left(\frac{1}{3},\frac{2}{5}\right),\,\left(\frac{1}{5},\frac{2}{5}\right)\right\rbrace.
\end{align*}

All five choices of the pair $(t_1,t_2)$ yield finite exchange graphs. The choices $(t_1,t_2)=\left(\tfrac13,\tfrac13\right)$ and $(t_1,t_2)=\left(\tfrac13,\tfrac14\right)$ yield cluster algebras of type $A_3$ and $B_3$. Their exchange graphs are (generalised) associahedra. The choice $(t_1,t_2)=\left(\tfrac13,\tfrac15\right)$ yields a generalised associadron of type $H_3$, which was given explicitly in the work of Fomin--Reading \cite[Figure 5.14]{FR}. For a particular choice of a reference point, Felikson--Tumarkin drew the exchange graph for an exchange matrix associated to $(t_1,t_2)=\left(\tfrac{1}{3},\tfrac{2}{5}\right)$ and $(t_1,t_2)=\left(\tfrac{1}{5},\tfrac{2}{5}\right)$ explicitly in \cite[Figure 6.4]{FT}. The exchange graphs contain $40$ and $48$ seeds, respectively.

Since the number of fundamental regions is finite we can use hand and computer calculations to prove the following.

\begin{theorem}
\phantomsection\label{Thm:SphericalExchangeGraph}
Fix a pair $(t_1,t_2)\in Sph$, and consider the exchange matrix $B$ from equation (\ref{Eqn:SphericalB}).
\begin{enumerate}
\item There are geometric realisations $(\mathbf{v},B)$ with the reference point in a compatible position;
\item all such realisations result in the same exchange graph;
\item in all such realisations, the compatible reference point belongs to some acute-angled domain (i.e. a domain associated to an acyclic seed);
\item every choice of a reference point in an acute-angled domain (i.e. a domain associated to an acyclic seed) gives such a realisation.
\end{enumerate} 
\end{theorem}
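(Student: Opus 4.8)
The plan is to prove the four claims essentially as a package, exploiting the finiteness of each spherical mutation class (which contains $40$, $48$, or fewer seeds by the explicit counts quoted from \cite{FT}). First I would set up the configuration space of reference points: fixing one of the five pairs $(t_1,t_2)\in Sph$ and the acyclic triangle realising $B$, every seed in the (finite) mutation class corresponds to a triangle on $\mathbb{S}^2$, and each rank $2$ subseed of each seed carves out a \emph{forbidden} lune on the sphere, namely the supplementary-angle region described in Remark \ref{Rem:ForbiddenRegions} (transported from the rank $2$ picture to the relevant great-circle pair). A reference point $u$ is compatible precisely when it avoids every such forbidden region, i.e. $u$ lies in the complement of a finite union of spherical lunes. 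The core of the argument is then to show this complement is nonempty and to understand its combinatorics.

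Next I would prove claims (1) and (3) together. The key observation is that for an acyclic seed the associated triangle is acute-angled (stated just before Remark \ref{Rem:FTProp}), and for such a seed the three forbidden lunes, one per rank $2$ subseed, are exactly the three lunes \emph{opposite} to the triangle's vertices, so the triangle's own interior is disjoint from all three. To get compatibility with \emph{every} seed in the mutation class, not just one, I would argue that the interior of any single acyclic (acute-angled) domain already avoids all forbidden regions arising from all seeds in the class. Because the mutation class is finite and is realised by a fixed tessellation of $\mathbb{S}^2$ by the reflection group generated by the great circles (the $H_3$, $A_3$, or $B_3$ Coxeter arrangement), the acute domains are precisely the fundamental chambers of this arrangement, and the forbidden lunes are unions of such chambers; a direct check on the arrangement shows a fundamental chamber meets no forbidden lune. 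This simultaneously gives existence of a compatible point (1) and places every compatible point in an acyclic domain (3). I expect to verify the disjointness by the finite hand-and-computer calculation the paper already advertises, enumerating the chambers and the lunes for each of the five arrangements.

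For claim (4), the converse, I would show that any $u$ in the open acute-angled domain of an acyclic seed satisfies Condition \ref{Condition:Pos1}: since the domain is a fundamental chamber avoiding all forbidden lunes of all seeds in the orbit (established in the previous step), such a $u$ is compatible with every seed, hence yields a valid realisation of the type in (1). Here the point is that the forbidden locus is independent of which compatible chamber we start from, so ``compatible reference point'' and ``reference point in an open acute-angled domain'' cut out the same set. Claim (2), independence of the exchange graph from the choice, then follows because the mutation rule (Definition \ref{Def:SeedMutation}) depends on $u$ only through the sign data $\langle u, v\rangle$, i.e. through which chamber $u$ occupies; moving $u$ within a fixed chamber, or between two chambers both compatible with the whole orbit, never changes any of these signs at any seed, so the entire sequence of mutations, and hence the graph $\Gamma$, is unchanged. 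Concretely I would show that two compatible reference points lie in the same chamber of the finer arrangement cut out by all the great circles $l_{v_i}$ over all seeds, and that within one such chamber the exchange graph is manifestly constant.

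The main obstacle I anticipate is claim (2): proving that \emph{all} compatible choices give literally the \emph{same} graph rather than merely isomorphic finite graphs. The subtlety is that distinct compatible chambers could, a priori, produce different labelled mutation sequences even while avoiding forbidden lunes. I would handle this by showing that the compatible locus is connected in the complement of the forbidden lunes and that crossing from one compatible chamber to an adjacent one never crosses any great circle $l_{v_i}$ that would flip a relevant sign (such a crossing would land in a forbidden lune, contradiction). Thus the sign vector $(\operatorname{sign}\langle u,v_i\rangle)_i$ is locally constant on the compatible locus for each seed simultaneously, forcing a single graph. Making the connectivity and ``no sign flip'' claims precise against the concrete $H_3$, $A_3$, $B_3$ arrangements is where the finite explicit verification does the real work, and I would lean on the already-computed exchange graphs of \cite{FT} with their $40$ and $48$ seeds to confirm the count matches in every compatible realisation.
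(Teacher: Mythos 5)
Your overall strategy --- reduce everything to a finite check on the five spherical arrangements, organised around the forbidden regions of Remark \ref{Rem:ForbiddenRegions} --- is exactly the route the paper takes (the paper offers nothing more detailed than ``hand and computer calculations''), and your treatment of (1), (3) and (4), via the observation that an acyclic seed has an acute-angled domain disjoint from its own forbidden sectors plus a finite enumeration over the whole mutation class, is sound modulo that computation. One small imprecision: the forbidden sector at a vertex is \emph{adjacent} to the triangle's sector (the supplementary angle $\angle(-B,O,A)$ shares the ray $OA$ with $\angle(A,O,B)$), not vertically opposite to it; this does not affect the conclusion that the domain itself avoids it. Also, for the pairs $\left(\tfrac13,\tfrac25\right)$ and $\left(\tfrac15,\tfrac25\right)$ the triangles are not fundamental chambers of the $H_3$ Coxeter arrangement, so ``acute domains $=$ fundamental chambers'' should not be relied on outside the $A_3$, $B_3$, $H_3$ cases.

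The genuine gap is in your argument for (2). You assert that moving $u$ between two compatible chambers ``never changes any of these signs at any seed''. That is false: the compatible locus is a disjoint union of several acute-angled domains, and two such domains always lie on opposite sides of some great circle $l_{v_i}$ attached to a seed of the class. Concretely, a sink/source mutation of an acyclic seed reflects its acute-angled domain across one of its own walls onto another acyclic domain; reference points in these two domains give opposite signs of $\langle u,v_i\rangle$ for the vector $v_i$ normal to the shared wall, and Definition \ref{Def:SeedMutation} branches on exactly this sign, so the mutation outcomes, and the vertex sets of the resulting exchange graphs, genuinely differ. Hence the sign vector is only locally constant on the compatible locus, and different compatible chambers can at best yield \emph{isomorphic} exchange graphs, never literally identical ones; your connectivity/no-sign-flip mechanism would not survive the finite verification you plan to run. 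To close (2) you need instead either a symmetry argument (an isometry of $\mathbb{S}^2$ carrying one acyclic domain together with its seed data to another, inducing an isomorphism of the two exchange graphs) or a direct enumeration, for each compatible chamber, showing the finitely many resulting graphs all agree with the known associahedra of \cite{FR} and \cite{FT}.
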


The exchange graphs in Theorem \ref{Thm:SphericalExchangeGraph} (2) agree with the exchange graphs of the ordinary cluster algebras of type $A_3$ or $B_3$ if $(t_1,t_2)=\left(\tfrac{1}{3},\frac{1}{3}\right)$ or $(t_1,t_2)=\left(\tfrac{1}{3},\frac{1}{4}\right)$; they agree with Fomin--Reading's exchange graph of type $H_3$ if $(t_1,t_2)=\left(\tfrac{1}{3},\frac{1}{5}\right)$; and they agree with Felikson--Tumarkin's exchange graphs if $(t_1,t_2)=\left(\tfrac{1}{3},\tfrac{2}{5}\right)$ or $(t_1,t_2)=\left(\tfrac{1}{5},\tfrac{2}{5}\right)$.

An example of a geometric picture of the exchange graph of type $A_3$ is shown in Figure \ref{Figure:ExchangeGraphSphere} where the reference point is represented by a star. Here a seed is given by a spherical triangle together with a quiver.

\begin{rem}[Belt line $b$] In Figure \ref{Figure:ExchangeGraphSphere}, a spherical line $b$ of the sphere is drawn in red. The line $b$ has formidable properties. First, it is a line which intersects precisely the acyclic seeds. Second, the spherical triangle of every cyclic seed admits two vertices such that the orthogonal projection of the vertex to the opposite side lies on the spherical line $b$. One can check that a spherical line with the same properties exists for every compatible realisation in Theorem \ref{Thm:SphericalExchangeGraph}. We will see later in Section \ref{Sec:InitialAcyclicBelt} that a line with the same properties exists for exchange graphs in the Euclidean plane.
\end{rem}

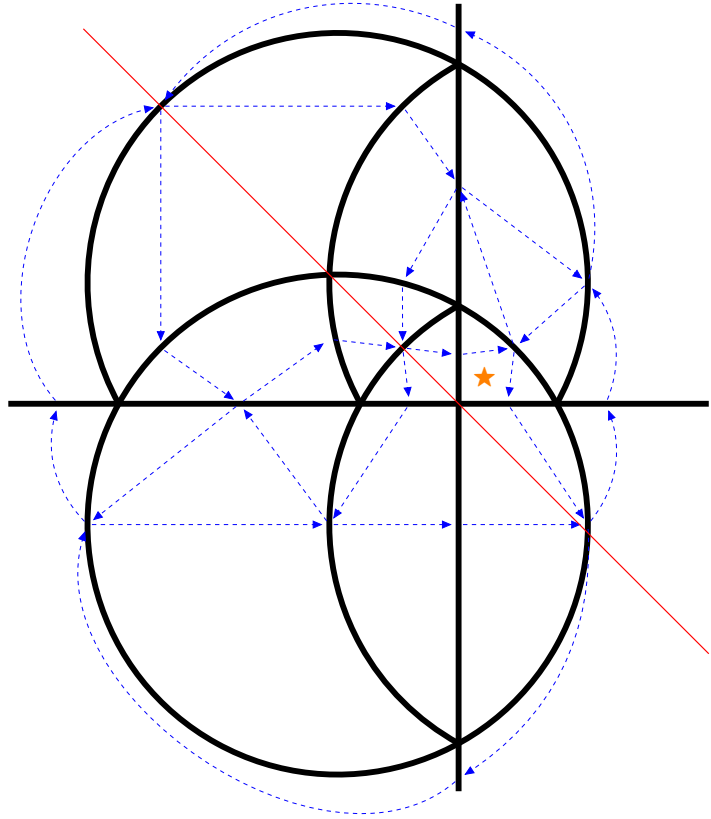
\begin{figure}
\resizebox{0.6\textwidth}{!}{
\begin{tikzpicture}

\newcommand{\AxisLength}{6}
\newcommand{\Radius}{6}
\newcommand{\CentreCircleCoord}{2.9}

\tikzmath{     
   \xaxis = sqrt(\Radius*\Radius-\CentreCircleCoord*\CentreCircleCoord)-\CentreCircleCoord;
   \sangle = asin(\CentreCircleCoord/\Radius);
   \he = sqrt(0.5*\Radius*\Radius-\CentreCircleCoord*\CentreCircleCoord);
};

\draw [black,line width=4pt] (0,\xaxis) arc [radius=\Radius, start angle=\sangle+90, end angle=270-\sangle];
\draw [black,line width=4pt] (0,\xaxis) arc [radius=\Radius, start angle=90-\sangle, end angle=450-\sangle];

\draw [black,line width=4pt] (\xaxis,0) arc [radius=\Radius, start angle=-\sangle, end angle=180+\sangle];

\draw [black,line width=4pt] (0,1*\xaxis+2*\CentreCircleCoord) arc [radius=\Radius, start angle=90+\sangle, end angle=180+\sangle];

\draw [-,black,line width=4pt] (-1.8*\AxisLength,0) to (\AxisLength,0);
\draw [-,black,line width=4pt] (0,-1.55*\AxisLength) to (0,1.6*\AxisLength);

\draw[shorten >=5pt,shorten <=3pt,->,blue,dashed,>=triangle 45] (0,0.5*\xaxis) -- (0.7071*\Radius-\CentreCircleCoord,0.7071*\Radius-\CentreCircleCoord);

\draw[shorten >=5pt,shorten <=3pt,->,blue,dashed,>=triangle 45] (0.7071*\Radius-\CentreCircleCoord,0.7071*\Radius-\CentreCircleCoord) -- (0.5*\xaxis,0);

\draw[shorten >=5pt,shorten <=3pt,->,blue,dashed,>=triangle 45] (-0.7071*\Radius+\CentreCircleCoord,0.7071*\Radius-\CentreCircleCoord) -- (0,0.5*\xaxis);

\draw[shorten >=5pt,shorten <=3pt,->,blue,dashed,>=triangle 45] (-0.7071*\Radius+\CentreCircleCoord,0.7071*\Radius-\CentreCircleCoord) -- (-0.5*\xaxis,0);

\draw[shorten >=5pt,shorten <=3pt,->,blue,dashed,>=triangle 45] (0.5*\xaxis,0) -- (\Radius-\CentreCircleCoord,-\CentreCircleCoord);

\draw[shorten >=5pt,shorten <=3pt,->,blue,dashed,>=triangle 45] (0,-\CentreCircleCoord) -- (\Radius-\CentreCircleCoord,-\CentreCircleCoord);

\draw[shorten >=5pt,shorten <=3pt,->,blue,dashed,>=triangle 45] (-0.5*\xaxis,0) -- (-\Radius+\CentreCircleCoord,-\CentreCircleCoord);

\draw[shorten >=5pt,shorten <=3pt,->,blue,dashed,>=triangle 45] (-\Radius+\CentreCircleCoord,-\CentreCircleCoord) -- (0,-\CentreCircleCoord);

\draw[shorten >=5pt,shorten <=3pt,->,blue,dashed,>=triangle 45] (-\CentreCircleCoord-\Radius,-\CentreCircleCoord) -- (-\Radius+\CentreCircleCoord,-\CentreCircleCoord);

\draw[shorten >=5pt,shorten <=3pt,->,blue,dashed,>=triangle 45] (-\CentreCircleCoord-\xaxis,0) -- (-\CentreCircleCoord-\Radius,-\CentreCircleCoord);

\draw[shorten >=5pt,shorten <=3pt,->,blue,dashed,>=triangle 45] (-\Radius+\CentreCircleCoord,-\CentreCircleCoord) -- (-\CentreCircleCoord-\xaxis,0);

\draw[shorten >=5pt,shorten <=3pt,->,blue,dashed,>=triangle 45] (-\CentreCircleCoord-\xaxis,0) -- (-0.5*\he+0.5*\CentreCircleCoord-0.5*\Radius,0.5*\he);

\draw[shorten >=5pt,shorten <=3pt,->,blue,dashed,>=triangle 45] (-\CentreCircleCoord-0.7071*\Radius,-\CentreCircleCoord+0.7071*\Radius) -- (-\CentreCircleCoord-\xaxis,0);

\draw[shorten >=5pt,shorten <=3pt,->,blue,dashed,>=triangle 45] (-0.5*\he+0.5*\CentreCircleCoord-0.5*\Radius,0.5*\he) -- (-0.7071*\Radius+\CentreCircleCoord,0.7071*\Radius-\CentreCircleCoord);

\draw[shorten >=5pt,shorten <=3pt,->,blue,dashed,>=triangle 45] (-\CentreCircleCoord+0.2588*\Radius,-\CentreCircleCoord+0.966*\Radius) -- (-0.7071*\Radius+\CentreCircleCoord,0.7071*\Radius-\CentreCircleCoord);

\draw[shorten >=5pt,shorten <=3pt,->,blue,dashed,>=triangle 45] (0,\CentreCircleCoord+\xaxis) -- (-\CentreCircleCoord+0.2588*\Radius,-\CentreCircleCoord+0.966*\Radius);

\draw[shorten >=5pt,shorten <=3pt,->,blue,dashed,>=triangle 45] (\CentreCircleCoord-0.7071*\Radius,\CentreCircleCoord+0.7071*\Radius) -- (0,\CentreCircleCoord+\xaxis);

\draw[shorten >=5pt,shorten <=3pt,->,blue,dashed,>=triangle 45] (0.7071*\Radius-\CentreCircleCoord,0.7071*\Radius-\CentreCircleCoord) -- (0,\CentreCircleCoord+\xaxis);

\draw[shorten >=5pt,shorten <=3pt,->,blue,dashed,>=triangle 45] (0,\CentreCircleCoord+\xaxis) -- (\Radius-\CentreCircleCoord,\CentreCircleCoord);

\draw[shorten >=5pt,shorten <=3pt,->,blue,dashed,>=triangle 45] (\Radius-\CentreCircleCoord,\CentreCircleCoord) -- (0.7071*\Radius-\CentreCircleCoord,0.7071*\Radius-\CentreCircleCoord);

\draw[shorten >=5pt,shorten <=3pt,->,blue,dashed,>=triangle 45] (-\CentreCircleCoord-0.7071*\Radius,\CentreCircleCoord+0.7071*\Radius) -- (\CentreCircleCoord-0.7071*\Radius,\CentreCircleCoord+0.7071*\Radius);

\draw[shorten >=5pt,shorten <=3pt,->,blue,dashed,>=triangle 45] (-\CentreCircleCoord-0.7071*\Radius,\CentreCircleCoord+0.7071*\Radius) -- (-\CentreCircleCoord-0.7071*\Radius,-\CentreCircleCoord+0.7071*\Radius);

\draw[shorten >=5pt,shorten <=3pt,->,blue,dashed,>=triangle 45] (-1.6*\AxisLength,0) to [bend left=60] (-\CentreCircleCoord-0.7071*\Radius,\CentreCircleCoord+0.7071*\Radius);

\draw[shorten >=5pt,shorten <=3pt,->,blue,dashed,>=triangle 45] (0,1.5*\AxisLength) to [bend right=40] (-\CentreCircleCoord-0.7071*\Radius,\CentreCircleCoord+0.7071*\Radius);

\draw[shorten >=5pt,shorten <=3pt,->,blue,dashed,>=triangle 45] (\Radius-\CentreCircleCoord,\CentreCircleCoord) to [bend right=40] (0,1.5*\AxisLength);

\draw[shorten >=5pt,shorten <=3pt,->,blue,dashed,>=triangle 45] (1.5*\xaxis,0) to [bend right=30] (\Radius-\CentreCircleCoord,\CentreCircleCoord);

\draw[shorten >=5pt,shorten <=3pt,->,blue,dashed,>=triangle 45] (\Radius-\CentreCircleCoord,-\CentreCircleCoord) to [bend right=30] (1.5*\xaxis,0);

\draw[shorten >=5pt,shorten <=3pt,->,blue,dashed,>=triangle 45] (\Radius-\CentreCircleCoord,-\CentreCircleCoord) to [bend left=30] (0,-1.5*\AxisLength);

\draw[shorten >=5pt,shorten <=3pt,->,blue,dashed,>=triangle 45] (0,-1.5*\AxisLength) to [bend left=75] (-\CentreCircleCoord-\Radius,-\CentreCircleCoord);

\draw[shorten >=5pt,shorten <=3pt,->,blue,dashed,>=triangle 45] (-\CentreCircleCoord-\Radius,-\CentreCircleCoord) to [bend left=30] (-1.6*\AxisLength,0);

\node[orange] at (0.2*\he,0.2*\he) {\textbf{\Large{$\bigstar$}}};

\draw[-,red,thick] (-1.5*\AxisLength,1.5*\AxisLength) -- (1*\AxisLength,-1*\AxisLength); 

\end{tikzpicture}}
\caption{Realising the exchange graph of type $A_3$ on the sphere}
\label{Figure:ExchangeGraphSphere}
\end{figure}

\subsection{Realisations in the Euclidean plane} 
\label{Sec:GeomRealE}

Consider a mutation-finite exchange matrix $B$ of rank $3$. The rest of the paper concerns case (\ref{Rem:FTPropEuc}) of Remark \ref{Rem:FTProp}, that is, we assume that $B$ is mutation-acyclic with $C(B)=4$. We choose a geometric realisation $\mathbf{v}=(v_1,v_2,v_3)$ of $B$ inside a quadratic space $V=\mathbb{R}^3$. Now we describe how to realise the seed $(\mathbf{v},B)$ in the Euclidean plane $\mathbb{E}^2$ after projectivisation.

There are two bilinear forms $V\times V\to\mathbb{R}$. First, we denote the standard Euclidean form by angular brackets $\langle\cdot,\cdot\rangle\colon V\times V\to\mathbb{R}$. Moreover, as in Definition \ref{Def:GeoReal}, we denote the bilinear form defining the quadratic space $V$ by round brackets $(\cdot,\cdot)\colon V\times V\to\mathbb{R}$. The condition $C(B)=4$ implies $\operatorname{det}(B)=0$. So the bilinear form $(\cdot,\cdot)\colon V\times V\to \mathbb{R}$ is degenerate. Recall that the \emph{radical} of a symmetric bilinear form $b\colon V\times V\to \mathbb{R}$ is
\begin{align*}
\operatorname{rad}(b)=\{\,v\in V\mid b(v,w)=0\textrm{ for all }w\in V\,\}.
\end{align*}
Using Remark \ref{Rem:classificationEuclidean} it is not hard to show that $(\cdot,\cdot)$ is positive semi-definite and that its radical has dimension $1$.

We choose a non-zero vector $e_1\in V$ in the radical of $(\cdot,\cdot)$. Extend $e_1$ to an ordered basis $\mathbf{e}=(e_1,e_2,e_3)$ of $V$ by choosing vectors $e_2$ and $e_3$. We pick a hyperplane $P\subseteq \mathbb{R}^3$ that is parallel to $e_2$ and $e_3$ and does not contain the origin. To be concrete, let us put $P=\{\,(w_1,w_2,w_3)\in \mathbb{R}^3\mid w_1=1\,\}$. Here coordinates are written with respect to the basis $\mathbf{e}$.

Let $v = (v_1,v_2,v_3)\in V$, and suppose that $v$ is not parallel to $e_1$, that is, $(v_2,v_3)\neq (0,0)$. Then the orthogonal complement $v^{\perp}=\{\,w\in V\mid \langle v,w\rangle=0\,\}$ is not parallel to $P$. Hence the intersection of $l_{v}=v^{\perp}\cap P$ defines a line in $P$. The line bounds the half-plane $\Pi_v=\{\,w\in V\mid \langle v,w\rangle<0\,\}\cap P$, and the intersection $\Pi_{v_1}\cap \Pi_{v_2}\cap \Pi_{v_3}$ defines a planar triangle (or an infinite region bounded by three lines). The concept of \emph{geometric realisation} now means that if $\lvert b_{ij}\rvert=\lvert (v_i,v_j)\rvert=2\cos(\pi t_{ij})$ for two indices $i$ and $j$, then $\pi t_{ij}$ is the angle between the sides of the triangle associated to $v_i$ and $v_j$. The number $t_{ij}$ is rational by Proposition \ref{Prop:Classification} (\ref{Prop:ClassificationRational}). A \emph{seed} is now given by a triangle in $P$ together with a quiver whose vertices are the sides of the triangle. The quiver encodes the placements of the signs inside the skew-symmetric matrix $B$.    

\begin{rem} Another geometric description can be constructed as follows. Consider the Euclidean plane $\mathbb{E}^2=\mathbb{R}^2$. Recall that a map $f\colon \mathbb{E}^2\to\mathbb{R}$ is \emph{affine linear} if there exists a row vector $a\in\mathbb{R}^2$ and a real number $b$ such that $f(x)=ax+b$ for all $x$. We denote by $F$ the set of all affine linear maps $\mathbb{E}^2\to\mathbb{R}$. Clearly, $F$ is a $3$-dimensional vector space. We define a symmetric bilinear form on $F$ by setting $(f,g)=(a,c)$ when $f(x)=ax+b$ and $g(x)=cx+d$ for all $x$. Note that the bilinear form is degenerate and its radical consists of all constant maps.

For every $f\in F$ define $l_f=f^{-1}(0)$ and $\Pi_f=f^{-1}([0,\infty))$. If $f$ is non-constant, then $l_f\subseteq\mathbb{E}^2$ is a line and $\Pi_f\subseteq \mathbb{E}^2$ is a half-plane bounded by $l_f$. Notice that $l_f$ and $\Pi_f$ do not change when we multiply $f$ by a positive scalar. In this way, a seed $(\mathbf{v},B)$ defines a triangle $\Delta$ in the Euclidean plane whose angles encode the entries of $B$. 

Both constructions are dual to each other. Vectors $v = (v_1,v_2,v_3)\in V$ are in bijection with linear maps $f_v\colon V\to \mathbb{R}$ defined by $f_v(w_1,w_2,w_3) = v_1w_1+v_2w_2+v_3w_3$. The hyperplane $P=\mathbb{R}^2$ embeds in $\mathbb{R}^3$ via $(w_2,w_3)\to (1,w_2,w_3)$. The restriction of $f_v$ to $P$ is an affine linear map $P\to \mathbb{R}$. Conversely, every affine linear map $P\to\mathbb{R}$ extends uniquely to a linear map $f_v$ from $V$ to $\mathbb{R}$. The zero set of the restriction of $f_v$ to $P$ defines the same line $l_{v}$ as above. The viewpoint of affine linear maps is used in the context of affine root systems, see Macdonald \cite[Section 1.2]{Mac}. Angles measured in $P=\mathbb{R}^2$ using the Euclidean bilinear form on $\mathbb{R}^2$ agree with angles measured in $F$ using the bilinear form above.
\end{rem}

\section{Geometric mutations of Euclidean triangles with rational angles}
\label{Section:GeoMut}

\subsection{The initial configuration}

We fix a mutation-finite exchange matrix $B_0$ of rank $3$ which is mutation-acyclic with $C(B_0)=4$ as in case (\ref{Rem:FTPropEuc}) of Remark \ref{Rem:FTProp}. We refer to $B_0$ as the \emph{initial} exchange matrix. We consider a geometric realisation $(\mathbf{v}_0,B_0)$ in the Euclidean plane $\mathbb{E}^2$ as constructed in Section \ref{Sec:GeomRealE}. It is given by a triangle $\Delta_0$ and a quiver $Q_0$. In view of Proposition~ \ref{Prop:Classification} (\ref{Prop:ClassificationRational}) the angles of $\Delta_0$ are rational multiples of $\pi$. 

Proposition \ref{Prop:Classification} (\ref{Prop:ClassificationEuclidean}) asserts that $(\mathbf{v}_0,B_0)$ is then mutation-equivalent to a seed formed by two parallel lines and a third line intersecting the parallel lines under the angle $\pi/d$ for some positive integer $d$, see Figure \ref{Figure:Parallel}.

\begin{rem}
\label{Rem:classificationEuclidean}
We can construct other matrices in the same mutation class using considerations as in the proof of \cite[Theorem 6.11]{FT}. When we mutate the matrix in equation (\ref{Eq:InfiniteRegions}) at indices $3$ and $2$ alternately, we obtain matrices 
\begin{align*}
\left(\begin{smallmatrix}
0&2\cos(\pi t_3)&2\cos(\pi t_2)\\
-2\cos(\pi t_3)&0&2\cos(\pi t_1)\\
-2\cos(\pi t_2)&-2\cos(\pi t_1)&0
\end{smallmatrix}\right).
\end{align*}
where $(t_1,t_2,t_3)$ is a permutation of $(\tfrac{1}{d}\pi ,\tfrac{k}{d}\pi,\tfrac{d-k-1}{d}\pi)$ for some natural number $k\in [0,\tfrac{d}{2}]$. Note that the sum of these angles is equal to $\pi$, so that they form a Euclidean triangle. We plug in $k=\tfrac{d-1}{2}$ if $d$ is odd, and $k=\tfrac{d}{2}$ if $d$ is even. After writing $d=2n+1$ or $d=2n$ for some $n\geq 1$, we can rephrase Proposition \ref{Prop:Classification} (\ref{Prop:ClassificationEuclidean}) as follows. Suppose that $B$ is mutation-acyclic and $C(B)=4$. Then $B$ is mutation-finite if and only it is mutation-equivalent to 
\begin{align*}
\left(\begin{smallmatrix}
0&2\cos(\pi t_3)&2\cos(\pi t_2)\\
-2\cos(\pi t_3)&0&2\cos(\pi t_1)\\
-2\cos(\pi t_2)&-2\cos(\pi t_1)&0
\end{smallmatrix}\right).
\end{align*}
where $(t_1,t_2,t_3)$ is one of the triples
\begin{align*}
\left(\frac{n}{2n+1}\pi,\frac{n}{2n+1}\pi,\frac{1}{2n+1}\pi\right),\quad\left(\frac{n-1}{2n}\pi,\frac{1}{2}\pi,\frac{1}{2n}\pi\right)&&(n\geq 1).
\end{align*}
\end{rem}

In fact, a stronger statement is true. 

\begin{prop}
\label{Prop:PossibleTriangles}
Let $n_1,n_2,n_3\in [1,d]$ with $n_1+n_2+n_3=d$. The mutation class of $(\mathbf{v}_0,B_0)$ contains a triangle with angles $\tfrac{n_1}{d}\pi$, $\tfrac{n_2}{d}\pi$ and $\tfrac{n_3}{d}\pi$ if and only $\operatorname{gcd}(n_1,n_2,n_3)=1$. 
\end{prop}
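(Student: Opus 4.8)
The plan is to translate the whole problem into the combinatorics of the three line-directions of the triangle. Given a seed realised in $\mathbb{E}^2$ by a triangle whose sides lie on the lines $\ell_1,\ell_2,\ell_3$ (so $\ell_i=l_{v_i}$), I record the direction of $\ell_i$ as an element $\theta_i\in\R/\pi\mathbb{Z}$. Since all interior angles are multiples of $\pi/d$, the differences $\theta_i-\theta_j$ lie in $\tfrac{\pi}{d}\mathbb{Z}$, so after a global rotation I may write $\theta_i=\tfrac{m_i}{d}\pi$ with $m_i\in\mathbb{Z}/d\mathbb{Z}$. The standard fact that the three interior angles of the triangle cut out by three lines are the three gaps between consecutive directions on the circle $\R/\pi\mathbb{Z}$ shows that the angle triple $(n_1,n_2,n_3)$ is exactly the multiset of gaps of $\{m_1,m_2,m_3\}$ in $\mathbb{Z}/d\mathbb{Z}$, whence $n_1+n_2+n_3=d$ automatically. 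By Definition \ref{Def:SeedMutation} a mutation either fixes a vector $v_i$ or replaces it by its reflection $v_i-(v_i,v_k)v_k$ across $l_{v_k}$, and always negates $v_k$; on the level of undirected lines this means $\ell_k$ is preserved while each other line is either kept or reflected across $\ell_k$. In direction coordinates a reflection of $\ell_i$ across $\ell_k$ is the point reflection $m_i\mapsto 2m_k-m_i$ in $\mathbb{Z}/d\mathbb{Z}$; in particular the directions stay in $\tfrac1d\mathbb{Z}\pi$ throughout the mutation class.

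For the ``only if'' direction I would show that $\operatorname{gcd}(n_1,n_2,n_3)$ is a mutation invariant. Because $n_1+n_2+n_3=d$, this equals $\operatorname{gcd}(n_1,n_2,n_3,d)$, which in turn is the positive generator of the subgroup $\Lambda\subseteq\mathbb{Z}$ generated by the pairwise differences $\{m_i-m_j\}$ together with $d$ (the gaps and the differences generate the same subgroup modulo $d$). A point reflection $m_i\mapsto 2m_k-m_i$ replaces the differences by $\mathbb{Z}$-linear combinations of the old ones, and conversely since the move is an involution; hence it preserves $\Lambda$ and therefore the gcd. Since the initial seeds listed in Remark \ref{Rem:classificationEuclidean} have angle triples $(n,n,1)$ for $d=2n+1$ and $(n-1,n,1)$ for $d=2n$, both of gcd $1$, every triangle in the mutation class satisfies $\operatorname{gcd}(n_1,n_2,n_3)=1$.

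For the ``if'' direction I would prove that the point-reflection moves act transitively on the set of triples with $n_i\ge 1$, $\sum n_i=d$ and $\operatorname{gcd}=1$. Computing the effect of a single point reflection on the gaps shows it realises a subtractive transfer step $(n_1,n_2,n_3)\mapsto(n_1-n_2,\,n_2,\,n_2+n_3)$, together with its inverse and all index permutations. These are precisely the moves of a Euclidean-algorithm descent, and a standard argument shows that any primitive triple summing to $d$ descends to a fixed normal form (for instance to the initial seed of Remark \ref{Rem:classificationEuclidean}, allowing the degenerate gap $0$ that corresponds to its two parallel lines as an intermediate configuration). Running the descent backwards from the initial seed then reaches the prescribed triple. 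To lift each transfer step to an honest seed mutation I reflect exactly one neighbouring line across $\ell_k$ by mutating at an index $k$ at which the quiver is through-going (in-degree and out-degree one): by Definition \ref{Def:SeedMutation} this reflects precisely one of the two lines and fixes the other, whereas a sink or source gives a trivial mutation or a full reflection of the triangle. One can always reduce to the through-going case, if necessary after an auxiliary mutation that changes only the orientation.

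The main obstacle is this last lifting step in the sufficiency argument: the transfer moves are a priori moves on directions, and I must certify that each one I use is an allowable seed mutation, routing the Euclidean descent so that it never gets stuck and controlling the passage through the degenerate (parallel-line) configurations with a zero gap. Matching the sign and positivity bookkeeping of Definition \ref{Def:SeedMutation} to the desired geometric reflection is the delicate part; by comparison, the gcd invariance behind the ``only if'' direction is routine.
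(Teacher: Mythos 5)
Your ``only if'' half is sound and is essentially the paper's own argument: the paper simply observes that if $e=\gcd(n_1,n_2,n_3)>1$ then every angle occurring in the mutation class is a multiple of $e\pi/d$, so the class cannot contain $\Delta_0$; your reformulation via the subgroup of $\mathbb{Z}$ generated by the direction differences and $d$ is a correct (slightly more formal) version of the same invariance.

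The ``if'' half is where the two arguments diverge, and it is where your proposal has a genuine gap --- one you yourself flag. The paper does not attempt a transitivity argument at all: it notes that a seed with the prescribed rational angles is mutation-finite and then invokes the Felikson--Tumarkin classification of rank-$3$ mutation-finite classes (\cite[Section 6]{FT}) to conclude that it is mutation-equivalent to $(\mathbf{v}_0,B_0)$. Your route would instead reprove (a special case of) that classification by a Euclidean descent on gap triples, and the missing step is precisely the hard part: at a given seed, which of the two non-mutated lines gets reflected across $\ell_k$ is dictated by Definition \ref{Def:SeedMutation} through the arrow orientations \emph{and} the sign of $v_k$, which is in turn determined by the reference point and flips under every mutation at $k$; a sink/source mutation either fixes the triangle or reflects it wholesale. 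So you cannot freely choose which subtractive transfer $(n_i,n_j)\mapsto(n_i, n_j-n_i)$ to perform --- you must show that the moves actually available at each seed (after whatever auxiliary sink/source mutations you permit) suffice to route the descent to the normal form, including through the degenerate parallel-line configurations, and that the descent never stalls at a seed where only the ``wrong'' reflection is offered. None of this bookkeeping is carried out, and it is not obviously routine; without it the sufficiency direction is a plan rather than a proof. Either complete that case analysis or do what the paper does and quote the classification.
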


\begin{proof}
Suppose that $\tfrac{n_1}{d}\pi$, $\tfrac{n_2}{d}\pi$ and $\tfrac{n_3}{d}\pi$ are the angles of a triangle $\Delta$ and $n_1,n_2,n_3$ have a non-trivial common divisor $e$. Then the angles of every triangle in the mutation class of $\Delta$ are multiples of $e\pi/(d)$. This mutation class cannot contain $\Delta_0$.

For the converse direction, notice that the matrix $B$ is mutation-finite by the $\pi$-rationality of the angles. Hence, we can apply Felikson--Tumarkin's classification of mutation classes for mutation-finite exchange matrices in rank $3$, see \cite[Section 6]{FT}. In particular, a seed whose triangle has angles $\tfrac{n_1}{d}\pi$, $\tfrac{n_2}{d}\pi$ and $\tfrac{n_3}{d}\pi$ is mutation-equivalent to $(\mathbf{v}_0,B_0)$.
\end{proof}

\begin{rem}
Note that the number $d$ is the least common denominator of the three rational multiples of $\pi$ in any geometric realisation obtained from $(\mathbf{v}_0,B_0)$ by sequences of mutations. We consider separately the cases where $d$ is even or odd. In this Section \ref{Section:GeoMut} and in Sections  \ref{Section:NumberTheory}, \ref{Sec:LinInd}, \ref{Section:MainThm} we study the case where $d$ is odd. Section \ref{Section:FurtherExchangeGraphs} is devoted to the case where $d$ is even.
\end{rem}

We fix an integer $n\geq 1$, and put $d=2n+1$.  We construct an initial seed as follows. 

\begin{defn}[Initial triangle]
\label{Def:InitialTriangle}
\begin{itemize}
\item[(a)] We define the \emph{fundamental angle} to be 
\begin{align*}
\alpha=\frac{\pi}{2n+1}.
\end{align*}
\item[(b)] The \emph{initial triangle} $\Delta_0\subseteq \mathbb{E}^2$ is an isosceles triangle with angles $\alpha$, $n\alpha$ and $n\alpha$.
\item[(c)] The \emph{initial seed} $(\mathbf{v}_0,B_0)$ is given by the initial triangle $\Delta_0$ together with an acyclic quiver as shown in Figure \ref{Figure:Initial}. 
\end{itemize}
\end{defn}

\begin{rem} By virtue of Remark~\ref{Rem:classificationEuclidean} the choice of an initial seed in Definition \ref{Def:InitialTriangle} is not a loss of generality.
\end{rem}

\begin{figure}
\centering
\begin{minipage}{.4\textwidth}
\centering
\begin{tikzpicture}

\newcommand{\x}{1cm} %%% half the length of the base of the initial triangle
\newcommand{\h}{\x*\SinTwoPiOverFive/\CosTwoPiOverFive} %%% height of the initial triangle, numerical approx. of \sqrt{5+2\sqrt{5}}

\path[draw] (-\x,0) to (\x,0);
\path[draw] (-\x,0) to (0,\h);
\path[draw] (\x,0) to (0,\h);

%\node[above left] (A) at (0.5*\x,0.5*\h) {\tiny{$+$}};
%\node[above right] (B) at (-0.5*\x,0.5*\h) {\tiny{$+$}};
%\node[above left] (C) at (0,0) {\tiny{$+$}};

\node[above left] (B) at (-0.5*\x,0.5*\h) {\tiny{$d_1$}};
\node[above right] (B) at (0.5*\x,0.5*\h) {\tiny{$d_1$}};
%\node[below] (C) at (0,0) {\tiny{$x_1$}};

\path[->,draw,shorten >= 2pt,shorten <=2pt,>=stealth] (-0.5*\x,0.5*\h) to (0.5*\x,0.5*\h);
\path[->,draw,shorten >= 2pt,shorten <=2pt,>=stealth] (0,0) to (0.5*\x,0.5*\h) ;
\path[->,draw,shorten >= 2pt,shorten <=2pt,>=stealth] (0,0) to (-0.5*\x,0.5*\h) ;

%\node at (0,0) {\textcolor{\RefCol}{$\bullet$}};

\end{tikzpicture}
\caption{The initial seed}
\label{Figure:Initial}
\vspace{2\baselineskip}
\begin{tikzpicture}[scale=0.7]

\newcommand{\rParallel}{5cm}
\newcommand{\sParallel}{1cm}
\newcommand{\tParallel}{1.2cm}

\path[draw] (0,0) to (\rParallel,0);
\path[draw] (0,0) to (\sParallel,\tParallel);
\path[draw] (\sParallel,\tParallel) to (\rParallel+\sParallel,\tParallel);

\node[above right] (A) at (0.2cm,0) {\tiny{$\pi/m$}};

\end{tikzpicture}
\caption{A seed with parallel sides}
\label{Figure:Parallel}

\end{minipage}%
\begin{minipage}{.5\textwidth}
  \centering
\definecolor{ffqqqq}{rgb}{1,0,0}
\definecolor{uuuuuu}{rgb}{0.26666666666666666,0.26666666666666666,0.26666666666666666}\definecolor{ududff}{rgb}{0.30196078431372547,0.30196078431372547,1}
\begin{tikzpicture}[line cap=round,line join=round,>=triangle 45,x=1cm,y=1cm,scale=0.6]
\draw [line width=1pt] (5.2,6.04)-- (3.68,1.52);
\draw [line width=1pt] (6.66,1.5)-- (3.68,1.52);
\draw [line width=1pt] (5.2,6.04)-- (6.66,1.5);
\draw [line width=1pt] (3.68,1.52)-- (5.1391946671471045,-3.0199945950815232);
\draw [line width=1pt] (5.1391946671471045,-3.0199945950815232)-- (6.66,1.5);
\draw [line width=1pt] (3.68,1.52)-- (1.269694730739412,-0.23249208528160037);
\draw [line width=1pt] (5.1391946671471045,-3.0199945950815232)-- (1.269694730739412,-0.23249208528160037);
\draw [line width=1pt] (1.269694730739412,-0.23249208528160037)-- (0.3707661294541681,-3.0737466268162663);
\draw [line width=1pt] (0.3707661294541681,-3.0737466268162663)-- (5.1391946671471045,-3.0199945950815232);
\draw [line width=1pt] (1.269694730739412,-0.23249208528160037)-- (-3.4989767387438,-0.2870119544746417);
\draw [line width=1pt] (-3.4989767387438,-0.2870119544746417)-- (0.3707661294541681,-3.0737466268162663);
\draw [line width=1pt] (-3.4989767387438,-0.2870119544746417)-- (-2.038976738743802,-4.827011954474642);
\draw [line width=1pt] (-2.038976738743802,-4.827011954474642)-- (0.3707661294541681,-3.0737466268162663);
\draw [line width=1pt] (-3.4989767387438,-0.2870119544746417)-- (-5.018976738743804,-4.807011954474642);
\draw [line width=1pt] (-5.018976738743804,-4.807011954474642)-- (-2.038976738743802,-4.827011954474642);
\draw [line width=0.3pt,dash pattern=on 1pt off 1pt on 1pt off 4pt] (3.68,1.52)-- (6.374871434098983,2.386632663829189);
\draw [line width=0.3pt,dash pattern=on 1pt off 1pt on 1pt off 4pt] (6.66,1.5)-- (3.976717089988039,2.402342925490747);
\draw [line width=0.3pt,dash pattern=on 1pt off 1pt on 1pt off 4pt] (5.2,6.04)-- (5.1695973335735514,1.510002702459238);
\draw [line width=1.6pt,color=ffqqqq] (5.1695973335735514,1.510002702459238)-- (2.5*1.205+5.1696,2.5*0.877+1.51);
\node [above] at (2.5*1.205+5.1696,2.5*0.877+1.51) {$b$};
\draw [line width=0.6pt,dashed] (5.1695973335735514,1.510002702459238)-- (3.976717089988039,2.402342925490747);
\draw [line width=0.6pt,dashed] (3.976717089988039,2.402342925490747)-- (6.374871434098983,2.386632663829189);
\draw [line width=1.6pt,color=ffqqqq] (5.1695973335735514,1.510002702459238)-- (3.964847365369707,0.6337539573592014);
\draw [line width=0.6pt,dashed] (6.36299654549934,0.6172742624822333)-- (3.964847365369707,0.6337539573592014);
\draw [line width=0.6pt,dashed] (5.1695973335735514,1.510002702459238)-- (6.36299654549934,0.6172742624822333);
\draw [line width=0.6pt,dashed] (2.475173052956761,0.6439907592804615)-- (2.0253830647270834,-0.776873313408131);
\draw [line width=0.6pt,dashed] (3.964847365369707,0.6337539573592014)-- (2.475173052956761,0.6439907592804615);
\draw [line width=1.2pt,color=ffqqqq] (3.964847365369707,0.6337539573592014)-- (2.0253830647270834,-0.776873313408131);
\draw [line width=1.6pt,color=ffqqqq] (2.0253830647270834,-0.776873313408131)-- (0.8201089642016517,-1.653503274778085);
\draw [line width=0.6pt,dashed] (0.8201089642016517,-1.653503274778085)-- (1.301604444927962,-3.063253767318059);
\draw [line width=0.6pt,dashed] (1.301604444927962,-3.063253767318059)-- (2.0253830647270834,-0.776873313408131);
\draw [line width=0.6pt,dashed] (0.33840400652924657,-0.24313946344332726)-- (-0.38464100400219414,-2.5297520198781243);
\draw [line width=1.6pt,color=ffqqqq] (-0.38464100400219414,-2.5297520198781243)-- (0.8201089642016517,-1.653503274778085);
\draw [line width=0.6pt,dashed] (0.8201089642016517,-1.653503274778085)-- (0.33840400652924657,-0.24313946344332726);
\draw [line width=1.2pt,color=ffqqqq] (-2*1.939-0.385,-2*1.411-2.53)-- (-0.38464100400219414,-2.5297520198781243);
\draw [line width=0.6pt,dashed] (-2.324105304644819,-3.9403792906454536)-- (-0.8337796930510591,-3.950142384241392);
\draw [line width=0.6pt,dashed] (-0.8337796930510591,-3.950142384241392)-- (-0.38464100400219414,-2.5297520198781243);
\begin{scriptsize}
\draw [fill=ududff] (3.68,1.52) circle (2.5pt);
\draw [fill=ududff] (6.66,1.5) circle (2.5pt);
\draw [fill=ududff] (5.2,6.04) circle (2.5pt);
\draw [fill=ududff] (5.1391946671471045,-3.0199945950815232) circle (2.5pt);
\draw [fill=ududff] (1.269694730739412,-0.23249208528160037) circle (2.5pt);
\draw [fill=ududff] (0.3707661294541681,-3.0737466268162663) circle (2.5pt);
\draw [fill=ududff] (-3.4989767387438,-0.2870119544746417) circle (2.5pt);
\draw [fill=ududff] (-2.038976738743802,-4.827011954474642) circle (2.5pt);
\draw [fill=ududff] (-5.018976738743804,-4.807011954474642) circle (2.5pt);
\draw [fill=uuuuuu] (6.374871434098983,2.386632663829189) circle (2pt);
\draw [fill=uuuuuu] (5.1695973335735514,1.510002702459238) circle (2pt);
\draw [fill=uuuuuu] (3.976717089988039,2.402342925490747) circle (2pt);
\draw [fill=uuuuuu] (6.36299654549934,0.6172742624822333) circle (2pt);
\draw [fill=uuuuuu] (3.964847365369707,0.6337539573592014) circle (2pt);
\draw [fill=uuuuuu] (2.475173052956761,0.6439907592804615) circle (2pt);
\draw [fill=uuuuuu] (2.0253830647270834,-0.776873313408131) circle (2pt);
\draw [fill=uuuuuu] (0.8201089642016517,-1.653503274778085) circle (2pt);
\draw [fill=uuuuuu] (1.301604444927962,-3.063253767318059) circle (2pt);
\draw [fill=uuuuuu] (0.33840400652924657,-0.24313946344332726) circle (2pt);
\draw [fill=uuuuuu] (-0.38464100400219414,-2.5297520198781243) circle (2pt);
\draw [fill=uuuuuu] (-2.324105304644819,-3.9403792906454536) circle (2pt);
\draw [fill=uuuuuu] (-0.8337796930510591,-3.950142384241392) circle (2pt);
\end{scriptsize}
\end{tikzpicture}
\caption{The line $b$}
\label{Figure:Line}
\end{minipage}
\end{figure}

\subsection{The initial acyclic belt}
\label{Sec:InitialAcyclicBelt}

\begin{defn}[Acyclic belts]
\begin{enumerate}
\item An \emph{acyclic belt} is a maximal connected full subgraph of the exchange graph containing only vertices $(\mathbf{v},B)$ with acyclic exchange matrices $B$. 
\item The acyclic belt obtained from $(\mathbf{v}_0,B_0)$ by sequences of mutations at sinks or sources is called the \emph{initial acyclic belt}. It is denoted by $I$ and we label the vertices of $I$ so that $I_0=(\mathbf{v}_0,B_0)$ and that $I_{n+1}=(\mathbf{v}_{n+1},B_{n+1})$ is obtained from $I_n=(\mathbf{v}_n,B_n)$ by a mutation at a source for all $n\in\mathbb{Z}$. 
\end{enumerate}
\end{defn}

\begin{ex}[Type $\widetilde{A}_2$]
Let us look at the case $n=1$. In this case, the matrix $B$ has integer entries and it defines a cluster algebra of type $\tilde{A}_2$. Its exchange graph is called the \emph{brick wall} and it is shown in Figure \ref{Figure:Classic}. Moreover, it is known how to visualise the seeds by equilateral triangles in the Euclidean plane, see Figure \ref{Figure:ClassicDual}. Note that there is exactly one acyclic belt, and this acyclic belt consists of equilateral triangles.  
\end{ex}

\begin{defn}[Belt line $b$] Define $b\subseteq\mathbb{E}^2$ to be the line connecting the two feet of the altitudes in the initial triangle corresponding to the sink and the source in the initial quiver $Q_0$. The line is shown in Figure \ref{Figure:Line}.
\end{defn}

\begin{rem}[Billiard]
The line $b$ is well-studied in Euclidean geometry. It plays an important role in the solution of Fagnano's problem, which is related to triangular billiards. The problem asks to find three points on the three sides of a given triangle such that the perimeter of the triangle formed by the three points is as small as possible. For an acute-angled triangle, the minimum is attained when the points are the feet of the altitudes of the triangle, and the perimeter of the minimal triangle forms a $3$-periodic billiard orbit, see Tabachnikov~\cite[Chapter 7]{T}. The name $b$ stands both for acyclic \emph{belt} and \emph{billiard}.   
\end{rem}

Now, we need to find a compatible choice of a reference point.

\begin{prop}
\label{Prop:ReferencePoint}
The only reference point $u$ satisfying the compatibility condition \ref{Condition:Pos1} lies infinitely far away on line $b$, in the direction of the arrow from the source to the sink in the initial triangle. 
\end{prop}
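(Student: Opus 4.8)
The plan is to reduce compatibility to the rank-$2$ picture and then to analyse the reference point as a direction at infinity. By Remark~\ref{Rem:ForbiddenRegions}, a reference point $u$ is compatible with a seed precisely when $u$ avoids, at each of the three vertices of the associated triangle, the supplementary \emph{forbidden} angle determined by the orientation of the two incident sides. For a reference point at infinity in a direction $\vec d\in\mathbb{S}^1$ (with a chosen sense), the positivity of a vector $v$ is governed by the sign of $\langle v,\vec d\rangle$, and membership of $\vec d$ in a forbidden cone depends only on $\vec d$ and on the \emph{orientation type} of the seed (the directions of its three sides together with its quiver), not on the position of the triangle. Thus I first treat points at infinity by intersecting the forbidden arcs over orientation types, and handle finite points separately.

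Next I locate the belt direction and check that it is allowed. Placing the initial isosceles triangle with base $[-1,1]\times\{0\}$ and apex $(0,\tan(n\alpha))$, and using $(2n+1)\alpha=\pi$ so that $2n\alpha=\pi-\alpha$, the foot of the altitude on the sink side is $(\cos\alpha,\sin\alpha)$ while the foot on the source (base) side is the origin; hence $b$ has direction angle $\alpha$ and the source-to-sink sense points into the open upper half-plane. A direct computation of the three forbidden cones of the initial seed $(\mathbf v_0,B_0)$ gives the union $[(n+1)\alpha,2\pi)$, so its allowed set of directions is the open arc $(0,(n+1)\alpha)$, which contains $\alpha$. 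To upgrade this to all acyclic seeds I use the billiard interpretation: the initial acyclic belt is the unfolding of the Fagnano (orthic) billiard orbit, and $b$ is exactly the straightened orbit, a single Euclidean line meeting every belt triangle. I will show that at every vertex of every belt triangle the forbidden cone lies on the side of $b$ pointing \emph{backwards}, so that the forward direction $\alpha$ is never forbidden; together with the orthogonal-projection-onto-$b$ property of the cyclic seeds this proves that the point at infinity on $b$ in the source-to-sink sense is compatible with every seed of $\Gamma$.

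For uniqueness I argue on two fronts. No finite point is compatible: the belt is periodic, so it contains translates of $(\mathbf v_0,B_0)$ receding to infinity in the $+b$ direction, and a fixed finite $u$ is seen from these far triangles in a direction converging to $\alpha+\pi$; since $\alpha+\pi\in[(n+1)\alpha,2\pi)$ this direction is forbidden for that orientation type, so $u$ fails compatibility with some far belt seed. No direction other than $\alpha$ is compatible: as one walks along the belt the triangles are reflected in their sides, so their side-directions rotate by multiples of $\alpha$, and because $\gcd(n,2n+1)=1$ these run through all $2n+1$ classes $k\alpha\pmod\pi$; the corresponding allowed arcs, each a rotate of $(0,(n+1)\alpha)$, have intersection equal to the single direction $\alpha$. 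Combining the two fronts, the only reference point satisfying the compatibility condition~\ref{Condition:Pos1} is the point at infinity on $b$ pointing from the source to the sink.

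\textbf{Main obstacle.} The delicate step is the covering statement: that the forbidden cones over all orientation types leave \emph{exactly} the single direction $\alpha$ uncovered. This requires the precise altitude/billiard geometry --- that $\alpha$ is the unique direction whose unfolded orbit threads every belt triangle --- together with careful bookkeeping at the boundary, where the short-period/long-period transition decides whether the critical direction $\alpha$ itself is allowed or forbidden. I also expect to need a verification that no cyclic seed forbids $\alpha$, for which the orthogonal-projection property of $b$ for cyclic seeds is the key input.
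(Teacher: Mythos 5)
Your framework---reducing compatibility to forbidden direction-cones at each vertex, computing that $b$ has direction $\alpha$ with the initial seed's allowed arc equal to $(0,(n+1)\alpha)$, and excluding finite reference points by means of translates of the initial seed receding along $b$---is sound, and that last step is essentially the mechanism of the paper's proof. The genuine gap is in your second ``front'', the exclusion of directions at infinity other than $\alpha$. You assert that the side-directions of the belt triangles run through all $2n+1$ classes $k\alpha\pmod{\pi}$ because $\gcd(n,2n+1)=1$; this is false in general. Every sixth triangle of the belt is a \emph{translate} of $\Delta_0$, so the belt realises at most a dozen or so distinct side-directions per period (three per triangle, with consecutive triangles sharing their mirror side), which is fewer than $2n+1$ once $n\geq 6$. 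Worse, the conclusion you draw is internally inconsistent: if the allowed arcs really were the rotates $(k\alpha,(n+1+k)\alpha)$ of $(0,(n+1)\alpha)$ for all residues $k$, their common intersection would be empty rather than $\{\alpha\}$---already the rotate by $+\alpha$ is an open arc that excludes $\alpha$. So the ``main obstacle'' you flag is not boundary bookkeeping; the proposed covering mechanism cannot produce the claimed intersection, and this is exactly where the proof has to happen.

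The paper closes this gap by a different observation: in each period of six source-mutations there are two triangles, $\Delta_{-1}$ and $\Delta_{-4}$ (and their translates $\Delta_{-1-6k}$, $\Delta_{-4-6k}$), whose base sides are \emph{parallel to} $b$. The forbidden sectors at the base vertices of these triangles have a bounding ray along the base direction, so their union over the receding translates fills an entire half-plane bounded by the line through the base of $\Delta_{-1}$ (respectively $\Delta_{-4}$). Compatibility with all of these seeds therefore confines $u$ to the strip between two lines parallel to $b$, which kills every direction at infinity except $\pm\alpha$ in one stroke; the remaining finite points of the strip and the backward direction $\alpha+\pi$ are then excluded exactly as in your first front. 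If you prefer to keep the arc-intersection formulation, you must actually compute the allowed arcs of all six orientation types in a period (they are not rotates of a single arc by all multiples of $\alpha$) and verify that their intersection is $\{\alpha\}$; the two parallel-to-$b$ triangles are precisely the ones whose arcs have $\alpha$ as an endpoint from either side. Finally, the sufficiency claim you append---that the point at infinity on $b$ is compatible with \emph{every} seed, including the cyclic ones---is not part of Proposition \ref{Prop:ReferencePoint} and is proved separately later in the paper; asserting it via ``the orthogonal-projection property'' without an argument does not establish it.
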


\begin{figure}
\centering
\begin{minipage}{.45\textwidth}
\begin{center}
\begin{tikzpicture}
\newcommand{\XX}{1.3cm};
\newcommand{\HH}{1cm};
\newcommand{\GG}{0.5cm};

\node at (-\XX,0) {$\bullet$};
\node at (\XX,0) {$\bullet$};
\node at (0,3.078*\XX) {$\bullet$};

\path[draw,-] (-\XX,0) to (0,3.078*\XX);
\path[draw,-] (\XX,0) to (0,3.078*\XX);
\path[draw,-] (-\XX,0) to (\XX,0);

\path[,draw,-latex,shorten >=0.1cm,shorten <=0.1cm] (0,0) to (0.5*\XX,1.539*\XX);
\path[,draw,-latex,shorten >=0.1cm,shorten <=0.1cm] (0,0) to (-0.5*\XX,1.539*\XX);
\path[,draw,-latex,shorten >=0.1cm,shorten <=0.1cm] (-0.5*\XX,1.539*\XX) to (0.5*\XX,1.539*\XX);

\fill[pattern color=red!50!,pattern=vertical lines] (-\XX,0) -- (\XX+\HH,0) -- (\XX+\HH-\GG,-3.078*\GG) -- (-\XX-\GG,-3.078*\GG); 
\draw[opacity=.4] (-\XX-\GG,-3.078*\GG) -- (-\XX,0) -- (\XX+\HH,0);

\fill[pattern color=blue!50!,pattern=horizontal lines] (\XX,0) -- (-\XX-\HH,0) -- (-\XX-\HH+\GG,-3.078*\GG) -- (\XX+\GG,-3.078*\GG); 
\draw[opacity=.4] (\XX+\GG,-3.078*\GG) -- (\XX,0) -- (-\XX-\HH,0);

\fill[pattern color=green!50!,pattern=north east lines] (-\XX-\GG,-3.078*\GG) -- (0,3.078*\XX) -- (-\GG,3.078*\XX+3.078*\GG) -- (-\XX-2*\GG,0) ; 
\draw[opacity=.4] (0,3.078*\XX) -- (-\GG,3.078*\XX+3.078*\GG);

\end{tikzpicture}
\caption{Locus of references points not compatible with the initial seed}
\label{Figure:Forbidden}
\end{center}
\end{minipage}
\begin{minipage}{.5\textwidth}
\centering
\begin{tikzpicture}

\newcommand{\n}{4} %% number of bricks
\newcommand{\s}{0.7} 
\newcommand{\y}{0.5} 
\renewcommand{\d}{0.2} 

\path[draw,-] (-\d,0) to (2*\n*\s+\d,0);
\path[draw,-] (-\d,\y) to (2*\n*\s+\d,\y);
\path[draw,-] (-\d,2*\y) to (2*\n*\s+\d,2*\y);

\foreach \x in {0,...,\n}
{
\node at (2*\x*\s,0) {$\bullet$};
\node at (2*\x*\s,\y) {$\bullet$};
\path[draw,-] (2*\x*\s,0) to (2*\x*\s,\y); 
}

\foreach \x in {1,...,\n}
{
\node at (2*\x*\s-\s,\y) {$\bullet$};
\node at (2*\x*\s-\s,2*\y) {$\bullet$};
\path[draw,-] (2*\x*\s-\s,\y) to (2*\x*\s-\s,2*\y); 
}

\end{tikzpicture}
\caption{The exchange graph of a cluster algebra of type $A_2^{(1)}$}
\label{Figure:Classic}
\vspace{0.5\baselineskip}
\begin{tikzpicture}

\newcommand{\numbertriangles}{8} %% number of triangles
\newcommand{\sidelength}{0.5} %% side length triangle
\newcommand{\lengthinf}{1.5} %% length of (vertical) infinity-gons
\newcommand{\lengthinfhorizontal}{3.5} %% length of horizontal infinity-gons

\newcommand{\height}{0.886*\sidelength} %% height of triangles

\path[draw,-] (0,0) to (0.5*\height*\numbertriangles,\height*\numbertriangles);
\path[draw,-] (-\sidelength,0) to (0.5*\height*\numbertriangles-\sidelength,\height*\numbertriangles);

\foreach \x in {0,...,\numbertriangles}
{
\path[draw,-] (0.5*\height*\x,\height*\x) to (-\lengthinfhorizontal+0.5*\height*\x-\sidelength,\height*\x);
\path[draw,-] (0.5*\height*\x-\sidelength,\height*\x) to (0.5*\height*\x-\sidelength+0.5*\sidelength+0.5*\lengthinf,\height*\x-0.866*\sidelength-0.866*\lengthinf);
}

\path[draw,-] (0.5*\height*\numbertriangles,\height*\numbertriangles) to (0.5*\height*\numbertriangles+0.5*\lengthinf,\height*\numbertriangles-0.866*\lengthinf);

\end{tikzpicture}
\caption{Geometric description of the seeds of type $A_2^{(1)}$}
\label{Figure:ClassicDual}
\end{minipage}
\end{figure}

\begin{proof}
We consider the initial seed from Figure \ref{Figure:Initial}. Condition \ref{Condition:Pos1} requires $u$ to be compatible with the seed at every index, that is, at all angles of the initial triangle $\Delta_0$. Remark \ref{Rem:ForbiddenRegions} allows us to determine the loci of compatibility for each angle explicitly. The loci are given by sectors around the $3$ angles; the sectors of non-compatible points are coloured green, blue, and red in Figure \ref{Figure:Forbidden}.

Starting with the initial seed we perform a sequence of mutations at sources. We label the initial seed $0$ and occurring further seeds by $-1,-2,\ldots$, see Figure \ref{Figure:InitialBelt}. The discussion in the previous paragraph about the location of $u$ implies that each of the first $6$ source mutations is a reflection of a triangle across a side.

The composition of two reflections across two lines intersecting each other at an angle $\varphi$ is a rotation with angle $2\varphi$. The composition of three rotations with angles $2\alpha$, $2n\alpha$ and $2n\alpha$ is a translation due to $2\alpha+2n\alpha+2n\alpha=2\pi$. Hence, after $6$ steps we obtain a triangle $\Delta_{-6}$ which can be obtained from $\Delta_0$ by a translation with a vector $w$. We obtain regions of compatibility for $\Delta_{-6}$ similar to those $\Delta_0$ in Figure \ref{Figure:Forbidden} but shifted by vector $w$. Analogously, we can continue to mutate at sources, and obtain similar regions of compatibility for every sixth triangle. 

It is well-known in the literature that the line $b$ intersects the occuring triangles in the feet of two altitudes, see for example Section 4.5 about Fagnano's theorem in Coxeter--Greitzer's book \cite{CG}. From this we can conclude that the translation vector $w$ is parallel to $b$.

A tracking of angles shows that the base sides of the triangles $\Delta_{-1}$ and $\Delta_{-4}$ in seeds $-1$ and $-4$ are parallel to $b$. The compatibility of the reference point of the seeds $\Delta_{-1-6k}$ and $\Delta_{-4-6k}$ for all $k\geq 0$ implies that $u$ must lie between the two lines defined by the base sides of the triangles $\Delta_{-1}$ and $\Delta_{-4}$. This implies that $u$ lies infinitely far away on the line $b$.
\end{proof}

The initial acyclic belt is shown in Figure \ref{Figure:InitialBelt} for the example $n=2$ (that is, $\alpha=\tfrac{\pi}{5}$).

\begin{rem} We can describe the notion of positivity given by the reference point $u$ from Proposition \ref{Prop:ReferencePoint} in geometric terms. Suppose that a seed $(\mathbf{v},B)$ is given by a triangle $\Delta$. Let $s$ be a side of $\Delta$, and without loss of generality let us assume that $s$ corresponds to the component $v_1$ of $\mathbf{v}=(v_1,v_2,v_3)$. The extension of $s$ divides $\mathbb{E}^2$ into two half-planes. Let $z\in\mathbb{E}^2$ be a vector perpendicular to $s$ such that lies in a different half-plane than $\Delta$. Then it is easy to see that there exists a vector $b^{\perp}\in\mathbb{E}^2$ having the property that $v_1$ (and hence the mutation of $(\mathbf{v},B)$ at $v_1$) is positive if and only if $(b^{\perp},z)>0$. Here $(\cdot,\cdot)\, \colon\, \mathbb{E}^2\times \mathbb{E}^2\to \mathbb{R}$ denotes the standard scalar product. By construction, the line $b^{\perp}$ is orthogonal to the line $b$.  
\end{rem}

%The initial triangle $\Delta_0$ intersects the line $b$ in two points. We denote by $P$ the intersection of $b$ and the base the triangle; we denote by $Q$ the intersection of $b$ and the on of the legs of the triangle. Suppose that a half-plane $\Pi\subseteq \mathbb{E}^2$ is bounded by a $l\subseteq \mathbb{E}^2$ that is not parallel to $b$. Then $\Pi\cap b$ is a ray $r$. We say that $\Pi$ is \emph{positive} if the points on $r$ that are far away from the origin are closer to $Q$ than to $P$. We say it is \emph{negative} otherwise. In other words, when $\Pi=\{\,v\in \mathbb{E}^2\mid (v,w)<0\,\}$ for some $w\in\mathbb{E}^2$ (orthogonal to $l$) and $u=P-Q$ (parallel to $b$), then $\Pi$ is positive if and only if $(u,w)>0$. Furthermore, if $l$ is parallel to $b$ we say that $\Pi$ is positive if and only if $b\subseteq \Pi$.
%
%\begin{prop} Suppose that the seed $(\mathbf{v},B)$ is given by a triangle or an infinite region $\Delta$. Moreover, assume that $\Pi$ is one of its defining half-planes. Then the mutation $\mu_H(\Delta)$ is positive if and only if $\Pi$ is positive. 
%\end{prop}
%
%\begin{proof} To be written.
%\end{proof}

The discussion in the proof of Proposition \ref{Prop:ReferencePoint} shows the following.

\begin{prop} 
\label{Prop:InitialBelt}
The initial acyclic belt has the following geometric structure.
\begin{enumerate}
\item The domain of every geometric realisation $I_n$ with $n\in\mathbb{Z}$ inside the initial acyclic belt is an acute-angled triangle. The line $b$ intersects every triangle $I_n$ in two points. These points are the feet of two altitudes in $I_n$. 
\item Suppose that $n\in\mathbb{Z}$. The domains of the geometric realisation $I_n$ and $I_{n+6}$ are obtained from each other by a translation by a vector parallel to $b$.
\end{enumerate}
\end{prop}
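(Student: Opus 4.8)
The plan is to read off both statements from the analysis already carried out in the proof of Proposition~\ref{Prop:ReferencePoint}, reorganising it into the two claims and adding the inductive bookkeeping that was left implicit there.

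For the first claim I would start with acute-angledness. Every $I_n$ lies in the acyclic belt, so its exchange matrix $B_n$ is acyclic, and a geometric realisation with acyclic quiver is represented by an acute-angled triangle, which is a consequence of condition~(3) in Definition~\ref{Def:GeoReal} (see also \cite{FT}). For the statement about $b$ I would argue by induction on $|n|$. The base case $n=0$ holds by the definition of the belt line, which connects the feet of two altitudes of $\Delta_0$. For the inductive step I would use that the source mutation passing from $I_n$ to $I_{n\pm 1}$ is realised by the reflection $\sigma$ of the triangle across the shared side, as established in the proof of Proposition~\ref{Prop:ReferencePoint}. Being an isometry, $\sigma$ carries altitudes to altitudes and hence feet of altitudes to feet of altitudes; thus it sends the two feet of $I_n$ through which $b$ passes to two feet of $I_{n\pm1}$. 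The point that must be checked is that the segment of $b$ inside $I_n$ and its continuation inside $I_{n\pm1}$ join into one straight line, i.e. that $b$ does not bend at the shared side. This is exactly Fagnano's reflection property of the orthic triangle: at the foot of the altitude lying on the shared side the incoming and outgoing portions make equal angles with that side, see Coxeter--Greitzer~\cite[Section 4.5]{CG}. Equivalently, $b$ is the unfolding of the $3$-periodic Fagnano billiard trajectory, which is straight precisely because the reflection law holds at each foot.

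For the second claim I would invoke the angle count from the proof of Proposition~\ref{Prop:ReferencePoint}. The composition of the six consecutive source mutations $I_n\to I_{n+1}\to\cdots\to I_{n+6}$ is a composition of six reflections across the sides of the intervening triangles. Pairing them into three rotations, each a product of two reflections across sides meeting at a vertex of angle $\alpha$, $n\alpha$ or $n\alpha$, produces rotations by $2\alpha$, $2n\alpha$ and $2n\alpha$. Since $2\alpha+2n\alpha+2n\alpha=2(2n+1)\alpha=2\pi$, the total rotation angle vanishes, so the composition is an orientation-preserving isometry with trivial rotational part, that is, a translation by some vector $w$. Because the first claim shows that $b$ meets every triangle along one and the same straight line, this translation maps $b$ onto itself, which forces $w$ to be parallel to $b$. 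Finally, the configuration of six triangles between $I_n$ and $I_{n+6}$ is the same for every $n$, since the mutation rule at sources does not depend on $n$; hence one and the same $w$ works throughout, giving the asserted translational periodicity.

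The main obstacle is the collinearity in the inductive step of the first claim: one must ensure that the feet-of-altitudes segments of consecutive triangles are genuinely collinear, not merely isometric images of one another. This is precisely the content of Fagnano's reflection property, and once it is secured the translation statement of part~(2) follows quickly from the reflection-composition count.
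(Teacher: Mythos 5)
Your proposal is correct and follows essentially the same route as the paper, which simply derives this proposition from the discussion in the proof of Proposition~\ref{Prop:ReferencePoint}: source mutations act as reflections across shared sides, the feet-of-altitudes/Fagnano property from Coxeter--Greitzer gives that $b$ passes through the required points of every triangle, and the angle count $2\alpha+2n\alpha+2n\alpha=2\pi$ shows the six-fold composition is a translation parallel to $b$. Your version merely makes the induction and the collinearity-via-reflection-law step explicit, which the paper leaves implicit behind the citation.
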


\begin{rem}
Proposition \ref{Prop:InitialBelt} is true for every other choice of an acyclic seed (given by an acute-angled initial triangle and an acyclic quiver) when we choose a reference point infinitely far away on the line through the feet of the altitudes corresponding to the sink and the source. Hence it is true for every acyclic belt. \emph{A priori}, two different acyclic belts could yield two distinct lines $b$ and $b'$. Moreover, two different directions of the lines could result in two conflicting definitions of the reference point (and thus, conflicting notions of positivity), and \emph{a priori} the length of the translation vectors in Proposition~\ref{Prop:InitialBelt} (2) could depend on the acyclic seed. However, we will show in the next section that the direction of the line $b$, the line $b$ itself, and the length of the translation vector are independent of the choice of the acyclic belt. 
\end{rem}

\begin{figure}
\begin{center}
\begin{tikzpicture}

%%%% Preamble
\newcommand{\x}{1cm} %%% half the length of the base of the initial triangle
\newcommand{\dist}{0cm} %%% distance between triangles at lazy mutations

\newcommand{\h}{\x*\SinTwoPiOverFive/\CosTwoPiOverFive} %%% height of the initial triangle
\newcommand{\xa}{-2*\x} %%% x-coordinate of vector a
\newcommand{\ya}{0} %%% y-coordinate of vector a
\newcommand{\xb}{-\x*2*\CosPiOverFive} %%% x-coordinate of vector b
\newcommand{\yb}{-\x*2*\SinPiOverFive} %%% y-coordinate of vector b
\newcommand{\xc}{-\x*2*\CosTwoPiOverFive} %%% x-coordinate of vector c
\newcommand{\yc}{-\x*2*\SinTwoPiOverFive} %%% y-coordinate of vector c

%%%% Initial triangle
\path[draw,thick] (-\x,0) to (\x,0);
\path[draw,thick] (-\x,0) to (0,\h);
\path[draw,thick] (\x,0) to (0,\h);

%\node[above left] (A) at (0.5*\x,0.5*\h) {\tiny{$+$}};
%\node[above right] (B) at (-0.5*\x,0.5*\h) {\tiny{$+$}};
%\node[above left] (C) at (0,0) {\tiny{$+$}};

\path[->,draw,shorten >= 2pt,shorten <=2pt,>=stealth] (-0.5*\x,0.5*\h) to (0.5*\x,0.5*\h);
\path[->,draw,shorten >= 2pt,shorten <=2pt,>=stealth] (0,0) to (0.5*\x,0.5*\h);
\path[->,draw,shorten >= 2pt,shorten <=2pt,>=stealth] (0,0) to (-0.5*\x,0.5*\h);

%\node at (0,0) {\textcolor{\RefCol}{$\bullet$}};
\node[above right] at (-\x,0) {\textcolor{\NumCol}{\tiny{$0$}}};

%%%% Triangle 1
\path[draw,thick] (-\x,0) to (0,-\h);
\path[draw,thick] (\x,0) to (0,-\h);

%\node[above left] (A) at (0.5*\x,-0.5*\h) {\tiny{$+$}};
%\node[above right] (B) at (-0.5*\x,-0.5*\h) {\tiny{$+$}};
%\node[below left] (C) at (0,0) {\tiny{$+$}};

\path[->,draw,shorten >= 2pt,shorten <=2pt,>=stealth] (-0.5*\x,-0.5*\h) to (0.5*\x,-0.5*\h);
\path[->,draw,shorten >= 2pt,shorten <=2pt,>=stealth] (0.5*\x,-0.5*\h) to (0,0);
\path[->,draw,shorten >= 2pt,shorten <=2pt,>=stealth] (-0.5*\x,-0.5*\h) to (0,0);

\node[below right] at (-\x,0) {\textcolor{\NumCol}{\tiny{$1$}}};

%%%% Triangle 2
\path[draw,thick] (-\x,0) to (-\x+\xb,\yb);
\path[draw,thick]  (-\x+\xb,\yb) to (0,-\h);

%\node[right] (A) at (-\x+0.5*\xb,0.5*\yb) {\tiny{$+$}};
%\node[right] (B) at  (-0.5*\x+0.5*\xb,0.5*\yb-0.5*\h) {\tiny{$+$}};
%\node[left] (C) at (-0.5*\x,-0.5*\h) {\tiny{$+$}};

\path[->,draw,shorten >= 2pt,shorten <=2pt,>=stealth] (-\x+0.5*\xb,0.5*\yb) to (-0.5*\x,-0.5*\h);
\path[->,draw,shorten >= 2pt,shorten <=2pt,>=stealth] (-0.5*\x+0.5*\xb,0.5*\yb-0.5*\h) to (-0.5*\x,-0.5*\h);
\path[->,draw,shorten >= 2pt,shorten <=2pt,>=stealth] (-0.5*\x+0.5*\xb,0.5*\yb-0.5*\h) to (-\x+0.5*\xb,0.5*\yb);

\node[right] at (-\x+\xb,\yb) {\textcolor{\NumCol}{\tiny{$2$}}};

%%%% Triangle 3
\path[draw,thick] (-\x+\xb,\yb) to (-\x+\xb+\xc,\yb+\yc);
\path[draw,thick]  (0,-\h) to (-\x+\xb+\xc,\yb+\yc);

%\node[below left] (A) at (-0.5*\x+0.5*\xb,-0.5*\h+0.5*\yb) {\tiny{$+$}};
%\node[above right] (B) at  (-\x+\xb+0.5*\xc,\yb+0.5*\yc) {\tiny{$+$}};
%\node[above right] (C) at (-0.5*\x+0.5*\xb+0.5*\xc,-0.5*\h+0.5*\yb+0.5*\yc) {\tiny{$+$}};

\path[->,draw,shorten >= 2pt,shorten <=2pt,>=stealth] (-\x+\xb+0.5*\xc,\yb+0.5*\yc) to (-0.5*\x+0.5*\xb,-0.5*\h+0.5*\yb);
\path[->,draw,shorten >= 2pt,shorten <=2pt,>=stealth] (-0.5*\x+0.5*\xb+0.5*\xc,-0.5*\h+0.5*\yb+0.5*\yc) to (-0.5*\x+0.5*\xb,-0.5*\h+0.5*\yb);
\path[->,draw,shorten >= 2pt,shorten <=2pt,>=stealth] (-\x+\xb+0.5*\xc,\yb+0.5*\yc) to (-0.5*\x+0.5*\xb+0.5*\xc,-0.5*\h+0.5*\yb+0.5*\yc);

\node[above right] at (-\x+\xb+\xc,\yb+\yc) {\textcolor{\NumCol}{\tiny{$3$}}};

%%%% Triangle 4
\path[draw,thick] (-\x+\xb,\yb) to (-2*\x+2*\xb+\xc,\yb);
\path[draw,thick] (-2*\x+2*\xb+\xc,\yb) to (-\x+\xb+\xc,\yb+\yc);

%\node[below] (A) at (-1.5*\x+1.5*\xb-0.5*\xc,\yb) {\tiny{$+$}};
%\node[below left] (B) at  (-\x+\xb+0.5*\xc,\yb+0.5*\yc) {\tiny{$+$}};
%\node[above] (C) at (-1.5*\x+1.5*\xb+\xc,\yb+0.5*\yc) {\tiny{$+$}};

\path[->,draw,shorten >= 2pt,shorten <=2pt,>=stealth] (-1.5*\x+1.5*\xb+\xc,\yb+0.5*\yc) to (-1.5*\x+1.5*\xb-0.5*\xc,\yb);
\path[->,draw,shorten >= 2pt,shorten <=2pt,>=stealth] (-1.5*\x+1.5*\xb+\xc,\yb+0.5*\yc) to (-\x+\xb+0.5*\xc,\yb+0.5*\yc);
\path[->,draw,shorten >= 2pt,shorten <=2pt,>=stealth] (-1.5*\x+1.5*\xb-0.5*\xc,\yb) to (-\x+\xb+0.5*\xc,\yb+0.5*\yc);

\node[below left] at (-\x+\xb,\yb) {\textcolor{\NumCol}{\tiny{$4$}}};

%%%% Triangle 5
\path[draw,thick] (-\x+2*\xb+\xc,2*\yb+\yc) to (-2*\x+\xc+2*\xb,\yb);
\path[draw,thick] (-\x+2*\xb+\xc,2*\yb+\yc) to (-\x+\xb+\xc,\yb+\yc);

%\node[right] (A) at (-1.5*\x+2*\xb+\xc,1.5*\yb+0.5*\yc) {\tiny{$+$}};
%\node[left] (B) at  (-\x+1.5*\xb+\xc,1.5*\yb+\yc) {\tiny{$+$}};
%\node[left] (C) at (-1.5*\x+1.5*\xb+\xc,\yb+0.5*\yc) {\tiny{$+$}};

\path[->,draw,shorten >= 2pt,shorten <=2pt,>=stealth]  (-1.5*\x+2*\xb+\xc,1.5*\yb+0.5*\yc) to (-\x+1.5*\xb+\xc,1.5*\yb+\yc);
\path[->,draw,shorten >= 2pt,shorten <=2pt,>=stealth]  (-1.5*\x+2*\xb+\xc,1.5*\yb+0.5*\yc) to (-1.5*\x+1.5*\xb+\xc,\yb+0.5*\yc);
\path[->,draw,shorten >= 2pt,shorten <=2pt,>=stealth] (-\x+1.5*\xb+\xc,1.5*\yb+\yc) to (-1.5*\x+1.5*\xb+\xc,\yb+0.5*\yc);

\node[left] at (-\x+\xb+\xc,\yb+\yc) {\textcolor{\NumCol}{\tiny{$5$}}};

%%%% Triangle 6
\path[draw,thick] (-\x+\xa+2*\xb+\xc,0+\ya+2*\yb+\yc) to (\x+\xa+2*\xb+\xc,0+\ya+2*\yb+\yc);
\path[draw,thick] (-\x+\xa+2*\xb+\xc,0+\ya+2*\yb+\yc) to (0+\xa+2*\xb+\xc,\h+\ya+2*\yb+\yc);

%\node[above left] (A) at (0.5*\x+\xa+2*\xb+\xc,0.5*\h+\ya+2*\yb+\yc) {\tiny{$+$}};
%\node[above right] (B) at (-0.5*\x+\xa+2*\xb+\xc,0.5*\h+\ya+2*\yb+\yc) {\tiny{$+$}};
%\node[above left] (C) at (0+\xa+2*\xb+\xc,0+\ya+2*\yb+\yc) {\tiny{$+$}};

\path[->,draw,shorten >= 2pt,shorten <=2pt,>=stealth] (-0.5*\x+\xa+2*\xb+\xc,0.5*\h+\ya+2*\yb+\yc) to (0.5*\x+\xa+2*\xb+\xc,0.5*\h+\ya+2*\yb+\yc);
\path[->,draw,shorten >= 2pt,shorten <=2pt,>=stealth] (0+\xa+2*\xb+\xc,0+\ya+2*\yb+\yc) to (0.5*\x+\xa+2*\xb+\xc,0.5*\h+\ya+2*\yb+\yc);
\path[->,draw,shorten >= 2pt,shorten <=2pt,>=stealth] (0+\xa+2*\xb+\xc,0+\ya+2*\yb+\yc) to (-0.5*\x+\xa+2*\xb+\xc,0.5*\h+\ya+2*\yb+\yc);

\node[above right] at (-\x+\xa+2*\xb+\xc,0+\ya+2*\yb+\yc) {\textcolor{\NumCol}{\tiny{$6$}}};

\path[draw,thick] (-\x-\xb-\xa+3*\dist,-\yb-\ya) to (-2*\x-\xa+3*\dist,-\yb-\ya-\yc);
\path[draw,thick] (-\x-\xa+3*\dist,-\ya) to (-\x-\xb-\xa+3*\dist,-\yb-\ya);

%\node[right] (A) at (-1.5*\x-\xa+3*\dist,\ya-0.5*\yb-0.5*\yc) {\tiny{$-$}};
%\node[left] (B) at  (-\x-0.5*\xb-\xa+3*\dist,-0.5*\yb-\ya) {\tiny{$-$}};
%\node[left] (C) at (-1.5*\x-0.5*\xb-\xa+3*\dist,-\ya-\yb-0.5*\yc) {\tiny{$-$}};

\path[->,draw,shorten >= 2pt,shorten <=2pt,>=stealth] (-\x-0.5*\xb-\xa+3*\dist,-0.5*\yb-\ya) to (-1.5*\x-0.5*\xb-\xa+3*\dist,-\ya-\yb-0.5*\yc);
\path[->,draw,shorten >= 2pt,shorten <=2pt,>=stealth] (-1.5*\x-\xa+3*\dist,\ya-0.5*\yb-0.5*\yc) to (-1.5*\x-0.5*\xb-\xa+3*\dist,-\ya-\yb-0.5*\yc);
\path[->,draw,shorten >= 2pt,shorten <=2pt,>=stealth] (-1.5*\x-\xa+3*\dist,\ya-0.5*\yb-0.5*\yc) to (-\x-0.5*\xb-\xa+3*\dist,-0.5*\yb-\ya);

\node[left] at (-\x-0.5*\xb-\xa+3*\dist,-0.5*\yb-\ya) {\textcolor{\NumCol}{\tiny{$-1$}}};

%%%% Triangle -5
\path[draw,thick] (-\x+\xb-\xa-2*\xb-\xc+3*\dist,\yb-\ya-2*\yb-\yc) to (-\x+\xb+\xc-\xa-2*\xb-\xc+3*\dist,\yb+\yc-\ya-2*\yb-\yc);
\path[draw,thick] (-2*\x+2*\xb+\xc-\xa-2*\xb-\xc+3*\dist,\yb-\ya-2*\yb-\yc) to (-\x+\xb-\xa-2*\xb-\xc+3*\dist,\yb-\ya-2*\yb-\yc);

%\node[below] (A) at (-1.5*\x+1.5*\xb-0.5*\xc-\xa-2*\xb-\xc+3*\dist,\yb-\ya-2*\yb-\yc) {\tiny{$-$}};
%\node[below left] (B) at  (-\x+\xb+0.5*\xc-\xa-2*\xb-\xc+3*\dist,\yb+0.5*\yc-\ya-2*\yb-\yc) {\tiny{$-$}};
%\node[above] (C) at (-1.5*\x+1.5*\xb+\xc-\xa-2*\xb-\xc+3*\dist,\yb+0.5*\yc-\ya-2*\yb-\yc) {\tiny{$-$}};

\path[->,draw,shorten >= 2pt,shorten <=2pt,>=stealth] (-1.5*\x+1.5*\xb+\xc-\xa-2*\xb-\xc+3*\dist,\yb+0.5*\yc-\ya-2*\yb-\yc) to (-1.5*\x+1.5*\xb-0.5*\xc-\xa-2*\xb-\xc+3*\dist,\yb-\ya-2*\yb-\yc);
\path[->,draw,shorten >= 2pt,shorten <=2pt,>=stealth] (-1.5*\x+1.5*\xb+\xc-\xa-2*\xb-\xc+3*\dist,\yb+0.5*\yc-\ya-2*\yb-\yc) to (-\x+\xb+0.5*\xc-\xa-2*\xb-\xc+3*\dist,\yb+0.5*\yc-\ya-2*\yb-\yc);
\path[->,draw,shorten >= 2pt,shorten <=2pt,>=stealth] (-1.5*\x+1.5*\xb-0.5*\xc-\xa-2*\xb-\xc+3*\dist,\yb-\ya-2*\yb-\yc) to (-\x+\xb+0.5*\xc-\xa-2*\xb-\xc+3*\dist,\yb+0.5*\yc-\ya-2*\yb-\yc);

\node[below left] at (-\x+\xb-\xa-2*\xb-\xc+3*\dist,\yb-\ya-2*\yb-\yc) {\textcolor{\NumCol}{\tiny{$-2$}}};

%%%% Triangle -6
\path[draw,thick] (0-\xa-2*\xb-\xc+3*\dist,-\h-\ya-2*\yb-\yc) to (-\x+\xb+\xc-\xa-2*\xb-\xc+3*\dist,\yb+\yc-\ya-2*\yb-\yc);
\path[draw,thick]  (0-\xa-2*\xb-\xc+3*\dist,-\h-\ya-2*\yb-\yc) to (-\x+\xb-\xa-2*\xb-\xc+3*\dist,\yb-\ya-2*\yb-\yc);

%\node[below left] (A) at (-0.5*\x+0.5*\xb-\xa-2*\xb-\xc+3*\dist,-0.5*\h+0.5*\yb-\ya-2*\yb-\yc) {\tiny{$-$}};
%\node[above right] (B) at  (-\x+\xb+0.5*\xc-\xa-2*\xb-\xc+3*\dist,\yb+0.5*\yc-\ya-2*\yb-\yc) {\tiny{$-$}};
%\node[above right] (C) at (-0.5*\x+0.5*\xb+0.5*\xc-\xa-2*\xb-\xc+3*\dist,-0.5*\h+0.5*\yb+0.5*\yc-\ya-2*\yb-\yc) {\tiny{$-$}};

\path[->,draw,shorten >= 2pt,shorten <=2pt,>=stealth] (-\x+\xb+0.5*\xc-\xa-2*\xb-\xc+3*\dist,\yb+0.5*\yc-\ya-2*\yb-\yc) to (-0.5*\x+0.5*\xb-\xa-2*\xb-\xc+3*\dist,-0.5*\h+0.5*\yb-\ya-2*\yb-\yc);
\path[->,draw,shorten >= 2pt,shorten <=2pt,>=stealth] (-0.5*\x+0.5*\xb+0.5*\xc-\xa-2*\xb-\xc+3*\dist,-0.5*\h+0.5*\yb+0.5*\yc-\ya-2*\yb-\yc) to (-0.5*\x+0.5*\xb-\xa-2*\xb-\xc+3*\dist,-0.5*\h+0.5*\yb-\ya-2*\yb-\yc);
\path[->,draw,shorten >= 2pt,shorten <=2pt,>=stealth] (-\x+\xb+0.5*\xc-\xa-2*\xb-\xc+3*\dist,\yb+0.5*\yc-\ya-2*\yb-\yc) to (-0.5*\x+0.5*\xb+0.5*\xc-\xa-2*\xb-\xc+3*\dist,-0.5*\h+0.5*\yb+0.5*\yc-\ya-2*\yb-\yc);

\node[above right] at (-\x+\xb+\xc-\xa-2*\xb-\xc+3*\dist,\yb+\yc-\ya-2*\yb-\yc) {\textcolor{\NumCol}{\tiny{$-3$}}};

%%%% Triangle -7
\path[draw,thick] (-\x-\xa-2*\xb-\xc+3*\dist,0-\ya-2*\yb-\yc) to (0-\xa-2*\xb-\xc+3*\dist,-\h-\ya-2*\yb-\yc);
\path[draw,thick]  (-\x+\xb-\xa-2*\xb-\xc+3*\dist,\yb-\ya-2*\yb-\yc) to (-\x-\xa-2*\xb-\xc+3*\dist,0-\ya-2*\yb-\yc);

%\node[right] (A) at (-\x+0.5*\xb-\xa-2*\xb-\xc+3*\dist,0.5*\yb-\ya-2*\yb-\yc) {\tiny{$-$}};
%\node[right] (B) at  (-0.5*\x+0.5*\xb-\xa-2*\xb-\xc+3*\dist,0.5*\yb-0.5*\h-\ya-2*\yb-\yc) {\tiny{$-$}};
%\node[left] (C) at (-0.5*\x-\xa-2*\xb-\xc+3*\dist,-0.5*\h-\ya-2*\yb-\yc) {\tiny{$-$}};

\path[->,draw,shorten >= 2pt,shorten <=2pt,>=stealth] (-\x+0.5*\xb-\xa-2*\xb-\xc+3*\dist,0.5*\yb-\ya-2*\yb-\yc) to (-0.5*\x-\xa-2*\xb-\xc+3*\dist,-0.5*\h-\ya-2*\yb-\yc);
\path[->,draw,shorten >= 2pt,shorten <=2pt,>=stealth] (-0.5*\x+0.5*\xb-\xa-2*\xb-\xc+3*\dist,0.5*\yb-0.5*\h-\ya-2*\yb-\yc) to (-0.5*\x-\xa-2*\xb-\xc+3*\dist,-0.5*\h-\ya-2*\yb-\yc);
\path[->,draw,shorten >= 2pt,shorten <=2pt,>=stealth] (-0.5*\x+0.5*\xb-\xa-2*\xb-\xc+3*\dist,0.5*\yb-0.5*\h-\ya-2*\yb-\yc) to (-\x+0.5*\xb-\xa-2*\xb-\xc+3*\dist,0.5*\yb-\ya-2*\yb-\yc);

\node[right] at (-\x+\xb-\xa-2*\xb-\xc+3*\dist,\yb-\ya-2*\yb-\yc) {\textcolor{\NumCol}{\tiny{$-4$}}};

%%%% Triangle -8
\path[draw,thick] (-\x-\xa-2*\xb-\xc+3*\dist,0-\ya-2*\yb-\yc) to (\x-\xa-2*\xb-\xc+3*\dist,0-\ya-2*\yb-\yc);
\path[draw,thick] (\x-\xa-2*\xb-\xc+3*\dist,0-\ya-2*\yb-\yc) to (0-\xa-2*\xb-\xc+3*\dist,-\h-\ya-2*\yb-\yc);

%\node[above left] (A) at (0.5*\x-\xa-2*\xb-\xc+3*\dist,-0.5*\h-\ya-2*\yb-\yc) {\tiny{$-$}};
%\node[above right] (B) at (-0.5*\x-\xa-2*\xb-\xc+3*\dist,-0.5*\h-\ya-2*\yb-\yc) {\tiny{$-$}};
%\node[below left] (C) at (0-\xa-2*\xb-\xc+3*\dist,0-\ya-2*\yb-\yc) {\tiny{$-$}};

\path[->,draw,shorten >= 2pt,shorten <=2pt,>=stealth] (-0.5*\x-\xa-2*\xb-\xc+3*\dist,-0.5*\h-\ya-2*\yb-\yc) to (0.5*\x-\xa-2*\xb-\xc+3*\dist,-0.5*\h-\ya-2*\yb-\yc);
\path[->,draw,shorten >= 2pt,shorten <=2pt,>=stealth] (0.5*\x-\xa-2*\xb-\xc+3*\dist,-0.5*\h-\ya-2*\yb-\yc) to (0-\xa-2*\xb-\xc+3*\dist,0-\ya-2*\yb-\yc);
\path[->,draw,shorten >= 2pt,shorten <=2pt,>=stealth] (-0.5*\x-\xa-2*\xb-\xc+3*\dist,-0.5*\h-\ya-2*\yb-\yc) to (0-\xa-2*\xb-\xc+3*\dist,0-\ya-2*\yb-\yc);

\node[below right] at (-\x-\xa-2*\xb-\xc+3*\dist,0-\ya-2*\yb-\yc) {\textcolor{\NumCol}{\tiny{$-5$}}};

%%%% Triangle -9
\path[draw,thick] (-\x-\xa-2*\xb-\xc+3*\dist,0-\ya-2*\yb-\yc) to (0-\xa-2*\xb-\xc+3*\dist,\h-\ya-2*\yb-\yc);
\path[draw,thick] (\x-\xa-2*\xb-\xc+3*\dist,0-\ya-2*\yb-\yc) to (0-\xa-2*\xb-\xc+3*\dist,\h-\ya-2*\yb-\yc);

%\node[above left] (A) at (0.5*\x-\xa-2*\xb-\xc+3*\dist,0.5*\h-\ya-2*\yb-\yc) {\tiny{$-$}};
%\node[above right] (B) at (-0.5*\x-\xa-2*\xb-\xc+3*\dist,0.5*\h-\ya-2*\yb-\yc) {\tiny{$-$}};
%\node[above left] (C) at (0-\xa-2*\xb-\xc+3*\dist,0-\ya-2*\yb-\yc) {\tiny{$-$}};

\path[->,draw,shorten >= 2pt,shorten <=2pt,>=stealth] (-0.5*\x-\xa-2*\xb-\xc+3*\dist,0.5*\h-\ya-2*\yb-\yc) to (0.5*\x-\xa-2*\xb-\xc+3*\dist,0.5*\h-\ya-2*\yb-\yc);
\path[->,draw,shorten >= 2pt,shorten <=2pt,>=stealth] (0-\xa-2*\xb-\xc+3*\dist,0-\ya-2*\yb-\yc) to (0.5*\x-\xa-2*\xb-\xc+3*\dist,0.5*\h-\ya-2*\yb-\yc);
\path[->,draw,shorten >= 2pt,shorten <=2pt,>=stealth] (0-\xa-2*\xb-\xc+3*\dist,0-\ya-2*\yb-\yc) to (-0.5*\x-\xa-2*\xb-\xc+3*\dist,0.5*\h-\ya-2*\yb-\yc);

\node[above right] at (-\x-\xa-2*\xb-\xc+3*\dist,0-\ya-2*\yb-\yc) {\textcolor{\NumCol}{\tiny{$-6$}}};

\end{tikzpicture}
\end{center}
\caption{The initial acyclic belt $I$}
\label{Figure:InitialBelt}
\end{figure}
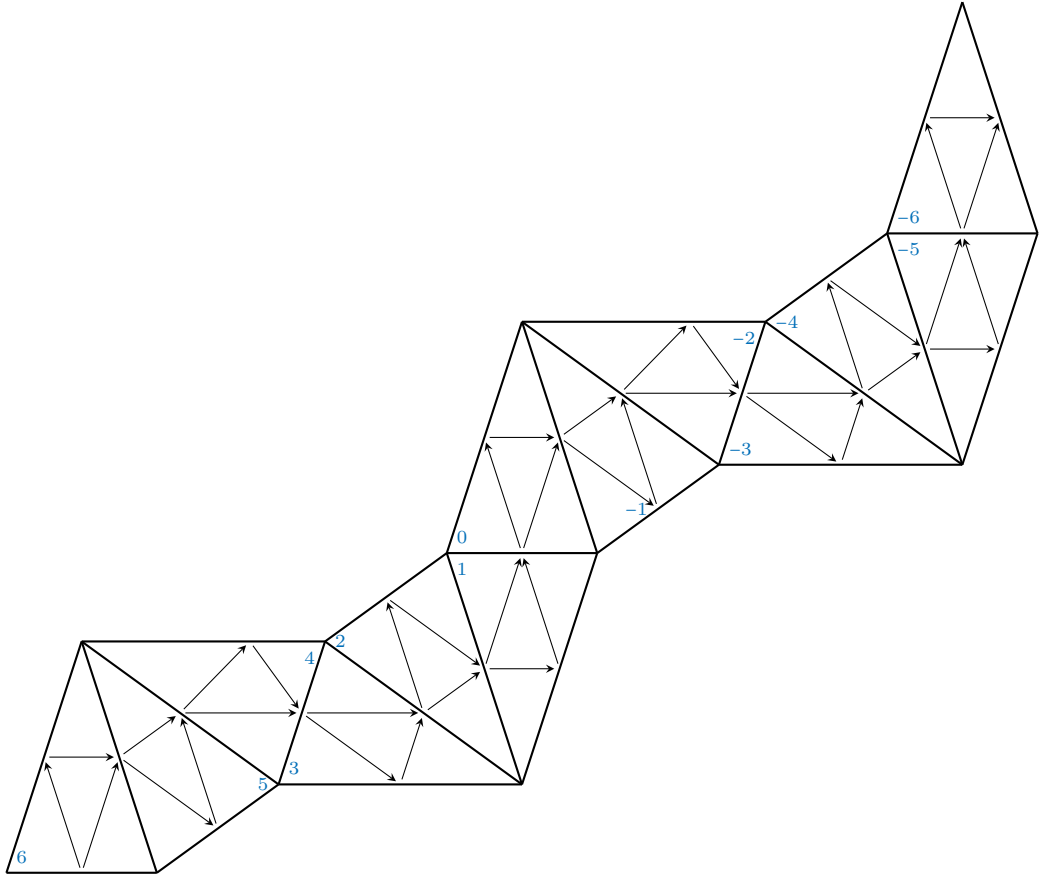

\subsection{A conserved quantity of mutation} Mutations of geometric realisations admit an interesting invariant. 

\begin{defn}[Mutation invariant]
\label{Def:MutInv}
Fix a triangle $\Delta=A_1A_2A_3$. For $i\in\{1,2,3\}$ we denote by $a_i$ the side of $\Delta$ opposite to $A_i$. We put
\begin{align*}
T(\Delta)=a_1\sin(A_2)\sin(A_3).
\end{align*}
Moreover, suppose now that $\Delta$ is an infinite region bounded by two parallel lines $a_1$ and $a_2$ and a finite side $a_3$ that intersects $a_1$ and $a_2$ at angles $\alpha$ and $\pi-\alpha$. Then we define 
\begin{align*}
T(\Delta)=a_3\sin(\alpha)\sin(\pi-\alpha)=a_3\sin^2(\alpha).
\end{align*}
\end{defn}

\begin{prop} The quantity $T$ is well-defined, that is, it is independent of the numbering of the vertices of the triangle $\Delta$. 
\end{prop}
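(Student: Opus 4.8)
The plan is to express $T(\Delta)$ in a manifestly symmetric form using the law of sines, from which independence of the labeling follows at once. First I would let $R$ denote the circumradius of the triangle $\Delta = A_1A_2A_3$ and recall that $a_i = 2R\sin(A_i)$ for every $i \in \{1,2,3\}$. Substituting this into the definition yields
\begin{align*}
T(\Delta) = a_1\sin(A_2)\sin(A_3) = 2R\sin(A_1)\sin(A_2)\sin(A_3),
\end{align*}
and the right-hand side is visibly invariant under every permutation of the indices $\{1,2,3\}$.

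To phrase this directly in terms of relabeling, note that applying a permutation $\sigma$ to the vertices replaces the defining expression by $a_{\sigma(1)}\sin(A_{\sigma(2)})\sin(A_{\sigma(3)})$. Because $\sin(A_{\sigma(2)})\sin(A_{\sigma(3)})$ is symmetric in its two factors, there are only three expressions to compare, namely $a_1\sin(A_2)\sin(A_3)$, $a_2\sin(A_1)\sin(A_3)$ and $a_3\sin(A_1)\sin(A_2)$; the computation above shows that each equals $2R\sin(A_1)\sin(A_2)\sin(A_3)$, so all three coincide.

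It then remains to treat the degenerate region bounded by two parallel lines $a_1, a_2$ and a finite transversal $a_3$. Here the only freedom in the labeling is the order of the two parallel sides, and swapping them interchanges the two intersection angles $\alpha$ and $\pi - \alpha$. Since $\sin(\pi-\alpha) = \sin(\alpha)$, the product $\sin(\alpha)\sin(\pi-\alpha) = \sin^2(\alpha)$ is unchanged, and hence so is $T(\Delta) = a_3\sin^2(\alpha)$.

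I do not expect any genuine obstacle in this argument: once the law of sines is invoked the symmetry is transparent, and the verification is essentially a one-liner. The only point deserving explicit mention is the degenerate case, where well-definedness rests on the observation that $T$ is built from the unique finite side together with the sine of an angle whose supplement has the same sine.
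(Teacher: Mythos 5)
Your argument is correct and matches the paper's proof, which likewise reduces the claim to the three equalities $a_1\sin(A_2)\sin(A_3)=a_2\sin(A_3)\sin(A_1)=a_3\sin(A_1)\sin(A_2)$ and cites the law of sines; your use of the circumradius form $a_i=2R\sin(A_i)$ just makes that one step explicit. Your additional check of the degenerate (infinite-region) case is a harmless bonus the paper omits.
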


\begin{proof}
Suppose that $\Delta=A_1A_2A_3$ a triangle as in Definition \ref{Def:MutInv}. We have to show 
\begin{align*}
a_1\sin(A_2)\sin(A_3)=a_2\sin(A_3)\sin(A_1)=a_3\sin(A_1)\sin(A_2).
\end{align*}
This is an immediate consequence of the law of sines.
\end{proof}

\begin{prop}[$T$ is mutation-invariant]
\label{Prop:MutationInvariance}
Suppose that the geometric realisation $(\mathbf{v},B)$ is represented by a triangle or an infinite region $\Delta$. Let $k\in\{1,2,3\}$ such that the mutation $\mu_k(\mathbf{v},B)=(v',B')$ is represented by triangle or an infinite region $\Delta'$. Then $T(\Delta)=T(\Delta')$. 
\end{prop}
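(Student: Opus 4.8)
The plan is to reduce the statement to two elementary features of the partial reflection underlying a mutation: it fixes one side of the triangle, and it changes the two interior angles adjacent to that side only up to replacing them by their supplements. Since $\sin x=\sin(\pi-x)$, the quantity $T$ — which is built precisely by pairing one side with the sines of its two adjacent angles — will be unaffected. Concretely, I would first normalise the expression for $T$: by the well-definedness of $T$ just established, for any partition $\{i,j,k\}=\{1,2,3\}$ one may write
\[
T(\Delta)=a_k\,\sin(A_i)\sin(A_j),
\]
where $a_k$ is the side of $\Delta$ lying on the line $l_{v_k}$ and $A_i,A_j$ are the interior angles at its two endpoints. I would take $k$ to be the index of the mutation $\mu_k$ under consideration, so that $a_k$ is exactly the side on the mirror line $l_{v_k}$.

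Next I would analyse the geometry of $\mu_k$. By Definition \ref{Def:SeedMutation}, for each $i\neq k$ the vector $v_i$ is either left unchanged or sent to $v_i-(v_i,v_k)v_k$, while $v_k\mapsto -v_k$. Since $(v_k,v_k)=2$, the map $x\mapsto x-(x,v_k)v_k$ is the reflection in the hyperplane $v_k^{\perp}$, which on the level of the realisation (as in the partial-reflection model recalled in Section \ref{Section:GeoReal} and Remark \ref{Rem:FTProp}, and spelled out for rank $2$ in Section \ref{Section:Rank2}) is the reflection of the corresponding side across the mirror line $l_{v_k}$; the sign change $v_k\mapsto -v_k$ merely flips the half-plane $\Pi_{v_k}$ and leaves $l_{v_k}$ fixed. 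Hence $l_{v_k}$ is common to $\Delta$ and $\Delta'$, and because the two endpoints $A_i=l_{v_j}\cap l_{v_k}$ and $A_j=l_{v_i}\cap l_{v_k}$ of $a_k$ lie on $l_{v_k}$, they are fixed by the reflection. Therefore $a_k$ is a common side of $\Delta$ and $\Delta'$; in particular its length is unchanged.

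It then remains to compare the angles at $A_i$ and $A_j$. At $A_j$ the non-shared side of $\Delta'$ lies either on $l_{v_i}$ or on its reflection across $l_{v_k}$; as the reflection in $l_{v_k}$ is an isometry fixing $l_{v_k}$, the unsigned angle this line makes with $l_{v_k}$ is preserved, so the interior angle of $\Delta'$ at $A_j$ equals $A_j$ or $\pi-A_j$. The same argument at $A_i$ (now with the line $l_{v_j}$) shows the interior angle there is $A_i$ or $\pi-A_i$. Using $\sin(\pi-x)=\sin(x)$, the product of the two relevant sines is unchanged, and together with the equality of $a_k$ this yields $T(\Delta')=a_k\sin(A_i)\sin(A_j)=T(\Delta)$. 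Note this argument never refers to whether the quiver is acyclic or cyclic, i.e.\ whether $\Delta$ is acute or obtuse, which is a virtue of working with $\sin$.

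Finally I would treat the degenerate case where $\Delta$ (or $\Delta'$) is an infinite region bounded by parallels $a_1\parallel a_2$ and a finite transversal $a_3$ at angles $\alpha$ and $\pi-\alpha$, where $T=a_3\sin^2(\alpha)$. If the mutation is at the transversal ($k=3$) the argument above applies verbatim with $a_k=a_3$. If it is at a parallel side, I would track $a_3$ and $\alpha$ directly: reflecting a line parallel to the mirror across the mirror gives another parallel line, so $\Delta'$ remains an infinite region; the finite side either stays or is reflected (hence keeps its length), and its angle with the parallel direction is preserved up to supplement, so $a_3\sin^2(\alpha)$ is unchanged. The only genuinely delicate point in the whole proof is the bookkeeping of which interior angle turns into its supplement — precisely the ambiguity that the definition of $T$, being insensitive to supplements, is designed to absorb; once one grants the partial-reflection description of $\mu_k$ from the model, everything else is immediate.
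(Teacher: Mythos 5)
Your proposal is correct and follows essentially the same route as the paper: the paper's proof likewise observes that $\mu_k$ fixes the side $a_k$ and alters the two adjacent interior angles only up to supplementary angles (one is kept, the other replaced by its supplement, with the sink/source case handled separately as a congruence), so that $T(\Delta)=a_k\sin(A_i)\sin(A_j)$ is unchanged because $\sin(\pi-x)=\sin(x)$. Your write-up merely makes explicit the partial-reflection bookkeeping and the infinite-region case, which the paper leaves implicit.
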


\begin{proof} The statement is true when $k$ is a sink or a source. Assume that $k$ is neither a sink or a source. Without loss of generality we may assume $k=1$. Consider the formula $T(\Delta)=a_1\sin(\angle A_1A_2A_3)\sin(\angle A_2A_3A_1)$. The mutation $\mu_k$ leaves $a_1$ and one of the angles invariant and replaces the other angle with its supplementary angle. This means $T(\Delta)=T(\Delta')$.
\end{proof}

\begin{coro} Suppose that the triangles $\Delta$ and $\Delta'$ are similar (i.e. they have the same angles) and are related by a sequence of mutations as in the previous proposition. Then $\Delta$ and $\Delta'$ are congruent (i.e. they have the same side lengths).
\end{coro}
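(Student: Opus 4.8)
The plan is to exploit the mutation-invariance of $T$ established in Proposition \ref{Prop:MutationInvariance} together with the law of sines. First I would rewrite $T$ in a manifestly symmetric form. If $R$ denotes the circumradius of the triangle $\Delta=A_1A_2A_3$, the law of sines gives $a_1=2R\sin(A_1)$, so that
\begin{align*}
T(\Delta)=a_1\sin(A_2)\sin(A_3)=2R\,\sin(A_1)\sin(A_2)\sin(A_3).
\end{align*}
Thus $T(\Delta)$ equals $2R$ times the product of the sines of the three angles of $\Delta$, a quantity that is visibly independent of the labelling of the vertices.

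Next, let $R$ and $R'$ denote the circumradii of $\Delta$ and $\Delta'$. Since $\Delta$ and $\Delta'$ are similar, they have the same three angles, and hence the products of the sines agree. Because $\Delta$ and $\Delta'$ are related by a sequence of mutations, applying Proposition \ref{Prop:MutationInvariance} step by step along that sequence gives $T(\Delta)=T(\Delta')$. Comparing the two symmetric expressions and cancelling the common nonzero factor $\sin(A_1)\sin(A_2)\sin(A_3)$ (each angle lies strictly between $0$ and $\pi$) then forces $R=R'$.

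Finally, with matching angles and equal circumradii, the identities $a_i=2R\sin(A_i)$ and $a_i'=2R'\sin(A_i')$ show that every pair of corresponding sides of $\Delta$ and $\Delta'$ has equal length, so the two triangles are congruent. The argument is essentially immediate once $T$ is written symmetrically, and there is no serious obstacle here; the only point deserving a remark is that the mutation sequence might pass through infinite regions bounded by two parallel lines. However, Proposition \ref{Prop:MutationInvariance} preserves $T$ at each single step whether or not the intermediate region is a genuine triangle, so invariance along the entire sequence still holds and the reasoning goes through unchanged.
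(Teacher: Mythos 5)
Your proof is correct and follows essentially the same route as the paper: the paper leaves this corollary to the reader, the intended argument (made explicit in the proof of Proposition \ref{Prop:FinitelyManyTriangles}) being that $T$ is mutation-invariant and homogeneous of degree $1$ in the side lengths, so the angles determine the size. Your rewriting $T(\Delta)=2R\sin(A_1)\sin(A_2)\sin(A_3)$ via the law of sines is just a concrete instantiation of that homogeneity argument, and your remark about intermediate infinite regions is a sensible (if not strictly necessary) precaution.
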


\begin{prop}
Let $\Delta=A_1A_2A_3$ be an acute-angled triangle and let $H_1$, $H_2$ and $H_3$ be the feet of the altitudes of $\Delta$. Then the invariant $T(\Delta)$ admits a geometric interpretation as
\begin{align*}
T(\Delta)=\tfrac{1}{2}\left(\lvert H_1H_2\rvert+\lvert H_2H_3\rvert+\lvert H_3H_1\rvert\right).
\end{align*}
\end{prop}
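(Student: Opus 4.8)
The plan is to identify the right-hand side as the half-perimeter of the orthic triangle $H_1H_2H_3$ and to compute each of its sides in closed form. I adopt the convention that $H_i$ is the foot of the altitude dropped from $A_i$, so that $H_i$ lies on the side $a_i$; since $\Delta$ is acute, each $H_i$ is an interior point of $a_i$ and the three segments $H_1H_2$, $H_2H_3$, $H_3H_1$ genuinely bound a triangle.

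First I would compute the two sub-segments meeting at a vertex. The right triangle $A_1H_2A_2$ (right-angled at $H_2$, with hypotenuse $A_1A_2=a_3$ and angle $A_1$ at $A_1$) gives $\lvert A_1H_2\rvert=a_3\cos(A_1)$; symmetrically the right triangle $A_1H_3A_3$ gives $\lvert A_1H_3\rvert=a_2\cos(A_1)$. Applying the law of cosines to the triangle $A_1H_2H_3$, whose angle at $A_1$ again equals $A_1$, and then invoking the law of cosines in $\Delta$ itself, I obtain
\begin{align*}
\lvert H_2H_3\rvert^2 &= \lvert A_1H_2\rvert^2 + \lvert A_1H_3\rvert^2 - 2\lvert A_1H_2\rvert\,\lvert A_1H_3\rvert\cos(A_1)\\
&= \cos^2(A_1)\left(a_2^2 + a_3^2 - 2a_2a_3\cos(A_1)\right) = a_1^2\cos^2(A_1).
\end{align*}
Because $A_1$ is acute this yields $\lvert H_2H_3\rvert=a_1\cos(A_1)$, and cyclically $\lvert H_3H_1\rvert=a_2\cos(A_2)$ and $\lvert H_1H_2\rvert=a_3\cos(A_3)$.

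It then remains to show
\begin{align*}
\tfrac12\left(a_1\cos(A_1) + a_2\cos(A_2) + a_3\cos(A_3)\right) = a_1\sin(A_2)\sin(A_3),
\end{align*}
which I would do by passing to the circumradius $R$. Writing $a_i=2R\sin(A_i)$ by the law of sines, the left-hand side becomes $\tfrac{R}{2}\left(\sin(2A_1)+\sin(2A_2)+\sin(2A_3)\right)$, while the right-hand side equals $2R\sin(A_1)\sin(A_2)\sin(A_3)$. Hence everything reduces to the identity $\sin(2A_1)+\sin(2A_2)+\sin(2A_3)=4\sin(A_1)\sin(A_2)\sin(A_3)$, valid whenever $A_1+A_2+A_3=\pi$; I would prove it in one line by sum-to-product, using $\sin(2A_1)+\sin(2A_2)=2\sin(A_3)\cos(A_1-A_2)$ and $\sin(2A_3)=-2\sin(A_3)\cos(A_1+A_2)$. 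Since $a_1\sin(A_2)\sin(A_3)=T(\Delta)$ by definition, this completes the argument.

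The computations here are entirely routine; the only genuine content is the orthic-side formula $\lvert H_2H_3\rvert=a_1\cos(A_1)$ (equivalently, the concyclicity of $A_2,A_3,H_2,H_3$ on the circle with diameter $a_1$) together with the sum-to-product identity, both classical facts from the circle of ideas around Fagnano's problem already invoked in the paper. The main thing to watch is the indexing --- matching each altitude foot to the correct vertex and opposite side --- and the use of acuteness to drop the absolute value when taking the square root.
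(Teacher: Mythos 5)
Your proof is correct, and it takes a more self-contained route than the paper. The paper simply quotes the classical fact that the semiperimeter of the orthic triangle equals $\mathcal{A}/R$ (citing Honsberger), writes $\mathcal{A}=\tfrac12 a_1a_2\sin(A_3)$, and substitutes $a_2=2R\sin(A_2)$ to land on $T(\Delta)$ in two lines. You instead derive the orthic side lengths $\lvert H_2H_3\rvert=a_1\cos(A_1)$ (and cyclic) from first principles via the two right triangles at each vertex and the law of cosines, and then reduce the claim to the identity $\sin(2A_1)+\sin(2A_2)+\sin(2A_3)=4\sin(A_1)\sin(A_2)\sin(A_3)$ for $A_1+A_2+A_3=\pi$, which you verify by sum-to-product. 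All the individual computations check out: the right angle at $H_2$ in $A_1H_2A_2$, the fact that the angle of $A_1H_2H_3$ at $A_1$ is again $A_1$ (which uses acuteness to place the feet inside the sides), the use of acuteness to take the positive square root, and the final trigonometric identity. What your version buys is independence from the external reference; what the paper's version buys is brevity, at the cost of importing the $\mathcal{A}/R$ formula as a black box. Either is acceptable.
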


\begin{proof}
The semiperimeter of the triangle $H_1H_2H_3$ is equal to $\mathcal{A}/R$ where $\mathcal{A}$ is the area and $R$ the circumradius of $\Delta$, see Honsberger \cite[Section 5]{H}. On the other hand, $\mathcal{A}=\tfrac{1}{2} a_1a_2\sin(A_3)$. The relation $a_2=2R\sin(A_2)$, see Coxeter--Greitzer \cite[Theorem 1.1]{CG}, establishes the claim.
\end{proof}

Note that the points $H_1$, $H_2$ and $H_3$ from the previous proposition constitute the solution to Fagnano's problem. Hence the term $2T(\Delta)$ is the minimum perimeter of a triangle inscribed in a fixed triangle $A_1A_2A_3$. 

\begin{defn}[Translation vector] Suppose that a seed $(\mathbf{v},B)$ is represented by an acute-angled triangle $\Delta$. The vector that relates every sixth triangle in the acyclic belt of $(\mathbf{v},B)$ is called \emph{translation vector}.
\end{defn}

\begin{coro}
Suppose that the seed $(\mathbf{v},B)$ is represented by an acute-angled triangle. Then the translation vectors of $(\mathbf{v},B)$ and the initial seed have the same length $4T(\Delta)=4T(\Delta_0)$.
\end{coro}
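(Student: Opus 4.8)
The plan is to prove the corollary in two stages: first that $T(\Delta) = T(\Delta_0)$, and second that the translation vector of an arbitrary acute-angled seed has length exactly four times the $T$-value of its triangle. The first stage is immediate. Since $(\mathbf{v},B)$ is represented by an acute-angled (hence acyclic) triangle, it is mutation-equivalent to the initial seed $(\mathbf{v}_0,B_0)$, and Proposition \ref{Prop:MutationInvariance} shows that $T$ is constant along mutations. Hence $T(\Delta)=T(\Delta_0)$ and therefore $4T(\Delta)=4T(\Delta_0)$. It then remains to identify this common value with the length of the translation vector, and for that it suffices to treat an arbitrary acute-angled seed together with its acyclic belt.

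Next I would compute the length of the translation vector $w$ from the billiard geometry of the line $b$. By Proposition \ref{Prop:InitialBelt}(1) the line $b$ meets each belt triangle $I_k$ in two feet of its altitudes, so the chord that $b$ cuts inside $I_k$ is precisely one side of the orthic triangle of $I_k$. As $k$ varies these chords concatenate along the single straight line $b$, and by Proposition \ref{Prop:InitialBelt}(2) the triangle $I_{n+6}$ is the translate of $I_n$ by $w$, with $w$ parallel to $b$. Consequently the portion of $b$ joining the chord of $I_n$ to the chord of $I_{n+6}$ has length $\lvert w\rvert$ and is tiled by the six chords contributed by $I_n,I_{n+1},\ldots,I_{n+5}$.

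The heart of the argument is to evaluate this sum of six chord lengths. Each $I_k$ is congruent to $\Delta$ (reflections across sides preserve congruence), so every chord equals one of the three orthic side lengths $\lvert H_1H_2\rvert$, $\lvert H_2H_3\rvert$, $\lvert H_3H_1\rvert$. Geometrically $b$ is the straightening (unfolding) of the Fagnano billiard orbit: one traversal of the orbit bounces off all three sides once and so realises each orthic side once, contributing the full orthic perimeter $\lvert H_1H_2\rvert+\lvert H_2H_3\rvert+\lvert H_3H_1\rvert = 2T(\Delta)$ by the proposition that identifies $T(\Delta)$ with half the orthic perimeter. A single traversal corresponds to three reflections, i.e.\ to an orientation-reversing isometry; the belt only closes up to a pure translation after six reflections, reflecting the angle balance $2A_1+2A_2+2A_3=2\pi$ (specialising to $2\alpha+2n\alpha+2n\alpha=2\pi$ for the initial triangle, as in the proof of Proposition \ref{Prop:ReferencePoint}). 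Thus the six belt chords realise each orthic side exactly twice, and
\begin{align*}
\lvert w\rvert = 2\bigl(\lvert H_1H_2\rvert+\lvert H_2H_3\rvert+\lvert H_3H_1\rvert\bigr) = 2\cdot 2T(\Delta) = 4T(\Delta).
\end{align*}
Applying this to both $\Delta$ and $\Delta_0$ and combining with $T(\Delta)=T(\Delta_0)$ completes the proof.

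The main obstacle I anticipate is the bookkeeping in the last step: proving rigorously that the six consecutive chords realise each of the three orthic sides exactly twice, rather than some sides more often than others. This is where the period-six structure of the belt (three reflections yield an orientation reversal, six a genuine translation) must be married precisely to the three-periodicity of the Fagnano orbit; the crucial factor of two in $\lvert w\rvert=4T(\Delta)$ rests entirely on this orientation count. Once the correspondence between belt reflections and billiard bounces is pinned down, the remaining identities are routine consequences of the law of sines and the orthic-perimeter formula already established.
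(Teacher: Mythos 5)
Your argument is correct and follows the same route as the paper: the paper's proof simply asserts that the translation vector equals $2\left(\lvert H_1H_2\rvert+\lvert H_2H_3\rvert+\lvert H_3H_1\rvert\right)$ with a reference to Figure \ref{Figure:Line}, which combined with the orthic-perimeter formula $T(\Delta)=\tfrac{1}{2}\left(\lvert H_1H_2\rvert+\lvert H_2H_3\rvert+\lvert H_3H_1\rvert\right)$ and the mutation-invariance of $T$ gives the claim. You have merely spelled out the unfolding argument (six belt chords, each orthic side realised twice because the billiard orbit has period three while the belt closes up only after six reflections) that the paper leaves to the picture.
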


\begin{proof}
The translation vector is equal to $2\left(\lvert H_1H_2\rvert+\lvert H_2H_3\rvert+\lvert H_3H_1\rvert\right)$, see Figure \ref{Figure:Line}. 
\end{proof}

\subsection{The feet of altitudes}

\begin{prop}[Feet of altitudes lie on $b$]
\label{Prop:Feet}
Suppose that the geometric realisation $(\mathbf{v},B)$ is obtained from the initial geometric realisation by a sequence of mutations and that it is given by a triangle $\Delta$ and a quiver $Q$.
\begin{enumerate}
\item If $\Delta$ is acute-angled so that the associated quiver $Q$ is acyclic, then the feet of the two altitudes opposite to the sink and the source lie on $b$.
\item If $\Delta$ is obtuse-angled, then the feet of the altitudes on the two sides of the obtuse angle lie on $b$.  
\end{enumerate}
\end{prop}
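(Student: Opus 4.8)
The plan is to induct on the number of mutations needed to reach $(\mathbf v,B)$ from the initial seed $I_0=(\mathbf v_0,B_0)$, proving statements (1) and (2) simultaneously. The base case is the initial acyclic belt: Proposition \ref{Prop:InitialBelt} already shows that $b$ meets every acute triangle $I_n$ in the two altitude feet opposite the sink and the source, so (1) holds there. For the inductive step I classify a single mutation $\mu_k$ geometrically using Definition \ref{Def:SeedMutation}: mutating at the side $a_k$ reflects each of the other two side-lines $l_{v_i}$ ($i\neq k$) across the fixed line $\ell=l_{v_k}$ precisely when the sign of $b_{ik}$ is suitable. Mutation at a sink or source of an acute triangle reflects both neighbouring lines, hence reflects the whole triangle across $a_k$ and stays inside one acyclic belt; mutation at the remaining vertex of an acute triangle, and every mutation of an obtuse (cyclic) triangle, reflects exactly one line and is therefore a \emph{partial reflection} carrying the triangle to a triangle that shares the side $a_k$ and one further vertex. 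By Proposition \ref{Prop:MutationInvariance} the partial reflection keeps $a_k$ and one adjacent angle and replaces the third angle by its supplement, which is exactly the passage acute $\leftrightarrow$ obtuse or obtuse $\leftrightarrow$ obtuse.

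The heart of the argument is a single geometric lemma about partial reflections. Suppose $\Delta=A_1A_2A_3$ and $\Delta'=A_1'A_2A_3$ share the side $A_2A_3$ lying on the mirror line $\ell$, the side-line $A_1A_3$ is kept fixed, and the side-line $A_1A_2$ is reflected across $\ell$ (so $A_1'$ is the intersection of the reflected line with the unchanged line $A_1A_3$). I claim that the foot $H$ of the altitude from $A_2$ is common to $\Delta$ and $\Delta'$, and that the foot $F$ of the altitude from $A_1'$ onto $\ell$ lies on the line joining $H$ to the foot $H'$ of the altitude from $A_3$ in $\Delta$. The first assertion is immediate, since both the vertex $A_2$ and the side-line $A_1A_3$ onto which it projects are untouched by the reflection. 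The second assertion is the one genuine computation: placing $A_2A_3$ on the $x$-axis and writing $\alpha_2,\alpha_3$ for the angles at $A_2,A_3$, one computes the three feet explicitly and checks that both the $x$-intercept of the line $HH'$ and the horizontal coordinate of $F$ equal $\tfrac{a_1\sin\alpha_3\cos\alpha_2}{\sin(\alpha_3-\alpha_2)}$, so that $F$, $H$ and $H'$ are collinear. The verification is a short exercise with the product-to-sum identities $\sin 2\alpha_3-\sin 2\alpha_2=2\cos(\alpha_2+\alpha_3)\sin(\alpha_3-\alpha_2)$ and $\cos 2\alpha_2+\cos 2\alpha_3=2\cos(\alpha_2+\alpha_3)\cos(\alpha_3-\alpha_2)$, and it is insensitive to whether the angles are acute or obtuse.

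Granting this lemma, the inductive step for a partial reflection follows: by the induction hypothesis the two feet named in the statement for $\Delta$ (namely $H$ and $H'$) lie on $b$, so the line $HH'$ equals $b$, and the lemma places the two feet named for $\Delta'$ (the common foot $H$ and the new foot $F$) on $b$ as well. Since the correspondence $H\leftrightarrow H$, $H'\leftrightarrow F$ is symmetric — line $HH'$ and line $HF$ coincide whenever $H'\neq H\neq F$ — the property passes back from $\Delta'$ to $\Delta$ too, so the acute $\to$ obtuse, obtuse $\to$ acute and obtuse $\to$ obtuse mutations are all handled at once. For the remaining whole-triangle reflections (the sink/source mutations inside an acyclic belt) I reuse the Fagnano unfolding already invoked for Proposition \ref{Prop:InitialBelt}: reflecting an acute triangle across one of its sides fixes the altitude foot lying on that side, and by the equal-angle (billiard) reflection law of the orthic triangle it sends the incoming leg of $b$ to the straight continuation of the outgoing leg, so $b$ meets the reflected triangle again in two altitude feet, compare Coxeter--Greitzer \cite{CG}.

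The step I expect to be most delicate is not any of these geometric facts but the combinatorial bookkeeping that the two feet tracked by the lemma are, in \emph{every} case, exactly the two feet singled out in the statement — the feet opposite the sink and source for an acute triangle, and the feet on the two sides of the obtuse angle for an obtuse triangle. Concretely, one must check that under each partial reflection the distinguished \emph{excluded} side (the side opposite the obtuse vertex in the obtuse case, and the side that is neither sink nor source in the acyclic case) is transported to the correspondingly excluded side of the mutated triangle. This is a finite check that follows from the sign conventions of Definition \ref{Def:SeedMutation} together with the acute/obtuse dichotomy coming from part (3) of Definition \ref{Def:GeoReal}; once it is in place, the induction closes and both parts (1) and (2) follow.
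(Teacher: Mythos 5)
Your argument is correct in outline and shares the paper's overall skeleton (induction on mutation distance, with sink/source mutations handled by the Fagnano unfolding already used for Proposition \ref{Prop:InitialBelt}), but the key step for the non-sink/source mutations is genuinely different. The paper does not work one mutation at a time: it organises the obtuse-angled seeds into chains $A_1A_kA_{k+1}$ with $A_2,A_3,\dots$ on a common line $l$ and a fixed angle $\psi$ at $A_1$, and observes that the single spiral similarity about $A_1$ (rotation by $\psi$, dilation by $\cos\psi$) carries $l$ to a line $b'$ containing all the relevant altitude feet at once; the induction hypothesis then forces $b'=b$. Your local collinearity lemma for one partial reflection, verified in coordinates, replaces that global similarity argument; I checked the computation and the common value $a_1\sin\alpha_3\cos\alpha_2/\sin(\alpha_3-\alpha_2)$ is right (the degenerate cases $\alpha_2+\alpha_3=\pi/2$ and $\alpha_2=\alpha_3$ do not occur for odd denominator, resp.\ correspond to infinite regions outside the statement). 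What each approach buys: the paper's similarity is coordinate-free and processes a whole obtuse chain in one stroke, while your lemma is elementary and symmetric in $\Delta\leftrightarrow\Delta'$, which is exactly what you need. One point deserves to be made explicit rather than left inside the final ``finite check'': when the obtuse angle of $\Delta$ sits at a vertex \emph{on} the mirror line (the obtuse-to-acute transition, and one of the obtuse-to-obtuse transitions), the two feet named for $\Delta$ are the foot on the mirror line and the common foot $H$ --- not the pair $H,H'$ your forward lemma tracks --- so the induction step there genuinely requires the reversed form of the lemma (with $\Delta'$ in the role of $\Delta$), not merely the remark that the property ``passes back''. Since the reversed lemma is the same computation with $r$ and $r'$ exchanged, this closes without new work, but the case split (obtuse vertex on versus off the mirror) is where the bookkeeping lives and should be written out.
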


\begin{proof}
We prove this property by induction on the length of a shortest mutation sequence from  $(\mathbf{v}_0,B_0)$ to $(\mathbf{v},B)$. There are two types of mutations, namely mutations inside an acyclic belt, see Figure \ref{Figure:Line}, and mutations involving obtuse-angled triangles. For mutations inside an acyclic belt we can use the same arguments as in the proof of Proposition \ref{Prop:InitialBelt} because the reference point $u$ lies on the line $b$.

Let us consider mutations involving obtuse-angled triangles, see Figure \ref{Figure:ObtuseMutation}, whose quivers are all cyclic. Hence a mutation at a side $a$ keeps one of the other sides and reflects the other across $s$, compare also to Figure \ref{Figure:GeometricMutations}. Hence a generic situation is given locally as follows. Points $A_2,A_3,\ldots$ lie on a common line $l$, and  another point $A_1$ does not lie on the line. The points form triangles $A_1A_{k}A_{k+1}$ for $k\geq 2$. These triangles all have angles of the same size $\psi$ at $A_1$ and are obtained from each other by successive mutations. Now we denote by $H_3,H_4,\ldots$ the feet of the altitudes from $A_3,A_4,\dots$ on the lines $A_1A_2,A_1A_3,\ldots$ We want to prove that $H_3,H_4,\dots$ all lie on the line $b$. We apply the induction hypothesis to the triangle $A_1A_{k}A_{k+1}$ such that $A_1A_kA_{k+1}$ has the shortest mutation distance from the origin among all choices of $k$. The induction hypothesis implies that the feet $P$ of the perpendicular from $A_1$ on $l$ lies on $b$. 

The similarity transformation around $A_1$, which rotates by the angle $\psi$ and stretches with factor $\cos(\psi)$, maps $A_k$ to $H_k$ for all $k \geq 2$. This transformation maps the line $l$ to a line $b'$ so that $H_3,H_4,\dots$ all lie on a common line $b'$. By construction the line $b'$ intersects $l$ at the angle $\psi$. The induction hypothesis, applied to the triangle $A_1A_{k}A_{k+1}$,  implies that the feet of the altitudes $P$ and $H_k$ lie on $b$ so that $b=b'$.
\end{proof}

\begin{center}
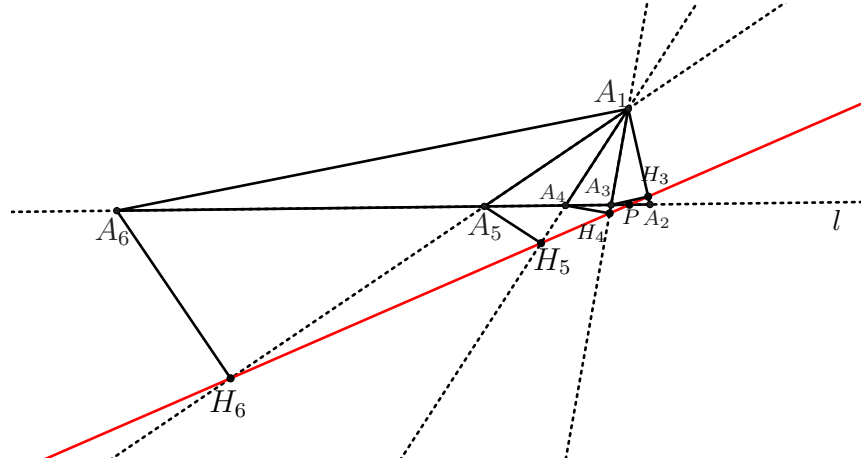
\begin{figure}
\definecolor{ffqqqq}{rgb}{1,0,0}\definecolor{sqsqsq}{rgb}{0.12549019607843137,0.12549019607843137,0.12549019607843137}\begin{tikzpicture}[scale=0.3,line cap=round,line join=round,>=triangle 45,x=1cm,y=1cm]\clip(-14.6,-12.54) rectangle (23.04,7.6);\draw [line width=1pt] (11.98,-1.32)-- (13.7,-1.3);\draw [line width=1pt] (12.74,2.94)-- (11.98,-1.32);\draw [line width=1pt] (12.74,2.94)-- (13.7,-1.3);\draw [line width=1pt,dotted,domain=-14.6:23.04] plot(\x,{(-2.51--0.02*\x)/1.72});
\node at (22,-2) {$l$};
\draw [line width=1pt,dotted,domain=-14.6:23.04] plot(\x,{(-39.49694127699572--3.646455471770662*\x)/2.3669732766539173});\draw [line width=1pt,dotted,domain=-14.6:23.04] plot(\x,{(-20.40872618586097--2.4284612471540146*\x)/3.5815884703677465});\draw [line width=1pt,dotted,domain=-14.6:23.04] plot(\x,{(--52.038-4.26*\x)/-0.76});\draw [line width=1pt] (12.74,2.94)-- (9.959514692228558,-1.3434940152066444);\draw [line width=1pt] (12.74,2.94)-- (6.360822531818554,-1.3853392728858305);\draw [line width=1pt] (6.360822531818554,-1.3853392728858305)-- (11.98,-1.32);\draw [line width=1pt,color=ffqqqq,domain=-14.6:23.04] plot(\x,{(-5.718970377196406--0.36200697228505374*\x)/0.8310080147537864});\draw [line width=1pt] (6.360822531818554,-1.3853392728858305)-- (8.873592179081012,-3.0164187284013626);\draw [line width=1pt] (9.959514692228558,-1.3434940152066444)-- (11.91361371107375,-1.6921126195076661);\draw [line width=1pt] (11.98,-1.32)-- (13.620433457500846,-0.948581103962073);\draw [line width=1pt] (12.74,2.94)-- (-9.92392109926941,-1.574696756968249);\draw [line width=1pt] (-9.92392109926941,-1.574696756968249)-- (6.360822531818554,-1.3853392728858305);\draw [line width=1pt] (-9.92392109926941,-1.574696756968249)-- (-4.88306414904689,-9.009147339553325);\begin{scriptsize}\draw [fill=sqsqsq] (11.98,-1.32) circle (4pt);\draw[color=sqsqsq] (11.4,-0.55) node {$A_3$};\draw [fill=sqsqsq] (13.7,-1.3) circle (4pt);\draw[color=sqsqsq] (14.0,-1.95) node {$A_2$};\draw [fill=sqsqsq] (12.74,2.94) circle (4pt);\draw[color=sqsqsq] (12.0,3.63) node {\large{$A_1$}};\draw [fill=sqsqsq] (9.959514692228558,-1.3434940152066444) circle (4pt);\draw[color=sqsqsq] (9.4,-0.8) node {$A_4$};\draw [fill=sqsqsq] (6.360822531818554,-1.3853392728858305) circle (4pt);\draw[color=sqsqsq] (6.4,-2.3) node {\large{$A_5$}};\draw [fill=black] (12.78942544274706,-1.3105880762471267) circle (4.5pt);\draw[color=black] (12.86,-1.85) node {$P$};\draw [fill=black] (13.620433457500846,-0.948581103962073) circle (4.5pt);\draw[color=black] (13.94,0.01) node {$H_3$};\draw [fill=black] (8.873592179081012,-3.0164187284013626) circle (4.5pt);\draw[color=black] (9.38,-3.81) node {\large{$H_5$}};\draw [fill=black] (11.91361371107375,-1.6921126195076661) circle (4.5pt);\draw[color=black] (11.1,-2.59) node {$H_4$};\draw [fill=sqsqsq] (-9.92392109926941,-1.574696756968249) circle (4pt);\draw[color=sqsqsq] (-10.1,-2.49) node {\large{$A_6$}};\draw [fill=black] (-4.88306414904689,-9.009147339553325) circle (4.5pt);\draw[color=black] (-4.99,-10.15) node {\large{$H_6$}};\end{scriptsize}\end{tikzpicture}
\caption{Mutations of an obtuse-angled triangle}
\label{Figure:ObtuseMutation}
\end{figure}
\end{center}

\begin{center}
\begin{figure}
\definecolor{ffqqqq}{rgb}{1,0,0}\definecolor{uuuuuu}{rgb}{0.26666666666666666,0.26666666666666666,0.26666666666666666}\begin{tikzpicture}[scale=0.4,line cap=round,line join=round,>=triangle 45,x=1cm,y=1cm]\draw [line width=0.8pt] (-13.00764033782241,-2.59)-- (-18.072083175656662,-4.844835226509175);\draw [line width=0.8pt] (-28.680248874204427,-2.59)-- (-18.072083175656662,-4.844835226509175);\draw [line width=0.8pt] (-13.00764033782241,-2.59)-- (-15.525319931504756,-5.38616646507369);\draw [line width=0.8pt,color=ffqqqq,domain=-36.82:0.44] plot(\x,{(--86.0255022656606-6.374624184329129*\x)/-29.990248874204426});\draw [line width=0.8pt] (-28.680248874204427,-2.59)-- (-13.00764033782241,-2.59);\draw [line width=0.8pt] (-18.072083175656662,-4.844835226509175)-- (-15.525319931504756,-5.38616646507369);\draw [line width=0.8pt,dotted,domain=-36.82:0.44] plot(\x,{(--16.213178697492957--2.2548352265091745*\x)/5.064442837834251});\draw [line width=0.8pt,dotted,domain=-36.82:0.44] plot(\x,{(-22.121631447765097-0.5413312385645153*\x)/2.546763244151906});\begin{scriptsize}\draw [fill=uuuuuu] (-28.680248874204427,-2.59) circle (3pt);\draw[color=uuuuuu] (-28.8,-3.1) node {$A_{k+1}$};\draw [fill=uuuuuu] (-18.072083175656662,-4.844835226509175) circle (3pt);\draw[color=uuuuuu] (-18.06,-5.42) node {$A_k$};\draw [fill=uuuuuu] (-15.525319931504756,-5.38616646507369) circle (3pt);\draw[color=uuuuuu] (-13.8,-4.94) node {$A_{k-1}$};\draw [fill=uuuuuu] (-13.00764033782241,-2.59) circle (3pt);\draw[color=uuuuuu] (-12.96,-1.68) node {$A_1$};\draw [fill=uuuuuu] (-26.0874556326005,-8.413508967772529) circle (3pt);\draw[color=uuuuuu] (-26.08,-9.46) node {$H_{k+1}$};\draw [fill=uuuuuu] (-13.685124437102195,-5.777312092164559) circle (3pt);\draw[color=uuuuuu] (-13.56,-6.48) node {$P$};\end{scriptsize}\end{tikzpicture}
\caption{Mutation of an obtuse-angled triangle}
\label{Figure:ObtuseMutation2}
\end{figure}
\end{center}

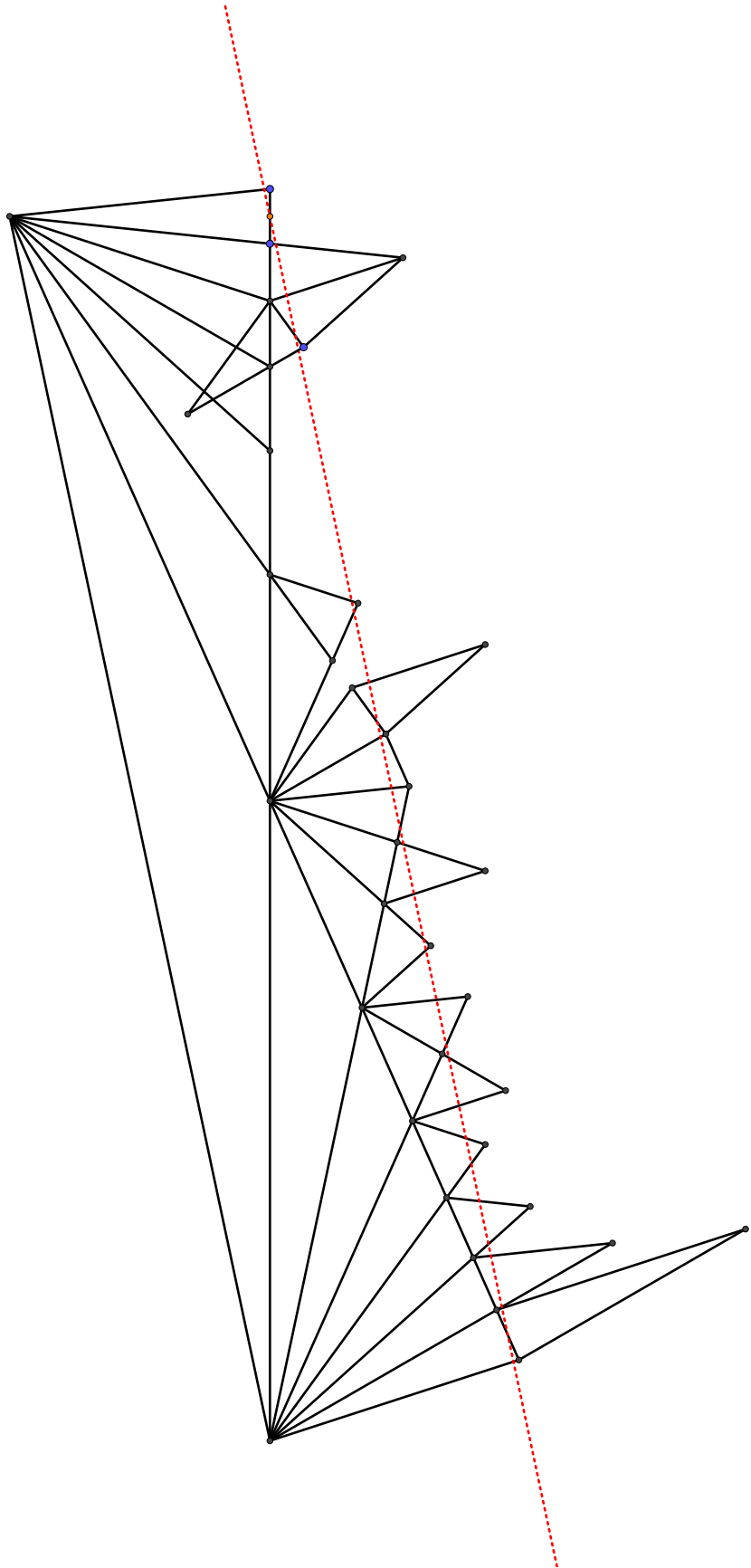
\begin{figure}[p]
\rotatebox{90}{
\definecolor{ffqqqq}{rgb}{1,0,0}\definecolor{ffxfqq}{rgb}{1,0.4980392156862745,0}\definecolor{uuuuuu}{rgb}{0.26666666666666666,0.26666666666666666,0.26666666666666666}\definecolor{ududff}{rgb}{0.30196078431372547,0.30196078431372547,1}\begin{tikzpicture}[scale=0.6,line cap=round,line join=round,>=triangle 45,x=1cm,y=1cm]\clip(-31.76,-15.49) rectangle (6.48,4.65);\draw [line width=1pt] (0.64,-2.59)-- (1.98,-2.59);\draw [line width=1pt] (1.31,3.7846241843291293)-- (0.64,-2.59);\draw [line width=1pt] (1.98,-2.59)-- (1.31,3.7846241843291293);\draw [line width=1pt] (1.31,3.7846241843291293)-- (-2.3703909888051244,-2.59);\draw [line width=1pt] (1.31,3.7846241843291293)-- (-4.429737396448772,-2.59);\draw [line width=1pt] (1.31,3.7846241843291293)-- (-7.463917477108924,-2.59);\draw [line width=1pt] (1.31,3.7846241843291293)-- (-13.00764033782241,-2.59);\draw [line width=1pt] (1.31,3.7846241843291293)-- (-28.680248874204427,-2.59);\draw [line width=1pt] (-28.680248874204427,-2.59)-- (0.64,-2.59);\draw [line width=1pt] (-13.00764033782241,-2.59)-- (-18.072083175656662,-4.844835226509175);\draw [line width=1pt] (-28.680248874204427,-2.59)-- (-26.699531089221104,-8.686022518633342);\draw [line width=1pt] (-28.680248874204427,-2.59)-- (-25.47538017598002,-8.140995416911771);\draw [line width=1pt] (-28.680248874204427,-2.59)-- (-24.195282867782296,-7.571059375315902);\draw [line width=1pt] (-28.680248874204427,-2.59)-- (-22.72525116242888,-6.9165590917795345);\draw [line width=1pt] (-28.680248874204427,-2.59)-- (-20.843944606013412,-6.0789474470043485);\draw [line width=1pt] (-26.699531089221104,-8.686022518633342)-- (-18.072083175656662,-4.844835226509175);\draw [line width=1pt] (-28.680248874204427,-2.59)-- (-18.072083175656662,-4.844835226509175);\draw [line width=1pt] (-13.00764033782241,-2.59)-- (-15.525319931504756,-5.38616646507369);\draw [line width=1pt] (-13.00764033782241,-2.59)-- (-14.020124437102195,-5.706105644005107);\draw [line width=1pt] (-13.00764033782241,-2.59)-- (-12.649503960316618,-5.997440019905175);\draw [line width=1pt] (-18.072083175656662,-4.844835226509175)-- (-12.649503960316618,-5.997440019905175);\draw [line width=1pt] (-18.072083175656662,-4.844835226509175)-- (-19.205710920309894,-6.808336077118282);\draw [line width=1pt] (-18.072083175656662,-4.844835226509175)-- (-17.79992665447517,-7.4342315576233);\draw [line width=1pt] (-20.843944606013412,-6.0789474470043485)-- (-17.79992665447517,-7.4342315576233);\draw [line width=1pt] (-22.72525116242888,-6.9165590917795345)-- (-21.423421437425556,-7.862393751215969);\draw [line width=1pt] (-20.843944606013412,-6.0789474470043485)-- (-21.423421437425556,-7.862393751215969);\draw [line width=1pt] (-24.195282867782296,-7.571059375315902)-- (-22.94051147775563,-8.964624184329125);\draw [line width=1pt] (-22.94051147775563,-8.964624184329125)-- (-22.72525116242888,-6.9165590917795345);\draw [line width=1pt] (-25.47538017598002,-8.140995416911771)-- (-23.83714649027652,-10.978499395221053);\draw [line width=1pt] (-23.83714649027652,-10.978499395221053)-- (-24.195282867782296,-7.571059375315902);\draw [line width=1pt] (-25.47538017598002,-8.140995416911771)-- (-23.494662390996734,-14.237017935545124);\draw [line width=1pt] (-23.494662390996734,-14.237017935545124)-- (-26.699531089221104,-8.686022518633342);\draw [line width=1pt] (-13.00764033782241,-2.59)-- (-9.57032221972923,-4.120392626705836);\draw [line width=1pt] (-9.57032221972923,-4.120392626705836)-- (-8.164537953894506,-4.746288107210854);\draw [line width=1pt] (-8.164537953894506,-4.746288107210854)-- (-7.463917477108924,-2.59);\draw [line width=1pt] (-9.57032221972923,-4.120392626705836)-- (-7.463917477108924,-2.59);\draw [line width=1pt] (1.31,3.7846241843291293)-- (-0.7612409535711604,-2.59);\draw [line width=1pt] (-15.525319931504756,-5.38616646507369)-- (-14.720744913887772,-7.86239375121595);\draw [line width=1pt] (-14.720744913887772,-7.86239375121595)-- (-14.020124437102195,-5.706105644005107);\draw [line width=1pt] (-18.072083175656662,-4.844835226509175)-- (-16.55499313532658,-6.529734411422479);\draw [line width=1pt] (-16.55499313532658,-6.529734411422479)-- (-15.525319931504756,-5.38616646507369);\draw [line width=1pt] (-20.843944606013412,-6.0789474470043485)-- (-20.10234593283077,-8.361353474649578);\draw [line width=1pt] (-20.10234593283077,-8.361353474649578)-- (-19.205710920309894,-6.808336077118285);\draw [line width=1pt] (-11.369406652118906,-5.427503978309312)-- (-12.649503960316618,-5.997440019905175);\draw [line width=1pt] (-13.00764033782241,-2.59)-- (-10.235778907465686,-4.603875210891957);\draw [line width=1pt] (-10.235778907465686,-4.603875210891957)-- (-11.369406652118906,-5.427503978309312);\draw [line width=1pt] (0.64,-2.59)-- (0.29751590072021533,-5.848518540324023);\draw [line width=1pt] (0.29751590072021533,-5.848518540324023)-- (-0.7612409535711604,-2.59);\draw [line width=1pt] (-0.7612409535711604,-2.59)-- (-1.8948686982243848,-3.4136287674173706);\draw [line width=1pt] (-1.8948686982243848,-3.4136287674173706)-- (0.29751590072021533,-5.848518540324023);\draw [line width=1pt] (-13.00764033782241,-2.59)-- (-11.369406652118906,-5.427503978309312);\draw [line width=1pt] (-11.369406652118906,-5.427503978309312)-- (-9.17702205317433,-7.8623937512159845);\draw [line width=1pt] (-9.17702205317433,-7.8623937512159845)-- (-10.235778907465686,-4.603875210891957);\draw [line width=1pt,dotted,color=ffqqqq,domain=-31.76:6.48] plot(\x,{(--86.0255022656606-6.374624184329129*\x)/-29.990248874204426});\draw [line width=1pt] (-3.533102383927903,-0.5761247891080642)-- (-1.8948686982243848,-3.4136287674173706);\draw [line width=1pt] (-3.533102383927903,-0.5761247891080642)-- (-0.7612409535711604,-2.59);\begin{scriptsize}\draw [fill=ududff] (0.64,-2.59) circle (2.5pt);\draw [fill=ududff] (1.98,-2.59) circle (2.5pt);\draw [fill=uuuuuu] (1.31,3.7846241843291293) circle (2pt);\draw [fill=uuuuuu] (-28.680248874204427,-2.59) circle (2pt);\draw [fill=uuuuuu] (-13.00764033782241,-2.59) circle (2pt);\draw [fill=uuuuuu] (-7.463917477108924,-2.59) circle (2pt);\draw [fill=uuuuuu] (-4.429737396448772,-2.59) circle (2pt);\draw [fill=uuuuuu] (-2.3703909888051244,-2.59) circle (2pt);\draw [fill=uuuuuu] (-18.072083175656662,-4.844835226509175) circle (2pt);\draw [fill=uuuuuu] (-26.699531089221104,-8.686022518633342) circle (2pt);\draw [fill=uuuuuu] (-25.47538017598002,-8.140995416911771) circle (2pt);\draw [fill=uuuuuu] (-24.195282867782296,-7.571059375315902) circle (2pt);\draw [fill=uuuuuu] (-22.72525116242888,-6.9165590917795345) circle (2pt);\draw [fill=uuuuuu] (-20.843944606013412,-6.0789474470043485) circle (2pt);\draw [fill=uuuuuu] (-15.525319931504756,-5.38616646507369) circle (2pt);\draw [fill=uuuuuu] (-14.020124437102195,-5.706105644005107) circle (2pt);\draw [fill=uuuuuu] (-12.649503960316618,-5.997440019905175) circle (2pt);\draw [fill=uuuuuu] (-23.494662390996734,-14.237017935545124) circle (2pt);\draw [fill=uuuuuu] (-17.79992665447517,-7.4342315576233) circle (2pt);\draw [fill=uuuuuu] (-21.423421437425556,-7.862393751215969) circle (2pt);\draw [fill=uuuuuu] (-19.205710920309894,-6.808336077118285) circle (2pt);\draw [fill=uuuuuu] (-22.94051147775563,-8.964624184329125) circle (2pt);\draw [fill=uuuuuu] (-23.83714649027652,-10.978499395221053) circle (2pt);\draw [fill=uuuuuu] (-9.57032221972923,-4.120392626705836) circle (2pt);\draw [fill=uuuuuu] (-8.164537953894506,-4.746288107210854) circle (2pt);\draw [fill=uuuuuu] (-0.7612409535711604,-2.59) circle (2pt);\draw [fill=uuuuuu] (-14.720744913887772,-7.86239375121595) circle (2pt);\draw [fill=uuuuuu] (-16.55499313532658,-6.529734411422479) circle (2pt);\draw [fill=uuuuuu] (-20.10234593283077,-8.361353474649578) circle (2pt);\draw [fill=uuuuuu] (-11.369406652118906,-5.427503978309312) circle (2pt);\draw [fill=uuuuuu] (-10.235778907465686,-4.603875210891957) circle (2pt);\draw [fill=uuuuuu] (0.29751590072021533,-5.848518540324023) circle (2pt);\draw [fill=ududff] (-1.8948686982243848,-3.4136287674173706) circle (2.5pt);\draw [fill=uuuuuu] (-9.17702205317433,-7.8623937512159845) circle (2pt);\draw [fill=ffxfqq] (1.31,-2.59) circle (2pt);\draw [fill=uuuuuu] (-3.533102383927903,-0.5761247891080642) circle (2pt);\end{scriptsize}\end{tikzpicture}}
\caption{Geometric mutations}
\label{Figure:GeometricMutations}
\end{figure}

\begin{rem}[Triangles are oriented towards the reference point]
\label{Rem:Orientation}
Let us look at the consequences of Proposition~\ref{Prop:Feet} for the orientation of the arrows in the associated quivers. To describe the orientations, we refer to the half-planes bounded by $b$ as $\Pi_1$ and $\Pi_2$. We divide each of the two cases of Proposition~\ref{Prop:Feet} into two subcases.
\begin{enumerate}
\item First assume that the triangle $\Delta=A_1A_2A_3$ is acute-angled, so that the associated quiver $Q$ is acyclic. Then the belt line $b$ intersects $\Delta$ in two points $H_1$ and $H_3$, and without loss of generality let us say the side $A_1A_2$ (containing $H_3$) corresponds to the source of the quiver; the side $A_2A_3$ (containing $H_1$) corresponds to the sink. There are two possibilities.
\begin{enumerate}
\item[(1a)] The third side of $\Delta$, namely $A_1A_3$, lies in $\Pi_1$.
\item[(1b)] The third side of $\Delta$, namely $A_1A_3$, lies in $\Pi_2$.
\end{enumerate}
\item Second assume that the triangle $\Delta$ is obtuse-angled so that the associated quiver $Q$ is cyclic. Then the belt line $b$ does not intersect $\Delta$, because the two feet of the altitudes are outside the triangle. Again, there are two possibilities.
\begin{enumerate}
\item[(2a)] The triangle $\Delta$ lies in $\Pi_1$.
\item[(2b)] The triangle $\Delta$ lies in $\Pi_2$.
\end{enumerate}
Let us label the vertices $\Delta=A_1A_2A_3$, so that the belt line $b$ intersects the extensions of the sides $A_1A_2$ and $A_2A_3$ in $H_3$ and $H_1$, respectively, and $H_3$ lies between $H_1$ and $u$. 
\end{enumerate}
Now it is easy to see that a sink/source mutation of an acute-angled seed takes case (1a) over to case (1b), and vice versa. Moreover, a mutation of an acute-angled seed at a vertex which is either sink nor source transports (1a) $\to$ (2a) and (1b) $\to$ (2b). What is more, mutation of a seed in case (2a) produces a seed in cases (1a) or (2a), and mutation of a seed in case (2b) produces a seed in cases (1b) or (2b). All these mutations leave the following structure intact.
\begin{enumerate}
\item If $\Delta=A_1A_2A_3$ is acute-angled, then the triangle is oriented towards the reference point $u$, that is, the sides are oriented $A_1A_2\to A_1A_3 \to A_2A_3$.
\item If $\Delta=A_1A_2A_3$ is obtuse-angled, then the sides are oriented $A_1A_2\to A_1A_3 \to A_2A_3 \to A_1A_2$. In particular, if two obtuse-angled seeds are located on different sides of the line $b$, see cases (2a) and (2b), then they have different orientations, compare Figure~\ref{Figure:Orientations}.
\end{enumerate}
\end{rem}

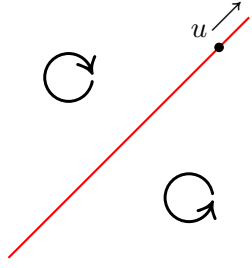
\begin{figure}
\begin{center}
\begin{tikzpicture}[scale=0.8]
\draw[thick,red] (-2,-2) to (2,2);
\filldraw[black] (1.5,1.5) circle (2pt) node[anchor=south] {$u$\rotatebox{45}{$\longrightarrow$}};
\node at (-1,1) {\textbf{\Huge{$\lcirclearrowright$}}};
\node at (1,-1) {\textbf{\Huge{$\rcirclearrowright$}}};
\end{tikzpicture}
\end{center}
\caption{The orientation of obtuse-angled triangles}
\label{Figure:Orientations}
\end{figure}

\subsection{Translational symmetries of the exchange graph}
\label{Section:AcyclicBelts}

\begin{defn}[Translated seeds] Let $w\in \mathbb{E}^2$.
\begin{enumerate}
\item Suppose that the seed $(\mathbf{v},B)$ is represented by a triangle $\Delta$ together with a quiver $Q$. The \emph{translated seed} $w+(\mathbf{v},B)$ is given by the translation of $\Delta$ in $\mathbb{E}^2$ by the vector $w$, together with the same quiver $Q$. In this case, we call $w$ the \emph{translation vector} between the seeds.
\item Suppose that $J$ is an acyclic belt. We denote its vertices by $J_k$ with $k\in\mathbb{Z}$. The \emph{translated acyclic belt} $w+J$ is the infinite path graph with vertices $w+J_k$ with $k\in \mathbb{Z}$. 
\end{enumerate}
\end{defn}

\begin{prop}
\label{Prop:ParallelTob}
All possible translation vectors between seeds in the mutation class of $(\mathbf{v}_0,B_0)$ are parallel to $b$.
\end{prop}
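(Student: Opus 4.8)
The plan is to deduce the statement directly from Proposition~\ref{Prop:Feet}, which pins certain feet of altitudes of every triangle in the mutation class onto the single fixed line $b$. Let $(\mathbf{v},B)$ and $(\mathbf{v}',B')$ be two seeds in the mutation class of $(\mathbf{v}_0,B_0)$, represented by triangles $\Delta$ and $\Delta'$, and suppose they are related by a translation, say $(\mathbf{v}',B')=w+(\mathbf{v},B)$, so that $\Delta'=w+\Delta$ and both seeds carry the \emph{same} quiver $Q$. I want to show that $w$ is parallel to $b$. The crucial structural remark is that a translation is an isometry of $\mathbb{E}^2$ and therefore commutes with the altitude-foot construction: if $H$ is the foot of an altitude of $\Delta$, then $H+w$ is the foot of the corresponding altitude of $\Delta'$.

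First I would record that, because $\Delta$ and $\Delta'$ carry the same quiver $Q$, they are simultaneously acute-angled (acyclic) or obtuse-angled (cyclic), and the distinguished vertices---the sink and source in the acyclic case, or the obtuse vertex in the cyclic case---are matched up by the translation. Proposition~\ref{Prop:Feet} then supplies a foot of an altitude $H$ of $\Delta$ lying on $b$ (the foot opposite the sink in the acute case, or a foot on a side of the obtuse angle in the obtuse case). Applying the same proposition to $\Delta'$ shows that the corresponding foot of $\Delta'$ also lies on $b$; by the isometry remark this foot is precisely $H+w$. Hence both $H$ and $H+w$ lie on $b$, so for $w\neq 0$ the vector $w=(H+w)-H$ joins two points of $b$ and is therefore parallel to $b$. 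The case $w=0$ is vacuous.

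I do not expect a genuine obstacle here, since the hard geometric work is already carried out in Proposition~\ref{Prop:Feet}. The only point that needs care is the bookkeeping ensuring that the feet singled out in $\Delta$ and in $\Delta'$ really correspond under the translation, rather than being two a priori unrelated points of $b$; this is guaranteed by the definition of a translated seed, which forces $\Delta'$ to carry the same quiver $Q$ as $\Delta$ and hence preserves the sink, the source and the obtuse vertex together with the altitudes dropped from them.
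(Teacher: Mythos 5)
Your proposal is correct and follows essentially the same route as the paper's own proof: both pick out a distinguished foot of an altitude $H$ of $\Delta$ using the quiver data, observe that since the translated seed carries the same quiver the corresponding foot of $\Delta'$ is $H+w$, and invoke Proposition~\ref{Prop:Feet} to place both points on $b$, forcing $w$ to be parallel to $b$. No further comment is needed.
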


\begin{proof}
Suppose that two seeds are related by a translation $w$. We assume that the seeds are represented by triangles (or infinite regions) $\Delta$ and $\Delta'$ together with quivers $Q$ and $Q'$. By assumption $\Delta'=w+\Delta$. We define $H$ to be the foot of the altitude on the side of $\Delta$ that corresponds to the source in $Q$ if $\Delta$ is acute-angled, and define $H$ to be the foot of the altitude on the side of $\Delta$ that corresponds to the source of the arrow between the two sides of the obtuse angle otherwise. We define $H'$ similarly. Since the triangles $\Delta$ and $\Delta'$ are congruent and $Q$ is isomorphic to $Q'$, we must have $H'=w+H$. By Proposition \ref{Prop:Feet} both $H$ and $H'$ lie on $b$. 
\end{proof}

By construction, the translation of the initial seed $(\mathbf{v}_0,B_0)$ by a translation vector $w$ parallel to $b$ is again an acyclic seed in the acyclic belt $w+(\mathbf{v}_0,B_0)$. For some vectors $w$, for example integer multiples of the translation vector $T(\Delta_0)$, this coincides with the acyclic belt of $(\mathbf{v}_0,B_0)$.

\begin{prop}
\label{Prop:FinitelyManyTriangles}
Let $Q$ be a quiver with three vertices and let $n_1,n_2,n_3\in [1,2n]$ be natural numbers such that $n_1\alpha+n_2\alpha+n_3\alpha=\pi$ and $\gcd(n_1,n_2,n_3)=1$. Then, up to translation by vectors parallel to $b$, the mutation class of $(\mathbf{v}_0,B_0)$ contains exactly two seeds with quiver $Q$ whose triangles have angles $n_1\alpha$, $n_2\alpha$ and $n_3\alpha$ (corresponding to the vertices of $Q$ in this order).
\end{prop}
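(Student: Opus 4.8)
The plan is to pin down the position of any qualifying seed up to translation parallel to $b$ and then to show that the only remaining freedom is a reflection across $b$, so that the count is exactly two. First I would record the rigidity input: by the mutation-invariance of $T$ (Proposition \ref{Prop:MutationInvariance}) and the resulting congruence of similar triangles in the mutation class, every triangle occurring with the prescribed angles $n_1\alpha,n_2\alpha,n_3\alpha$ is congruent. Hence all the seeds we are counting carry one fixed triangle shape, with the sides labelled by the vertices of $Q$, the angles assigned as prescribed, and the arrows oriented according to $Q$. Counting them modulo translations parallel to $b$ is therefore the same as counting the rigid placements of this single decorated triangle subject to the geometric constraints imposed by $b$ and the reference point $u$.

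For the upper bound I would invoke Proposition \ref{Prop:Feet}: the two altitude feet singled out by $Q$ (the feet on the source and sink sides when $Q$ is acyclic, or the feet on the two sides of the obtuse angle when $Q$ is cyclic) must lie on $b$. A rigid triangle with two marked points constrained to the line $b$ is determined, modulo sliding along $b$, by the order of the two feet along $b$ and by the choice of half-plane. The order is forced by the labelling convention relative to $u$ recorded in Remark \ref{Rem:Orientation} (the foot $H_3$ lies between $H_1$ and $u$), so the only genuine freedom left is which of the two half-planes $\Pi_1,\Pi_2$ bounded by $b$ the triangle occupies. This leaves at most two placements, and they are distinct modulo translations parallel to $b$, since such translations preserve each half-plane while the two placements lie on opposite sides of $b$.

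For the lower bound I would exhibit both placements inside the mutation class. Existence of at least one seed with the prescribed angles follows from Proposition \ref{Prop:PossibleTriangles} (using $\gcd(n_1,n_2,n_3)=1$). To realise the prescribed quiver $Q$ in both half-planes I would run along the acyclic belt of Proposition \ref{Prop:InitialBelt}: a mutation at a source is a reflection across a side, which cyclically permutes the three labels of an acyclic quiver with period three and, by Remark \ref{Rem:Orientation}, swaps the flavours (1a) and (1b), i.e.\ flips the half-plane at each step. Consequently the same labelled acyclic quiver reappears after three source mutations on the opposite side of $b$, producing the second acute placement. For cyclic $Q$ the obtuse seeds are obtained from the two acute placements by a mutation at the vertex that is neither sink nor source, which carries (1a) to (2a) and (1b) to (2b) by Remark \ref{Rem:Orientation}, giving the two obtuse placements in the two half-planes. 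In either case both placements from the upper bound are realised, so the count is exactly two.

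The main obstacle is the orientation bookkeeping in the lower-bound step: one must verify that the half-plane flip produced by the belt (equivalently, reflection across $b$) carries the abstract labelled quiver $Q$, together with its angle assignment, to itself rather than to its opposite, even though it reverses the geometric chirality. This is precisely what allows the two seeds to share the same $Q$ while lying on opposite sides of $b$, and it is the point where Remark \ref{Rem:Orientation} must be read carefully to distinguish the combinatorial quiver from the clockwise/counterclockwise orientation of Figure \ref{Figure:Orientations}. The remaining care is to confirm that the second placement genuinely lies in the mutation class, and not merely that it is an admissible geometric configuration; this is why I route the existence argument through the explicit belt dynamics rather than through an abstract reflection symmetry of the plane.
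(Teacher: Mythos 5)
Your treatment of the ``at most two'' half coincides with the paper's own proof: congruence of all qualifying triangles via the invariant $T$ (Proposition \ref{Prop:MutationInvariance}), the two distinguished altitude feet pinned to $b$ by Proposition \ref{Prop:Feet}, and the orientation constraints of Remark \ref{Rem:Orientation} leaving only the choice of half-plane, hence at most two placements modulo translations parallel to $b$. Where you diverge is in insisting on an explicit lower bound; the paper passes from ``at most two, and at least one exists by Proposition \ref{Prop:PossibleTriangles}'' directly to ``exactly two.'' Your belt computation for acyclic $Q$ is correct and worth having: three source mutations return the labelled weighted quiver (and the assignment of angles to labels) to itself while composing three reflections, an orientation-reversing isometry whose axis must be $b$ because both seeds keep their marked feet on $b$; so $I_0$ and $I_3$ realise the two half-plane placements and are not translates of one another.

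The step that does not survive scrutiny is the cyclic case: ``the obtuse seeds are obtained from the two acute placements by a mutation at the vertex that is neither sink nor source.'' That is only true for obtuse seeds adjacent to an acyclic belt. Already for $2n+1=7$ the triangle with angles $(\alpha,\alpha,5\alpha)$ is reached only through the obtuse triangle with angles $(\alpha,2\alpha,4\alpha)$, so it lies at distance two from every acute seed and your argument produces no second placement for it. The repair is short and stays within your framework: the reference point $u$ sits at infinity on $b$, so the reflection $\rho$ across $b$ fixes $u$, preserves positivity, and commutes with mutation and with translations along $b$. Since $I_3=\rho(I_0)+w_0$ for some $w_0$ parallel to $b$, every seed $\mu(I_0)$ of the exchange graph has the mirror seed $\mu(I_3)=\rho\bigl(\mu(I_0)\bigr)+w_0$ in the exchange graph as well, which supplies the second placement for every triple and every quiver, cyclic or not. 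With that patch your proof is complete and, on the existence half, more explicit than the paper's.
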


\begin{proof}
The mutation class of $(\mathbf{v}_0,B_0)$ must contain a triangles with angles $n_1\alpha$, $n_2\alpha$ and $n_3\alpha$ by Proposition~\ref{Prop:PossibleTriangles}. The conserved quantity $T(\Delta)=T(\Delta_0)$ from Proposition~\ref{Prop:MutationInvariance} is homogeneous of degree $1$, which implies that the angles of the triangle $\Delta$ suffice to determine its size. In particular, we can reconstruct the triangle up to congruence uniquely when given its angles. 

Proposition~\ref{Prop:Feet} shows that the belt line $b$ intersects every triangle corresponding to a seed in $\Gamma$ in the feet of two altitudes. Moreover, the given quiver $Q$ and the given angles tell us which altitudes we have to consider. Namely, for acyclic quivers (with acute-angled triangles as in case (1) of Proposition~\ref{Prop:Feet}) we have to consider the altitudes on the sides that correspond to the source and the sink. The quiver must be oriented towards the reference point by Remark~\ref{Rem:Orientation}, which determines the location of the triangles up to translations by vectors parallel to $b$ and reflection across $b$.

For cyclic quivers (with obtuse-angled triangles as in case (2) of Proposition~\ref{Prop:Feet}) we have to consider the altitudes on the sides of the obtuse angle. The triangle must lie either in $\Pi_1$ or in $\Pi_2$, where $\Pi_1$ and $\Pi_2$ are the half-planes bounded by $b$ as in Remark \ref{Rem:Orientation}. This location determines the orientation of the triangle by Remark~\ref{Rem:Orientation} as well as the relative position of $H_1$ and $H_3$ on $b$. 

Hence in all cases, up to translation by vectors parallel to $b$, there are exactly two triangles with prescribed angles in the mutation class of $(\mathbf{v}_0,B_0)$.
\end{proof} 

\begin{prop}
\label{Prop:GroupAction}
Let $w\in\mathbb{E}^2$. Assume that there exist two seeds $(\mathbf{v}_1,B_1)$ and $(\mathbf{v}_2,B_2)$ in $\Gamma$ such that $(\mathbf{v}_2,B_2)=w+(\mathbf{v}_1,B_1)$. Then the map $T_w$
\begin{align*}
\Gamma&\longrightarrow\Gamma\\
(\mathbf{v},B)&\longmapsto w+(\mathbf{v},B)
\end{align*}
is an automorphism of graphs.
\end{prop}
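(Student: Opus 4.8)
The plan is to show that the translation $T_w$ \emph{commutes with every mutation}, and then to propagate the hypothesis along the connected graph $\Gamma$. First, since $(\mathbf{v}_2,B_2)=w+(\mathbf{v}_1,B_1)$ relates two seeds lying in the same mutation class, Proposition~\ref{Prop:ParallelTob} forces $w$ to be parallel to the belt line $b$. This is the only property of $w$ we shall need.

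The key step is the identity
\begin{align*}
T_w\circ\mu_k=\mu_k\circ T_w
\end{align*}
for every seed in $\Gamma$ and every index $k\in\{1,2,3\}$. Two ingredients enter here. The first is that positivity is preserved under $T_w$: by the remark following Proposition~\ref{Prop:ReferencePoint}, a side of a triangle is positive if and only if $(b^{\perp},z)>0$, where $z$ is the outward normal to that side and $b^{\perp}\perp b$. Since translating a triangle by $w$ changes neither the outward normal direction $z$ of any side nor the quiver, the sign of $(b^{\perp},z)$ is unchanged, so $v_k$ is positive for $(\mathbf{v},B)$ if and only if the corresponding vector is positive for $w+(\mathbf{v},B)$. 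In particular, both seeds invoke the same branch of Definition~\ref{Def:SeedMutation}. The second ingredient is purely geometric: a mutation at $v_k$ either fixes a neighbouring side or reflects it across the line $\ell$ carrying the side $s_k$. Writing $\tau_w$ for translation by $w$, we have $R_{\ell+w}=\tau_w\circ R_\ell\circ\tau_{-w}$ for the reflection across a translated line, so reflecting the translated configuration equals translating the reflected configuration. Combining the two ingredients yields the commutation identity.

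With commutation in hand, we first show $T_w(\Gamma)\subseteq\Gamma$. The graph $\Gamma$ is connected, so any seed $(\mathbf{v},B)\in\Gamma$ can be written as $(\mu_{k_s}\circ\cdots\circ\mu_{k_1})(\mathbf{v}_1,B_1)$ for some mutation sequence. Applying $T_w$ and commuting it past each $\mu_{k_j}$ gives
\begin{align*}
T_w(\mathbf{v},B)=(\mu_{k_s}\circ\cdots\circ\mu_{k_1})(T_w(\mathbf{v}_1,B_1))=(\mu_{k_s}\circ\cdots\circ\mu_{k_1})(\mathbf{v}_2,B_2),
\end{align*}
which lies in $\Gamma$ because $(\mathbf{v}_2,B_2)\in\Gamma$ and $\Gamma$ is closed under mutation. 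The same argument applied to $-w$ (which relates $(\mathbf{v}_1,B_1)=(-w)+(\mathbf{v}_2,B_2)$) shows $T_{-w}(\Gamma)\subseteq\Gamma$; since $T_w$ and $T_{-w}$ are mutually inverse, $T_w$ is a bijection of $\Gamma$. Finally, edges are preserved in both directions: if $(\mathbf{v}',B')=\mu_k(\mathbf{v},B)$ is an edge, then $T_w(\mathbf{v}',B')=\mu_k(T_w(\mathbf{v},B))$ is again an edge, and $T_{-w}$ handles the converse. Hence $T_w$ is a graph automorphism.

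The main obstacle is the commutation identity, and within it the \emph{translation-invariance of positivity}. Everything hinges on the reference point lying at infinity along $b$ together with $w\parallel b$: only then does translating a seed leave the notion of positive and negative vectors intact. The remark after Proposition~\ref{Prop:ReferencePoint} packages exactly this fact through the auxiliary vector $b^{\perp}$, so the delicate point reduces to observing that outward normals of sides are translation-invariant. The geometric half of the commutation, namely reflections commuting with translations, is routine.
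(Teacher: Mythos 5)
Your proposal is correct and follows essentially the same route as the paper's own (much terser) proof: the paper likewise reduces everything to the observation that, because $w$ is parallel to $b$, corresponding sides of $\Delta$ and $w+\Delta$ have the same sign, so mutation commutes with the translation. Your write-up merely makes explicit the commutation identity, the propagation along the connected graph, and the inverse $T_{-w}$, all of which the paper leaves implicit.
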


\begin{proof}
Suppose that $(\mathbf{v},B)$ is a seed in $\Gamma$ and that the triangles of $(\mathbf{v},B)$ and $w+(\mathbf{v},B)$ are given by $\Delta$ and $w+\Delta$. Since $w$ is parallel to $b$, a side of $\Delta$ has the same sign as the corresponding side of $w+\Delta$. Hence, a mutation at these sides yield triangles which are related to each other by a translation by $w$.     
\end{proof}

\begin{prop} The choice of $V^{+}\subseteq V$ is compatible with every seed. 
\end{prop}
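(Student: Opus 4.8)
The plan is to unwind the definitions and reduce the statement to the per-vertex rank-$2$ compatibility criterion of Remark \ref{Rem:ForbiddenRegions}, which can then be checked against the geometric structure already established in Proposition \ref{Prop:Feet} and Remark \ref{Rem:Orientation}. Recall that the set $V^{+}$ in question is the one determined by the reference point $u$ sitting at infinity on the belt line $b$ in the source-to-sink direction (Proposition \ref{Prop:ReferencePoint}), and that the Compatibility Condition \ref{Condition:Pos1} asks precisely that $u$ be compatible with every seed $(\mathbf{v},B)$ of $\Gamma$, that is, with each of its three rank-$2$ subseeds. So it suffices to show that for every seed in $\Gamma$ and every vertex of its triangle, the direction towards $u$ along $b$ avoids the forbidden supplementary sector attached to that vertex by Remark \ref{Rem:ForbiddenRegions}.

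First I would invoke Remark \ref{Rem:Orientation}, which guarantees that every seed of $\Gamma$ is oriented towards $u$: in the acute-angled case the sides carry the orientation $A_1A_2\to A_1A_3\to A_2A_3$, and in the obtuse-angled case they carry the cyclic orientation $A_1A_2\to A_1A_3\to A_2A_3\to A_1A_2$. Since compatibility at a vertex depends only on the angle there, the local orientation of the two incident sides, and the position of $u$, this invariance collapses the whole problem to a finite check: it is enough to verify rank-$2$ compatibility at the three vertices of a triangle in each of the two configurations, together with the subcases 1a/1b and 2a/2b that record on which side of $b$ the triangle lies.

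The key geometric input for the check is Proposition \ref{Prop:Feet}, which pins down the position of $b$ relative to the sides of $\Delta$. In the acute case $b$ passes through the feet $H_3\in A_1A_2$ and $H_1\in A_2A_3$ of the altitudes opposite the source and the sink; in the obtuse case it passes through the feet on the extensions of the two sides of the obtuse angle. At each vertex $O$ with incident rays $OA,OB$ and arrow $OA\to OB$, I would use these altitude-foot relations to determine the direction of $b$ as an angle measured from the two rays, and then confirm that the ray towards $u$ does not land in the sector $\angle(-B,O,A)$; equivalently, that $w_0$ is positive or $w_1$ is negative in the notation surrounding Definition \ref{Def:Positivity}. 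Because $u$ lies at infinity, only the direction of $b$ matters, not a finite location, and performing the check vertex by vertex yields compatibility of $u$ with every rank-$2$ subseed of the seed.

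The main obstacle will be the bookkeeping in this per-vertex verification: one must track consistently, across the acute and obtuse configurations and their subcases, which incident ray plays the role of $OA$ and which of $OB$, and then confirm that the altitude-foot constraints force the direction of $b$ into the allowed half of the angular picture at each of the three vertices. Here the orientation invariance of Remark \ref{Rem:Orientation} does the decisive work, since it ensures that the single pattern ``oriented towards $u$'' persists under every mutation, so that the finitely many local checks simultaneously cover all seeds of $\Gamma$ and hence establish Condition \ref{Condition:Pos1}.
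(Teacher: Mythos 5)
Your proposal is correct and follows essentially the same route as the paper: the paper's proof likewise reduces to drawing the forbidden sectors of Remark \ref{Rem:ForbiddenRegions} at each vertex, separately for acute-angled and obtuse-angled triangles, and checking by inspection that the point at infinity on $b$ avoids them. Your additional scaffolding (invoking Proposition \ref{Prop:Feet} and Remark \ref{Rem:Orientation} to justify why the check reduces to finitely many configurations) only makes explicit what the paper leaves implicit.
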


\begin{proof}
We can verify the statement by drawing the sectors of non-compatible reference points, separately for acute-angled and obtuse-angled triangles, like we draw them for the initial triangle in Figure \ref{Figure:Forbidden}. An inspection shows that the choice of the reference point $u$ infinitely far on the line $b$ is compatible for every triangle. 
\end{proof}

\subsection{On the lengths of sides and translation vectors} 
\label{Section:Lengths}

We denote the length of the larger side in the initial triangle by $d_1$, see Figure \ref{Figure:Initial}.

Suppose that the seed $(\mathbf{v},B)$ is obtained from $(\mathbf{v}_0,B_0)$ by a sequence of mutations. Assume that $(\mathbf{v},B)$ is represented by a triangle $\Delta$. %We claim that the side lengths of $\Delta$ belong to the field $d_1K$ where $K=\mathbb{Q}(2\cos(2\alpha))$. To verify the claim 
Recall that $T(\Delta)=T(\Delta_0)$. Hence, if $a$ is a side of $\Delta$ and $\beta$ and $\gamma$ are angles in $\Delta$ incident with $a$, then $\beta$ and $\gamma$ are rational multiples of $\alpha=\pi/(2n+1)$ and
\begin{align}
\label{Eqn:Sides}
a = d_1\frac{\sin(\alpha)\sin(n\alpha)}{\sin(\beta)\sin(\gamma)}.
\end{align}

%\begin{proof}
%We prove the statement by induction on the length of a shortest mutation sequence from $(v_0,B_0)$ to $(\mathbf{v},B)$. The statement is true for the initial triangle $\Delta_0$ which has sides $d_1$, $d_1$ and $2d_1\cos(\frac{\pi}{2n+1})$.
%For the induction step note that the new side length is obtained from a previous side length by multiplication by $\sin(\frac{k_1\pi}{2n+1})/\sin(\frac{k_2\pi}{2n+1})$ which is equal to $U_{k_1}(\cos(\frac{\pi}{2n+1}))/U_{k_2}(\cos(\frac{\pi}{2n+1}))$. This term is contained in $d_1R$ by the previous proposition.  
%\end{proof} 

\begin{defn}[Translation vector lengths]
\label{Defn:TVectors}
For every $k\in[1,n]$ with $\gcd(k,2n+1)=1$ we put
\begin{align*}
s_k=d_1\frac{\sin(\alpha)\sin(n\alpha)}{\sin^2(k\alpha)}.
\end{align*} 
\end{defn}

\begin{prop}
\label{Prop:InfiniteRegionTranslationVectors}
Suppose that $k\in[1,n]$ is coprime to $2n+1$. Furthermore, let $w_k\in\mathbb{E}^2$ be a vector parallel to the line $b$ whose length is equal to $s_k$, where $s_k$ is as in Definition \ref{Defn:TVectors}. Then the exchange graph of $(\mathbf{v}_0,B_0)$ contains the translated acyclic belt $I+w_k$.
\end{prop}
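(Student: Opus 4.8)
The plan is to reduce the statement, by means of the group action in Proposition \ref{Prop:GroupAction}, to the construction of a \emph{single} pair of seeds in $\Gamma$ differing by the translation $w_k$. Indeed, suppose one can exhibit two seeds $(\mathbf{v}_1,B_1),(\mathbf{v}_2,B_2)\in\Gamma$ with $(\mathbf{v}_2,B_2)=w_k+(\mathbf{v}_1,B_1)$. Then Proposition \ref{Prop:GroupAction} makes $T_{w_k}\colon\Gamma\to\Gamma$ a graph automorphism, and since the initial acyclic belt satisfies $I\subseteq\Gamma$ we obtain $I+w_k=T_{w_k}(I)\subseteq\Gamma$, which is exactly the claim. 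Thus the entire proof comes down to producing one $w_k$-translated pair of seeds inside $\Gamma$, after which the direction of $w_k$ (towards the reference point, as in Remark \ref{Rem:Orientation}) guarantees that $I+w_k$ is again an acyclic belt.

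To locate the length $s_k$ geometrically I would first identify the distinguished object. Consider the infinite region $R_k$ bounded by two parallel lines together with a finite third side crossing them at angles $k\alpha$ and $\pi-k\alpha$; its two parallel sides are parallel to $b$ by Proposition \ref{Prop:ParallelTob}. By Proposition \ref{Prop:MutationInvariance} the conserved quantity is unchanged, $T(R_k)=T(\Delta_0)$, so Definition \ref{Def:MutInv} gives that the finite side has length $T(\Delta_0)/\sin^2(k\alpha)=s_k$, matching Definition \ref{Defn:TVectors}. (Equivalently one may work with the isosceles triangle of angles $k\alpha$, $k\alpha$, $(2n+1-2k)\alpha$, whose side between the two angles $k\alpha$ has length $s_k$ by the side-length formula (\ref{Eqn:Sides}).) The coprimality hypothesis $\gcd(k,2n+1)=1$ is precisely what forces $\gcd\bigl(k,\,k,\,2n+1-2k\bigr)=1$, so that by the classification of Proposition \ref{Prop:Classification} (\ref{Prop:ClassificationEuclidean}) together with Proposition \ref{Prop:PossibleTriangles} the region $R_k$ lies in the mutation class of $(\mathbf{v}_0,B_0)$ and is therefore realised by a seed of $\Gamma$.

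It then remains to move $R_k$ to its $w_k$-translate through a sequence of mutations. The idea is a controlled excursion: starting from $R_k$, mutate at the finite side to pass into the neighbouring region of finite obtuse-angled (cyclic) triangles, traverse the fan of cyclic seeds that pivots about a single vertex exactly as in the proof of Proposition \ref{Prop:Feet}, and return to a second copy of $R_k$. By Proposition \ref{Prop:ParallelTob} the net displacement is automatically parallel to $b$, and its magnitude can be tracked through the feet of the altitudes on $b$ furnished by Proposition \ref{Prop:Feet}: as the excursion proceeds these feet march along $b$, while the invariance $T\equiv T(\Delta_0)$ fixes the congruence class of every intermediate triangle, so the total displacement along $b$ is forced. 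Here the similarity map (rotate by the pivot angle, scale by its cosine) used in the proof of Proposition \ref{Prop:Feet} is the tool that converts the positions of the base vertices into positions of the feet on $b$, and summing these contributions over one full pivot should return the value $s_k$.

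I expect the main obstacle to be exactly this last bookkeeping: verifying that the feet on $b$ advance by \emph{precisely} $s_k$ over the excursion, and that the quiver returns to its original orientation relative to the reference point (Remark \ref{Rem:Orientation}), so that the initial and terminal copies of $R_k$ are genuine $w_k$-translates rather than differing by a reflection across $b$ or by a translation of the wrong length. Controlling the orientation and the exact displacement simultaneously — rather than merely up to the belt period $4T(\Delta_0)$ — is the delicate point; once the displacement is confirmed to equal $s_k$ in the correct direction, the reduction of the first paragraph immediately yields $I+w_k\subseteq\Gamma$.
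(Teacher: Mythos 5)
Your reduction to exhibiting a single $w_k$-translated pair of seeds via Proposition \ref{Prop:GroupAction}, and your identification of the infinite region $R_k$ (in the mutation class by Proposition \ref{Prop:PossibleTriangles}, with finite side $a$ of length $s_k$ parallel to $b$ by the invariance $T(R_k)=T(\Delta_0)$ and Proposition \ref{Prop:ParallelTob}) both match the paper. The gap is in how you produce the translated pair. The reason the paper introduces $R_k$ at all is the one-line observation that mutating $R_k$ at one of its two \emph{parallel} sides yields an infinite region that is exactly the translate of $R_k$ along its finite side $a$, so the displacement is parallel to $b$ and of length $\lvert a\rvert=s_k$ by construction -- no further computation is needed. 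You instead mutate at the \emph{finite} side and propose an excursion through the fan of obtuse-angled triangles pivoting about a vertex, and you explicitly concede that you cannot verify that the net displacement of this excursion equals $s_k$ (rather than, say, the belt period $4T(\Delta_0)$ or some other element of the length lattice), nor that the excursion closes up on a copy of $R_k$ with the correct orientation. That verification is precisely the content of the proposition, so as written the proof is incomplete at its central quantitative step.

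Once the correct one-step mutation is in place, the rest of your argument goes through and agrees with the paper: the same mutation sequence that carries $R_k$ into the initial belt $I$ carries its translate into $I+w_k$, by the translation-equivariance of mutation underlying Proposition \ref{Prop:GroupAction}.
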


\begin{proof}
We consider an infinite region $\Delta$ that is given by two parallel half-lines that intersect the third (finite) side $a$ of the region at angles $k\alpha$ and $\pi-k\alpha$. By Proposition \ref{Prop:PossibleTriangles} the mutation class of $(\mathbf{v}_0,B_0)$ contains a seed $(\mathbf{v},B)$ whose domain is equal to $\Delta$. Its quiver must be oriented cyclically since $\pi-k\alpha$ is obtuse. The mutation of $\Delta$ at one of the parallel sides yields an infinite region $\Delta'$ which is obtained from $\Delta$ by translation along $a$. Note that $a$ is parallel to the line $b$ by Proposition \ref{Prop:ParallelTob}. The equation $T(\Delta)=T(\Delta_0)$ implies that the length of $a$ is equal to $s_k$.

Then there exists a mutation sequence that takes $\Delta$ to a triangle $I_k$ in the initial acyclic belt $I$ in the same way as $\Delta'$ is mapped to the translation of $I_k$ along $a$.
\end{proof}

\section{The algebraic number theory of sines and cosines of occuring angles}
\label{Section:NumberTheory}

\subsection{Cyclotomic fields}

In this section we study the number theoretic properties of the cosines of the angles that occur in the triangles. The material in this section is classical and can be found for example in Lang's books about cyclotomic fields.

\begin{defn}[Cyclotomic polynomials] For every $d\geq 1$ the polynomial
\begin{align*}
\Phi_d=\prod_{\substack{k \in [1,d] \\ \gcd(k,d)=1}}(x-e^{2\pi k i/d})\in\mathbb{C}[x].
\end{align*}
is called the $d$-th \emph{cyclotomic polynomial}.
\end{defn}
Cyclotomic polynomials satisfy many properties. In particular, for any given $d\geq 1$, the polynomial $\Phi_d\in \mathbb{Z}[x]$ is a monic polynomial with integer coefficients. In fact, it is the minimal polynomial of any primitive $d$-th root of unity $e^{2\pi k i/d}$ with $\gcd(k,d)=1$. Moreover, $\Phi_d$ is irreducible over $\mathbb{Z}$ and for every $m\geq 1$ we have
\begin{align*}
x^m-1=\prod_{d \mid m}\Phi_d.
\end{align*}

The degree of the cyclotomic polynomial $\deg(\Phi_d)=\varphi(d)$ is given by Euler's totient function. We consider the primitive root of unity $\zeta=e^{2\alpha i}$ with $\alpha=\frac{\pi}{2n+1}$. Since $2n+1$ is odd, the zeros of $\Phi_{2n+1}$ can be grouped into pairs of complex conjugates. In other words the set of complex zeros of $\Phi_{2n+1}$ is equal to $\left\lbrace \,\zeta^k,\zeta^{-k} \mid k \in U\, \right\rbrace$ where
\begin{align*}
U=\left\lbrace \, k\in[1,n] \mid \gcd(k,2n+1)=1\,\right\rbrace.
\end{align*}
%In particular, the product of the zeros is equal to $1$
In particular, the polynomial $\Phi_{2n+1}$ is palindromic, i.e. $\Phi_{2n+1}(1/x)=x^{-\varphi(2n+1)}\Phi_{2n+1}(x)$. Hence there is a polynomial $\Psi_{2n+1}\in\mathbb{Z}[x]$ of degree $\varphi(2n+1)/2$ such that 
\begin{align*}
\Psi_{2n+1}(x+x^{-1})=x^{-\varphi(2n+1)/2}\Phi_{2n+1}(x).
\end{align*}
This polynomial must be irreducible over $\mathbb{Z}$, because any non-trivial factorization would yield a non-trivial factorization of $\Phi_{2n+1}$. Furthermore, $2\cos(2k \alpha)$ is a root of $\Psi_{2n+1}$ for every $k \in U$. As the cardinality of the set $U$ is equal to the degree of $\Psi_{2n+1}$, the polynomial $\Psi_{2n+1}$ cannot have roots other than the ones from above. Moreover, $\Psi_{2n+1}$ is monic.

\begin{defn}[Chebyshev polynomials of the first kind] Define $T_m\in\mathbb{Z}[x]$ recursively by $T_0=1$, $T_1=x$ and $T_{m+1}=2xT_m-T_{m-1}$ for $m\geq 1$. 
\end{defn}
It is well-known that $T_m(\cos(x))=\cos(mx)$ for all $m\in\mathbb{N}$ and $x\in\mathbb{R}$. Note that every $T_m$ is a monic integer polynomial in $t=2x$. Moreover, a result of Watkins--Zeitlin \cite[Equation (2)]{WZ} asserts that if $m=2n+1$ is odd, then
\begin{align*}
T_{n+1}-T_n=2^n\prod_{d \mid m}\Psi_d.
\end{align*}
%In particular, $2\cos(2k\alpha)$ is a monic integer polynomial in $2\cos(2\alpha)$ for every $k \in U$.
%This implies that $\mathcal{O}_K$ is closed under multiplication by $2\cos(2k\alpha)$ for every $k\geq 1$.

%\begin{defn}[Chebyshev polynomials of the second kind] Define $U_m\in\mathbb{Z}[x]$ recursively by $U_0=1$, $U_1=2x$ and $U_{k+1}=2xU_k-U_{k-1}$ for $k\geq 1$.
%\end{defn}
%
%It is well-known that $U_k(\cos(x))=\sin(kx)/\sin(x)$ for all $k\in\mathbb{N}$ and $x\in\mathbb{R}$. Note that every $U_k$ is a monic integer polynomial in $t=2x$.The Chebyshev polynomials $U_k\in\mathbb{Z}[x]$ of the second kind are defined recursively by $U_0=1$, $U_1=2x$ and $U_{k+1}=2xU_k-U_{k-1}$ for $k\geq 1$. It is well-known that $U_k(\cos(x))=\sin(kx)/\sin(x)$ for all $k\in\mathbb{N}$ and $x\in\mathbb{R}$. Note that every $U_k$ is a monic integer polynomial in $t=2x$.

The field $\mathbb{Q}(\zeta)$ is known as the \emph{cyclotomic field}. We also consider the following number field.  

\begin{defn}[Maximal real subfield] Put $K=\mathbb{Q}(2\cos(2\alpha))=\mathbb{Q}(\zeta+\zeta^{-1})$. 
\end{defn}

Then $K\subseteq \mathbb{Q}(\zeta)$ is the maximal real subfield of the cyclotomic field. Notice that $K$ contains the element $2\cos(k \alpha)$ for every $k\in U$.

Recall that an element $x\in \mathbb{C}$ is called an \emph{algebraic integer} if it is a root of a monic polynomial with integer coefficients. The set of algebraic integers in a field extension $F/\mathbb{Q}$ is a subring of $F$ and is denoted $\mathcal{O}_F$. It is known that $\mathcal{O}_{\mathbb{Q}(\zeta)}=\mathbb{Z}[\zeta]$. Note that $2\cos(2k \alpha)\in \mathcal{O}_K$ for every $k\in U$ because the elements are roots of the monic integer polynomial $\Psi_{2n+1}$. In fact, the set $\left\lbrace \,2\cos(2k\alpha) \mid k \in U\, \right\rbrace$ is a basis of the $\mathbb{Z}$-module $\mathcal{O}_K$.

Note that for every $k\in U$ the equality $\cos(2k\alpha)=-\cos((2n+1-2k)\alpha)$ holds. If $k>n/2$, then the number $2n+1-2k$ is an odd integer in the interval $[1,n]$. In particular,
\begin{align}
\label{Eq:CyclotomicInt}
\mathcal{O}_K=\left\langle\, 2\cos(2k\alpha) \mid k \in U\,\right\rangle_{\mathbb{Z}}=\left\langle\, 2\cos(k\alpha) \mid k \in U\,\right\rangle_{\mathbb{Z}}.
\end{align}

We are also interested in the groups of units $\mathcal{O}_K^{\times}\subseteq \mathcal{O}_{\mathbb{Q}(\zeta)}^{\times}$ because they contain elements which are relevant for our geometric discussions. Elements of these groups are known as \emph{cyclotomic units} in the literature.

\begin{prop}
\label{Prop:RingIntegers}
Let $k\in [1,n]$.
\begin{enumerate}
\item We have
\begin{align*}
(a)\quad\frac{1-\zeta^k}{1-\zeta},\frac{1-\zeta^k}{1-\zeta^n}\in\mathcal{O}_{\mathbb{Q}(\zeta)},&& (b)\quad \frac{\sin(k\alpha)}{\sin(\alpha)},\frac{\sin(k\alpha)}{\sin(n\alpha)}\in\mathcal{O}_K.
\end{align*}
\item If in addition $k$ is coprime to $2n+1$, then
\begin{align*}
(a)\quad \frac{1-\zeta^k}{1-\zeta},\frac{1-\zeta^k}{1-\zeta^n}\in\mathcal{O}_{\mathbb{Q}(\zeta)}^{\times},&&(b)\quad\frac{\sin(k\alpha)}{\sin(\alpha)},\frac{\sin(k\alpha)}{\sin(n\alpha)}\in\mathcal{O}_K^{\times}.
\end{align*}
\end{enumerate}
\end{prop}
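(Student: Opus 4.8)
The plan is to reduce everything to two basic facts about cyclotomic quotients: that $\frac{1-\zeta^a}{1-\zeta^b}$ is an algebraic integer whenever $\gcd(b,2n+1)=1$, and a unit whenever in addition $\gcd(a,2n+1)=1$. Throughout I write $m=2n+1$ and record the elementary but decisive observation that $\gcd(n,m)=1$: any common divisor of $n$ and $2n+1$ divides $(2n+1)-2n=1$. This is what lets the denominator $1-\zeta^n$ be handled on exactly the same footing as $1-\zeta$.

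For part (1)(a), when $\gcd(b,m)=1$ the class of $b$ is invertible modulo $m$, so $\zeta^a=(\zeta^b)^c$ for an integer $c$ with $bc\equiv a\pmod m$ and $c\in[1,m-1]$, whence
\[
\frac{1-\zeta^a}{1-\zeta^b}=\frac{1-(\zeta^b)^c}{1-\zeta^b}=1+\zeta^b+\dots+\zeta^{b(c-1)}\in\mathbb{Z}[\zeta]=\mathcal{O}_{\mathbb{Q}(\zeta)}.
\]
Taking $(a,b)=(k,1)$ and $(a,b)=(k,n)$ settles both assertions of (1)(a) at once, using $\gcd(n,m)=1$ for the second. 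For part (2)(a) I would run the same computation for the reciprocal: since $k$ is now also coprime to $m$, both $\frac{1-\zeta^k}{1-\zeta^b}$ and its inverse $\frac{1-\zeta^b}{1-\zeta^k}$ are algebraic integers by the previous display, so each quotient is a unit of $\mathcal{O}_{\mathbb{Q}(\zeta)}$.

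The device for passing to the sine ratios in (b) is the half-angle root of unity $\xi=e^{i\alpha}$, which satisfies $\xi^2=\zeta$ and, because $m$ is odd, already lies in $\mathbb{Q}(\zeta)$; indeed $\xi=-\zeta^{\,n+1}$, a root of unity and hence an algebraic integer. Writing $\sin(j\alpha)=(\xi^{\,j}-\xi^{-j})/(2i)$ and clearing powers of $\xi$ yields the clean identities
\[
\frac{\sin(k\alpha)}{\sin(\alpha)}=\xi^{\,1-k}\,\frac{1-\zeta^k}{1-\zeta},\qquad \frac{\sin(k\alpha)}{\sin(n\alpha)}=\xi^{\,n-k}\,\frac{1-\zeta^k}{1-\zeta^n}.
\]
Each right-hand side is a root of unity times a quantity already treated in (a); hence it is an algebraic integer, giving (1)(b), and a unit of $\mathcal{O}_{\mathbb{Q}(\zeta)}$ when $\gcd(k,m)=1$, giving the $\mathcal{O}_{\mathbb{Q}(\zeta)}$-version of (2)(b). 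These ratios are real, so they lie in $K\cap\mathcal{O}_{\mathbb{Q}(\zeta)}=\mathcal{O}_K$; and a real element $x\in\mathcal{O}_K$ that is a unit in $\mathcal{O}_{\mathbb{Q}(\zeta)}$ has $x^{-1}\in K\cap\mathcal{O}_{\mathbb{Q}(\zeta)}=\mathcal{O}_K$ as well, so it is a unit of $\mathcal{O}_K$, completing (2)(b).

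The main point to get right is the bookkeeping of the half-angle factor $\xi$. One must verify that $\xi\in\mathbb{Q}(\zeta)$ — this is precisely where the oddness of $m$ is used — and that the exponents $1-k$ and $n-k$ (both genuine integers for $k\in[1,n]$) make $\xi^{\,1-k}$ and $\xi^{\,n-k}$ honest roots of unity rather than formal fractional powers. Everything else is the standard geometric-series manipulation together with the general identification $\mathcal{O}_K=K\cap\mathcal{O}_{\mathbb{Q}(\zeta)}$, from which the descent from units of $\mathcal{O}_{\mathbb{Q}(\zeta)}$ to units of $\mathcal{O}_K$ is immediate once reality is used.
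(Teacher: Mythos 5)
Your proposal is correct and follows essentially the same route as the paper: geometric-series expansion of $(1-\zeta^a)/(1-\zeta^b)$ using that the denominator exponent is invertible mod $2n+1$, inversion of the roles of numerator and denominator to get units, and multiplication by the half-angle root of unity ($\xi=e^{i\alpha}=-\zeta^{n+1}$, which the paper writes as $\zeta^{(1-k)/2}$) to convert to sine ratios, followed by descent to $\mathcal{O}_K=K\cap\mathcal{O}_{\mathbb{Q}(\zeta)}$ via reality. Your write-up is if anything slightly more careful than the paper's in justifying that $\xi\in\mathbb{Q}(\zeta)$ and in treating the $\sin(n\alpha)$ denominator explicitly.
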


\begin{proof}
(1a) Using the geometric series we see that the element $(1-\zeta^k)/(1-\zeta)=\sum_{l=0}^{k-1}\zeta^l$ belongs to $\mathcal{O}_{\mathbb{Q}(\zeta)}=\mathbb{Z}[\zeta]$. For the second claim note that $n$ is coprime to $2n+1$. Hence $\zeta^n$ is a generator of the cyclic group $\{ \zeta^k \mid k\in[0,2n]\}$. Hence there is an integer $l$ such that $\zeta^k=\zeta^{nl}$. Then $(1-\zeta^{nl})/(1-\zeta^n)\in \mathcal{O}_{\mathbb{Q}(\zeta)}=\mathbb{Z}[\zeta]$ using the geometric series as before.

(1b) By the previous part of the proposition $(1-\zeta^k)/(1-\zeta)$ is an algebraic integer. The same is true for the $(4n+2)$-th root of unity $\zeta^{\frac{1-k}{2}}$. Hence the product of the two numbers is an algebraic integer as well. The identities $2\sin(k\alpha)=\zeta^{k/2}-\zeta^{-k/2}$  and $2\sin(\alpha)=\zeta^{1/2}-\zeta^{-1/2}$ imply
\begin{align*}
\zeta^{\frac{1-k}{2}}\cdot\frac{1-\zeta^k}{1-\zeta} =\frac{\sin(k\alpha)}{\sin(\alpha)}.
\end{align*}
Now $\sin(k\alpha)/\sin(\alpha)\in\mathcal{O}_K$ because it is an algebraic integer and it belongs to $K=\mathbb{Q}(\zeta)\cap \mathbb{R}$. 

(2) Now suppose that $\gcd(k,2n+1)=1$. First we want to show that $(1-\zeta)/(1-\zeta^k)\in\mathcal{O}_{\mathbb{Q}(\zeta)}$. The assumption implies that $\zeta^k$ is a generator of the cyclic group $\{ \zeta^l \mid l\in[0,2n]\}$ and $(1-\zeta^{kl})/(1-\zeta^k)\in \mathcal{O}_{\mathbb{Q}(\zeta)}=\mathbb{Z}[\zeta]$ using the geometric series as above. The claim $(1-\zeta^n)/(1-\zeta^k)\in\mathcal{O}_{\mathbb{Q}(\zeta)}$ can be shown analogously. The claim $(1-\zeta)/(1-\zeta^k)\in\mathcal{O}_{\mathbb{Q}(\zeta)}$ also implies that $\sin(\alpha)/\sin(k\alpha)\in\mathcal{O}_K$. Similarly $\sin(n\alpha)/\sin(k\alpha)\in\mathcal{O}_K$.
\end{proof}

\subsection{The Galois group}
\label{Section:Galois}

The field extension $\mathbb{Q}(\zeta)/\mathbb{Q}$ is Galois. Its Galois group is isomorphic to the multiplicative group $(\mathbb{Z}/(2n+1)\mathbb{Z})^{\times}$. Explicitly, an isomorphism is given by
\begin{align*}
(\mathbb{Z}/(2n+1)\mathbb{Z})^{\times} &\to\operatorname{Gal}(\mathbb{Q}(\zeta),\mathbb{Q})\\
l+(2n+1)\mathbb{Z}&\mapsto m_l
\end{align*} 
where $m_l\colon \mathbb{Q}(\zeta)\to\mathbb{Q}(\zeta)$ maps $\zeta^r\mapsto \zeta^{rl}$ for every $r\in [0,2n]$.

Notice that every $m_l$ (with $l$ coprime to $2n+1$) leaves the subfield $K$ invariant. Hence the restriction defines a homomorphism of fields 
\begin{align}
\label{Eqn:FieldHom}\nonumber
\sigma_l\colon K&\to K\\
2\cos(2r\alpha)&\mapsto 2\cos(2rl\alpha)
\end{align}
for every $r\in [0,2n]$. These homomorphisms are actually automorphisms since they define a permutation on the basis $\{2\cos(2k\alpha)\mid k\in U\}$ thanks to the coprimality of $l$ and $2n+1$. However, they are not pairwise different. If $l\in [1,n]$ is coprime to $2n+1$, then $2n+1-l$ is coprime to $2n+1$ as well and $m_l=m_{2n+1-l}$ because $(2l+2(2n+1-l))\alpha=2\pi$. In fact, the map
\begin{align*}
(\mathbb{Z}/(2n+1)\mathbb{Z})^{\times}/\{\pm 1\} &\to \operatorname{Aut}(K)\\
l&\mapsto m_l
\end{align*}
is an isomorphism of groups. In particular, the automorphism group of $K$ is abelian.

\begin{prop}
Suppose that $r,l\in [1,n]$ such that $\gcd(l,2n+1)=1$. Then
\begin{align*}
\sigma_l\left(\frac{1}{\sin^2(r\alpha)}\right)=\frac{1}{\sin^2(rl\alpha)}
\end{align*}
where $\sigma$ is defined as in equation (\ref{Eqn:FieldHom}).
\end{prop}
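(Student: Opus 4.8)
The plan is to reduce the statement to the fact that $\sigma_l$ is a ring homomorphism which fixes $\mathbb{Q}$ and acts on cosines by $\cos(2r\alpha)\mapsto\cos(2rl\alpha)$, combined with the elementary double-angle identity $\sin^2(\theta)=\tfrac12\bigl(1-\cos(2\theta)\bigr)$. The only genuine point to check is that $1/\sin^2(r\alpha)$, which a priori is merely a real number, actually lies in the field $K$ on which $\sigma_l$ is defined, so that we are entitled to apply the automorphism to it.

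First I would record that for $r\in[1,n]$ we have $r\alpha=r\pi/(2n+1)\in(0,\pi/2)$, so $\sin(r\alpha)\neq 0$ and hence $1/\sin^2(r\alpha)$ is a well-defined nonzero real number. Writing $\sin^2(r\alpha)=\tfrac12\bigl(1-\cos(2r\alpha)\bigr)$ exhibits $\sin^2(r\alpha)$ as an element of $K=\mathbb{Q}(2\cos(2\alpha))$, because $2\cos(2r\alpha)=\zeta^r+\zeta^{-r}$ lies in $K$ (indeed in $\mathcal{O}_K$). Since $K$ is a field and $\sin^2(r\alpha)\neq 0$, its reciprocal lies in $K$ as well, so $\sigma_l$ may legitimately be applied.

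Next I would simply compute, using that $\sigma_l$ is a field automorphism of $K$ fixing the rational subfield and, by the defining equation (\ref{Eqn:FieldHom}), sending $\cos(2r\alpha)\mapsto\cos(2rl\alpha)$:
\[
\sigma_l\!\left(\frac{1}{\sin^2(r\alpha)}\right)
=\sigma_l\!\left(\frac{2}{1-\cos(2r\alpha)}\right)
=\frac{2}{1-\sigma_l\bigl(\cos(2r\alpha)\bigr)}
=\frac{2}{1-\cos(2rl\alpha)}
=\frac{1}{\sin^2(rl\alpha)},
\]
where the last equality again uses the double-angle identity, now in the form $1-\cos(2rl\alpha)=2\sin^2(rl\alpha)$.

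I do not expect any real obstacle here; the computation is one line once applicability is justified. The only items deserving care are the bookkeeping that $1/\sin^2(r\alpha)\in K$, and the observation that the defining formula for $\sigma_l$ on the generators $2\cos(2r\alpha)$ forces $\sigma_l(\cos(2r\alpha))=\cos(2rl\alpha)$ after dividing by the fixed scalar $2$. If one wishes to sidestep any worry about whether $rl$ lies in the index range $[0,2n]$, one can note that $\cos(2rl\alpha)$ is unchanged under reducing $rl$ modulo $2n+1$, so the value $\sigma_l(\cos(2r\alpha))=\cos(2rl\alpha)$ is consistent with the description of $\sigma_l$ via the companion automorphism $m_l\colon\zeta^r\mapsto\zeta^{rl}$ of $\mathbb{Q}(\zeta)$.
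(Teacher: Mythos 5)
Your proof is correct and follows essentially the same route as the paper: both rest on the identity $1/\sin^2(r\alpha)=2/(1-\cos(2r\alpha))$ together with the fact that $\sigma_l$ is a field homomorphism sending $\cos(2r\alpha)$ to $\cos(2rl\alpha)$. The extra bookkeeping you include (that $1/\sin^2(r\alpha)$ indeed lies in $K$, and the reduction of $rl$ modulo $2n+1$) is sound and merely makes explicit what the paper leaves implicit.
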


\begin{proof}
The claim follows from the identity
\begin{align*}
\frac{1}{\sin^2(r\alpha)}=\frac{2}{1-\cos(2r\alpha)}.
\end{align*}
because $\sigma_l$ is a map of fields.
\end{proof}

\section{Linear independence of translation vectors}
\label{Sec:LinInd}

\subsection{An estimate of the coefficients}

In this section we want to show that the set  
\begin{align*}
\left\lbrace\, \frac{1}{\sin^2(k\alpha)} \mid k \in U\,\right\rbrace
\end{align*}
is linearly independent over $\mathbb{Q}$. This set of equal to the set of all translation vectors from Definition~ \ref{Defn:TVectors}, scaled by inverse of the common factor $d_1\sin(\alpha)\sin(n\alpha)$. Since the cardinality of the set $U=\left\lbrace \, k\in[1,n] \mid \gcd(k,2n+1)=1\,\right\rbrace$ is equal to the degree of the field extension $K/\mathbb{Q}$, this implies that this set of translation vectors is a basis of $K$ as a vector space over $\mathbb{Q}$. 

The following result is a special case of a Verlinde formula. A more general formula and a proof can be found in the article by Zagier, see \cite{Z}. Our formula can be obtained from Zagier's $D(g,k)$ by putting $k=2$.

\begin{prop}
\label{Prop:Verlinde}
For every $n\geq 1$ we have
\begin{align*}
\sum_{k=1}^n\frac{1}{\sin^2(k\alpha)}=\frac{2}{3}n(n+1).
\end{align*}
\end{prop}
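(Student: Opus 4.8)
The plan is to reduce the claim to an identity for sums of $\cot^2$, and then to extract that sum via Vieta's formulas from a polynomial whose roots are exactly the relevant cotangents. Writing $\csc^2\theta = 1 + \cot^2\theta$, the assertion is equivalent to
\[
\sum_{k=1}^n \cot^2(k\alpha) = \frac{n(2n-1)}{3},
\]
since then $\sum_{k=1}^n \csc^2(k\alpha) = n + \frac{n(2n-1)}{3} = \frac{2n(n+1)}{3}$, which is the desired value.

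To compute $\sum_{k=1}^n \cot^2(k\alpha)$, I would first expand $\sin((2n+1)\theta)$ by taking the imaginary part of $(\cos\theta + i\sin\theta)^{2n+1}$ via the binomial theorem:
\[
\sin((2n+1)\theta) = \sum_{j=0}^n \binom{2n+1}{2j+1}(-1)^j \cos^{2(n-j)}\theta\,\sin^{2j+1}\theta.
\]
Dividing by $\sin^{2n+1}\theta$ and setting $x = \cot^2\theta$ turns the right-hand side into a polynomial of degree $n$,
\[
\frac{\sin((2n+1)\theta)}{\sin^{2n+1}\theta} = p(x), \qquad p(x) = \sum_{j=0}^n \binom{2n+1}{2j+1}(-1)^j\, x^{n-j},
\]
whose leading coefficient is $\binom{2n+1}{1} = 2n+1$ and whose next coefficient is $-\binom{2n+1}{3}$.

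Next I would identify the roots of $p$. For $k \in \{1,\dots,n\}$ the angle $k\alpha = k\pi/(2n+1)$ lies in $(0,\pi/2)$, so $\sin(k\alpha)\neq 0$ while $\sin((2n+1)k\alpha) = \sin(k\pi) = 0$; hence each $x_k = \cot^2(k\alpha)$ is a root of $p$. Since $\cot^2$ is strictly decreasing on $(0,\pi/2)$, the values $x_1,\dots,x_n$ are pairwise distinct, and as $\deg p = n$ they account for all roots of $p$. Vieta's formula then gives
\[
\sum_{k=1}^n \cot^2(k\alpha) = \frac{\binom{2n+1}{3}}{\binom{2n+1}{1}} = \frac{(2n)(2n-1)}{6} = \frac{n(2n-1)}{3},
\]
and substituting into the reduction of the first paragraph yields $\tfrac{2}{3}n(n+1)$, as claimed.

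Every step is routine; the only points demanding care are the bookkeeping in the binomial expansion (tracking the sign $(-1)^j$ and the parities of the exponents so that the quotient is genuinely a polynomial in $\cot^2\theta$) and the verification that the $n$ cotangent values are distinct and hence exhaust the roots of $p$. I do not expect a serious obstacle here. As an alternative route one could instead prove the full-range identity $\sum_{k=1}^{2n}\csc^2(k\pi/(2n+1)) = \frac{(2n+1)^2-1}{3}$ and halve it using $\sin((2n+1-k)\alpha) = \sin(k\alpha)$.
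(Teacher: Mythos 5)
Your proof is correct and complete. The paper itself does not give an argument for this identity: it simply observes that it is the special case $k=2$ of Zagier's Verlinde-type formula for $D(g,k)$ and cites \cite{Z}. Your route is the classical elementary one — reduce $\csc^2$ to $\cot^2$, realise the $n$ values $\cot^2(k\alpha)$ as the full root set of the degree-$n$ polynomial obtained from the expansion of $\sin((2n+1)\theta)/\sin^{2n+1}\theta$, and read off their sum from Vieta. All the details check out: the angles $k\alpha$ for $k\in[1,n]$ do lie in $(0,\pi/2)$ so the roots are distinct and exhaust $p$, and $\binom{2n+1}{3}/\binom{2n+1}{1}=n(2n-1)/3$ gives $n+n(2n-1)/3=\tfrac{2}{3}n(n+1)$ as required. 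What your approach buys is self-containedness at the cost of a page of computation; what the paper's citation buys is brevity and a deliberate placement of the identity within the Verlinde-formula circle of ideas, which the authors advertise in the introduction as one of the number-theoretic inputs to their main theorem. Either is acceptable; yours would make the paper independent of \cite{Z} at this point.
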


Clearly, the first term in the sum is larger than every other summand. In fact, the first summand is larger than the sum of the remaining terms as the following lemma shows.

\begin{lemma}
\label{Prop:Estimate}
For every $n\geq 1$ we have
\begin{align*}
\sum_{k=2}^n\frac{1}{\sin^2(k\alpha)}<\frac{1}{\sin^2(\alpha)}.
\end{align*}
\end{lemma}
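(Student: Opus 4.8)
The plan is to convert the tail sum into closed form using the Verlinde formula from Proposition~\ref{Prop:Verlinde}, and then to reduce the claimed inequality to an elementary estimate on $\sin^2(\alpha)$. Concretely, I would first apply Proposition~\ref{Prop:Verlinde} to isolate the $k=1$ term, writing
\begin{align*}
\sum_{k=2}^n\frac{1}{\sin^2(k\alpha)}=\frac{2}{3}n(n+1)-\frac{1}{\sin^2(\alpha)}.
\end{align*}
Substituting this into the desired inequality shows that the statement is equivalent to $\tfrac{2}{3}n(n+1)<\tfrac{2}{\sin^2(\alpha)}$, that is, to the single inequality
\begin{align*}
\sin^2(\alpha)<\frac{3}{n(n+1)}.
\end{align*}

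To establish this, I would exploit that $\alpha=\pi/(2n+1)$ lies in the interval $(0,\pi/2)$, so the standard bound $\sin(\alpha)<\alpha$ applies and yields $\sin^2(\alpha)<\alpha^2=\pi^2/(2n+1)^2$. It therefore suffices to prove $\pi^2/(2n+1)^2\leq 3/(n(n+1))$, equivalently $\pi^2\,n(n+1)\leq 3(2n+1)^2$. Expanding the right-hand side as $3(2n+1)^2=12\,n(n+1)+3$ and using the numerical fact $\pi^2<12$, I obtain
\begin{align*}
\pi^2\,n(n+1)<12\,n(n+1)<12\,n(n+1)+3=3(2n+1)^2,
\end{align*}
which closes the chain of inequalities. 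For $n=1$ the tail sum is empty and the statement is trivial, while for $n\geq 1$ the argument above applies uniformly.

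There is essentially no serious obstacle here: the entire content is that the Verlinde formula renders the problem elementary, after which the only nontrivial input is the crude estimate $\sin(\alpha)<\alpha$ together with $\pi^2<12$. I would emphasise that the margin in $\pi^2<12$ is comfortable, so no delicate numerics are needed, and that the reduction step is exact rather than an approximation, meaning the strictness of the final inequality is preserved throughout.
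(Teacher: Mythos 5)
Your proof is correct and follows essentially the same route as the paper: both reduce the claim via Proposition~\ref{Prop:Verlinde} to the inequality $\sin^2(\pi/(2n+1))<3/(n(n+1))$ and then conclude using $\sin(x)<x$ together with $\pi^2<12$. The only difference is cosmetic (you clear denominators and expand $3(2n+1)^2=12n(n+1)+3$, while the paper compares the fractions directly).
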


\begin{proof}
By Proposition \ref{Prop:Verlinde} the claim is equivalent to the inequalities
\begin{align*}
\frac{2}{3}n(n+1)<\frac{2}{\sin^2(\alpha)} \,\Leftrightarrow\, \sin^2\left(\frac{\pi}{2n+1}\right)<\frac{3}{n(n+1)}.
\end{align*}
It is well known that $\sin(x)< x$ for all $x>0$. Moreover, using Archimedes' bound $\pi<22/7$ it is easy to see that $\pi<2\sqrt{3}$ or $\pi^2<12$. We conclude that
\begin{align*}
\sin^2\left(\frac{\pi}{2n+1}\right)<\frac{\pi^2}{4n^2+4n+1}<\frac{12}{4n^2+4n}=\frac{3}{n(n+1)},
\end{align*}
which finishes the proof of the claim.
\end{proof}

\subsection{Determinants of group characters}

In this subsection we recall the definition of a group character and present a classical result about a determinant constructed from group characters. The determinant will play a crucial role in the proof of the linear independence of the translation vectors.

\begin{defn}[Group character] Let $G$ be a finite group. A \emph{character} is a group homomorphism from $G$ to $\mathbb{C}^{\times}$.
\end{defn}

Since every element $g$ in a finite group $G$ has finite order, the image of every character $\chi$ must lie in the unit circle $S^1\subseteq \mathbb{C}^{\times}$. 

The following theorem has a long and colourful history. In the special case of cyclic groups the theorem yields a factorisation of the determinant of a circulant matrix which was first proved by Catalan. Our formulation is due to Dedekind although Burnside proved a related statement. The theorem was generalised to all finite groups by Frobenius.

\begin{theorem}[Dedekind] 
\label{Thm:CharacterDets}
Let $G=\{g_1,\ldots,g_t\}$ be a finite abelian group of order $t$. Suppose that $R= \mathbb{C}[X_g\mid g\in G]$ is the polynomial ring in $t$ variables indexed by the elements of $G$. We define a matrix $M\in\operatorname{Mat}_{t\times t}(R)$ by putting $M_{ij}=X_{g_ig_j^{-1}}$. Then the determinant is given by
\begin{align*}
\operatorname{det}(M)=\prod_{\chi\colon G\to S^1}\left(\sum_{g\in G}\chi(g)X_g\right). 
\end{align*}
where the sum runs over all characters of the group $G$. In particular, every factor on the right hand side is an irreducible homogeneous polynomial of degree $1$.
\end{theorem}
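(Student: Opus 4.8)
The plan is to diagonalise the group matrix $M$ by its characters, exploiting that a finite abelian group of order $t$ has exactly $t$ characters. First I would assemble the characters into a single constant matrix. Writing $\widehat{G}=\{\chi_1,\dots,\chi_t\}$ for the character group, I form the matrix $P\in\operatorname{Mat}_{t\times t}(\mathbb{C})$ with entries $P_{i,\chi}=\chi(g_i)$, and for each character I set the column vector $v_\chi=(\chi(g_1),\dots,\chi(g_t))^{\mathsf T}$. The Schur orthogonality relations give $\sum_{i}\overline{\chi(g_i)}\chi'(g_i)=t\,\delta_{\chi,\chi'}$, so that $P^{*}P=tI$; in particular $P$ is an invertible matrix with constant complex entries, and its inverse again has entries in $\mathbb{C}\subseteq R$.

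The key computation is that each $v_\chi$ is an eigenvector of $M$ as a matrix over $R$. Indeed,
\begin{align*}
(Mv_\chi)_i=\sum_{j=1}^{t}X_{g_ig_j^{-1}}\,\chi(g_j).
\end{align*}
Reindexing by $h=g_ig_j^{-1}$, which ranges bijectively over $G$ as $j$ does, and using $g_j=h^{-1}g_i$ together with $\chi(h^{-1})=\overline{\chi(h)}$ (valid since $\chi$ takes values in $S^1$), the sum becomes
\begin{align*}
(Mv_\chi)_i=\chi(g_i)\sum_{h\in G}\overline{\chi(h)}\,X_h=\lambda_\chi\,(v_\chi)_i,\qquad \lambda_\chi:=\sum_{h\in G}\overline{\chi(h)}\,X_h.
\end{align*}
Thus $MP=PD$ where $D=\operatorname{diag}(\lambda_{\chi_1},\dots,\lambda_{\chi_t})$. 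Since $P$ is invertible with entries constant in $R$, we obtain $M=PDP^{-1}$ over $R$, and taking determinants yields $\det(M)=\det(D)=\prod_{\chi}\lambda_\chi$.

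Finally I would remove the complex conjugate and identify the factors. Because complex conjugation permutes $\widehat{G}$, the product $\prod_{\chi}\sum_{h}\overline{\chi(h)}X_h$ equals $\prod_{\chi}\sum_{h}\chi(h)X_h$, which is the asserted formula. Each factor $\sum_{g}\chi(g)X_g$ is visibly homogeneous of degree $1$ in the $X_g$ and nonzero, hence irreducible in $R$; moreover distinct characters give non-proportional coefficient vectors, by the independence encoded in $P^{*}P=tI$, so the displayed product is a genuine factorisation into $t$ irreducible linear forms.

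I expect the only delicate points to be the bookkeeping in the reindexing $h=g_ig_j^{-1}$ and the observation that abelianness is used precisely to guarantee that the number of one-dimensional characters equals $t$, so that $P$ is square and invertible. The eigenvector identity itself holds for an arbitrary finite group, which is exactly why the non-abelian case requires irreducible representations of higher dimension and produces the higher-degree factors of the full Frobenius theory.
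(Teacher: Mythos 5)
Your proof is correct. The paper itself does not prove Theorem \ref{Thm:CharacterDets}; it is quoted as a classical result (Catalan, Dedekind, Frobenius), so there is no in-paper argument to compare against. Your diagonalisation is the standard one: the reindexing $h=g_ig_j^{-1}$, $g_j=h^{-1}g_i$ is carried out correctly, the identity $MP=PD$ with $P$ the (invertible, by character orthogonality) character table of the abelian group gives $\det M=\prod_\chi\lambda_\chi$ over $R$ since $\det P$ is a nonzero constant, and the final pass from $\overline{\chi}$ to $\chi$ via the permutation $\chi\mapsto\overline{\chi}$ of $\widehat{G}$ is valid. The only point worth flagging is cosmetic: the irreducibility of each factor needs nothing beyond its being a nonzero linear form, so the appeal to non-proportionality of distinct rows of $P$ is unnecessary for the statement as given (it would only be needed to assert that the $t$ linear factors are pairwise non-associate).
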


\subsection{Linear independence of translation vectors}

Recall that a $\mathbb{Q}$-basis $B$ of a Galois extension $F/\mathbb{Q}$ is called \emph{normal} if there exists an element $a\in F$ such that 
\begin{align*}
B=\left\lbrace\, \sigma(a)\mid \sigma\in\operatorname{Gal}(F,\mathbb{Q})\,\right\rbrace.
\end{align*}
The Normal Basis Theorem asserts that every Galois extension has a normal basis. The following statement is a variation of a well-known argument which plays a role in the proof of the Normal Basis Theorem. Note that we do not assume that the field extension is Galois.

\begin{lemma}
\label{Prop:LinearIndependence}
Let $F/\mathbb{Q}$ be a field extension. Assume that $\phi_1,\ldots,\phi_r\colon F\to F$  are isomorphisms of fields. Let $a\in K$. We define a matrix $M\in \operatorname{Mat}_{r\times r}(K)$ by $M_{ij}=\phi_i^{-1}(\phi_j(a))$. Suppose that $\operatorname{det}(M)\neq 0$. Then $\phi_1(a),\ldots,\phi_r(a)$ are linearly independent over $\mathbb{Q}$.
\end{lemma}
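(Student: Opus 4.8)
The plan is to argue by contradiction, exploiting the single fact that every field homomorphism fixes the prime field $\mathbb{Q}$ pointwise. Suppose, for contradiction, that $\phi_1(a),\ldots,\phi_r(a)$ were linearly dependent over $\mathbb{Q}$. Then there would exist rational numbers $c_1,\ldots,c_r$, not all zero, with
\begin{align*}
\sum_{j=1}^r c_j\,\phi_j(a)=0.
\end{align*}
The goal is to convert this one relation into the assertion that the nonzero vector $\mathbf{c}=(c_1,\ldots,c_r)^{\mathsf{T}}$ lies in the kernel of $M$, which immediately contradicts $\det(M)\neq 0$.

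To do this, I would apply the field isomorphism $\phi_i^{-1}$ to the displayed relation, for each fixed index $i\in\{1,\ldots,r\}$. The key observation is that $\phi_i^{-1}$ is additive and multiplicative, sends $0$ to $0$, and fixes every rational number, since a field homomorphism fixes $1$ and hence the entire prime subfield $\mathbb{Q}$. Consequently the scalars $c_j$ pass through unchanged, and
\begin{align*}
0=\phi_i^{-1}\!\left(\sum_{j=1}^r c_j\,\phi_j(a)\right)=\sum_{j=1}^r c_j\,\phi_i^{-1}\bigl(\phi_j(a)\bigr)=\sum_{j=1}^r M_{ij}\,c_j,
\end{align*}
where the final equality is just the definition $M_{ij}=\phi_i^{-1}(\phi_j(a))$.

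This yields $M\mathbf{c}=0$ with $\mathbf{c}\neq 0$, so $M$ is singular and $\det(M)=0$, contradicting the hypothesis; hence no nontrivial $\mathbb{Q}$-relation exists and the $\phi_j(a)$ are linearly independent over $\mathbb{Q}$. I do not anticipate any genuine obstacle in this lemma itself: the only point needing care is the invariance of the coefficients $c_j$ under $\phi_i^{-1}$, and this is precisely what upgrades an additive dependence into a $\mathbb{Q}$-linear dependence among the columns of $M$. The real work in the intended application lies elsewhere, namely in verifying the hypothesis $\det(M)\neq 0$; that is where the Dedekind determinant of Theorem \ref{Thm:CharacterDets} enters, together with the estimate of Lemma \ref{Prop:Estimate}, to guarantee the determinant is nonzero for the specific choice $a=1/\sin^2(\alpha)$ and the automorphisms $\sigma_l$ of $K$.
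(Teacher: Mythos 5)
Your proposal is correct and follows essentially the same route as the paper: apply each $\phi_i^{-1}$ to the putative $\mathbb{Q}$-linear relation, use that field homomorphisms fix $\mathbb{Q}$ pointwise to obtain $M\mathbf{c}=0$, and conclude from $\det(M)\neq 0$. The only cosmetic difference is that you phrase it as a contradiction while the paper directly deduces that all coefficients vanish.
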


\begin{proof}
Suppose that $\lambda_1,\ldots,\lambda_r\in \mathbb{Q}$ such that $\lambda_1\phi_1(a)+\ldots+\lambda_r\phi_r(a)=0$. Note that every field automorphism $\phi\in\operatorname{Aut}(K)$ restricts to the identity on $\mathbb{Q}\subseteq F$ since $\phi(1)=1$. We apply $\phi_1^{-1},\ldots,\phi_r^{-1}$ to the above equation and obtain
\begin{align*}
M\left(\begin{matrix}\lambda_1\\\vdots\\\lambda_r\end{matrix}\right)=0.
\end{align*}
As $M$ is invertible, we can conclude that $\lambda_1=\ldots=\lambda_r=0$.
\end{proof}

\begin{theorem}
\label{Thm:Independence}
Let $n\geq 1$ and $\alpha=\frac{\pi}{2n+1}$. The set
\begin{align*}
\left\lbrace\, \frac{1}{\sin^2(k\alpha)} \mid k \in [1,n], \gcd(k,2n+1)=1\,\right\rbrace
\end{align*}
is linearly independent over $\mathbb{Q}$.
\end{theorem}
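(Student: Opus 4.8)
The plan is to realise the given family as a single Galois orbit and then apply the abstract independence criterion of Lemma~\ref{Prop:LinearIndependence}. Put $a=1/\sin^2(\alpha)\in K$. The last proposition of Section~\ref{Section:Galois} gives $\sigma_l(a)=1/\sin^2(l\alpha)$ for every $l\in U$, and $G=\operatorname{Aut}(K)=\{\sigma_l\mid l\in U\}$ is an abelian group of order $\lvert U\rvert=[K:\mathbb{Q}]$. Hence the set in question is exactly the orbit $\{\sigma_l(a)\mid l\in U\}$, and by Lemma~\ref{Prop:LinearIndependence} it suffices to show that the matrix $M$ with entries $M_{ij}=\sigma_i^{-1}(\sigma_j(a))$ has nonzero determinant (linear independence of the family then forces the elements to be distinct, recovering the set statement).

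Next I would evaluate $\det(M)$ by means of Dedekind's factorisation of the group determinant (Theorem~\ref{Thm:CharacterDets}). Enumerating $G=\{g_1,\ldots,g_t\}$ with $g_i=\sigma_{l_i}$ and writing $X_g=g(a)$, one has $M_{ij}=X_{g_i^{-1}g_j}$. Since $G$ is abelian, $g_i^{-1}g_j=g_jg_i^{-1}$, so $M$ is the transpose of the Dedekind matrix $\bigl[X_{g_ig_j^{-1}}\bigr]$ and shares its determinant:
\begin{align*}
\det(M)=\prod_{\chi\colon G\to S^1}\left(\sum_{g\in G}\chi(g)\,g(a)\right)=\prod_{\chi\colon G\to S^1}\left(\sum_{l\in U}\frac{\chi(\sigma_l)}{\sin^2(l\alpha)}\right),
\end{align*}
the product ranging over all characters of $G$. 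It therefore remains to prove that each factor is nonzero.

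The crucial step — where the arithmetic of the angles enters — is the dominant-term estimate of Lemma~\ref{Prop:Estimate}. Because $1\in U$ and $\sigma_1=\operatorname{id}$, the summand $l=1$ contributes $\chi(\operatorname{id})/\sin^2(\alpha)=1/\sin^2(\alpha)$, a fixed positive real, to \emph{every} character sum. Each $\chi$ takes values on the unit circle, so the triangle inequality yields
\begin{align*}
\left\lvert\,\sum_{l\in U}\frac{\chi(\sigma_l)}{\sin^2(l\alpha)}\,\right\rvert\geq\frac{1}{\sin^2(\alpha)}-\sum_{\substack{l\in U\\ l\geq 2}}\frac{1}{\sin^2(l\alpha)}\geq\frac{1}{\sin^2(\alpha)}-\sum_{k=2}^{n}\frac{1}{\sin^2(k\alpha)}>0,
\end{align*}
the final inequality being precisely Lemma~\ref{Prop:Estimate}. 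Thus no factor vanishes, $\det(M)\neq0$, and Lemma~\ref{Prop:LinearIndependence} delivers the desired linear independence over $\mathbb{Q}$.

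I expect the only genuine obstacle to be ruling out a conspiracy among the roots of unity $\chi(\sigma_l)$ that would annihilate some character sum; this is exactly what the estimate resolves, since the first summand strictly dominates all the others combined. A minor bookkeeping point is matching $M$ to the precise shape of Dedekind's matrix, which the commutativity of $G$ handles through the transpose identity above. Note that the Verlinde formula (Proposition~\ref{Prop:Verlinde}) is not needed directly here, as the bound in Lemma~\ref{Prop:Estimate} applies uniformly to all characters, including the trivial one.
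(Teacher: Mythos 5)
Your proposal is correct and follows essentially the same route as the paper: apply Lemma \ref{Prop:LinearIndependence} to $a=1/\sin^2(\alpha)$ and the automorphisms $\sigma_l$, factor $\det(M)$ via Dedekind's group determinant, and kill each character sum with the dominant-term estimate of Lemma \ref{Prop:Estimate}. Your remark on the transpose relating $[X_{g_i^{-1}g_j}]$ to Dedekind's matrix $[X_{g_ig_j^{-1}}]$ is a small bookkeeping point the paper passes over silently, and your observation that the Verlinde formula enters only through Lemma \ref{Prop:Estimate} is accurate.
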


\begin{proof}
 Recall the abbreviation $U=\left\lbrace\, k \in [1,n]\mid \gcd(k,2n+1)=1\,\right\rbrace$. We apply Lemma \ref{Prop:LinearIndependence} to $a=1/\sin^2(\alpha)$ and the field automorphisms $\sigma_l\in\operatorname{Aut}(K)$ with $l\in U$ from Section \ref{Section:Galois}. It is sufficient to prove that the determinant of the matrix $M\in \operatorname{Mat}_{U\times U}(K)$ with
\begin{align*}
M_{rs}=(\sigma_r^{-1}\circ \sigma_s)(a)&&(r,s\in U)
\end{align*}
is not equal to zero. We compute the determinant by Theorem \ref{Thm:CharacterDets} of Dedekind. More precisely, we apply the theorem to $G=\operatorname{Aut}(K)$ and substitute $X_{m}=\sigma(a)$ for all $m\in G$. We obtain
\begin{align*}
\operatorname{det}(M)=\prod_{\chi\colon G\to S^1} \left(\sum_{l\in U}\chi(\sigma_l)\sigma_l(a)\right).
\end{align*}
We show that every factor in the right hand side of the equation is non-zero. Let $\chi\colon G\to S^1$ be a character. Notice that $1\in U$ because it is always coprime to $2n+1$. We have $\sigma_1(a)=a$ because $\sigma_1$ is the neutral element in $G$. The triangle inequality, the fact that the image $\operatorname{im}(\chi)$ is contained in the unit circle, and Lemma \ref{Prop:Estimate} yield the chain
\begin{align*}
\left\lvert\sum_{\substack{l\in U \\ l \neq 1}}\chi(\sigma_l)\sigma_l(a)\right\rvert\leq \sum_{\substack{l\in U \\ l \neq 1}}\sigma_l(a)\lvert \chi(\sigma_l)\rvert=\sum_{\substack{l\in U \\ l \neq 1}}\sigma_l(a)\leq \sum_{l=2}^{n}\sigma_l(a)<a=\lvert\chi(\sigma_1)\sigma_1(a)\rvert
\end{align*}
of inequalities. We can conclude that
\begin{align*}
\sum_{\substack{l\in U \\ l \neq 1}}\chi(\sigma_l)\sigma_l(a)\neq -\chi(\sigma_1)\sigma_1(a)
\end{align*}
because those are two complex numbers with different absolute values.
\end{proof}

\section{The structure of the exchange graph}
\label{Section:MainThm}

\subsection{The lattice of translation vectors}

Let us introduce the following lattices.

\begin{defn}[Length lattices] \begin{enumerate}
\item (Translation lattice from infinite regions) Put 
\begin{align*}
R=\left\langle\, d_1\frac{\sin(\alpha)\sin(n\alpha)}{\sin^2(k\alpha)} \mid k\in U\,\right\rangle_{\mathbb{Z}}.
\end{align*}
\item (Translation lattice) Let $L$ be the $\mathbb{Z}$-module spanned by the Euclidean lengths of all vectors $w\in \mathbb{E}^2$ such that the exchange graph of $(\mathbf{v}_0,B_0)$ contains two seeds $(\mathbf{v}_1,B_1)$ and $(\mathbf{v}_2,B_2)$ with $(\mathbf{v}_1,B_1)=(\mathbf{v}_2,B_2)+w$.
\item (Side lattice) Let $L'$ be the $\mathbb{Z}$-module spanned by the Euclidean lengths of sides in triangles $\Delta$ such that the exchange graph of $(\mathbf{v}_0,B_0)$ contains a seed $(\mathbf{v},B)$ that is represented by the triangle $\Delta$.
\end{enumerate}
\end{defn}

\begin{prop}
\label{Prop:LatticeInclusions}
\begin{enumerate}
\item 
There exists a natural number $d$ such that there are inclusions:
\begin{align*}
\xymatrix@C=0.0pc@R=0.0pc{
L&\subseteq&\frac{1}{d}\mathbb{Z}[2\cos(\alpha)]d_1\\
\rotatebox[origin=c]{90}{$\subseteq$}&&\rotatebox[origin=c]{90}{$\subseteq$}\\
R&\subseteq&\mathbb{Z}[2\cos(\alpha)]d_1
}
\end{align*}
\item The ranks of the lattices are equal to $\operatorname{rk}_{\mathbb{Z}}(R)=\operatorname{rk}_{\mathbb{Z}}(L)=\operatorname{rk}_{\mathbb{Z}}(\mathbb{Z}[2\cos(\alpha)]d_1)=\varphi(2n+1)/2$. 
\end{enumerate}
\end{prop}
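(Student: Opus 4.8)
The plan is to establish the four inclusions of the diagram one at a time and then deduce the rank statement of part (2) by a squeezing argument, feeding in Theorem~\ref{Thm:Independence}. Before anything else I would record the algebraic identification $\mathbb{Z}[2\cos(\alpha)]=\mathcal{O}_K$: each generator $2\cos(k\alpha)$ ($k\in U$) of $\mathcal{O}_K$ in (\ref{Eq:CyclotomicInt}) is a monic integer (Chebyshev) polynomial in $2\cos(\alpha)$, so $\mathcal{O}_K\subseteq\mathbb{Z}[2\cos(\alpha)]$, while $2\cos(\alpha)=-2\cos(2n\alpha)$ is a real algebraic integer of $K$, giving the reverse inclusion. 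With this in place the two ``easy'' inclusions are immediate. The inclusion $\mathbb{Z}[2\cos(\alpha)]d_1\subseteq\tfrac{1}{d}\mathbb{Z}[2\cos(\alpha)]d_1$ is trivial for any $d\geq1$. For $R\subseteq\mathbb{Z}[2\cos(\alpha)]d_1=\mathcal{O}_Kd_1$ I would write each generator as $d_1\cdot\frac{\sin(\alpha)}{\sin(k\alpha)}\cdot\frac{\sin(n\alpha)}{\sin(k\alpha)}$ and apply Proposition~\ref{Prop:RingIntegers}(2b): both factors are units of $\mathcal{O}_K$ because $k\in U$ is coprime to $2n+1$, so their product lies in $\mathcal{O}_K$. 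The left inclusion $R\subseteq L$ is exactly Proposition~\ref{Prop:InfiniteRegionTranslationVectors}, which realises each $s_k$ ($k\in U$) as the length of a translation vector of the belt $I+w_k$ inside the exchange graph, whence $s_k\in L$.

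The substance lies in the top inclusion $L\subseteq\tfrac{1}{d}\mathcal{O}_Kd_1$, which I would prove geometrically along the belt line $b$. By Proposition~\ref{Prop:ParallelTob} every translation vector is parallel to $b$, and by Proposition~\ref{Prop:Feet} the distinguished feet of altitudes of \emph{every} triangle in the exchange graph lie on the one fixed line $b$. Fixing an origin at a foot of $\Delta_0$, each seed then determines two real coordinates on $b$, and I would show by induction on mutation distance that all these foot-coordinates lie in $\tfrac{1}{d}\mathcal{O}_Kd_1$ for a suitable fixed $d$. A single mutation is a partial reflection relating two triangles whose feet on $b$ differ by an orthic-type length of the form $d_1\frac{\sin(\alpha)\sin(n\alpha)\cos(p\alpha)}{\sin(q\alpha)\sin(r\alpha)}$, with $p\alpha,q\alpha,r\alpha$ among the angles; here the side comes from (\ref{Eqn:Sides}) and the projection factor from $2\cos(p\alpha)\in\mathcal{O}_K$. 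By Proposition~\ref{Prop:FinitelyManyTriangles} there are only finitely many triangle shapes occurring, hence only finitely many such orthic lengths, each of the shape $d_1\xi$ with $\xi\in K$. Since for each $\xi\in K$ clearing denominators in its minimal polynomial produces a positive integer $m$ with $m\xi\in\mathcal{O}_K$, taking $d$ to be the least common multiple of these finitely many integers works. Foot-coordinates are then finite sums of such displacements starting from $0$, so they lie in $\tfrac{1}{d}\mathcal{O}_Kd_1$; and every translation length, being a difference of two foot-coordinates, lies there as well, giving $L\subseteq\tfrac{1}{d}\mathcal{O}_Kd_1$.

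For part (2) I would argue by squeezing ranks inside the $\mathbb{Q}$-vector space $Kd_1\subseteq\mathbb{R}$, which has dimension $[K:\mathbb{Q}]=\varphi(2n+1)/2$. Theorem~\ref{Thm:Independence} shows the numbers $1/\sin^2(k\alpha)$, $k\in U$, are $\mathbb{Q}$-linearly independent; multiplying by the nonzero scalar $d_1\sin(\alpha)\sin(n\alpha)$ preserves independence, so the $|U|=\varphi(2n+1)/2$ generators of $R$ are independent and $\operatorname{rk}_{\mathbb{Z}}(R)=\varphi(2n+1)/2$. Since $\mathcal{O}_K$ is a free $\mathbb{Z}$-module of rank $[K:\mathbb{Q}]$, also $\operatorname{rk}_{\mathbb{Z}}(\mathbb{Z}[2\cos(\alpha)]d_1)=\operatorname{rk}_{\mathbb{Z}}(\tfrac{1}{d}\mathbb{Z}[2\cos(\alpha)]d_1)=\varphi(2n+1)/2$. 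The chains $R\subseteq L\subseteq\tfrac{1}{d}\mathcal{O}_Kd_1$ and $R\subseteq\mathcal{O}_Kd_1\subseteq\tfrac{1}{d}\mathcal{O}_Kd_1$ from part (1), together with the monotonicity of rank under inclusion and the fact that the outer terms already have rank $\varphi(2n+1)/2$, force every module in each chain — in particular $L$ — to have rank exactly $\varphi(2n+1)/2$.

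The main obstacle is the top inclusion: one must verify that a single geometric mutation moves the feet on $b$ by a quantity of controlled denominator. This needs the case analysis of Remark~\ref{Rem:Orientation} (the source/sink moves inside an acyclic belt and the obtuse similarity transformation of Figure~\ref{Figure:ObtuseMutation}) to confirm that each displacement really is an orthic length of one of the finitely many occurring triangles; once finiteness of shapes is invoked, existence of the common denominator $d$ is soft. I deliberately avoid pinning down an explicit value of $d$ (say a divisor of a power of $2n+1$), since that would require the precise ideal factorisation of $\sin(k\alpha)/\sin(\alpha)$ through the cyclotomic units of Proposition~\ref{Prop:RingIntegers}, whereas the statement only asserts existence.
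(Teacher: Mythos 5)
Your proposal is correct and follows essentially the same route as the paper: $R\subseteq\mathcal{O}_Kd_1$ via Proposition~\ref{Prop:RingIntegers}(2b), $R\subseteq L$ via Proposition~\ref{Prop:InfiniteRegionTranslationVectors}, the key inclusion $L\subseteq\tfrac{1}{d}\mathbb{Z}[2\cos(\alpha)]d_1$ by decomposing a translation vector along $b$ into finitely many displacements of the form (side length from (\ref{Eqn:Sides})) times (cosine of a multiple of $\alpha$) and clearing the finitely many denominators, and the rank statement by squeezing between $R$ and $\tfrac{1}{d}\mathbb{Z}[2\cos(\alpha)]d_1$ using Theorem~\ref{Thm:Independence}. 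The only cosmetic difference is that you track feet of altitudes and orthic segments where the paper sums projections of triangle side vectors onto $b$; both reduce to the same finiteness-of-shapes argument.
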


\begin{proof} The inclusion $R\subseteq \mathbb{Z}[2\cos(\alpha)]d_1$ follows from Proposition \ref{Prop:RingIntegers} (2b) and closure of $\mathbb{Z}[2\cos(\alpha)]=\mathcal{O}_{K}$ under multiplication. The inclusion $R\subseteq L$ was established in Proposition \ref{Prop:InfiniteRegionTranslationVectors}.

To construct the integer $d$ note that 
\begin{align*}
L'\subseteq \left\langle\, d_1\frac{\sin(\alpha)\sin(n\alpha)}{\sin(k_1\alpha)\sin(k_2\alpha)} \mid k_1,k_2\in [1,n]\,\right\rangle_{\mathbb{Z}}
\end{align*}
by virtue of formula (\ref{Eqn:Sides}), see Subsection \ref{Section:Lengths}. The formula $2\sin(x)\sin(y)=\cos(x-y)-\cos(x+y)$ for $x,y\in\mathbb{R}$ implies that the numerators and the denominators of the generators belong to the field $K=\left\langle\, 2\cos(k\alpha)\mid k\in U\,\right\rangle_{\mathbb{Q}}$. Hence $L'\subseteq K d_1$. Let $e\in\mathbb{N}$ be the common denominator of all the fractions that occur as coefficients in $\mathbb{Q}$-linear combinations of the generators of $L'$ in the basis $\left\lbrace\, 2\cos(k\alpha)d_1\mid k\in U\,\right\rbrace$ of $d_1 K$. Then $L'\subseteq \frac{1}{e}\mathbb{Z}[2\cos(\alpha)]d_1$.

Let $w\in L$ be a translation vector. Hence, there exists a sequence $(v_i,B_i)$, $i\in [1,k]$ of seeds such that $(v_1,B_1)=(v_k,B_k)+w$ and two adjacent elements in the sequence are related to each other by a single mutation. We denote by $\Delta_1$ and $\Delta_k$ the triangles associated with the first and the last seed, and by $w_1,w_k\in\mathbb{E}^2$ two vertices of $\Delta_1$ and $\Delta_k$ with $w_1=w_k+w$. From this we can conclude that there are vectors $u_i$ with $i\in [1,l]$ with $w=\sum_{i=1}^{l}u_i$ such that $\lvert u_i\rvert\in L'$ is a side length in a triangle given by one of the seeds in the mutation sequence and angle between $u_i$ and $b$ is an integer multiple of $\alpha$ for every $i\in[1,l]$. So we may write $w=\sum_{i=1}^l \lvert u_i\rvert \cos(m_i\alpha)$ with $m_i\in\mathbb{Z}$. The formula $2\cos(x)\cos(y)=\cos(x+y)+\cos(x-y)$ and the inclusion $L'\subseteq \frac{1}{e}\mathbb{Z}[2\cos(\alpha)]d_1$ imply that $L\subseteq \frac{1}{d}\mathbb{Z}[2\cos(\alpha)]d_1$ for $d=2e$. The inclusion  $\mathbb{Z}[2\cos(\alpha)]d_1\subseteq \frac{1}{d}\mathbb{Z}[2\cos(\alpha)]d_1$ is automatic. 

%The first statement follows because $\sin(\frac{k\pi}{2n+1})\neq 0$. Note that $P_n$ and $U_k$ are monic integer polynomials in $\mathbb{Z}[t]$ (where $t=2x$). The irreducibility of $P_n$ and the fact that $t=2\cos(\frac{\pi}{2n+1})$ is a zero of $P_n$ but not of $U_k$ imply that the polynomials are coprime over $\mathbb{Q}$. Hence there exist polynomials $f,g\in \mathbb{Q}[t]$ (depending on $n$ and $k$) such that $fP_n+gU_k=1$. We plug in $t=2\cos(\frac{\pi}{2n+1})$ to obtain the inverse. 

The rank of the lattice $R$ is equal to $\varphi(2n+1)/2$ by Theorem \ref{Thm:Independence}. The lattice $\frac{1}{d}\mathbb{Z}[2\cos(\alpha)]d_1$ has the same rank. From this we can conclude that all intermediate lattices must have the same rank as well.
\end{proof}

\begin{que} Does the equality $R=L$ hold?
\end{que}

\subsection{The exchange graph}

In this subsection we describe the structure of the exchange graph of $(\mathbf{v}_0,B_0)$. We can summarise the previous discussion as follows. 

\begin{theorem}
\label{Thm:ExchangeGraph}
The exchange graph of $(\mathbf{v}_0,B_0)$ has the following structure. 
\begin{enumerate}
\item The acyclic belt $I+w$ is a full subgraph of the exchange graph for every $w\in L$.
\item Suppose that $n_1,n_2,n_3\in [0,2n+1]$ are natural numbers such that $\gcd(n_1,n_2,n_3)=1$, so that there exists a Euclidean triangle $\Delta$ with angles $n_1\alpha$, $n_2\alpha$, $n_3\alpha$. Furthermore, let $Q$ be a quiver corresponding to $\Delta$, which is acyclic if $\Delta$ is acute-angled and cyclic if $\Delta$ is obtuse-angled. Then there exist exactly two seeds in the exchange graph whose triangle is congruent to $\Delta$ and whose quiver is $Q$. Furthermore, two such seeds are related to each other by a translation by a vector in $L$ or a reflection across the belt line $b$. 
%\item Suppose that $n_1,n_2,n_3\in [0,2n+1]$ are natural numbers such that $\gcd(n_1,n_2,n_3)=1$ and the triangle $\Delta$ with angles $n_1\alpha$, $n_2\alpha$, $n_3\alpha$ is obtuse-angled. Suppose that $Q$ is a cyclic orientation of the triangle's sides. Then the exchange graph contains a seed whose triangle is congruent to $\Delta$ and whose quiver is $Q$. Two such seeds are related to each other by a translation by a vector in $L$. 
\end{enumerate}
\end{theorem}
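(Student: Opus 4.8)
The plan is to assemble the statement from the propositions established earlier in this section, since both claims are essentially repackagings of the geometric and number-theoretic facts already proved; the work lies in matching them up rather than in any new construction.

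For part (1), I would first recall that by Proposition \ref{Prop:ParallelTob} every translation vector relating two seeds of $\Gamma$ is parallel to $b$, so that $L$ may be regarded as a lattice of vectors parallel to $b$ and not merely as a lattice of lengths. By definition $L$ is generated by those vectors $w$ for which there exist seeds $(\mathbf{v}_1,B_1),(\mathbf{v}_2,B_2)$ in $\Gamma$ with $(\mathbf{v}_1,B_1)=(\mathbf{v}_2,B_2)+w$, and for each such generator Proposition \ref{Prop:GroupAction} provides a graph automorphism $T_w$. Since $T_{w+w'}=T_w\circ T_{w'}$, the assignment $w\mapsto T_w$ extends to all of $L$, and each $T_w$ with $w\in L$ is an automorphism of $\Gamma$. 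Applying $T_w$ to the initial acyclic belt $I$, which is a full subgraph of $\Gamma$ by construction, then yields $I+w=T_w(I)$ as a full subgraph for every $w\in L$; that concrete generators of $L$ actually arise (from infinite regions) is guaranteed by Proposition \ref{Prop:InfiniteRegionTranslationVectors}.

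For part (2), existence of a triangle $\Delta$ with angles $n_1\alpha,n_2\alpha,n_3\alpha$ in the mutation class follows from Proposition \ref{Prop:PossibleTriangles} using the hypothesis $\gcd(n_1,n_2,n_3)=1$; the conserved quantity of Proposition \ref{Prop:MutationInvariance} fixes the size of $\Delta$ up to congruence, and the choice of $Q$ (acyclic for acute $\Delta$, cyclic for obtuse $\Delta$) is the only admissible one by Remark \ref{Rem:Orientation}. The counting is then Proposition \ref{Prop:FinitelyManyTriangles}: up to translation by vectors parallel to $b$ there are exactly two seeds with the prescribed triangle and quiver, and by Remark \ref{Rem:Orientation} these two lie on opposite sides of $b$ and so are interchanged by reflection across $b$.

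It remains to upgrade \emph{translation parallel to $b$} to \emph{translation by a vector of $L$}, and this is the one genuinely delicate point. The key observation is that if two seeds of $\Gamma$ with identical triangle shape and quiver lie on the same side of $b$, then Proposition \ref{Prop:FinitelyManyTriangles} forces them to differ by a translation $w$ parallel to $b$; but $w$ then relates two seeds of $\Gamma$, so $w\in L$ by the very definition of $L$. Consequently the seeds of each of the two reflection types form a single $L$-orbit, and any two seeds with triangle congruent to $\Delta$ and quiver $Q$ are related either by a vector of $L$ (same side of $b$) or by a reflection across $b$ followed by such a vector (opposite sides), which is exactly the asserted structure. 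The main obstacle is thus precisely this identification of the continuous family of $b$-parallel translations permitted by Proposition \ref{Prop:FinitelyManyTriangles} with the discrete lattice $L$ of translations actually realised inside $\Gamma$.
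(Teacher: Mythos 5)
Your proposal is correct and follows essentially the same route as the paper, which offers no separate proof of Theorem \ref{Thm:ExchangeGraph} but presents it explicitly as a summary of the preceding discussion (Propositions \ref{Prop:PossibleTriangles}, \ref{Prop:MutationInvariance}, \ref{Prop:InfiniteRegionTranslationVectors}, \ref{Prop:ParallelTob}, \ref{Prop:FinitelyManyTriangles}, \ref{Prop:GroupAction} and Remark \ref{Rem:Orientation}). Your explicit handling of the one genuinely delicate step --- identifying the $b$-parallel translation between two same-side seeds with an element of $L$ via the very definition of $L$ and the group structure coming from $T_{w+w'}=T_w\circ T_{w'}$ --- is exactly the intended reading and, if anything, is spelled out more carefully than in the paper.
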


%% obsolete
%\begin{defn}[Coefficient ring]
%We put $R=\mathbb{Q}\left[2\cos\left(\frac{\pi}{2n+1}\right)\right]$.
%\end{defn}
%%%%

%\begin{prop}
%Suppose that $k_1,k_2\in[1,n]$ are integers such that $\operatorname{gcd}(k_1,k_2,2n+1)=1$. Then $\sin(k_2\alpha)\neq 0$ and
%\begin{align*}
%\frac{\sin\left(k_1\alpha\right)}{\sin\left(k_2\alpha\right)}\in \mathcal{O}_K.
%\end{align*}
%\end{prop}
%
%
%
%\begin{proof}
%The first statement is obvious. For the second statement first note that
%\begin{align*}
%\frac{\sin\left(k_1\alpha\right)}{\sin\left(k_2\alpha\right)}=\frac{U_{k_1}(2\cos(\alpha))}{U_{k_2}(2\cos(\alpha))}\in K.
%\end{align*}
%Hence, to show that the element belongs to the ring of integers $\mathcal{O}_K$ it suffices to show that it is an algebraic integer.
%\end{proof}
%
%
%
%\begin{prop}
%Every translation vector between two acyclic belts is parallel to $b$ and its length belongs to $d_1R$. 
%\end{prop}
%
%\begin{proof}
%Such a translation vector is a sum of vectors whose lengths belong to $d_1R$ by the previous proposition. These vectors intersect $b$ under an angle of the form $\cos(\frac{k\pi}{2n+1})$ with $k\in [0,2n]$. The claim follows from the fact that $d_1R$ is closed under multiplication by these terms.
%\end{proof}

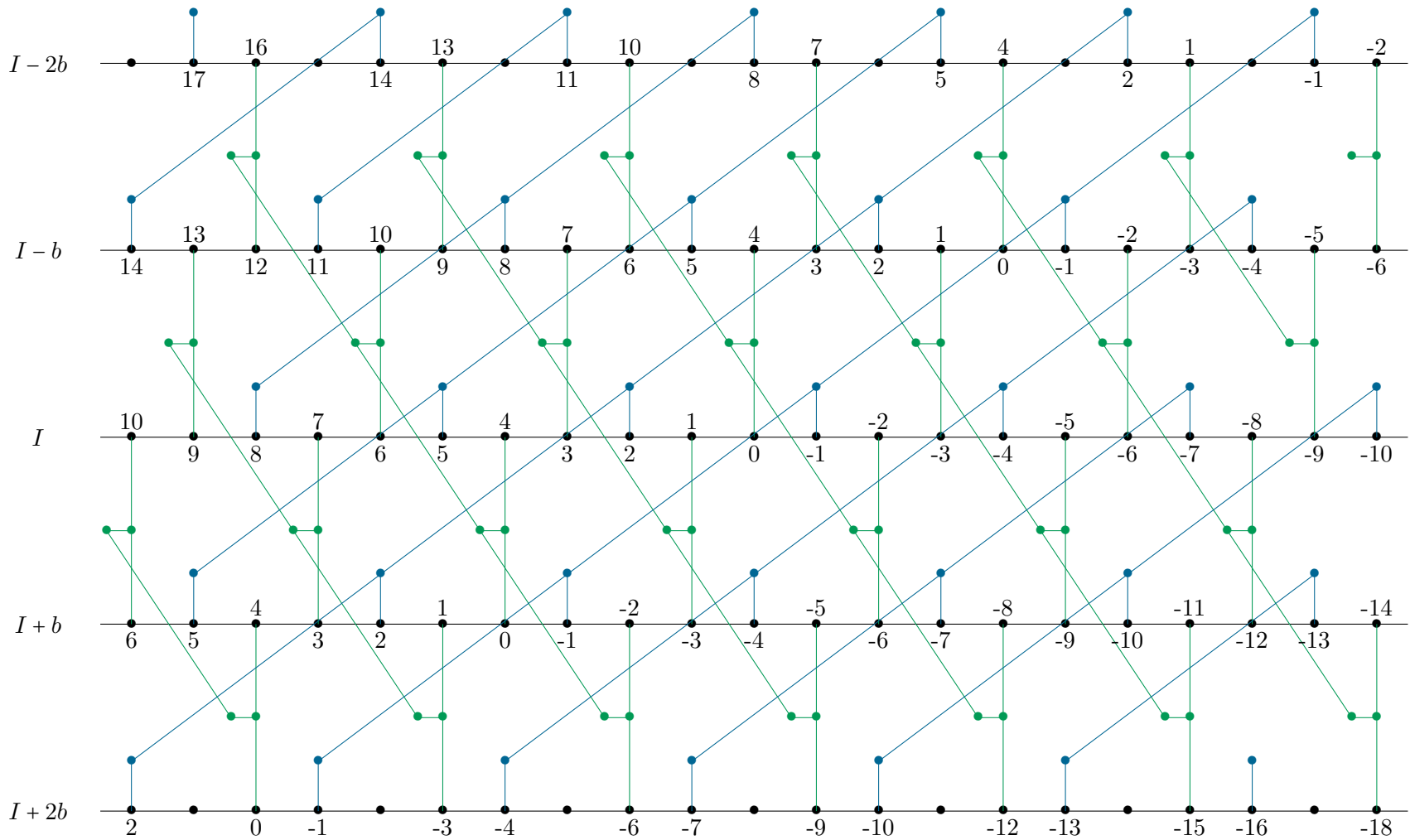
\begin{figure}[p]
\rotatebox{90}{
\begin{tikzpicture}

\newcommand{\x}{1cm} %% distance between two points in acyclic belt
\newcommand{\y}{3cm} %% vertical distance between two belts
\newcommand{\n}{10} %% we draw points -n to n in initial acyclic belt
\newcommand{\q}{8} %% useful parameter m=n-2
\newcommand{\m}{7} %% useful parameter m=n-3
\newcommand{\p}{6} %% useful parameter m=n-3

\newcommand{\s}{0.4cm} %% horizontal shift of the infinity-gons of first type
\newcommand{\ColorFirstInfinityGon}{ForestGreen} %% color of the infinity gons of type 1
\newcommand{\BendOne}{0} %% curviness of infinity gons of first type

\renewcommand{\t}{0.8cm} %% vertical shift of the infinity-gons of second type
\newcommand{\ColorSecondInfinityGon}{MidnightBlue} %% color of the infinity gons of type 2
\newcommand{\BendTwo}{0} %% curviness of infinity gons of second type

\pgfmathsetmacro{\label}{123} %% variable to store labels of points
\pgfmathsetmacro{\remainder}{123} %% variable to store values mod 3

%%% initial acyclic belt
\foreach \k in {-\n,...,\n} {
   \node at (\k*\x,0) {$\bullet$}; 
 } 
\path[draw] (-\n*\x-0.5*\x,0) to (0,0);
\path[draw] (0,0) to (3*\x,0);
\path[draw] (3*\x,0) to (\n*\x+0.5*\x,0);
\node at (-\n*\x-1.5*\x,0) {$I$};

%%% acyclic belt I-b
\foreach \k in {-\n,...,\n} {
   \node at (\k*\x,\y) {$\bullet$};
 } 
\path[draw] (-\n*\x-0.5*\x,\y) to (2*\x,\y);
\path[draw] (2*\x,\y) to (5*\x,\y);
\path[draw] (5*\x,\y) to (\n*\x+0.5*\x,\y);
\node at (-\n*\x-1.5*\x,\y) {$I-b$};

%%% acyclic belt I-2b
\foreach \k in {-\n,...,\n} {
   \node at (\k*\x,2*\y) {$\bullet$};
 } 
\path[draw] (-\n*\x-0.5*\x,2*\y) to (4*\x,2*\y);
\path[draw] (4*\x,2*\y) to (7*\x,2*\y);
\path[draw] (7*\x,2*\y) to (\n*\x+0.5*\x,2*\y);
\node at (-\n*\x-1.5*\x,2*\y) {$I-2b$};

%%% acyclic belt I+b
\foreach \k in {-\n,...,\n} {
   \node at (\k*\x,-\y) {$\bullet$};
 } 
\path[draw] (-\n*\x-0.5*\x,-\y) to (-3*\x,-\y);
\path[draw] (-3*\x,-\y) to (0*\x,-\y);
\path[draw] (0*\x,-\y) to (\n*\x+0.5*\x,-\y);
\node at (-\n*\x-1.5*\x,-\y) {$I+b$};

%%% acyclic belt I+2b
\foreach \k in {-\n,...,\n} {
   \node at (\k*\x,-2*\y) {$\bullet$};
 } 
\path[draw] (-\n*\x-0.5*\x,-2*\y) to (-5*\x,-2*\y);
\path[draw] (-5*\x,-2*\y) to (-2*\x,-2*\y);
\path[draw] (-2*\x,-2*\y) to (\n*\x+0.5*\x,-2*\y);
\node at (-\n*\x-1.5*\x,-2*\y) {$I+2b$};

%%% vertices between I and I-b
\foreach \k in {-\n,...,\n} {
   \pgfmathtruncatemacro{\remainder}{mod(\k,3)};
   \ifthenelse{\remainder=0}{
      \node at (\k*\x,0.5*\y) {\textcolor{\ColorFirstInfinityGon}{$\bullet$}};
      \path[draw,color=\ColorFirstInfinityGon] (\k*\x,0) to (\k*\x,\y);
      \pgfmathtruncatemacro{\label}{-\k+4};
      \node[above] at (\k*\x,\y) {\label};
      \pgfmathtruncatemacro{\label}{-\k};
      \node[below] at (\k*\x,0) {\label};
      \node at (\k*\x-\s,0.5*\y) {\textcolor{\ColorFirstInfinityGon}{$\bullet$}};
      \path[draw,color=\ColorFirstInfinityGon] (\k*\x-\s,0.5*\y) to (\k*\x,0.5*\y);
    }{
    }
 }

%%% vertices between I-b and I-2b
\foreach \k in {-\n,...,\n} {
   \pgfmathtruncatemacro{\remainder}{mod(\k,3)};
   \ifthenelse{\equal{\remainder}{1} \OR \equal{\remainder}{-2}}{
      \node at (\k*\x,1.5*\y) {\textcolor{\ColorFirstInfinityGon}{$\bullet$}};
      \path[draw,color=\ColorFirstInfinityGon] (\k*\x,\y) to (\k*\x,2*\y);
      \pgfmathtruncatemacro{\label}{-\k+8};
      \node[above] at (\k*\x,2*\y) {\label};
      \pgfmathtruncatemacro{\label}{-\k+4};
      \node[below] at (\k*\x,\y) {\label};
      \node at (\k*\x-\s,1.5*\y) {\textcolor{\ColorFirstInfinityGon}{$\bullet$}};
      \path[draw,color=\ColorFirstInfinityGon] (\k*\x-\s,1.5*\y) to (\k*\x,1.5*\y);
    }{
    }
 } 
 
 %%% vertices between I and I+b
\foreach \k in {-\n,...,\n} {
   \pgfmathtruncatemacro{\remainder}{mod(\k,3)};
   \ifthenelse{\equal{\remainder}{-1} \OR \equal{\remainder}{2}}{
      \node at (\k*\x,-0.5*\y) {\textcolor{\ColorFirstInfinityGon}{$\bullet$}};
      \path[draw,color=\ColorFirstInfinityGon] (\k*\x,0) to (\k*\x,-\y);
      \pgfmathtruncatemacro{\label}{-\k-4};
      \node[below] at (\k*\x,-\y) {\label};
      \pgfmathtruncatemacro{\label}{-\k};
      \node[above] at (\k*\x,0) {\label};
      \node at (\k*\x-\s,-0.5*\y) {\textcolor{\ColorFirstInfinityGon}{$\bullet$}};
      \path[draw,color=\ColorFirstInfinityGon] (\k*\x-\s,-0.5*\y) to (\k*\x,-0.5*\y);
    }{
    }
 } 

 %%% vertices between I+b and I+2b
\foreach \k in {-\n,...,\n} {
   \pgfmathtruncatemacro{\remainder}{mod(\k,3)};
   \ifthenelse{\equal{\remainder}{1} \OR \equal{\remainder}{-2}}{
      \node at (\k*\x,-1.5*\y) {\textcolor{\ColorFirstInfinityGon}{$\bullet$}};
      \path[draw,color=\ColorFirstInfinityGon] (\k*\x,-\y) to (\k*\x,-2*\y);
      \pgfmathtruncatemacro{\label}{-\k-8};
      \node[below] at (\k*\x,-2*\y) {\label};
      \pgfmathtruncatemacro{\label}{-\k-4};
      \node[above] at (\k*\x,-\y) {\label};
      \node at (\k*\x-\s,-1.5*\y) {\textcolor{\ColorFirstInfinityGon}{$\bullet$}};
      \path[draw,color=\ColorFirstInfinityGon] (\k*\x-\s,-1.5*\y) to (\k*\x,-1.5*\y);
    }{
    }
 }

%%% infinity gons type 1
\foreach \k in {-\n,...,\q} {
   \pgfmathtruncatemacro{\remainder}{mod(\k,3)};
   
   \ifthenelse{\equal{\remainder}{1} \OR \equal{\remainder}{-2}}{
      \path[draw,color=\ColorFirstInfinityGon,bend right=\BendOne] (\k*\x-\s,1.5*\y) to (\k*\x+2*\x-\s,0.5*\y);
   }{}
      
   \ifthenelse{\equal{\remainder}{0}}{
      \path[draw,color=\ColorFirstInfinityGon,bend right=\BendOne] (\k*\x-\s,0.5*\y) to (\k*\x+2*\x-\s,-0.5*\y);  
    }{}
    
    \ifthenelse{\equal{\remainder}{-1} \OR \equal{\remainder}{2}}{
      \path[draw,color=\ColorFirstInfinityGon,bend right=\BendOne] (\k*\x-\s,-0.5*\y) to (\k*\x+2*\x-\s,-1.5*\y);
   }{}
 }

 %%% infinity gons of type 2
 \foreach \k in {-\n,...,\n}{
   \pgfmathtruncatemacro{\remainder}{mod(\k,3)};
   
   \ifthenelse{\equal{\remainder}{1} \OR \equal{\remainder}{-2}}{
      \node at (\k*\x,\t) {\textcolor{\ColorSecondInfinityGon}{$\bullet$}};
      \pgfmathtruncatemacro{\label}{-\k};
      \node[below] at (\k*\x,0) {\label};
      \path[draw,color=\ColorSecondInfinityGon] (\k*\x,\t) to (\k*\x,0);
    }{}     
    
    \ifthenelse{\equal{\remainder}{2} \OR \equal{\remainder}{-1}}{
      \node at (\k*\x,\y+\t) {\textcolor{\ColorSecondInfinityGon}{$\bullet$}};
       \pgfmathtruncatemacro{\label}{-\k+4};
       \node[below] at (\k*\x,\y) {\label};
       \path[draw,color=\ColorSecondInfinityGon] (\k*\x,\y+\t) to (\k*\x,\y);
    }{}   
    
     \ifthenelse{\equal{\remainder}{0}}{
       \node at (\k*\x,-\y+\t) {\textcolor{\ColorSecondInfinityGon}{$\bullet$}};
       \pgfmathtruncatemacro{\label}{-\k-4};
       \node[below] at (\k*\x,-\y) {\label};
       \path[draw,color=\ColorSecondInfinityGon] (\k*\x,-\y+\t) to (\k*\x,-\y);
    }{}  
    
    \ifthenelse{\equal{\remainder}{-1} \OR \equal{\remainder}{2}}{
       \node at (\k*\x,-2*\y+\t) {\textcolor{\ColorSecondInfinityGon}{$\bullet$}};
       \pgfmathtruncatemacro{\label}{-\k-8};
       \node[below] at (\k*\x,-2*\y) {\label};
       \path[draw,color=\ColorSecondInfinityGon] (\k*\x,-2*\y+\t) to (\k*\x,-2*\y);
    }{}  
    
     \ifthenelse{\equal{\remainder}{0}}{
       \node at (\k*\x,2*\y+\t) {\textcolor{\ColorSecondInfinityGon}{$\bullet$}};
       \pgfmathtruncatemacro{\label}{-\k+8};
       \node[below] at (\k*\x,2*\y) {\label};
       \path[draw,color=\ColorSecondInfinityGon] (\k*\x,2*\y+\t) to (\k*\x,2*\y);
    }{}  
 }
 
  \foreach \k in {-\p,...,\n}{
   \pgfmathtruncatemacro{\remainder}{mod(\k,3)};
   
     \ifthenelse{\equal{\remainder}{0}}{
       \path[draw,color=\ColorSecondInfinityGon,bend right=\BendTwo] (\k*\x,2*\y+\t) to (\k*\x-4*\x,\y+\t);
       \path[draw,color=\ColorSecondInfinityGon,bend right=\BendTwo] (\k*\x,-\y+\t) to (\k*\x-4*\x,-2*\y+\t);
    }{}   
    
       \ifthenelse{\equal{\remainder}{-1} \OR \equal{\remainder}{2}}{
       \path[draw,color=\ColorSecondInfinityGon,bend right=\BendTwo] (\k*\x,\y+\t) to (\k*\x-4*\x,\t);
    }{}   
    
       \ifthenelse{\equal{\remainder}{1} \OR \equal{\remainder}{-2}}{
       \path[draw,color=\ColorSecondInfinityGon,bend right=\BendTwo] (\k*\x,\t) to (\k*\x-4*\x,-\y+\t);
    }{}   
    
}

\end{tikzpicture}}
\caption{The exchange graph}
\label{Figure:ExchangeGraph}
\end{figure}

Let us illustrate the result by an example.

\begin{ex} Suppose that $n=2$. The exchange graph of $(\mathbf{v}_0,B_0)$ is shown in Figure \ref{Figure:ExchangeGraph}. We denote by $w\in\mathbb{E}^2$ the generator of the $1$-dimensional lattice $L$. The exchange graph has the following properties. 
\begin{enumerate}
\item The acyclic belt $I+nw$ is a full subgraph of the exchange graph for every $n\in\mathbb{Z}$.
\item (Coloured green) For every $(n,m)\in\mathbb{Z}\times 3\mathbb{Z}$ the exchange graph contains additional vertices $R_{n,m}$ and $S_{n,m}$ such that the following conditions hold:
\begin{itemize}
\item The vertex $R_{n,m}$ is adjacent to $(I+nw)_m$, $(I+(n-1)w)_{m+4}$ and $S_{n,m}$.
\item The vertex $S_{n,m}$ is adjacent to $R_{n,m}$, $S_{n-1,m+6}$ and $S_{n+1,m-6}$.
\end{itemize}
% In particular, the exchange graph contains a heptagon with vertices
%\begin{align*}
%(I+nw)_{m},R_{n,m},S_{n,m},S_{n+1,m-6},R_{n+1,m-6},(I+nw)_{m-2},(I+nw)_{m-1}
%\end{align*}
%for every $(n,m)\in\mathbb{Z}\times 3\mathbb{Z}$ and an infinity-gon $\{S_{n,m-6n}\colon n\in\mathbb{Z}\}$ for every $m\in3\mathbb{Z}$.
\item (Coloured blue) For every $(n,m)\in\mathbb{Z}\times (2+3\mathbb{Z})$ the exchange graph contains an additional vertex $T_{n,m}$ such that $T_{n,m}$ adjacent to $(I+nw)_m$, $T_{n-1,m}$ and $T_{n+1,m}$. 
%In particular, the exchange graph contains a nonagon with vertices
%\begin{align*}
%&(I+nw)_{m+2},(I+nw)_{m+1},(I+nw)_{m},T_{n,m},T_{n+1,m},\\
%&(I+(n+1)w)_{m},(I+(n+1)w)_{m-1},(I+(n+1)w)_{m-2},R_{n+1,m-2}
%\end{align*}
%for every $(n,m)\in\mathbb{Z}\times (2+3\mathbb{Z})$ and an infinity-gon $\{\,T_{n,m}\mid n\in\mathbb{Z}\,\}$ for every $m\in(2+3\mathbb{Z})$.
\end{enumerate}
\end{ex}

\subsection{Quasi-isometries}

\begin{defn}[Quasi-isometries] Suppose that $(M_1,d_1)$ and $(M_2,d_2)$ are metric spaces. A map $f\colon M_1\to M_2$ is called a \emph{quasi-isometry} if the following conditions hold.
\begin{enumerate}
\item There exist real numbers $a\geq 1$ and $b\geq 0$ such that
\begin{align*}
\frac{1}{a} d_1\left(x,y\right)-b\leq d_2\left(f(x),f(y)\right) \leq a d_1\left(x,y\right)+b
\end{align*}
for all $x,y\in M_1$.
\item There exists a real number $c\geq 0$ such that for every $z\in M_2$ there exists $x\in M_1$ with $d_2\left(z,f(x)\right)\leq c$.
\end{enumerate}
We say that $(M_1,d_1)$ and $(M_2,d_2)$ are \emph{quasi-isometric} if there exists a quasi-isometry $f\colon M_1\to M_2$.
\end{defn}

It is known that the composition of two quasi-isometries is again a quasi-isometry, and that if $f\colon M_1\to M_2$ is a quasi-isometry with constants $a,b,c$, then the map $g\colon M_2\to M_1$, where $g(z)=x$ for an arbitrary $x\in M_1$ such that $d_2\left(z,f(x)\right)\leq c$, is again a quasi-isometry. It follows that quasi-isometry is an equivalence relation on metric spaces. 

Every isometry is a quasi-isometry, but the converse is false in general. Other examples of quasi-isometries are the inclusions $\mathbb{Z}^k\hookrightarrow\mathbb{R}^k$ and $k\mathbb{Z}\hookrightarrow\mathbb{Z}$ for each $k\geq 1$.

Let $\Gamma=(V_0,V_1)$ be a simple graph with vertex set $V_0$ and edge set $V_1$. Recall that the distance $d$ between two vertices in $\Gamma$ is the number of edges in a shortest path between the two vertices. In this way, $(V_0,d)$ becomes a metric space. Also we may view $(V_0,V_1)$ as a $1$-dimensional cell complex, and the distance $d$ induces a metric on $(V_0,V_1)$. Notice that the embedding $(V_0,d)\hookrightarrow ((V_0,V_1),d)$ is a quasi-isometry.

Another source of quasi-isometries is group theory. Recall that a generating set $S$ of a group $G$ is called \emph{symmetric} if $S=S^{-1}$ and $S$ does not contain the identity.

\begin{nota}[Cayley graph] Assume that a group $G$ is generated by a finite symmetric set $S\subseteq G$. By $\operatorname{Cay}_S(G)$ we denote the \emph{Cayley graph} of $G$ (i.e. the graph with vertex set $G$ and an edge between $g,h\in G$ if $gh^{-1}\in S$).  
\end{nota}

It is well known that if a graph has two finite, symmetric generating sets $S,S'\subseteq G$, then $\operatorname{id}\colon G\to G$ induces a quasi-isomorphism between $\operatorname{Cay}_S(G)$ and $\operatorname{Cay}_{S'}(G)$. 

Suppose that $G$ is a group with identity $e$ and $M$ is a set. Recall that a \emph{group action} of $G$ on $M$ is a map $G\times M\to M$, $(g,m)\mapsto gm$ such that $ex=x$ for all $x\in M$ and $(gh)x=g(hx)$ for all $g,h\in M$ and $x\in X$.  

\begin{defn}[Group actions on metric spaces] Assume that the group $G$ acts on a metric space $(M,d)$. 
\begin{enumerate}
\item We say the group action is \emph{isometric} if $d(gx,gy)=d(x,y)$ for all $g\in G$ and $x,y\in M$.
\item We say the group action is \emph{properly discontinuous} if the set $\{g\in G\mid d(x,gx)\leq r\}$ is finite for every $x\in M$ and $r\geq 0$.
\item We say the group action is \emph{cocompact} if the orbit space $M/G$ is compact with respect to the quotient topology.
\end{enumerate}
\end{defn}

Recall that a metric space $(M,d)$ is called \emph{proper} if the closed ball $B_r(x)=\{y\in M\mid d(x,y)\leq r\}$ is compact for every $x\in M$ and $r>0$. It is called \emph{geodesic} if for all $x,y\in M$ there exists a \emph{geodesic} between $x$ and $y$, that is, an isometric embedding $p\colon [0,d]\to M$ such that $p(0)=x$ and $p(d)=y$ where $d=d(x,y)$.

\begin{theorem}[Schwarz \cite{Sch} and Milnor \cite{M}] Suppose that a group $G$ acts on a proper, geodesic metric space $(M,d)$ and that the group action is isometric, properly discontinuous and cocompact. Then $G$ is finitely generated. Moreover, $(M,d)$ is quasi-isometric to $\operatorname{Cay}_S(G)$ for every finite, symmetric generating set $S\subseteq G$. To be concrete, fix $x\in M$. Then the map 
\begin{align*}
G\to M,\,g\mapsto gx
\end{align*} 
is a quasi-isomorphism. 
\end{theorem}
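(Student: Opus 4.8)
The plan is to run the classical argument behind the Schwarz--Milnor Lemma: first manufacture a finite symmetric generating set $S$ out of the geometry of the action, and then check directly that the orbit map intertwines the word metric on $\operatorname{Cay}_S(G)$ with the metric $d$ on $M$ up to the constants in the definition of a quasi-isometry. Throughout I fix the basepoint $x\in M$. The preliminary step is to convert cocompactness into a metric covering radius: from the compactness of $M/G$ together with properness of $M$ I would extract a radius $R>0$ for which the $G$-translates of the closed ball $\bar B_R(x)$ already cover $M$. Concretely, cocompactness together with properness of $M$ yields a compact set $K\subseteq M$ with $G\cdot K=M$; since $K$ is bounded it sits inside some $\bar B_R(x)$, whence $G\cdot \bar B_R(x)=M$.

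Next I would define
\[
S=\{\,g\in G\mid g\neq e \text{ and } d(x,gx)\leq 2R+1\,\}.
\]
This set is symmetric because $d(x,gx)=d(x,g^{-1}x)$, and it is finite by proper discontinuity applied with $r=2R+1$. The key lemma is then proved by a subdivision argument that simultaneously shows $S$ generates $G$ and bounds word length by distance. Given $g\in G$, I take a geodesic from $x$ to $gx$ (available since $M$ is geodesic), of length $L=d(x,gx)$, and subdivide it into $n=\lceil L\rceil$ arcs of length at most $1$ by points $x=p_0,\dots,p_n=gx$. Using the covering property, I pick $g_0=e,g_1,\dots,g_n=g$ with $d(p_i,g_ix)\leq R$; the triangle inequality gives $d(x,g_i^{-1}g_{i+1}x)=d(g_ix,g_{i+1}x)\leq 2R+1$, so each $g_i^{-1}g_{i+1}\in S\cup\{e\}$. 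Thus $g=(g_0^{-1}g_1)\cdots(g_{n-1}^{-1}g_n)$ exhibits $g$ as a product of at most $n$ generators, proving that $S$ generates $G$ (hence $G$ is finitely generated) and that $|g|_S\leq n\leq d(x,gx)+1$.

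For the opposite comparison, set $C=\max_{s\in S}d(x,sx)$. Writing $g=s_1\cdots s_k$ with $k=|g|_S$ and telescoping along the orbit while using that $G$ acts by isometries yields $d(x,gx)\leq\sum_i d(x,s_ix)\leq C\,|g|_S$. Since $d(gx,hx)=d(x,g^{-1}hx)$ and $d_S(g,h)=|g^{-1}h|_S$, these two inequalities combine into
\[
\tfrac{1}{a}\,d_S(g,h)-b\leq d(gx,hx)\leq a\,d_S(g,h)+b
\]
for all $g,h\in G$, with $a=\max(C,1)$ and $b=1$; coboundedness is precisely the covering property, so $c=R$ works. Hence the orbit map is a quasi-isometry onto $M$. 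The statement for an arbitrary finite symmetric generating set then follows because the identity induces a quasi-isometry between the Cayley graphs of any two such sets.

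The step I expect to be the main obstacle is the very first one: passing from the topological hypothesis that $M/G$ is compact to the existence of a genuine compact set $K$ with $G\cdot K=M$, and thence to the covering radius $R$. This is where properness of $M$ and openness of the quotient map $M\to M/G$ must be used together, and it is the only point where the three hypotheses on the action interact in a non-formal way; once $R$ is in hand, the remaining steps are routine manipulations with the triangle inequality and proper discontinuity.
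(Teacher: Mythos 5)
Your proposal is the standard textbook proof of the Schwarz--Milnor Lemma, and it is correct: the covering radius $R$ extracted from cocompactness and properness, the finite symmetric set $S=\{g\neq e : d(x,gx)\leq 2R+1\}$, the geodesic-subdivision argument giving $|g|_S\leq d(x,gx)+1$, and the telescoping bound $d(x,gx)\leq C|g|_S$ are exactly the classical argument. The paper itself offers no proof of this statement --- it is quoted as a known result with references to Schwarz and Milnor --- so there is nothing to compare against beyond noting that your argument is the one found in those sources and in standard geometric group theory texts.
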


\begin{coro}
\label{Coro:QItoLattice}
The exchange graph $\Gamma$ is quasi-isometric to the Cayley graph of the lattice $L$, and a quasi-isomorphism is given by the translation of the initial seed i.e. by the map
\begin{align*}
L&\to \Gamma,\,w\mapsto w+(\mathbf{v}_0,B_0).
\end{align*}
\end{coro}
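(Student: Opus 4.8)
The plan is to apply the Schwarz--Milnor Lemma (the theorem stated immediately above) to the action of the lattice $L$ on the exchange graph $\Gamma$. First I would set up $\Gamma$ as a metric space, regarding $\Gamma=(V_0,V_1)$ as a $1$-dimensional cell complex with its path metric $d$. Since every seed has rank $3$, each vertex admits at most three mutations, so $\Gamma$ is a connected, locally finite graph. A connected locally finite graph, viewed as a $1$-complex, is a proper geodesic metric space: geodesics exist because the edges are isometric copies of $[0,1]$ and the graph is connected, while properness holds because every closed ball contains only finitely many vertices. This verifies the hypotheses on the target space $(M,d)=(\Gamma,d)$.

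Next I would make the $L$-action explicit. By Proposition \ref{Prop:ParallelTob} every vector relating two seeds is parallel to $b$, so assigning to such a vector its signed length along $b$ identifies the set of realisable translation vectors with $L$. Proposition \ref{Prop:GroupAction} shows that for each realisable $w$ the translation $T_w\colon (\mathbf{v},B)\mapsto w+(\mathbf{v},B)$ is a graph automorphism of $\Gamma$; composing $T_{w_1}$ and $T_{w_2}$ exhibits $w_1+w_2$ as again realisable, so these vectors form a subgroup and $w\mapsto T_w$ is a genuine action of $L$ on $\Gamma$ by automorphisms. As graph automorphisms preserve $d$, the action is isometric. For proper discontinuity I would observe that the orbit map $w\mapsto w+(\mathbf{v},B)$ is injective (a translate of a triangle determines its translation vector uniquely) and combine this with local finiteness: for fixed $x$ and $r\ge 0$ the ball $\bar B_r(x)$ contains only finitely many vertices, whence $\{w\in L:\ d(x,wx)\le r\}$ is finite.

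The remaining point, and the one I expect to be the main obstacle, is cocompactness, i.e.\ that $\Gamma/L$ is compact. Since $L$ acts by graph automorphisms, it suffices to produce finitely many $L$-orbits of vertices (and hence of edges). This is exactly the content of Proposition \ref{Prop:FinitelyManyTriangles}: up to translation by vectors parallel to $b$ — that is, up to the $L$-action — the mutation class contains at most two seeds for each pair consisting of a quiver $Q$ on three vertices and an admissible angle triple $(n_1\alpha,n_2\alpha,n_3\alpha)$ with $n_1+n_2+n_3=2n+1$ and $\gcd(n_1,n_2,n_3)=1$. There are only finitely many such triples and finitely many quivers on three vertices, so there are finitely many $L$-orbits of seeds, making $\Gamma/L$ a finite graph and therefore compact. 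The subtlety is the bookkeeping: one must check that ``translation parallel to $b$'' in Proposition \ref{Prop:FinitelyManyTriangles} is precisely translation by an element of $L$, which holds by the definition of $L$ together with Proposition \ref{Prop:ParallelTob}, so that the finite count of triangles up to parallel translation really is a finite count of $L$-orbits. With all four hypotheses verified, the Schwarz--Milnor Lemma shows that $L$ is finitely generated and that, for the base vertex $x=(\mathbf{v}_0,B_0)$, the orbit map $L\to\Gamma$, $w\mapsto w+(\mathbf{v}_0,B_0)$, is a quasi-isometry onto $\Gamma$; equivalently, it is a quasi-isometry between $\operatorname{Cay}_S(L)$ and $\Gamma$, which is the assertion of the corollary.
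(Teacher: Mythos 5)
Your proposal is correct and follows essentially the same route as the paper: realise $\Gamma$ as a proper geodesic $1$-complex, verify that the translation action of $L$ is isometric, properly discontinuous (via injectivity of the orbit map plus local finiteness) and cocompact (via Proposition \ref{Prop:FinitelyManyTriangles}), and then invoke the Schwarz--Milnor Lemma. Your extra care in checking that the realisable translation vectors form a subgroup, so that $w\mapsto T_w$ really is an action of all of $L$, is a point the paper passes over more quickly but does not change the argument.
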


\begin{proof}
The graph $\Gamma$ (viewed as a $1$-dimensional cell complex) is a geodesic metric space. It is proper because every ball $B_r(x)$ in $\Gamma$ is sequentially compact and hence compact.

We consider the translation map 
\begin{align*}
L\times \Gamma&\to \Gamma\\
(w,(\mathbf{v},B))&\mapsto w+(\mathbf{v},B).
\end{align*}
The definition of $L$ implies that the map is well-defined, that is, the image $w+(\mathbf{v},B)$ belongs to $\Gamma$ for all $w\in L$ and all seeds $(\mathbf{v},B)$. It is a group action by definition. Proposition \ref{Prop:GroupAction} implies that the group action is isometric. Let $x=(\mathbf{v},B)$ be a vertex of $\Gamma$ and $r\geq 0$. Then the number of vertices $y$ of $\Gamma$ with $d(x,y)\leq r$ is finite. For every such $y$ there is at most one $w\in L$ such that $y=w+x$. We see that the group action is properly discontinuous. By construction every seed $x=(\mathbf{v},B)$ is given by a quiver and a triangle such that the angles are given by $n_1\alpha$, $n_2\alpha$ and $n_3\alpha$ for some natural numbers satisfying $n_1+n_2+n_3=2n+1$. By Proposition \ref{Prop:FinitelyManyTriangles} the exchange graph contains at most $2$ triangles of that kind for every quiver $Q$ and every triple $(n_1,n_2,n_3)$ up to translation. Hence $\Gamma /L$ is a finite graph. In particular, it is compact so that the group action is cocompact. The claim follows from the theorem of Schwarz and Milnor.  
\end{proof}

Choose a basis $B=\{l_1,\ldots,l_r\}$ of the lattice $L$ where $r=\operatorname{rk}_{\mathbb{Z}}(L)=\varphi(2n+1)/2$ denotes the rank. Then the Cayley graph of $L$ with respect to $B$ is isomorphic to $\mathbb{Z}^r$ with two lattice points $x,y\in\mathbb{Z}^r$ being connected by an edge if and only if their Euclidean distance is equal to $1$. In particular, $\Gamma$ is quasi-isometric to $\mathbb{Z}^r$.

\subsection{The growth rate of the exchange graph}

We use the abbreviation $x_0=(\mathbf{v}_0,B_0)$ for the initial seed. Recall that $\Gamma=(V_0,V_1)$ is the exchange graph of $x_0$. 

\begin{defn}[Growth function] The \emph{growth function} $gr\colon \mathbb{N}\to\mathbb{N}$ is defined by 
\begin{align*}
gr(n)=\lvert \left\lbrace\, x\in V_0\mid d(x_0,x)\leq n\right\rbrace\rvert.
\end{align*}
\end{defn}

\begin{defn}[Polynomial growth] We say that $\Gamma$ has \emph{polynomial growth} if $gr(n)=\mathcal{O}(n^r)$ for some $r\geq 0$. If this happens to be the case, then we call the smallest natural number $r$ such that $gr(n)=\mathcal{O}(n^r)$ the \emph{polynomial growth rate} of $\Gamma$. 
\end{defn}

For example, let us consider $\mathbb{Z}^r$. Then $f(n)$ is equal to the number of points $\mathbf{x}\in\mathbb{Z}^r$ such that $\sum_{i=1}^n\lvert x_i\rvert\leq n$. The sequence $gr(n)$ is also known as the \emph{crystal ball sequence} in the literature. It is well known that $gr(n)=\lambda n^r+\mathcal{O}(n^{r-1})$ where $\lambda=2^r/r!$ is a constant (depending on $r$ but not on $n$). In particular, $\mathbb{Z}^r$ has polynomial growth with growth rate $r$.

\begin{coro}
\label{Theo:GrowthRate}
The exchange graph $\Gamma$ has polynomial growth and its polynomial growth rate is equal to $\varphi(2n+1)/2$.
\end{coro}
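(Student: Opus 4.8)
The plan is to read off the growth rate directly from the quasi-isometry established in Corollary~\ref{Coro:QItoLattice} together with the rank computation of Proposition~\ref{Prop:LatticeInclusions}. Set $r=\operatorname{rk}_{\mathbb{Z}}(L)=\varphi(2n+1)/2$. Corollary~\ref{Coro:QItoLattice} and the remark following it show that the orbit map $\Phi\colon L\to\Gamma$, $w\mapsto w+(\mathbf{v}_0,B_0)$, is a quasi-isometry, where the Cayley graph of $L$ with respect to a $\mathbb{Z}$-basis carries the word ($\ell^1$) metric and is isomorphic to $\mathbb{Z}^r$. Since the crystal ball sequence of $\mathbb{Z}^r$ satisfies $gr_{\mathbb{Z}^r}(n)=\lambda n^r+O(n^{r-1})$, the entire task reduces to transporting the polynomial growth rate across $\Phi$, i.e. to proving $gr_\Gamma(n)=\Theta(n^r)$.

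First I would fix quasi-isometry constants $a\ge 1$, $b\ge 0$, $c\ge 0$ for $\Phi$ and record two elementary structural features of our situation that make the transport clean. The graph $\Gamma$ has bounded degree: every rank~$3$ seed admits exactly the three mutations $\mu_1,\mu_2,\mu_3$, so $\Gamma$ is at most $3$-regular, and hence the ball of radius $c$ about any vertex contains at most $C_c:=1+3+\cdots+3^{c}$ vertices. Moreover $\Phi$ is uniformly finite-to-one: if $\Phi(w)=\Phi(w')$, then the lower quasi-isometry estimate forces $d_L(w,w')\le ab$, so each fibre of $\Phi$ has at most $gr_{\mathbb{Z}^r}(ab)$ elements.

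For the upper bound I would argue that any vertex $x$ with $d_\Gamma(x_0,x)\le n$ lies within distance $c$ of some $\Phi(w)$, and the lower quasi-isometry inequality then yields $d_L(0,w)\le a(n+c+b)$; since at most $C_c$ vertices sit within distance $c$ of a given $\Phi(w)$, this gives $gr_\Gamma(n)\le C_c\,gr_{\mathbb{Z}^r}\bigl(a(n+c+b)\bigr)=O(n^r)$. For the matching lower bound I would take all $w\in L$ with $d_L(0,w)\le (n-b)/a$; the upper quasi-isometry inequality places every $\Phi(w)$ inside the ball of radius $n$ about $x_0$, and because $\Phi$ is uniformly finite-to-one these produce at least $gr_{\mathbb{Z}^r}\bigl((n-b)/a\bigr)/gr_{\mathbb{Z}^r}(ab)$ distinct vertices, which is $\ge c'n^r$ for some $c'>0$ and all large $n$. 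Combining the two bounds gives $gr_\Gamma(n)=\Theta(n^r)$, so the polynomial growth rate is exactly $r=\varphi(2n+1)/2$; here $\varphi(2n+1)\ge 2$ for $n\ge 1$, so $r\ge 1$ and $gr_\Gamma(n)\ne O(n^{r-1})$, confirming minimality.

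The only genuine subtlety, and the place I would be most careful, is that a quasi-isometry a priori controls only distances and need not preserve the counting of points, so growth could in principle be distorted. The two features isolated above—bounded degree of $\Gamma$ and uniform finiteness of the fibres of $\Phi$—are precisely what exclude this, and both ultimately rest on the cocompactness of the $L$-action furnished by Corollary~\ref{Coro:QItoLattice}. I expect this bookkeeping, rather than any new idea, to be the main obstacle to making the argument airtight.
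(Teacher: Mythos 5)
Your proposal is correct and follows essentially the same route as the paper: transport the growth of $\mathbb{Z}^r$ across the quasi-isometry of Corollary~\ref{Coro:QItoLattice}, using the $3$-regularity of $\Gamma$ to bound the number of vertices near each $\Phi(w)$ for the upper bound and a counting of preimages for the lower bound. The only (immaterial) difference is that the paper invokes the injectivity of the orbit map directly for the lower bound, whereas you derive uniform finiteness of its fibres from the quasi-isometry inequalities.
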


\begin{proof} By Corollary \ref{Coro:QItoLattice}, $\Gamma$ is quasi-isometric to the Cayley graph of $L$ where $L$ is a lattice of rank $\varphi(2n+1)/2$. The Cayley graph of $L$ is isomorphic to $\mathbb{Z}^{\varphi(2n+1)/2}$ and therefore has polynomial growth with growth rate $r=\varphi(2n+1)/2$. 

Recall that $x_0=(\mathbf{v}_0,B_0)$ denotes the initial seed. We consider the map $f\colon L\to \Gamma$ given by $f(w)=w+x_0$. Corollary \ref{Coro:QItoLattice} asserts that $f$ is a quasi-isomorphism. Note that $f(0)=x_0$.

According to the definition of a quasi-isomorphism we can pick $a\geq 1$ and $b\geq 0$ such that $\frac{1}{a}d_L(u,w)-b\leq d_{\Gamma}(f(u),f(w))\leq ad_L(u,w)+b$ for all $u,w\in L$. Moreover, we can pick $c\geq 0$ such that for all $x\in V_0$ there exists $w\in L$ such that $d_{\Gamma}\left(x,f(w)\right)\leq c$. Here, the subscripts indicate the metric spaces of the distance functions. Notice that for every $w\in L$ there are only finitely many $x\in V_0$ satisfying $d_{\Gamma}\left(x,f(w)\right)\leq c$. In fact, since $\Gamma$ is a $3$-regular graph, the number of such $x$ can be bounded by $3^c$. Notice that this constant does not depend on $x$. 

Suppose that $x\in V_0$ is a seed in $\Gamma$. We choose a $w\in L$ such that $d_{\Gamma}\left(x,f(w)\right)\leq c$. The triangle inequality implies
\begin{align}
\label{Ineq1}
&d_L\left(w,0\right)\leq a\left[d_{\Gamma}(f(w),f(0))+b\right]\leq a\left[d_{\Gamma}(f(w),x)+d_{\Gamma}(x,x_0)+b\right]\leq ad_{\Gamma}(x,x_0)+a(b+c);\\
&d_L\left(w,0\right)\geq \frac{1}{a}\left[d_{\Gamma}(f(w),f(0))-b\right]\geq \frac{1}{a}\left[-d_{\Gamma}(f(w),x)+d_{\Gamma}(x,x_0)-b\right]\geq \frac{1}{a}d_{\Gamma}(x,x_0)-\frac{b+c}{a}.
\end{align}
From inequality (\ref{Ineq1}) we can conclude that
\begin{align*}
gr^{\Gamma}(n)\leq 3^cgr^{L}\left(an+a(b+c)\right)=\mathcal{O}(n^r).
\end{align*}
Here the superscripts indicate the metric space of the growth function. In particular, $\Gamma$ has polynomial growth, and its growth rate is at most $r$. For any $n\gg 0$ there are $\lambda n^r+\mathcal{O}(n^{r-1})$ pairwise distinct elements $w\in L$ such that $d_L(w,0)\leq n$. Notice that the quasi-isomorphism $f$ is injective by construction. Application of $f$ yields $\lambda n^r+\mathcal{O}(n^{r-1})$ pairwise distinct seeds $x$ in $\Gamma$ such that $d_{\Gamma}(x,x_0)\leq an+b+c=\mathcal{O}(n)$. 
\end{proof}

\section{Exchange graphs for even least common denominators}
\label{Section:FurtherExchangeGraphs}

\subsection{The structure of the exchange graphs}
\label{Section:EvenDenominators}

In this section we consider geometric mutations of seeds that are given by triangles in the Euclidean plane whose angles are rational multiples of $\pi$ where the least common denominator of the three rational multiples is \emph{even}.

Fix a natural number $n\geq 2$, and put $\alpha=\pi/2n$.  Suppose that the initial seed $(\mathbf{v}_0,B_0)$ is given by a triangle $\Delta_0=A_1^{(0)}A_2^{(0)}A_3^{(0)}$ with angles $A_1^{(0)}=\alpha$, $A_2^{(0)}=(n-1)\alpha$, and $A_3^{(0)}=n\alpha$. For $i\in\{1,2,3\}$ we denote the side of $\Delta$ opposite to $A_i^{(0)}$ by $a_i^{(0)}$. To construct a seed we introduce a quiver with vertices $a_1^{(0)}$, $a_2^{(0)}$, and $a_3^{(0)}$, and arrows $a_1^{(0)}\to a_3^{(0)}$ and $a_3^{(0)}\to a_2^{(0)}$. As before, we assume that the reference point lies infinitely far away on the line $b$, which is constructed from the initial triangle by the billiard geometry. It is easy to check that this is the unique compatible choice of it.

The geometric considerations in Section \ref{Section:GeoMut} do not depend on the parity of the common denominator. In particular, there is a line $b\subseteq \mathbb{E}^2$ that contains the feet of two altitudes of every triangle $\Delta$ in the exchange graph. In this case, the line $b$ contains the vertex with the right angle of the initial triangle. Moreover, the quantity $T(\Delta)=a_1\sin(A_2)\sin(A_3)$ is conserved and hence the same for every triangle $\Delta=A_1A_2A_3$ in $\Gamma$.  

Some statements in Section \ref{Section:NumberTheory} undergo a slight change when we switch to an even common denominator. Quintessentially, equation (\ref{Eq:CyclotomicInt}) does not hold anymore. We put $K=\mathbb{Q}(2\cos(2\alpha))$ and consider the ring of integers $\mathcal{O}_{K}$. Then $2\cos(\alpha)\notin K$. (For example, when $n=2$, then $K=\mathbb{Q}(2\cos(2\alpha))=\mathbb{Q}$ but $2\cos(\alpha)=\sqrt{2}\notin\mathbb{Q}$.) In particular,  
\begin{align*}
\mathcal{O}_K=\left\langle\, 2\cos(2k\alpha) \mid k \in [1,n],\,\operatorname{gcd}(k,2n)=1\,\right\rangle_{\mathbb{Z}}\subsetneqq\left\langle\, 2\cos(k\alpha) \mid k \in [1,n],\,\operatorname{gcd}(k,2n)=1\,\right\rangle_{\mathbb{Z}}.
\end{align*}
Notice that $\operatorname{rk}_{\mathbb{Z}}(\mathbb{Z}[2\cos(\alpha)])=\varphi(2n)$ is twice as large as $\operatorname{rk}_{\mathbb{Z}}(\mathcal{O}_K)=\varphi(2n)/2$. The lattice 
\begin{align*}
R=\left\langle\, d_1\frac{\sin(\alpha)\sin(n\alpha)}{\sin^2(k\alpha)} \mid k\in [1,n],\,\operatorname{gcd}(k,2n)=1\,\right\rangle_{\mathbb{Z}}
\end{align*}
 is generated by the lengths of all the finite sides bounding infinite regions in $\Gamma$. For every element $w\in R$ the translation map $(\mathbf{v},B)\mapsto w+(\mathbf{v},B)$ induces a symmetry of the exchange graph. We can show that $\operatorname{rk}_{\mathbb{Z}}(R)=\varphi(2n)/2$ similar to Section \ref{Section:Galois}. As before, let $L$ denote the lattice of all elements $w\in \mathbb{E}^2$ such that the translation map $(\mathbf{v},B)\mapsto w+(\mathbf{v},B)$ induces a symmetry of the exchange graph. As in Proposition \ref{Prop:LatticeInclusions} there exists a natural number $d$ such that there are inclusions:
\begin{align*}
\xymatrix@C=0.0pc@R=0.0pc{
L&\subseteq&\frac{1}{d}\mathbb{Z}[2\cos(\alpha)]d_1\\
\rotatebox[origin=c]{90}{$\subseteq$}&&\rotatebox[origin=c]{90}{$\subseteq$}\\
R&\subseteq&\mathbb{Z}[2\cos(\alpha)]d_1
}
\end{align*}
However, in this situation the lower left corner of the diagram does not have the same rank as the upper right corner as before. We conclude with the following theorem.

\begin{theorem}
\phantomsection\label{thm:even}\begin{enumerate}
\item For every $(n_1,n_2,n_3)\in[0,2n+1]$ with $\gcd(n_1,n_2,n_3)=1$ and every $w\in L$ the exchange graph contains at most two additional vertices represented by (finite or infinite) triangles with angles $n_1\alpha$, $n_2\alpha$, $n_3\alpha$ (with two different orientations). For a fixed triple $(n_1,n_2,n_3)$ together with a fixed orientation all these triangles are related to each other by translations by vectors in $L$.   
\item We have $\operatorname{rk}_{\mathbb{Z}}(L)=\varphi(2n)/2$ or $\operatorname{rk}_{\mathbb{Z}}(L)=\varphi(2n)$.
\item The exchange graph $\Gamma$ is quasi-isometric to the Cayley graph of the lattice $L$, and a quasi-isomorphism is given by the map $L\to \Gamma$ with $w\mapsto w+(\mathbf{v}_0,B_0)$ where $(\mathbf{v}_0,B_0)$ is the initial seed of $\Gamma$.
\end{enumerate}
\end{theorem}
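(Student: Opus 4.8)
The plan is to obtain all three parts by transporting the arguments of Section~\ref{Section:MainThm} to the even setting, the only genuinely new ingredient being the rank dichotomy in part~(2). First I would record (as already noted before the theorem) that the billiard geometry of Section~\ref{Section:GeoMut} is insensitive to the parity of the common denominator: the belt line $b$ exists, the feet of two altitudes of every triangle in $\Gamma$ lie on $b$ (the analogue of Proposition~\ref{Prop:Feet}), the quantity $T(\Delta)=a_1\sin(A_2)\sin(A_3)$ is mutation-invariant (Proposition~\ref{Prop:MutationInvariance}), every translation vector is parallel to $b$ (Proposition~\ref{Prop:ParallelTob}), and the triangles are oriented towards the reference point (Remark~\ref{Rem:Orientation}). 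Granting these, part~(1) is proved verbatim as Proposition~\ref{Prop:FinitelyManyTriangles}: since $T$ is homogeneous of degree $1$ and equals $T(\Delta_0)$ throughout, the angles $n_1\alpha,n_2\alpha,n_3\alpha$ determine the triangle up to congruence; the quiver $Q$ together with the orientation rule then fixes the two distinguished altitude feet on $b$, hence the triangle up to translation parallel to $b$ and reflection across $b$. For each of the two orientations the realisations differ by a vector parallel to $b$, which by definition lies in $L$, so there are at most two seeds of prescribed shape up to $L$-translation.

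Part~(3) is then a copy of Corollary~\ref{Coro:QItoLattice}. Viewed as a $1$-dimensional cell complex, $\Gamma$ is a proper geodesic metric space and is $3$-regular. The lattice $L$ acts on $\Gamma$ by translation; the action is well-defined and isometric by (the even analogue of) Proposition~\ref{Prop:GroupAction}, properly discontinuous because only finitely many $L$-translates of a fixed seed lie within a bounded distance, and cocompact because part~(1) bounds $\Gamma/L$: there are finitely many triples $(n_1,n_2,n_3)$ with $n_1+n_2+n_3=2n$, finitely many quivers, and at most two seeds for each up to $L$. The Schwarz--Milnor lemma (\cite{Sch},\cite{M}) then yields the asserted quasi-isometry $L\to\Gamma$, $w\mapsto w+(\mathbf{v}_0,B_0)$.

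The heart of the matter is part~(2). I would first establish $\operatorname{rk}_{\mathbb{Z}}(R)=\varphi(2n)/2=[K:\mathbb{Q}]$ by the even analogue of Theorem~\ref{Thm:Independence}, i.e. the linear independence over $\mathbb{Q}$ of $\{1/\sin^2(k\alpha)\mid k\in U\}$ with $U=\{k\in[1,n]\mid\gcd(k,2n)=1\}$, driven by a Verlinde-type identity (as in Proposition~\ref{Prop:Verlinde}), the dominant-term estimate (as in Lemma~\ref{Prop:Estimate}), and the Dedekind determinant (Theorem~\ref{Thm:CharacterDets}) applied to $\operatorname{Gal}(K/\mathbb{Q})$. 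Combined with the inclusions of Proposition~\ref{Prop:LatticeInclusions}, this places $L$ in the sandwich $R\subseteq L\subseteq\tfrac1d\mathbb{Z}[2\cos\alpha]d_1$. Here $F=\mathbb{Q}(2\cos\alpha)=\mathbb{Q}(\zeta_{4n}+\zeta_{4n}^{-1})$ is the maximal real subfield of $\mathbb{Q}(\zeta_{4n})$, so $[F:\mathbb{Q}]=\varphi(4n)/2=\varphi(2n)$, while $K=\mathbb{Q}(2\cos2\alpha)$ is the maximal real subfield of $\mathbb{Q}(\zeta_{2n})$, whence $[K:\mathbb{Q}]=\varphi(2n)/2$ and $[F:K]=2$. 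Thus $\operatorname{rk}R=\varphi(2n)/2$ and $\operatorname{rk}\tfrac1d\mathbb{Z}[2\cos\alpha]d_1=\varphi(2n)$. The decisive step is to show that $W:=L\otimes_{\mathbb{Z}}\mathbb{Q}$ is a \emph{$K$-vector subspace} of the two-dimensional $K$-vector space $F\cdot d_1$. Since $R\subseteq L$ forces $W\supseteq K\cdot d_1$, a $K$-subspace containing the line $K\cdot d_1$ must be either $K\cdot d_1$ or all of $F\cdot d_1$; hence $\operatorname{rk}_{\mathbb{Z}}L=\dim_{\mathbb{Q}}W\in\{\varphi(2n)/2,\varphi(2n)\}$, which is exactly the claim.

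The hard part will be precisely the $K$-stability of $W$, that is, closure under multiplication by $2\cos(2\alpha)$. Note that mere $\operatorname{Gal}(F/\mathbb{Q})$-stability of $W$ is \emph{not} enough: already $K\cong\mathbb{Q}[\operatorname{Gal}(K/\mathbb{Q})]$ carries $\mathbb{Q}[\operatorname{Gal}(K/\mathbb{Q})]$-submodules of intermediate dimension, so a purely Galois-theoretic argument would permit ranks strictly between $\varphi(2n)/2$ and $\varphi(2n)$. The extra structure must come from the geometry of realisable translations. My approach would be to use the infinite-region translations $s_k=d_1\sin(\alpha)\sin(n\alpha)/\sin^2(k\alpha)$, which all lie in $L$ and satisfy $s_k/s_1=\sin^2(\alpha)/\sin^2(k\alpha)\in K$, so that $R$ already spans the line $K\cdot d_1$; then, writing an arbitrary $w\in L$ as $w=\sum_i|u_i|\cos(m_i\alpha)$ with side lengths $|u_i|\in L'$ and integers $m_i$ exactly as in the proof of Proposition~\ref{Prop:LatticeInclusions}, one has the algebraic identity $2\cos(2\alpha)\,w=\sum_i|u_i|\bigl(\cos((m_i+2)\alpha)+\cos((m_i-2)\alpha)\bigr)$, a vector of the same shape, and the task is to realise it once more as an honest difference of two seeds in $\Gamma$. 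Establishing this realisability is the crux; once it is done, the two permitted values of the rank correspond geometrically to whether the transverse translations (for instance the belt translation of length $4T(\Delta_0)$, which one checks lies in $K\cdot d_1$ precisely when $n$ is odd) span a direction outside $K\cdot d_1$ or not.
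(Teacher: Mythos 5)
Your treatment of parts (1) and (3) coincides with the paper's: both are obtained by rerunning Proposition~\ref{Prop:FinitelyManyTriangles} and Corollary~\ref{Coro:QItoLattice} (via Schwarz--Milnor) in the even setting, after observing that the belt-line geometry of Section~\ref{Section:GeoMut} and the invariant $T$ are insensitive to the parity of the denominator. That is exactly what the paper does, and those parts of your proposal are fine.

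For part (2) the two arguments genuinely diverge, and yours is incomplete at precisely the point you yourself call the crux. The paper reads the statement off the sandwich $R\subseteq L\subseteq \tfrac{1}{d}\mathbb{Z}[2\cos(\alpha)]d_1$ together with $\operatorname{rk}_{\mathbb{Z}}(R)=\varphi(2n)/2$ (proved as in Section~\ref{Section:Galois}, i.e.\ by the even analogues of Proposition~\ref{Prop:Verlinde}, Lemma~\ref{Prop:Estimate} and Theorem~\ref{Thm:CharacterDets}) and $\operatorname{rk}_{\mathbb{Z}}(\tfrac{1}{d}\mathbb{Z}[2\cos(\alpha)]d_1)=\varphi(2n)$; it offers no further argument. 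You correctly observe that this sandwich only yields $\varphi(2n)/2\le\operatorname{rk}_{\mathbb{Z}}(L)\le\varphi(2n)$ and does not by itself exclude intermediate ranks, and you propose to force the dichotomy by showing that $W=L\otimes_{\mathbb{Z}}\mathbb{Q}$ is a $K$-subspace of the two-dimensional $K$-space $F\cdot d_1$. That would indeed suffice, and your observation that Galois-stability alone is not enough is a good one. But the decisive step --- closure of $W$ under multiplication by $2\cos(2\alpha)$, which you reduce to realising $2\cos(2\alpha)\,w$ ``once more as an honest difference of two seeds in $\Gamma$'' --- is announced and not proved. As written, your part (2) establishes only the two-sided rank bound, i.e.\ no more than the paper's own (equally terse) argument; the strict dichotomy remains open in your write-up. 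A minor imprecision: $R\otimes\mathbb{Q}$ is the $K$-line $K\cdot d_1\sin(\alpha)$ rather than $K\cdot d_1$ (note $\sin(\alpha)=\cos((n-1)\alpha)\notin K$ in general, e.g.\ $n=2$); this does not affect the dichotomy argument, since any one-dimensional $K$-subspace of $F\cdot d_1$ serves equally well.
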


\section*{Acknowledgments}
The authors were partially supported by EPSRC grant EP/N005457/1. The second author was partially supported by EPSRC grant EP/M004333/1.

\bibliographystyle{hyperalphaabbr}
\bibliography{geomutation}

\end{document}